\documentclass[11pt]{article}

\usepackage{amssymb,graphics,amsmath,amsthm,amsopn,amstext,amsfonts,color,fullpage,fancybox,
hyperref,bm}
\usepackage{graphicx}
\usepackage{verbatim}
\thispagestyle{empty} \setlength{\parindent}{0pt}
\usepackage{algorithm}
\usepackage{algorithmicx}
\usepackage{algpseudocode}
\usepackage{longtable}
\usepackage{multirow}
\usepackage{dsfont}
\usepackage{mathrsfs}
\usepackage{caption}
\usepackage{subcaption}
\usepackage{setspace}
\usepackage{color,cite}

\thispagestyle{empty}

\newcommand{\gap}{\vspace{0.1in}}
\newcommand{\epc}{\hspace{1pc}}
\newcommand{\wh}{\widehat}
\newcommand{\wt}{\widetilde}
\newcommand{\thalf}{\frac{1}{2}}

\definecolor{UniBlue}{RGB}{254,107,1}

\textwidth 6.6in
\textheight 9.1in
\topmargin -0.3in \evensidemargin 0.00in
\oddsidemargin  -0.02in
\setlength{\parindent}{0pt}

\newtheorem{theorem}{Theorem}
\newtheorem{proposition}[theorem]{Proposition}
\newtheorem{lemma}[theorem]{Lemma}
\newtheorem{corollary}[theorem]{Corollary}
\newtheorem{definition}[theorem]{Definition}

\newtheorem{remark}[theorem]{Remark}

\thispagestyle{empty}

\setlength{\parindent}{0pt}

\title{Quasi-difference-convexity: Modernization of \\
Quasi-differentiable Optimization\footnote{Both authors 
are affiliated with the Daniel J.\ Epstein Department of Industrial and Systems 
Engineering, University of Southern
California, Los Angeles, U.S.A.\ 90089. This work was based on research supported by the
U.S.\ Air Force Office of Scientific Research under grant FA9550-22-1-0045.
{\tt Emails: jongship@usc.edu and yulinpen@usc.edu}.  The authors gratefully acknowledge several
fruitful discussions with Xinyao Zhang, Kim-Chuan Toh, Junyi Liu, Shaoning Han, and Ying Cui 
(in reverse order of their last names) in the initial phase of and during this work.}}

\author{Jong-Shi Pang \and Yulin Peng}

\date{Original July 2025}

\begin{document}

\maketitle

\begin{abstract}
\noindent Quasi-differentiable functions were introduced by Pshenichnyi in a 1969 
monograph written 
in Russian and translated in an English version in 1971.  This class of nonsmooth functions 
was studied extensively in two decades since but has not received much attention in 
today's wide optimization literature.  This regrettable omission is in spite of the fact 
that many functions in modern day applications of optimization can be shown to be 
quasi-differentiable.  In essence, a quasi-differentiable function is one whose directional 
derivative at an arbitrary reference vector, as a function of the direction, is the 
difference of two positively homogenous, convex functions.  Thus, to bring quasi-differentiable 
functions closer to the class of difference-of-convex functions that has received fast growing 
attention in recent years in connection with many applied subjects, we propose to rename 
quasi-differentiable functions as quasi-difference-convex (quasi-dc) functions.    
Besides modernizing and advancing this class of nonconvex and nondifferentiable functions, 
our research aims to put together a unified treatment 
of iterative convex-programming based descent algorithms for solving a broad class of 
composite quasi-dc programs and to establish their subsequential convergence, sequential 
convergence, and rates of convergence; the latter two topics are in line with the modern 
focus of such analysis for convex programs and some extensions and are departures from 
the sole emphasis of subsequential convergence in the traditional studies 
of quasi-differentiable optimization.  Through this research, we have gained significant
new insights and understanding, advanced the fundamentals, and broadened the applications 
of this neglected yet pervasive class of nonconvex and nondifferentiable functions 
and their optimization.
\end{abstract}

\section{Introduction}

As documented in the monograph \cite{CuiPang2021}, there has been an 
explosion of interests in recent years in the study of minimization 
of nonconvex and nondifferentiable functions, abbreviated as ``non''-functions.  
Among other things, 
this monograph has provided a categorization of many classes of such functions
and highlighted their fundamental properties and
clarified their connections (Chapter~4).  This extensive summary of the fundamentals
of nonconvex and nondifferentiable functions is accompanied by 
a unified computational approach for solving the resulting minimization problems,
even with constraints defined by functions of the same kind.  The basic idea of this 
approach is surrogation (Chapter~7) which upper bounds the given functions by
(typically) convex majorants that define a sequence of convex subprograms solvable
by well established methods.  With focus on generality
and breadth, the monograph has paid little attention to the class of what may
be called {\sl simple ``non''-functions}, which are given by the sums of 
convex (not differentiable) functions and differentiable (not convex) functions.
The advantage of this class of simple ``non''-functions is that full advantage
can be taken of the rich theory and methods of convex programming
and differentiable optimization each having a long history of fruitful developments.  
The summation operation greatly facilitates
the merger of these two individual subareas and allows the toolsets therein to
be profitably combined, resulting in particular
sharp theories and efficient first-order methods for solving this class of 
simple ``non''-optimization problems; a small sample of the literature of this area
most relevant to our present work
includes \cite{Beck23,AttouchBolte09,AttouchBolteSvaiter13,LiPong18}.

\gap

A very recent development on the latter problems is the incorporation of two simple
``non''-functions in a fraction, leading to a fractional program that is the
main topic of two recent studies \cite{BotDaoLi22,BotLiTao24}.  Starting from
the pioneering work of Dinkelbach \cite{Dinkelbach67} in 1967, fractional programs
have their own long history with rich applications.  Traditionally, a fractional program
refers to the minimization of a single ratio.  As seen from the two cited references
\cite{BotDaoLi22,BotLiTao24}, the area of (single-ratio) fractional programming 
continues to flourish to date; yet, not because of the lack of attempts
(to be surveyed later), there are technical challenges to directly and 
rigorously extend the original Dinkelbach method to fractional programs 
with sums of multiple ratios, particularly those involving nondifferentiable 
functions.  With an eye toward applications, our work originated from an interest 
in such an extension.  As it turns out,
this initial interest leads us to realize that a fraction whose numerator
and denominator are both simple ``non''-functions belongs to the classical
family of {\sl quasi-differentiable} functions introduced by Pshenichnyi 
in a 1969 monograph written in Russian which was translated
in 1971 into the English version \cite{Pshenichnyi71}.  (This was pointed out in
\cite{DemyanovPolyakovaRubinov86}.)  In turn, the class
of nondifferentiable functions has much to do with the class of difference-of-convex
(dc) functions \cite{Tuy87,Tuy98} that has significantly broader appeal and sharper
theory \cite{Hartman59,Hiriart-Urruty85,Shapiro86,ShapiroYomdin87} as well as a very 
successful algorithm (known as the Difference-of-Convex Algorithm
abbreviated as the DCA) \cite{PhamLeThi97}.  Partly due to the extensive efforts of
the authors \cite{LeThiPham18,LeThiPham15,LeThiPham14}, the DCA has been applied to
many domains.  Yet because of its generality, the limit points computed by the
algorithm are ``critical solutions'' of a dc program, which are a very relaxed kind
of stationary solutions.   Enhancements of the DCA for problems with special structures
will be reviewed below.

\section{Literature Review and Goals} \label{sec:literature}

Formally, a directionally differentiable function 
$f : {\cal O} \subseteq \mathbb{R}^n \to \mathbb{R}$ defined on the open set
${\cal O}$ is {\sl quasi-differentiable} at a vector 
$\bar{x} \in {\cal O}$ \cite{Pshenichnyi71} if there exist compact convex sets 
$\overline{\partial} f(\bar{x})$ and $\underline{\partial} f(\bar{x})$ such that 
for every direction $v \in \mathbb{R}^n$, 
\[ \begin{array}{lllll}
f^{\, \prime}(\bar{x};v) & \triangleq & \displaystyle{
\lim_{\tau \downarrow 0}
} \, \displaystyle{
\frac{f(\bar{x} + \tau v) - f(\bar{x})}{\tau}
} & \equiv & \mbox{the directional derivative of $f$ at $\bar{x}$ along $v$} 
\\ [0.1in]
& = & \displaystyle{
\max_{a \in \overline{\partial} f(\bar{x})}
} \, a^{\top}v - \displaystyle{
\max_{b \in \underline{\partial} f(\bar{x})}
} \, b^{\top}v & \equiv & \mbox{a difference-of-convex function in $v$ given $\bar{x}$};
\end{array} \]
in short, $f$ is quasi-differentiable at $\bar{x}$ if the directional derivative
$f^{\, \prime}(\bar{x};\bullet)$ is difference-of-convex (dc) \cite{Tuy87,Tuy98}
with the two convex functions being positively homogeneous (i.e., they
are support functions of 
compact convex sets. Seemingly obvious, there is a little subtlety in this 
equivalence that deserves to be highlighted; see 
Proposition~\ref{pr:quasi-dc is quasi-diff}.) 
While there have been several focused studies of quasi-differentiable functions 
and their minimization, including an early article \cite{DemyanovPolyakova79}, 
a survey chapter \cite{Demyanov08}, and two edited volumes 
\cite{DemyanovDixon86,DemyanovRubinov00}, this subject has never been a main
topic of interest in the field of mathematical optimization.  There is nevertheless
a recent extension \cite{Ishikuka88} to ``minmax quasi-differentiable'' functions 
that have close connections to ``generalized bilevel programs''.  A possible reason 
for the lack of attention of quasi-differentiable functions and their optimization
may be attributed to the persistent abstraction of research about these functions 
that deal mostly with the upper and lower sets $\overline{\partial} f(\bar{x})$ 
and $\underline{\partial} f(\bar{x})$, respectively, and pay little attention to 
applied realizations of the theory and methods, such as in the special case of 
fractional programs that themselves have vast applications.  Nevertheless these
early works are foundational and are the building block of
some descent algorithms for the minimization of certain quasi-differentiable functions; 
see e.g.\ \cite{Bagirov00,DemyanovGSDixon86,KDDixon86,Kiewel84}.

\gap

In contrast, the class of dc functions,
which is a subclass of the quasi-differentiable functions (because the
directional derivative of a dc function at a fixed reference vector must 
be dc in the direction), has received extensive attention and occupies 
a prominent place in today's vast literature on the subject of 
nonconvex functions \cite{CuiPang2021}.  Besides its pervasiveness in applications, 
as evidenced by the references documented in the surveys 
\cite{LeThiPham18,LeThiPham15,LeThiPham14}
and also in \cite{NouiehedPangRaza18} that has added many functions
occurring in statistics to the class, the basic difference-of-convex algorithm (DCA)
is very simple conceptually and easy to be applied and has aided the widespread
use of the dc metheodology.  Two noteworthy recent additions to the literature on 
the DCA are \cite{LeThiHuynhPham18,LeThiHuynhPham24}.  Among these references,
the former establishes the sequential 
convergence of the algorithm and its rate for problems with subanalytic data; the tool is 
a nonsmooth form of the Lojasiewicz inequality applied to problems where one of the 
two dc components is differentiable with locally Lipschitz derivatives; the latter
reference extends the DCA to a dc function composite with a differentiable
function; such a composite function is an extension of an amenable function that
we will review later. 
A related sequential convergence analysis of the DCA has appeared 
in \cite[Subsection~8.4.2]{CuiPang2021} without explicitly involving the 
Kurdyka-Lojaziewicz (KL) theory \cite{Kurdyka98,Lojasiewicz59,Lojasiewicz64}
commonly used to address
such convergence issues for subanalytic functions 
\cite{AttouchBolte09,AttouchBolteSvaiter13}.  Among the class of dc function, the
subclass of difference-of-finite-max functions play an important role in the
design of descent algorithms for computing sharp stationary 
solutions \cite{PangRazaAlvarado17}; see also the follow-up papers 
\cite{LuZhouSun19,LuZhou19}.  Specifically, $f : {\cal O} \to \mathbb{R}^n$
is a difference-of-finite-max-convex-differentiable function on the open set ${\cal O}$ 
if
\begin{equation} \label{eq:max-max}
f(x) \, \equiv \displaystyle{
\max_{1 \leq i \leq I}
} \, g_i(x) - \displaystyle{
\max_{1 \leq j \leq J}
} \ h_j(x)
\end{equation}
for some positive integers $I$ and $J$ (which may be taken to be equal without loss
of generality) and some convex and continuously differentiable functions $g_i$ and
$h_j$.  Included in this subclass of dc functions are the piecewise affine (PA) functions
where the functions $g_i$ and $h_j$ are all affine, a fact whose proof can be found 
in \cite{Scholtes02}.   The DCA has been enhanced via of the classical idea of subgradient 
bundling.  This leads to the family of bundle methods of nonsmooth dc programs for
which there is a growing literature 
\cite{deOliveira19,JBKMakela17,KanzowNeder24,vanADJMdeOliveiraSwaminathan21}.

\gap

Being a subclass of the
dc functions, the class of weakly convex functions, which are
originally defined in \cite{Nurminski78} and
subsequently expanded in \cite{Uryasev88}, 
has been particularly
popular in the recent nonsmooth optimization literature; this popularity is partly 
due to the easy and favorable dc decompositions of weakly convex functions 
that facilitate the design of practical algorithms supported by sharp analysis.
These favorable properties are particularly appealing for modern
applications in machine learning, statistics, and diverse engineering domains.
Extending the class of dc functions, the class of implicitly convex-concave
functions was identified in \cite[Subsection~4.4.6]{CuiPang2021} and shown
to be a subclass of the quasi-differentiable functions; see Proposition~4.4.26
in the reference.  Specifically, a function $\theta : {\cal O} \to \mathbb{R}$ is
{\sl implicitly convex-concave} on the open convex set ${\cal O}$ if there exists 
a bivariate function $\psi : {\cal O} \times {\cal O} \to \mathbb{R}$ such that  
$\psi(\bullet;z)$ is convex on ${\cal O}$ for
every $z \in {\cal O}$ and $\psi(x;\bullet)$ is concave on ${\cal O}$ 
for every $x \in {\cal O}$ and $f(x) = \psi(x;x)$.  Implicitly convex-concave
functions arise as value functions of parametric optimization problems
with parameters in both the objective functions and constraints satisfying
constraint qualifications; in turn
such value functions are pervasive in optimistic and pessimistic versions
of bilevel programs and in two-stage stochastic programs; see \cite{CuiPang2021}.  

\gap

In summary, the master class of quasi-differentiable functions has been overshadowed
by its many subclasses all having connections to difference of convexity.  For this
reason, we will use the term {\sl quasi-difference-convexity} as a modern substitute
for quasi-differentiability in order to tie these functions closer
to applications and computations and draw the attention of potential users
of dc functions to the broader family of quasi-dc functions.

\subsection{Fractional programs}

In \cite{Dinkelbach67},  Dinkelbach introduced the single-ratio fractional program:
\begin{equation} \label{eq:Dinkelbach single ratio}
\displaystyle{
\operatornamewithlimits{\mbox{\bf minimize}}_{x \in X \subseteq \mathbb{R}^n}
} \ \displaystyle{
\frac{n(x)}{d(x)}
},
\end{equation}
where the denominator is a positive function.  Dinkelbach's algorithm for
the above single-ratio problem iteratively solves
subproblems each of the form:
\begin{equation} \label{eq:Dinkelbach original subproblem}
\displaystyle{
\operatornamewithlimits{\mbox{\bf minimize}}_{x \in X}
} \ n(x) - \bar{\psi} \, d(x), \epc
\mbox{where } \ \bar{\psi} \, \triangleq \, 
\displaystyle{
\frac{n(\bar{x})}{d(\bar{x})}
}
\end{equation}
with $\bar{x}$ being a given iterate.  Dinkelbach's original paper
has neither specified how each subproblem should be solved in practice nor 
required a globally optimal solution to be computed; yet, for practical
purposes, this ``scalarization'' approach essentially requires the 
numerator $n(x)$ to be convex and nonnegative and the denominator 
$d(x)$ to be concave so that a solution
for each subproblem (\ref{eq:Dinkelbach original subproblem}) can be readily
computed to satisfy the conditions for convergence.  (For one thing, the
sign restriction of the numerator and denominator functions ensures
the convexity of the scalarized objective function 
of (\ref{eq:Dinkelbach original subproblem}).)
Obviously, 
such convexity/concavity requirements, along with the single-ratio setting, 
are rather restrictive.  Nevertheless, Dinkelbach's pioneering work
has led to a large body of extended studies of fractional 
programs in broader settings.  Today, there is an extensive 
literature on this topic, which for the most part pertains to
several extended classes.

\gap 

\noindent (a) Generalized fractional programs were introduced and studied in
\cite{CrouzeixFerlandSchaible85,CrouzeixFerland91}:
\begin{equation} \label{eq:max of single ratios}
\displaystyle{
\operatornamewithlimits{\mbox{\bf minimize}}_{x \in X}
} \ \displaystyle{
\max_{1 \leq j \leq J}
} \ \left\{ \, \displaystyle{
\frac{n_j(x)}{d_j(x)}
} \, \right\}.
\end{equation}
The special class of fractional quadratic functions (i.e., all the
functions $n_j$ and $d_j$ are quadratic functions) have found applications 
in signal and information processing \cite{GSMO19} and energy efficiency 
in wireless networks \cite{ZapponeJorswieck14}.

\gap

\noindent (b) Sums-of-ratios problems 
\begin{equation} \label{eq:sum of single ratios}
\displaystyle{
\operatornamewithlimits{\mbox{\bf minimize}}_{x \in X}
} \ \displaystyle{
\sum_{k=1}^K 
} \ \displaystyle{
\frac{n_k(x)}{d_k(x)}
} 
\end{equation}
have diverse applications in signal 
processing \cite{ShenYu18-I,ShenYu18-II,ZapponeJorswieck14} and
in multiclass clustering \cite{LiWangMerchant23,ShiMalik00,YuShi03}; 
there have been many suggestions to 
extend the original Dinkelbach algorithm; a sample of references
includes \cite{AlmogyLevin71,Benson02-a,
Benson02-b,FreundJarre01,GruzdevaStrekalovsky18,JargalDarkijav22,Korramabadi21,ShenWangWu23}.
Some of these works propose global optimization methods under special assumptions
on the numerator and denominator functions.  

\gap

(c) As already mentioned \cite{BotDaoLi22,BotLiTao24,LeThiHuynhPham18}, 
single ratios with simple non-functions
whose split convexity-plus-differentiability structure can be profitably exploited
have been the subject of some additional papers \cite{LiShenZhangZhou22,ZhouZhangLi24}.

\gap

In the recent monograph \cite[Section~7.2.4]{CuiPang2021}, 
the authors have considered the generalized fractional 
program (\ref{eq:max of single ratios}),
where the functions $n_j$ and $d_j$ are nonconvex and 
nondifferentiable but can be ``surrogated'' 
at an arbitrary $\bar{x} \in X$ by
$\wh{n}_j(\bullet;\bar{x})$ and $\wh{d}_j(\bullet;\bar{x})$
respectively.  These surrogated functions are such that  
the resulting {\sl surrogated Dinkelbach subprogram}
\begin{equation} \label{eq:Dinkelbach subproblem}
\displaystyle{
\operatornamewithlimits{\mbox{\bf minimize}}_{x \in X}
} \ \displaystyle{
\max_{1 \leq j \leq J}
} \ \wh{\psi}_j(x;\bar{x}),
\end{equation}
where
\[
\wh{\psi}_j(x;\bar{x}) \, \triangleq \, \wh{n}_j(x;\bar{x}) - 
\left[ \, \displaystyle{
\max_{1 \leq j^{\, \prime} \leq J}
} \, \displaystyle{
\frac{n_{j^{\prime}}(\bar{x})}{d_{j^{\prime}}(\bar{x})}
} \, \right] \, \wh{d}_j(x;\bar{x}),
\epc j \, = \, 1, \cdots, J
\]
is globally solvable.  An instance where such
surrogated functions can easily be constructed is the
classical Dinkelbach setting where 
each $n_j$ is convex and nonnegative and $d_j$ is concave and positive; 
in this case, we may take
$\wh{n}_j(\bullet;\bar{x}) \equiv n_j$ and
$\wh{d}_j(\bullet;\bar{x}) \equiv d_j$.  
Under a key directional derivative consistency condition
on the surrogated functions, Algorithm~7.2.8 in
\cite{CuiPang2021}, which is based on 
a combination of the above surrogation and 
the classical Dinkelbach algorithm 
for (\ref{eq:Dinkelbach single ratio}), is shown to compute a 
directional stationary solution of 
(\ref{eq:max of single ratios}); see Proposition~7.2.6
in the cited reference.  The idea of surrogation will play a major role in this
paper.

\subsection{Goals and organization of this paper}

In a nutshell, inspired by the fusion of several old and contemporary research topics: 
the historical class 
of quasi-differentiable functions,
the classical iterative descent methods and their modernization,
the renewed interest of fractional programs, the popularization of dc programming,
the explosive literature on first-order methods for the minimization of 
simple non-functions and their sequential convergence analysis via the KL theory, 
and more broadly,
the expansive recent research of modern nonconvex and nondifferentiable 
optimization, this paper has two major goals:

\gap

$\bullet $ to revisit, modernize, and advance the old subject of quasi-differentiable 
optimization via the class of
quasi-difference-convex functions and the manifestation of their breadth in 
contemporary applications of nonsmooth and nonconvex optimization, focusing on 
composite functions involving the class of simple non-functions;

\gap

$\bullet $ to put together a unified treatment of iterative convex-programming
based descent algorithms for solving a broad class of composite quasi-dc programs,
and to establish their
subsequential convergence, sequential convergence, and rates of convergence; the
latter two topics are in line with the modern analysis of these algorithms 
and are departures from the sole emphasis of subsequential convergence in 
their traditional studies.

\gap

We begin in the next section with the formal definitions of the quasi-functions, 
reviewing and extending their properties, and ending the discussion with
a general result showing the preservation of the quasi-difference-convexity
property and with an equivalent restatement of the ``inf-stationarity'' introduced
in the quasi-differentiable calculus literature 
\cite{DemyanovPolyakova79,DemyanovRubinov80} in terms of the more transparent
directional stationarity condition.  In Section~\ref{sec:quasi-dc for minimization},
we set the stage for our journey in algorithmic development for a major class of 
quasi-dc minimization problems.  In this vein, our work brings the early references 
\cite{Bagirov00,DemyanovGSDixon86,KDDixon86}, which have established only the subsequential
convergence of the descent algorithms described therein, to align with the modern interests 
of sequential convergence and rate analysis using the popular tool of the KL theory.  
Recent references on the latter topics can be found 
in \cite{LeThiHuynhPham18,LeThiHuynhPham24} for dc programming 
(see also \cite[Subsection~8.4.2]{CuiPang2021}) and 
\cite{BotDaoLi22,BotLiTao24} for fractional programs of simple non-functions.  Details
of the algorithmic design and description as well as their convergence analysis 
are covered in the last
two Sections~\ref{sec:iterative algorithms} and \ref{sec:sequential convergence}.

\gap

{\bf A word about the contributions of the present work.}  Readers
familiar with the recent works of the author (Pang) will find many of the ideas employed
here already appeared in the monograph \cite{CuiPang2021} and the subsequent references
\cite{CuiLiuPang2022,CuiLiuPang2023}.  Nevertheless, as mentioned already, an initial
impetus of the present work stemmed from a very different topic, namely, that of 
fractional programming.  This motivation is reinforced by the recent works 
\cite{BotDaoLi22,BotLiTao24,LeThiHuynhPham24} focused on special classes of composite
non-problems and the seemingly lack of attention to the unifying class of less
well-known quasi-differentiable
functions.  So the idea of the present paper was born with the goals stated above.  The end
product is a mix of old and new ideas and results applied to a class of composite nonconvex 
and nondifferentiable optimization problems and its specializations to be treated by 
classical methods enhanced by 
renewed perspectives and modern tools for the benefits of broad applications. 

\section{Quasi-difference-convex Functions: Old and New Results} 
\label{sec:quasi-dc functions}

We say that a Bouligand differentiable (i.e., locally Lipschitz continuous and
directionally differentiable) function $f : {\cal O} \to \mathbb{R}$ defined
on the open convex set ${\cal O} \subseteq \mathbb{R}^n$ is 
{\sl quasi-difference-convex} (quasi-dc) at $\bar{x}$ if
$f^{\, \prime}(\bar{x};\bullet) : \mathbb{R}^n \to \mathbb{R}$ is the difference of
two convex functions (on $\mathbb{R}^n$).  These properties
are said to hold on a domain if they hold at every point in the domain.
As a historical remark, we note that the term {\sl pseudosmoothness}
has been used for this concept in an unpublished 
manuscript \cite[Definition~3.1]{ShapiroYomdin87}; this reference has also 
used the terminology of difference-convex-homogeneous for a function that is 
the difference of two convex positively homogeneous function.  Since throughout this paper, 
we will work only with 
locally Lipschitz continuous functions, we require all quasi-dc functions to be locally 
Lipchitz.  It is well known that positively homogeneous convex functions are closely 
tied to sublinear
functions (i.e., functions that are positively homogeneous and subadditive); for
this reason, quasi-difference-convex functions have been 
call {\sl difference-sublinear} functions, a terminology we do not use in this paper.

\gap

It was mentioned in \cite{DemyanovPolyakovaRubinov86}, without proof, that an important role
of quasi-difference-convexity is that the directional 
derivative $f^{\, \prime}(\bar{x};\bullet)$ of every Bouligand differentiable function 
$f$ can be
approximated to within any prescribed accuracy by the difference of two positively homogeneous
convex functions. In this vein, a recent result in \cite{Royset2020} shows that every upper
semicontinuous extended-valued function is the limit
of a ``hypo-converging'' sequence of piecewise affine functions of the 
difference-of-max type.  There are many results on 
positively homogeneous functions that are applicable to the directional derivatives 
of a quasi-dc function; see \cite{GorokhovikTrafimovich16,GorokhovikTrafimovich18}.
The following simple result formally proves that quasi-differentiability is equivalent
to quasi-difference-convexity.  While seemingly obvious, a subtlety in this equivalence
is that in the definition of quasi-difference-convexity, positive homogeneity of the
two convex functions in the dc decomposition of $f^{\, \prime}(\bar{x};\bullet)$ is not
explicitly stated.  The result below essentially shows that this explicit requirement
is implicit in the definition of quasi-difference-convexity of the directional derivative.

\begin{proposition} \label{pr:quasi-dc is quasi-diff} \rm 
Let $f : {\cal O} \to \mathbb{R}$ be Bouligand differentiable on the open convex set
${\cal O} \subseteq \mathbb{R}^n$.  Then $f$ is quasi-dc at $\bar{x} \in {\cal O}$
if and only if it is quasi-differentiable there.
\end{proposition}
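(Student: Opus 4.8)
The plan is to dispose of the easy implication first and then concentrate on ``quasi-dc $\Rightarrow$ quasi-differentiable,'' which is where the announced subtlety lives. For the easy direction, if $f$ is quasi-differentiable at $\bar{x}$ then, by definition, for every $v$,
\[
f^{\,\prime}(\bar{x};v)\ =\ \max_{a\in\overline{\partial}f(\bar{x})}a^{\top}v\ -\ \max_{b\in\underline{\partial}f(\bar{x})}b^{\top}v,
\]
and since the support function of a nonempty compact convex set is a finite-valued convex function on $\mathbb{R}^n$, this displays $f^{\,\prime}(\bar{x};\bullet)$ as a difference of two (finite) convex functions; hence $f$ is quasi-dc at $\bar{x}$.

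For the converse, suppose $g:=f^{\,\prime}(\bar{x};\bullet)$ equals $p-q$ for two convex functions $p,q$ on $\mathbb{R}^n$. First I would record the two facts that make the argument go: (a) $p$ and $q$ are automatically \emph{finite-valued}---otherwise their difference could not be the real-valued function $g$---and are therefore continuous, indeed locally Lipschitz, on $\mathbb{R}^n$; and (b) $g$ is already \emph{positively homogeneous}, because directional derivatives obey $f^{\,\prime}(\bar{x};\lambda v)=\lambda\,f^{\,\prime}(\bar{x};v)$ for every $\lambda\ge 0$. The idea is then to ``homogenize'' the given decomposition by passing to the directional derivatives of the two convex components at the origin. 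After subtracting constants so that $p(0)=q(0)=0$ (this alters neither $p-q$ nor convexity, and is consistent with $g(0)=0$), define
\[
\overline{g}(v)\ \triangleq\ p^{\,\prime}(0;v)\ =\ \lim_{\tau\downarrow 0}\frac{p(\tau v)}{\tau},
\qquad
\underline{g}(v)\ \triangleq\ q^{\,\prime}(0;v)\ =\ \lim_{\tau\downarrow 0}\frac{q(\tau v)}{\tau}.
\]
By standard convex analysis, since $p$ and $q$ are finite-valued convex functions on all of $\mathbb{R}^n$, both directional derivatives exist, are finite-valued, and are sublinear, i.e., positively homogeneous convex functions on $\mathbb{R}^n$. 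Because each of the two limits above exists and is finite, they may be subtracted termwise, and using the positive homogeneity of $g$,
\[
\overline{g}(v)-\underline{g}(v)\ =\ \lim_{\tau\downarrow 0}\frac{p(\tau v)-q(\tau v)}{\tau}\ =\ \lim_{\tau\downarrow 0}\frac{g(\tau v)}{\tau}\ =\ \lim_{\tau\downarrow 0}\frac{\tau\,g(v)}{\tau}\ =\ g(v),
\]
so that $f^{\,\prime}(\bar{x};\bullet)=\overline{g}-\underline{g}$ is now exhibited as the difference of two positively homogeneous convex functions---the feature the bare definition of quasi-dc did not demand.

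To conclude, I would invoke the classical fact that a finite-valued sublinear function $s$ on $\mathbb{R}^n$ is Lipschitz and coincides with the support function of the nonempty compact convex set $C_s:=\{a\in\mathbb{R}^n: a^{\top}v\le s(v)\ \text{for all }v\}$ (its subdifferential at the origin), which is bounded precisely by the Lipschitz modulus of $s$. Applying this to $\overline{g}$ and $\underline{g}$ and setting $\overline{\partial}f(\bar{x}):=C_{\overline{g}}$ and $\underline{\partial}f(\bar{x}):=C_{\underline{g}}$ gives $f^{\,\prime}(\bar{x};v)=\max_{a\in\overline{\partial}f(\bar{x})}a^{\top}v-\max_{b\in\underline{\partial}f(\bar{x})}b^{\top}v$ for all $v$, i.e., $f$ is quasi-differentiable at $\bar{x}$.

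I do not expect a genuine obstacle: the whole proposition turns on the single device of replacing an arbitrary dc decomposition of a positively homogeneous function by the one built from the components' directional derivatives at the origin. The only points needing care---and the reason the statement is not a pure tautology---are the observation in (a) that the convex components are necessarily finite (so that the directional derivatives at $0$ are finite and the limit interchange in the second display is legitimate) and the routine appeals to the sublinearity of directional derivatives of finite convex functions and to the support-function representation of finite sublinear functions.
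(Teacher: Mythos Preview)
Your proof is correct and is essentially the same as the paper's: both replace an arbitrary dc decomposition $g-h$ of the positively homogeneous directional derivative by the decomposition $g^{\,\prime}(0;\bullet)-h^{\,\prime}(0;\bullet)$ obtained via the limit $\tau\downarrow 0$, then invoke the support-function representation of finite sublinear functions. Your write-up is a bit more explicit about why the convex components are finite-valued and about normalizing at the origin, but the argument is identical in substance.
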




\begin{proof} `It suffices to prove the ``only if'' statement.  Suppose that 
$\wh{f} \triangleq f^{\, \prime}(\bar{x};\bullet) = g - h$ has a dc decomposition.
For any scalar $\alpha > 0$, we have
\[
\alpha \, \wh{f}(v) \, = \, \wh{f}(\alpha v) \, = \, g(\alpha v) - h(\alpha v),
\]
which upon dividing by $\alpha$ and using the fact that $\wh{f}(0) = 0 = g(0) - h(0)$, 
yields
\[
\wh{f}(v) \, = \, \displaystyle{
\frac{g(\alpha v) - g(0)}{\alpha}
} - \displaystyle{
\frac{h(\alpha v) - h(0)}{\alpha}
} 
\]
Letting $\alpha \downarrow 0$ gives
\[
\wh{f}(v) \, = \, g^{\, \prime}(0;v) - h^{\, \prime}(0;v).
\]
Since a positively
homogeneous, closed proper convex function is the support function of a convex compact 
set (see \cite[Corollary~13.1.2]{Rockafellar70}), the desired quasi-differentiability
of $f$ at $\bar{x}$ follows readily.
\end{proof}

{\bf Remark.}. In the authors' original proof, the limit was taken for 
$\alpha \uparrow \infty$, leading to the alternative representation
$\wh{f}(v) = g_{\infty}(v) - h_{\infty}(v)$,
where $g_{\infty}$ and $h_{\infty}$ are the recession functions \cite{Rockafellar70} 
(as in convex analysis) of the convex functions $g$ and $h$, respectively.  
These recession functions are positively homogeneous, in addition to being convex
but possibly extended-valued.  It was suggested by Mr.\ Jingu Li, a Ph.D.\ student
at Tsinghua University in Beijing, to take the limit $\alpha \downarrow 0$; this
simple change of limits avoids the extended-valued representation.  \hfill $\Box$

\gap

An important subclass of the quasi-dc functions consists of those whose directional
derivative is a (positively homogeneous) convex function in the direction when the 
reference point is fixed.
Such a function has been called dd-convex (for directional derivative convex) 
in \cite[Definition~4.3.3]{CuiPang2021}.  As noted in the reference, the class of
dd-convex functions is very broad, including in particular an amenable function
\cite{PoliquinRockafellar93}, i.e., a convex composite with a differentiable function:
$f \circ g$, which $f$ is convex and $g$ is differentiable; so are a favorable dc
function (which is a dc function whose concave part is 
differentiable \cite[Definititon~6.1.5]{CuiPang2021}) and its
composition with a differentiable function.  

\gap 
 
Before studying properties of quasi-dc functions, we note that the univariate function 
\[
f(t) \, = \, \left\{ \begin{array}{ll}
t^2 \, \sin 1/t & \mbox{for $t \neq 0$} \\ [0.1in] 
0 & \mbox{for $t = 0$}
\end{array} \right.
\]
is quasi-dc but not dc.  The reason is that the function, which is everywhere
locally Lipschitz and differentiable, has its derivative given by
\[
f^{\, \prime}(t) \, = \, \left\{ \begin{array}{ll}
2t \, \sin 1/t - \cos 1/t & \mbox{for $t \neq 0$} \\ [0.1in] 
0 & \mbox{for $t = 0$}
\end{array} \right.
\]
which is not continuous at $t = 0$.
Therefore $f$ is quasi-dc (by Proposition~\ref{pr:univariate composition} below); 
moreover, by a theorem in \cite{Hiriart-Urruty85} (see also
\cite[Proposition~4.4.19]{CuiPang2021}), $f$ is not dc because a differentiable function
is dc if and only if it is continuously differentiable.  

\gap

Below, we collect some elementary (and known) classes of quasi-dc functions before 
we derive some results pertaining to the preservation of the quasi-dc property under
functional compositions, ending up with a final main result of the kind.  Throughout the
proofs of the results in the rest of the section, since the required positive homogeneity 
of the component convex functions in the quasi-difference-convexity can easily seen 
to hold, the arguments focus only on the dc decomposition.

\begin{proposition} \label{pr:classes of quasi-dc} \rm
The following statements are valid for a function $f : {\cal O} \to \mathbb{R}$ defined on
an open convex set ${\cal O} \subseteq \mathbb{R}^n$.

\gap

$\bullet $ If $f$ is difference-of-convex, then it is quasi-dc.  In particular, a convex 
(or concave) function is quasi-dc.  More generally, an implicitly convex-concave function
is quasi-dc.

\gap

$\bullet $ If $f$ is differentiable, then it is quasi-dc.

\gap

$\bullet $ If $f$ is given by (\ref{eq:max-max}) where the functions $g_i$ and $h_j$
are differentiable (not necessarily convex), then $f$ is quasi-dc;
thus a PA function is quasi-dc. \hfill $\Box$
\end{proposition}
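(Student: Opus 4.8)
The plan is to verify each of the three bullets in turn, in each case producing an explicit dc decomposition of the directional derivative $f^{\,\prime}(\bar x;\bullet)$ at an arbitrary $\bar x \in {\cal O}$; by Proposition~\ref{pr:quasi-dc is quasi-diff} the positive homogeneity of the two convex components then comes for free, so I only need to exhibit a difference of two convex (positively homogeneous) functions of the direction. For the first bullet, if $f = g - h$ is dc with $g,h$ convex, then $f^{\,\prime}(\bar x;v) = g^{\,\prime}(\bar x;v) - h^{\,\prime}(\bar x;v)$ (directional derivatives of convex functions exist and are themselves convex, positively homogeneous functions of $v$), which is immediately a dc decomposition; a convex function is the case $h \equiv 0$, a concave function the case $g \equiv 0$. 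The claim for implicitly convex-concave functions is exactly Proposition~4.4.26 of \cite{CuiPang2021}, which I would simply cite.

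For the second bullet, if $f$ is differentiable at $\bar x$ then $f^{\,\prime}(\bar x;v) = \nabla f(\bar x)^{\top} v$ is linear, hence trivially dc (write it as $\nabla f(\bar x)^{\top}v - 0$, or split into the convex function $\max\{\nabla f(\bar x)^{\top}v,0\}$ minus $\max\{-\nabla f(\bar x)^{\top}v,0\}$ if one insists on a symmetric form). For the third bullet, with $f(x) = \max_{1\le i\le I} g_i(x) - \max_{1\le j\le J} h_j(x)$ and all $g_i,h_j$ differentiable, I would use the standard formula for the directional derivative of a pointwise finite max: at $\bar x$, letting $\mathcal{I}(\bar x) \triangleq \{i : g_i(\bar x) = \max_{i'} g_{i'}(\bar x)\}$ and $\mathcal{J}(\bar x)$ analogously,
\[
f^{\,\prime}(\bar x;v) \;=\; \max_{i\in\mathcal{I}(\bar x)} \nabla g_i(\bar x)^{\top} v \;-\; \max_{j\in\mathcal{J}(\bar x)} \nabla h_j(\bar x)^{\top} v .
\]
Each term is a finite maximum of linear functions of $v$, hence convex and positively homogeneous, so this is the desired dc decomposition. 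That $f$ is Bouligand differentiable (locally Lipschitz and directionally differentiable) is routine since it is a finite combination of $C^1$ functions via max operations. Finally, the statement about PA functions follows because every PA function admits a representation of the form (\ref{eq:max-max}) with all $g_i,h_j$ affine — this is the result of \cite{Scholtes02} already cited in the excerpt — so it is a special case of the third bullet.

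None of these steps is a genuine obstacle; the only point requiring a little care is justifying the max-rule for the directional derivative of $\max_i g_i$ when the $g_i$ are merely differentiable (not convex), and confirming that the resulting $f^{\,\prime}(\bar x;\bullet)$ is actually the two-sided directional derivative (the limit as $\tau\downarrow 0$) rather than only a one-sided Dini derivative — this is standard and follows from differentiability of each $g_i$, but it is the one place where I would pause to make the argument explicit rather than wave it through. Everything else is bookkeeping: assembling the pieces and invoking Proposition~\ref{pr:quasi-dc is quasi-diff} to dispose of the positive-homogeneity subtlety.
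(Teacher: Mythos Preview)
Your proposal is correct and is precisely the natural verification the paper has in mind; the paper in fact omits the proof entirely (the $\Box$ follows the statement with no argument), presenting the result as elementary and known. Your three-step argument---dc decomposition of the directional derivative for dc functions, linearity of the directional derivative for differentiable functions, and the max-rule formula for the max-minus-max case---is exactly the intended content, and your citation of \cite[Proposition~4.4.26]{CuiPang2021} for the implicitly convex-concave case matches the paper's own reference.

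One small terminological remark: the directional derivative in this paper is defined as the one-sided limit $\lim_{\tau\downarrow 0}$, so your parenthetical about a ``two-sided'' directional derivative is slightly off; what you presumably mean (and what is indeed worth checking) is that the limit exists as a genuine limit rather than only as an upper or lower Dini derivative. For a finite max of differentiable functions this is standard and your instinct to pause there is sound, but it is not a gap.
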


Less straightforward is the quasi-dc property of a locally Lipschitz biconvex
function (as defined in the proposition below) and its induced univariate 
implicitly convex-convex function.  This
result requires a sum rule of the directional derivative of a bivariate function
first described in \cite{Robinson85}.
Specifically, let $g : {\cal O} \times {\cal O} \to \mathbb{R}$ be a Bouligand
differentiable jointly in its two arguments.  Then for any pair 
$(\bar{x},\bar{y}) \in {\cal O} \times {\cal O}$, the two partial directional
derivatives 
\[
g_x^{\, \prime}((\bar{x},\bar{y});u) \, \triangleq \,
g(\bullet,\bar{y})^{\, \prime}(\bar{x};u) \ \mbox{ and } \ 
g_y^{\, \prime}((\bar{x},\bar{y});v) \, \triangleq \,
g(\bar{x},\bullet)^{\, \prime}(\bar{y};v)
\]
exist, but the sum formula:
\begin{equation} \label{eq:sum formula for dd}
g^{\, \prime}((\bar{x},\bar{y});(u,v)) \, = \, 
g_x^{\, \prime}((\bar{x},\bar{y});u) + g_y^{\, \prime}((\bar{x},\bar{y});v) 
\end{equation}
may not be valid.  There are sufficient conditions (besides continuous differentiability)
and special classes
of bivariate functions for the formula to hold; for general discussion of this formula, 
see \cite[Exercise~4.1.3]{CuiPang2021}.  Among the functions for which the formula
holds is a convex-concave function \cite[Proposition~4.4.26]{CuiPang2021}.  For the
class of biconvex functions, we have the following result
\cite[Proposition~4.4.27]{CuiPang2021}.  See Corollary~\ref{co:composite quasi-dc}
for a generalization.  

\begin{proposition} \label{pr:biconvex} \rm
Let $g : {\cal O} \times {\cal O} \to \mathbb{R}$ be locally Lipschitz continuous
such that $g(\bullet,y)$ and $g(x,\bullet)$ are convex on ${\cal O}$ for
arbitrary fixed pairs $(x,y) \in {\cal O} \times {\cal O}$.  Under either
one of the following conditions: $g(\bullet,y)$ is differentiable on ${\cal O}$
for all $y \in {\cal O}$ or $g(x,\bullet)$ is differentiable on ${\cal O}$
for all $x \in {\cal O}$, then the sum formula (\ref{eq:sum formula for dd}) 
holds for all $(\bar{x},\bar{y}) \in {\cal O} \times {\cal O}$.  Hence, 
$g$ is dd-convex on ${\cal O} \times {\cal O}$;
therefore, the implicitly convex-convex 
function $f(x) \triangleq g(x,x)$ is dd-convex on ${\cal O}$.
\hfill $\Box$
\end{proposition}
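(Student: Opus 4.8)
The plan is to deduce both dd-convexity claims from the sum formula (\ref{eq:sum formula for dd}), which is the real content here; once (\ref{eq:sum formula for dd}) is in hand, the dd-convexity of $g$ and of $f$ are one-line consequences. By the symmetry of the two hypotheses I assume throughout that $g(\bullet,y)$ is differentiable on ${\cal O}$ for every $y\in{\cal O}$, and write $\nabla_x g(x,y)$ for the gradient of $g(\bullet,y)$ at $x$ (equivalently, by convexity, its unique subgradient there; see \cite{Rockafellar70}). The step I expect to be the main obstacle is showing that the partial gradient map $(x,y)\mapsto\nabla_x g(x,y)$ is \emph{jointly} continuous on ${\cal O}\times{\cal O}$: differentiability of each section $g(\bullet,y)$ by itself does not deliver joint continuity, yet it is precisely what is needed to pass to the limit in the cross term of the decomposition below.

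For the joint continuity I would fix $(x_0,y_0)\in{\cal O}\times{\cal O}$, a compact neighborhood on which $g$ is Lipschitz with constant $L$, and a sequence $(x_k,y_k)\to(x_0,y_0)$ in that neighborhood; Lipschitz continuity forces $\|\nabla_x g(x_k,y_k)\|\le L$, so from any subsequence one extracts a further subsequence along which $\nabla_x g(x_k,y_k)\to a$. For an arbitrary $d\in\mathbb{R}^n$ and $t>0$, the tangent-line inequality for the convex differentiable function $g(\bullet,y_k)$ gives $g(x_k+td,y_k)-g(x_k,y_k)\ge t\,\nabla_x g(x_k,y_k)^{\top}d$; letting $k\to\infty$ along the subsequence (using joint continuity of $g$) yields $g(x_0+td,y_0)-g(x_0,y_0)\ge t\,a^{\top}d$, and dividing by $t$ and letting $t\downarrow0$ gives $\nabla_x g(x_0,y_0)^{\top}d\ge a^{\top}d$. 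Since $d$ is arbitrary, $a=\nabla_x g(x_0,y_0)$; as this is independent of the subsequence, $\nabla_x g(x_k,y_k)\to\nabla_x g(x_0,y_0)$, proving joint continuity.

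Granting this, I establish (\ref{eq:sum formula for dd}) at a fixed $(\bar{x},\bar{y})$ along a fixed direction $(u,v)$ by writing, for small $\tau>0$,
\[
g(\bar{x}+\tau u,\bar{y}+\tau v)-g(\bar{x},\bar{y})=\big[\,g(\bar{x}+\tau u,\bar{y}+\tau v)-g(\bar{x},\bar{y}+\tau v)\,\big]+\big[\,g(\bar{x},\bar{y}+\tau v)-g(\bar{x},\bar{y})\,\big].
\]
Divided by $\tau$, the second bracket converges to $g_y^{\,\prime}((\bar{x},\bar{y});v)$ by definition of the partial directional derivative (which exists because $g(\bar{x},\bullet)$ is convex). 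For the first bracket I apply the one-dimensional mean value theorem to $t\mapsto g(\bar{x}+t\tau u,\bar{y}+\tau v)$ on $[0,1]$, obtaining $\theta_\tau\in(0,1)$ with first bracket $=\tau\,\nabla_x g(\bar{x}+\theta_\tau\tau u,\bar{y}+\tau v)^{\top}u$; since $(\bar{x}+\theta_\tau\tau u,\bar{y}+\tau v)\to(\bar{x},\bar{y})$ as $\tau\downarrow0$, the joint continuity just proved makes this converge to $\nabla_x g(\bar{x},\bar{y})^{\top}u=g_x^{\,\prime}((\bar{x},\bar{y});u)$. Adding the two limits gives (\ref{eq:sum formula for dd}); in particular the joint directional derivative $g^{\,\prime}((\bar{x},\bar{y});\bullet)$ exists.

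Finally, dd-convexity drops out. By (\ref{eq:sum formula for dd}), $g^{\,\prime}((\bar{x},\bar{y});(u,v))=\nabla_x g(\bar{x},\bar{y})^{\top}u+g_y^{\,\prime}((\bar{x},\bar{y});v)$ is the sum of a linear function of $u$ and the directional derivative of the convex function $g(\bar{x},\bullet)$ at $\bar{y}$, which is sublinear — hence convex — in $v$; so the whole expression is a positively homogeneous convex function of $(u,v)$, i.e.\ $g$ is dd-convex on ${\cal O}\times{\cal O}$. For $f(x)=g(x,x)$, the definition of the directional derivative gives directly $f^{\,\prime}(\bar{x};w)=g^{\,\prime}((\bar{x},\bar{x});(w,w))=\nabla_x g(\bar{x},\bar{x})^{\top}w+g_y^{\,\prime}((\bar{x},\bar{x});w)$, again linear-plus-convex in $w$, so $f$ is dd-convex on ${\cal O}$. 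The other hypothesis is handled identically after interchanging the roles of the two arguments.
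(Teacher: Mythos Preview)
Your proof is correct. The paper does not supply its own proof of this proposition; it simply cites \cite[Proposition~4.4.27]{CuiPang2021} and marks the statement with a $\Box$. So there is nothing to compare against in the paper itself.

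That said, your argument is exactly the expected one and matches the spirit of the sum-rule machinery the paper attributes to \cite{Robinson85}: decompose the increment along the two coordinate blocks, handle the block with the convex (possibly nondifferentiable) section by the definition of the partial directional derivative, and handle the differentiable block via the mean value theorem together with \emph{joint} continuity of the partial gradient $(x,y)\mapsto\nabla_x g(x,y)$. Your proof of that joint continuity---bounding subgradients by the local Lipschitz constant, extracting a limit, and showing the limit is a subgradient of the differentiable convex section $g(\bullet,y_0)$, hence equals the unique gradient---is clean and correct. The dd-convexity of $g$ and of $f(x)=g(x,x)$ then follow immediately from the sum formula, as you note.
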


The following preservation result is for a univariate composition. 

\begin{proposition} \label{pr:univariate composition} \rm
Let $f : {\cal O} \to \mathbb{R}$ be a quasi-difference-convex function
on the open convex set ${\cal O} \subseteq \mathbb{R}^n$ and
$g : \Omega \to \mathbb{R}$ be a univariate Bouligand differentiable
function on the open convex set $\Omega$ containing $f({\cal O})$.  
Then the composite function
$g \circ f : {\cal O} \to \mathbb{R}$ is quasi-dc on ${\cal O}$.  In particular,
every univariate Bouligand differentiable function is quasi-dc.
\end{proposition}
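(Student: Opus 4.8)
The plan is to compute the directional derivative of the composite $g \circ f$ at a reference point $\bar{x}$ via the univariate chain rule, and then show the resulting expression in the direction $v$ is a difference of two convex functions. Since $f$ is Bouligand differentiable and $g$ is a univariate Bouligand differentiable function, the standard chain rule for directional derivatives applies: $f$ is directionally differentiable at $\bar{x}$ along $v$ with derivative $t \triangleq f^{\,\prime}(\bar{x};v)$, and since $g$ is directionally differentiable at the scalar $f(\bar{x})$, we obtain
\[
(g\circ f)^{\,\prime}(\bar{x};v) \, = \, g^{\,\prime}\!\bigl(f(\bar{x}); f^{\,\prime}(\bar{x};v)\bigr).
\]
(This chain rule holds without any continuity-of-derivative assumption precisely because $g$ is univariate; for a vector-valued inner map one needs more, but here the inner variation happens along a single line in the image.) Let $\wh{f}(v) \triangleq f^{\,\prime}(\bar{x};v)$; by Proposition~\ref{pr:quasi-dc is quasi-diff} (and the hypothesis that $f$ is quasi-dc), $\wh{f} = p - q$ where $p$ and $q$ are positively homogeneous convex functions.

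The key step is then the following structural observation about univariate Bouligand differentiable functions: for a scalar $s$, the map $t \mapsto g^{\,\prime}(s;t)$ is positively homogeneous and, moreover, is of the form $\alpha\, t^+ - \beta\, t^-$ for two scalars $\alpha = g^{\,\prime}(s;1)$ and $\beta = -g^{\,\prime}(s;-1)$ (here $t^+ = \max(t,0)$, $t^- = \max(-t,0)$), since on the half-line $t \ge 0$ the derivative is $\alpha t$ and on $t \le 0$ it is $\beta t$. Such a function of $t$ is automatically a difference of two convex functions of $t$: indeed $\alpha t^+ - \beta t^- = \max(\alpha t, \beta t) - \max(0, (\beta-\alpha)t)\cdot\mathbf{1}[\cdots]$ — more cleanly, write $\alpha t^+ - \beta t^- = \tfrac{\alpha+\beta}{2}\,t + \tfrac{\alpha-\beta}{2}\,|t|$, which exhibits it as an affine function plus a scalar multiple of $|t|$, hence as a dc function of the single variable $t$ (the $|t|$ term is convex if $\alpha \ge \beta$ and concave otherwise, and in either case contributes a convex-minus-convex split). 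So $g^{\,\prime}(f(\bar{x});\bullet)$ is a univariate dc function; combined with the first step,
\[
(g\circ f)^{\,\prime}(\bar{x};v) \, = \, \tfrac{\alpha+\beta}{2}\,\bigl(p(v) - q(v)\bigr) + \tfrac{\alpha-\beta}{2}\,\bigl|\,p(v) - q(v)\,\bigr|,
\]
with $\alpha,\beta$ depending only on $\bar{x}$.

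It remains to rewrite this as a genuine dc function of $v$. The first summand is already dc (a scalar times $p - q$, reassigning the convex/concave roles if the scalar is negative). For the second, note $|p(v) - q(v)| = \max(p(v), q(v)) - \min(p(v), q(v)) = \max(p,q) + \max(-p,-q) - (p+q)$... more usefully, $|p - q| = 2\max(p,q) - (p+q)$, and $\max(p,q)$ is convex (pointwise max of convex functions) while $p+q$ is convex, so $|p-q|$ is dc; multiplying by the scalar $\tfrac{\alpha-\beta}{2}$ and absorbing signs keeps it dc. Summing the two dc pieces and collecting terms gives an explicit representation $(g\circ f)^{\,\prime}(\bar{x};\bullet) = G - H$ with $G, H$ convex; positive homogeneity of $G$ and $H$ is inherited since $p$, $q$, $\max(p,q)$, and $p+q$ are all positively homogeneous and the chain-rule expression above is built from these by nonnegative combinations after the sign bookkeeping. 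This establishes quasi-difference-convexity of $g \circ f$ at the arbitrary point $\bar{x} \in {\cal O}$, hence on ${\cal O}$. The final sentence (every univariate Bouligand differentiable function is quasi-dc) follows by taking $f$ to be the identity on an open interval, which is differentiable and hence quasi-dc by Proposition~\ref{pr:classes of quasi-dc}.

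I expect the main obstacle to be the careful justification of the univariate chain rule for directional derivatives under only Bouligand differentiability (locally Lipschitz plus directional differentiability) of both maps — specifically controlling the error term $g(f(\bar{x}) + \tau t + o(\tau)) - g(f(\bar{x}) + \tau t)$ using the local Lipschitz property of $g$ — and the bookkeeping of signs of $\alpha - \beta$ so that the $|t|$ and $|p-q|$ terms are assigned to the correct (convex) side of the decomposition; both are routine but need to be stated precisely.
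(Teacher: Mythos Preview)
Your proof is correct and follows essentially the same route as the paper: both apply the chain rule $(g\circ f)'(\bar{x};v) = g'(f(\bar{x});f'(\bar{x};v))$ and exploit the positive homogeneity of the univariate directional derivative to write it as $t_+\,g'(f(\bar{x});1) + t_-\,g'(f(\bar{x});-1)$ with $t=f'(\bar{x};v)$, then conclude dc-ness from the fact that $t_\pm$ is dc in $v$. The paper stops there (noting simply that $(\cdot)_\pm$ of a dc function is dc and the multipliers are constants), whereas you unpack the same observation via the rewriting $\alpha t_+ - \beta t_- = \tfrac{\alpha+\beta}{2}t + \tfrac{\alpha-\beta}{2}|t|$ and the identity $|p-q| = 2\max(p,q) - (p+q)$; this is just a more explicit version of the same step.
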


\begin{proof}  For any $v \in \mathbb{R}^n$, we have
\[
( g \circ f )^{\, \prime}(\bar{x};v) \, = \, 
g^{\, \prime}(f(\bar{x});f^{\, \prime}(\bar{x};v)) \, = \, 
f^{\, \prime}(\bar{x};v)_+ \, g^{\, \prime}(f(\bar{x});1) + 
f^{\, \prime}(\bar{x};v)_- \, g^{\, \prime}(f(\bar{x});-1),
\] 
where $f^{\, \prime}(\bar{x};v)_{\pm} = \max( \, \pm f^{\, \prime}(\bar{x};v), \, 0 \, )$.
Since $f^{\, \prime}(\bar{x};\bullet)$ is a dc function, so is 
$f^{\, \prime}(\bar{x};\bullet)_{\pm}$.  Since $g^{\, \prime}(f(\bar{x});\pm 1)$
are constants given $\bar{x}$, it follows readily that 
$( g \circ f )^{\, \prime}(\bar{x};\bullet)$ is dc.  The last statement follows 
readily with $n = 1$, $\Omega = {\cal O}$, and $f$ being the identity function.
\end{proof}

The next result is the multivariate generalization of the above univariate result;
it requires a coordinate-wise sum property of the directional derivatives.  

\begin{proposition} \label{pr:multivariate composition with dd condition} \rm
Let $f_j : {\cal O} \to \mathbb{R}$ be a quasi-difference-convex function
on the open convex set ${\cal O} \subseteq \mathbb{R}^n$ for $j = 1, \cdots, m$
and $g : \Omega \to \mathbb{R}$ be a multivariate function, with
$\Omega$ being an open convex set in $\mathbb{R}^m$ containing $F({\cal O})$,
where $F \triangleq ( f_j )_{j=1}^m$.Provided that 
\begin{equation} \label{eq:dd-sum equality}
g^{\, \prime}(y;v) \, = \, \displaystyle{
\sum_{j=1}^m
} \, g(\bullet,y^{-j})^{\, \prime}(y_j;v_j), \epc \forall \, 
(y,v) \, \in \, \mathbb{R}^{2m}.
\end{equation}
where $y^{-j}$ is the subvector of $y$ without the $j$th component, then the
composite function $g \circ F$, where $F \triangleq ( f_j )_{j=1}^m$ is
quasi-dc.
\end{proposition}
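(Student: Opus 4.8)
The plan is to follow the template of the proof of Proposition~\ref{pr:univariate composition}, now feeding in the coordinate-wise hypothesis (\ref{eq:dd-sum equality}) where the scalar chain rule was used there. First I would invoke the Bouligand-differentiable chain rule. Since $F = (f_j)_{j=1}^m$ is B-differentiable on ${\cal O}$ (each $f_j$, being quasi-dc, is locally Lipschitz and directionally differentiable) and $g$ is B-differentiable on $\Omega \supseteq F({\cal O})$, the composite $g \circ F$ is locally Lipschitz and directionally differentiable on ${\cal O}$, with
\[
(g \circ F)^{\, \prime}(\bar{x};u) \, = \, g^{\, \prime}\!\left( F(\bar{x}); \, F^{\, \prime}(\bar{x};u) \right), \epc
F^{\, \prime}(\bar{x};u) \, = \, \left( f_j^{\, \prime}(\bar{x};u) \right)_{j=1}^m,
\]
for every $\bar{x} \in {\cal O}$ and every $u \in \mathbb{R}^n$. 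Thus $g \circ F$ is B-differentiable, and, in accordance with the convention noted before Proposition~\ref{pr:classes of quasi-dc}, it remains only to exhibit a dc decomposition of $(g \circ F)^{\, \prime}(\bar{x};\bullet)$ for fixed $\bar{x}$ (the requisite positive homogeneity of its two convex components being automatic by Proposition~\ref{pr:quasi-dc is quasi-diff}).

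Next I would apply (\ref{eq:dd-sum equality}) with $y = F(\bar{x})$ and $v = F^{\, \prime}(\bar{x};u)$ to obtain
\[
(g \circ F)^{\, \prime}(\bar{x};u) \, = \, \sum_{j=1}^m g\!\left( \bullet, F(\bar{x})^{-j} \right)^{\, \prime}\!\left( f_j(\bar{x}); \, f_j^{\, \prime}(\bar{x};u) \right).
\]
Each inner term is the directional derivative of the fixed univariate B-differentiable function $t \mapsto g(t, F(\bar{x})^{-j})$ at the point $f_j(\bar{x})$ along the scalar direction $f_j^{\, \prime}(\bar{x};u)$; by positive homogeneity of a univariate directional derivative in its scalar direction argument (exactly as in the proof of Proposition~\ref{pr:univariate composition}),
\[
g\!\left( \bullet, F(\bar{x})^{-j} \right)^{\, \prime}\!\left( f_j(\bar{x}); \, s \right) \, = \, s_+ \, \gamma_j^+ + s_- \, \gamma_j^-, \epc s \, \triangleq \, f_j^{\, \prime}(\bar{x};u),
\]
where $s_{\pm} = \max(\pm s, 0)$ and $\gamma_j^{\pm} \triangleq g(\bullet, F(\bar{x})^{-j})^{\, \prime}(f_j(\bar{x}); \pm 1)$ are constants once $\bar{x}$ is fixed.

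Finally I would assemble the dc decomposition. Since $f_j$ is quasi-dc, $f_j^{\, \prime}(\bar{x};\bullet)$ is dc, hence so are its positive and negative parts $f_j^{\, \prime}(\bar{x};\bullet)_{\pm}$ (if $f_j^{\, \prime}(\bar{x};\bullet) = p - q$ with $p,q$ convex, then the positive part is $\max(p,q) - q$ and the negative part is $\max(p,q) - p$, each a difference of two convex functions). Therefore each summand $\gamma_j^+ \, f_j^{\, \prime}(\bar{x};\bullet)_+ + \gamma_j^- \, f_j^{\, \prime}(\bar{x};\bullet)_-$ is a real linear combination of dc functions of $u$, thus dc, and the sum over $j = 1, \ldots, m$ is dc; consequently $(g \circ F)^{\, \prime}(\bar{x};\bullet)$ is dc, i.e., $g \circ F$ is quasi-dc at $\bar{x}$, and since $\bar{x} \in {\cal O}$ was arbitrary, on all of ${\cal O}$. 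I do not expect a deep obstacle: the only points needing care are that $g$ should be Bouligand differentiable (as in the univariate statement) so that the chain rule and the local Lipschitz continuity of $g \circ F$ are available, together with the elementary bookkeeping of positive/negative parts; the genuine new ingredient relative to Proposition~\ref{pr:univariate composition} is precisely the coordinate-wise sum hypothesis (\ref{eq:dd-sum equality}), which is what prevents the cross-coordinate behavior of $g^{\, \prime}$ from destroying the dc structure.
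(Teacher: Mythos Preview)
Your proposal is correct and follows essentially the same route as the paper: apply the directional-derivative chain rule, invoke the coordinate-wise sum hypothesis (\ref{eq:dd-sum equality}), split each univariate directional derivative into its positive and negative parts via positive homogeneity, and conclude that each summand (hence the sum) is dc because $f_j^{\,\prime}(\bar{x};\bullet)$ is dc. Your extra bookkeeping (explicitly noting that $g$ must be Bouligand differentiable for the chain rule, and spelling out why the positive/negative parts of a dc function are dc) is helpful detail the paper leaves implicit.
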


\begin{proof} This is fairly easy by the string of equalities:
\[ \begin{array}{l}
( g \circ F )^{\, \prime}(\bar{x};v) \\ [0.1in] 
= \, g^{\, \prime}(F(\bar{x});F^{\, \prime}(\bar{x};v)) \, = \, \displaystyle{
\sum_{j=1}^m
} \, g(\bullet,F^{-j}(\bar{x}))^{\, \prime}(f_j(\bar{x});f_j^{\, \prime}(\bar{x};v)),
\epc \mbox{where $F^{-j}(\bar{x}) \triangleq ( f_k(\bar{x}) )_{k \neq j}$} \\ [0.1in]
= \, \displaystyle{
\sum_{j=1}^m
} \, \left[ \, ( f_j^{\, \prime}(\bar{x};v) )_+ 
g(\bullet,F^{-j}(\bar{x}))^{\, \prime}(f_j(\bar{x});1) + 
( f_j^{\, \prime}(\bar{x};v) )_- 
g(\bullet,F^{-j}(\bar{x}))^{\, \prime}(f_j(\bar{x});-1) \, \right]
\end{array} \]
Since each $f_j^{\, \prime}(\bar{x};\bullet)$ is a dc function, the above identity
immediately yields that 	$( g \circ F )^{\, \prime}(\bar{x};\bullet)$ is a dc function,
as desired.
\end{proof}

It turns out that the directional derivative
equality (\ref{eq:dd-sum equality}) bears a very close connection to 
quasi-difference convexity.

\begin{proposition} \label{pr:dd sum} \rm
Let $f : {\cal O} \to \mathbb{R}$ be a Bouligand differentiable function
on the open convex set ${\cal O} \subseteq \mathbb{R}^n$.  For any vector
$\bar{x} \in {\cal O}$, the following statements are equivalent:

\gap

{\bf (A)} $f$ satisfies the equality (\ref{eq:dd-sum equality}) at $\bar{x}$; i.e.,
\[
f^{\, \prime}(\bar{x};v) \, = \, \displaystyle{
\sum_{j=1}^m
} \, f(\bullet,\bar{x}^{-j})^{\, \prime}(\bar{x}_j;v_j), \epc 
\forall \, v \, \in \, \mathbb{R}^n
\]
{\bf (B)} $f$ is quasi-dc at $\bar{x}$ and the upper and lower sets 
$\overline{\partial} f(\bar{x})$ and $\underline{\partial} f(\bar{x})$ are rectangles;
i.e., each is the Cartesian product of $n$ one-dimensional compact intervals.

\gap

{\bf (C)} $f^{\, \prime}(\bar{x};\bullet)$ is a separable function on $\mathbb{R}^n$.	
\end{proposition}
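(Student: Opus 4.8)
The plan is to establish the cycle (A) $\Rightarrow$ (C) $\Rightarrow$ (B) $\Rightarrow$ (A), reading (B) — in view of the non-uniqueness of quasi-differentials — as the assertion that \emph{some} representative pair $(\overline{\partial} f(\bar{x}), \underline{\partial} f(\bar{x}))$ consists of rectangles, since one can always enlarge both sets by a common compact convex summand and no stronger reading is tenable. The one preliminary observation I would record first is that, because $f$ is Bouligand — hence locally Lipschitz — differentiable, $\wh{f} \triangleq f^{\,\prime}(\bar{x};\bullet)$ is finite-valued, Lipschitz, and positively homogeneous; consequently its restriction to the $j$-th coordinate ray, $p_j(t) \triangleq \wh{f}(t e_j) = f(\bullet,\bar{x}^{-j})^{\,\prime}(\bar{x}_j;t)$, is a finite univariate positively homogeneous function, hence necessarily of the two-slope form $p_j(t) = p_j(1)\,t_+ + p_j(-1)\,t_-$ with $t_\pm \triangleq \max(\pm t,0)$, and in particular dc with an explicit convex positively homogeneous decomposition. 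This is where local Lipschitz continuity enters and is exactly the finiteness subtlety already flagged after Proposition~\ref{pr:quasi-dc is quasi-diff}.

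The implication (A) $\Rightarrow$ (C) is immediate: the right-hand side of the displayed equality in (A) is a sum whose $j$-th term depends only on $v_j$, so it is separable by definition. For (C) $\Rightarrow$ (B) I would first run the converse bookkeeping: writing $\wh{f}(v) = \sum_{j=1}^m \psi_j(v_j)$ and replacing $\psi_j$ by $\psi_j - \psi_j(0)$ — which leaves the sum unchanged, since $\sum_j \psi_j(0) = \wh{f}(0) = 0$ — I may assume $\psi_j(0) = 0$, and then evaluating at $v = v_j e_j$ forces $\psi_j(v_j) = \wh{f}(v_j e_j) = p_j(v_j)$ (as a byproduct this already yields (C) $\Rightarrow$ (A)). Using the preliminary decomposition, write $p_j(1) = \alpha_j^+ - \alpha_j^-$ and $p_j(-1) = \gamma_j^+ - \gamma_j^-$ with all four numbers nonnegative, and set $q_j(t) \triangleq \alpha_j^+ t_+ + \gamma_j^+ t_-$, $r_j(t) \triangleq \alpha_j^- t_+ + \gamma_j^- t_-$; these are nonnegative combinations of the convex positively homogeneous functions $t_+$ and $t_-$, hence finite convex positively homogeneous, with $p_j = q_j - r_j$. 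Then $\wh{f} = Q - R$ where $Q(v) \triangleq \sum_j q_j(v_j)$ and $R(v) \triangleq \sum_j r_j(v_j)$ are finite, convex, positively homogeneous (separable sums of such); by \cite[Corollary~13.1.2]{Rockafellar70}, $Q = \sigma_{\partial Q(0)}$ and $R = \sigma_{\partial R(0)}$ with $\partial Q(0), \partial R(0)$ compact convex. A one-line separability argument — test membership against $v$ supported on a single coordinate — shows $\partial Q(0) = \prod_{j=1}^n \partial q_j(0)$, a Cartesian product of $n$ compact intervals, and similarly for $R$; taking $\overline{\partial} f(\bar{x}) \triangleq \partial Q(0)$ and $\underline{\partial} f(\bar{x}) \triangleq \partial R(0)$ then gives a rectangular quasi-differential pair, which is (B).

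For (B) $\Rightarrow$ (A): if $\wh{f} = \sigma_A - \sigma_B$ with $A = \prod_{j=1}^n A_j$ and $B = \prod_{j=1}^n B_j$ products of compact intervals, then the standard identity $\sigma_{\prod_j A_j}(v) = \sum_j \sigma_{A_j}(v_j)$ (and likewise for $B$) gives $\wh{f}(v) = \sum_{j=1}^n\big(\sigma_{A_j}(v_j) - \sigma_{B_j}(v_j)\big)$, whose $j$-th summand equals $\wh{f}(v_j e_j) = f(\bullet,\bar{x}^{-j})^{\,\prime}(\bar{x}_j;v_j)$; this is (A), and trivially also (C). I do not anticipate a genuine obstacle here: the only points requiring care are the correct reading of (B) and the insistence on finite-valued (not extended-valued) positively homogeneous functions throughout. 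The three background facts invoked — a finite convex positively homogeneous function is the support function of a compact convex set, the support function of a Cartesian product is the sum of the factor support functions, and the subdifferential of a separable convex function at a point is the product of the one-dimensional subdifferentials — are all standard.
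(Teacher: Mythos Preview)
Your proof is correct and follows essentially the same approach as the paper's, only cycling in the reverse direction: the paper proves (A) $\Rightarrow$ (B) $\Rightarrow$ (C) $\Rightarrow$ (A), while you prove (A) $\Rightarrow$ (C) $\Rightarrow$ (B) $\Rightarrow$ (A). The substantive steps match up one-to-one --- your two-slope form $p_j(t)=p_j(1)\,t_+ + p_j(-1)\,t_-$ and its dc splitting via $\alpha_j^\pm,\gamma_j^\pm$ is a tidier version of the paper's four-case sign analysis in (A) $\Rightarrow$ (B); your normalization $\psi_j \leftarrow \psi_j - \psi_j(0)$ in (C) is exactly the paper's device in (C) $\Rightarrow$ (A); and your (B) $\Rightarrow$ (A) via $\sigma_{\prod A_j}=\sum_j\sigma_{A_j}$ is the explicit form of what the paper calls ``obvious'' in (B) $\Rightarrow$ (C).
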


\begin{proof} (A) $\Rightarrow$ (B).  We fist show that the univariate function
$g_j \triangleq f(\bullet,\bar{x}^{-j})^{\, \prime}(\bar{x}_j;\bullet)$ is 
difference-convex.  In fact, letting 
$a_j \triangleq f(\bullet,\bar{x}^{-j})^{\, \prime}(\bar{x}_j;1)$ and
$b_j \triangleq f(\bullet,\bar{x}^{-j})^{\, \prime}(\bar{x}_j;-1)$, we have
\[ \begin{array}{lll}
g_j(t) & \triangleq & t_+ \,  f(\bullet,\bar{x}^{-j})^{\, \prime}(\bar{x}_j;1) +
t_- \,  f(\bullet,\bar{x}^{-j})^{\, \prime}(\bar{x}_j;-1) \\ [0.1in]
& = & \left\{ \begin{array}{ll}
\displaystyle{
\max_{\alpha \in [ \, -b_j, \, a_j \, ]}
} \, \alpha \, t & \mbox{if $a_j \geq 0$ and $b_j \geq 0$} \\ [0.15in]
\displaystyle{
\min_{\alpha \in [ \, a_j, \, -b_j \, ]}
} \, \alpha \, t & \mbox{if $a_j \leq 0$ and $b_j \leq 0$} \\ [0.15in]
\displaystyle{
\max_{\alpha \in [ \, 0, \, a_j ]}
} \, \alpha \, t + \displaystyle{
\min_{\beta \in [ \, 0, \, -b_j \,]}
} \, \beta \, t & \mbox{if $a_j > 0$ and $b_j < 0$} \\ [0.1in]
\displaystyle{
\min_{\alpha \in [ \, a_j, \, 0 \, ]}
} \, \alpha \, t + \displaystyle{
\max_{\beta \in [ \, -b_j, \, 0 \,]}
} \, \beta \, t & \mbox{if $a_j < 0$ and $b_j > 0$}
\end{array} \right. \\ [0.6in]
& = &  \displaystyle{
\max_{\gamma \in [ \, \underline{a}_j,\overline{a}_j \, ]}
} \, \gamma \, t - \displaystyle{
\max_{\gamma \in [ \, \underline{b}_j,\overline{b}_j \, ]}
} \, \gamma \, t \hspace{1in} \forall \, t \, \in \, \mathbb{R} 
\end{array} \]
for some scalars $\underline{a}_j \leq \overline{a}_j$ and $\underline{b}_j \leq
\overline{b}_j$.  Since 
$f^{\, \prime}(\bar{x};v) = \displaystyle{
\sum_{j=1}^n
} \, g_j(v_j)$ by assumption, 
it follows that $f^{\, \prime}(\bar{x};v) = \displaystyle{
\max_{a \in A(\bar{x})}
} \, a^{\top}v - \displaystyle{
\max_{b \in B(\bar{x})}
} \, b^{\top}v$, where $A(\bar{x}) \triangleq \displaystyle{
\prod_{j=1}^n
} \, [ \, \underline{a}_j,\overline{a}_j \, ]$ and $B(\bar{x}) \triangleq \displaystyle{
\prod_{j=1}^n
} \, [ \, \underline{b}_j,\overline{b}_j \, ]$.  Hence $f$ is quasi-dc. 

\gap

(B) $\Rightarrow$ (C).  This is obvious.

\gap

(C) $\Rightarrow $ (A).  If $f^{\, \prime}(\bar{x};v) = \displaystyle{
\sum_{j=1}^n
} \, g_j(v_j)$ for some univariate functions $g_j$, then it follows that
\[
f(\bullet,\bar{x}^{-j})^{\, \prime}(\bar{x}_j;v_j) \, = f^{\, \prime}(\bar{x};v_j e^j) 
\, = \, g_j(v_j) + \displaystyle{
\sum_{k \neq j}
} \, g_k(0),
\]  
where $e^j$ is the $j$th unit coordinate vector.  Since $f^{\, \prime}(\bar{x};0) = 0$,
it follows that $\displaystyle{
\sum_{j=1}^n
} \, g_j(0) = 0$.  Hence
\[
g_j(v_j) \, = \, f(\bullet,\bar{x}^{-j})^{\, \prime}(\bar{x}_j;v_j) + g_j(0),
\]
which yields $f^{\, \prime}(\bar{x};v) = \displaystyle{
\sum_{j=1}^n
} \, f(\bullet,\bar{x}^{-j})^{\, \prime}(\bar{x}_j;v_j)$, which is
the sum formula holds for $f^{\, \prime}(\bar{x};v)$.
\end{proof}

\begin{remark} \rm
It is well known that a Bouligand differentiable function with a linear directional
derivative at a reference vector, i.e., if $f^{\, \prime}(\bar{x};\bullet)$ is
linear, then $f$ is differentiable at $\bar{x}$.  Proposition~\ref{pr:dd sum} gives
two necessary and sufficient conditions for 
$f^{\, \prime}(\bar{x};\bullet)$ to be a separable function; interestingly,
one of these conditions turn out to be quasi-differentiability with rectangular 
upper and lower sets.  \hfill $\Box$
\end{remark}
 
The next result gives several broad families of the outer function $g$ that will yield the
quasi-difference-convexity of the composite function $g \circ F$; some concrete
examples of $g$ provide practical instances of composite quasi-dc functions that include,
for instance, sums of ratios and sum of products of quasi-dc functions.  

\gap

\begin{proposition} \label{pr:multivariate composition with dd condition} \rm
Let $f_k : {\cal O} \to \mathbb{R}$ be a quasi-difference-convex function
on the open convex set ${\cal O} \subseteq \mathbb{R}$ for $k = 1, \cdots, m$
and $g : \Omega \to \mathbb{R}$ be a multivariate function, with
$\Omega$ being an open convex set in $\mathbb{R}^m$ containing $F({\cal O})$,
where $F \triangleq ( f_k )_{k=1}^m$.  Then the composite function $g \circ F$  
is quasi-dc under the following conditions on $g$:

\gap

$\bullet $ $g$ is differentiable;

\gap

$\bullet $ $g$ is PA;

\gap

$\bullet $ $g$ is the $p$-norm for some $p \in [ \, 1, \infty ]$.

\gap

Hence the following statements are true:

\gap

$\bullet $ sums, products (thus squares), quotients (with positive denominators) of
quasi-dc functions are quasi-dc; 

\gap

$\bullet $ a simple non-function, which (by definition)
is the sum of a convex and a differentiable function, is quasi-dc;

\gap

$\bullet $ the $p$-power of a positive quasi-dc function is quasi-dc for $p \geq 1$.
\end{proposition}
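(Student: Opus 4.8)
The engine of the argument is the chain rule for Bouligand (B-) derivatives. Each $f_k$ is quasi-dc, hence locally Lipschitz and directionally differentiable, so the map $F=(f_k)_{k=1}^m$ is B-differentiable; in each of the three cases the outer $g$ is also B-differentiable (indeed locally Lipschitz), so for every $\bar x\in{\cal O}$ and $v\in\mathbb R^n$,
\[
(g\circ F)'(\bar x;v) \, = \, g'\bigl(F(\bar x);F'(\bar x;v)\bigr), \qquad
F'(\bar x;v)\,=\,\bigl(f_k'(\bar x;v)\bigr)_{k=1}^m .
\]
Writing $d_k\triangleq f_k'(\bar x;\bullet)$, each $d_k$ is a dc function of $v$ by the quasi-difference-convexity of $f_k$, so it suffices to show that $v\mapsto g'\bigl(F(\bar x);(d_1(v),\dots,d_m(v))\bigr)$ is dc; the requisite positive homogeneity is automatic, as the paper has noted, since every $d_k$ and every $g'(y;\bullet)$ is positively homogeneous. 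I will use the elementary closure properties of the class of dc functions on $\mathbb R^n$: it is a vector space, it is closed under the maximum or minimum of finitely many members (via $\max_i(g_i-h_i)=\max_i\bigl(g_i+\sum_{l\neq i}h_l\bigr)-\sum_l h_l$), and it is closed under products; the one non-elementary ingredient I will invoke is that \emph{a finite-valued convex function on $\mathbb R^m$ composed with a dc map is dc}.

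I would first dispatch the differentiable case: there $g'(y;\bullet)$ is the linear map $w\mapsto\nabla g(y)^{\top}w$, so $(g\circ F)'(\bar x;v)=\sum_k\partial_k g(F(\bar x))\,d_k(v)$ is a fixed linear combination of dc functions, hence dc. This already yields all the listed consequences: sums, products, and quotients (positive denominator) of quasi-dc functions are quasi-dc (take $g(y)=\sum_k y_k$, $g(y_1,y_2)=y_1y_2$, $g(y_1,y_2)=y_1/y_2$ on $\{y_2>0\}$, iterated if needed); a simple non-function, being the sum of a convex function (quasi-dc, since the directional derivative of a convex function is sublinear hence dc) and a differentiable function (quasi-dc), is quasi-dc; and the $p$-power of a positive quasi-dc function $f$ is quasi-dc by Proposition~\ref{pr:univariate composition}, since $t\mapsto t^p$ is Bouligand differentiable on a neighborhood of the locally positive range of $f$. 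For the PA case, Scholtes' representation expresses $g$ as a difference of two finite maxima of affine functions, so $g'(y;\bullet)$ is a positively homogeneous PA function of the form $w\mapsto\max_{i\in I}(a_i^{\top}w)-\max_{j\in J}(b_j^{\top}w)$; substituting $w=(d_k(v))_k$, each $a_i^{\top}(d_k(v))_k$ is a linear combination of dc functions hence dc, the two finite maxima are dc, and their difference is dc.

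The delicate case is $g=\|\cdot\|_p$. When $p\in(1,\infty)$ and $F(\bar x)\neq0$, the $p$-norm is differentiable at $F(\bar x)$ and we are back in the differentiable case. When $p=1$, $\|F(\cdot)\|_1=\sum_k|f_k(\cdot)|$ is a sum of quasi-dc functions (each $|f_k|$ is quasi-dc by Proposition~\ref{pr:univariate composition} applied to the univariate $t\mapsto|t|$), hence quasi-dc; when $p=\infty$, $\|F(\cdot)\|_\infty=\max_k|f_k(\cdot)|$ is a finite maximum of quasi-dc functions, hence quasi-dc (its directional derivative at $\bar x$ is the maximum over the active indices of the $|f_k|'(\bar x;\bullet)$, a finite maximum of dc functions). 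There remains the regime $p\in(1,\infty)$ with $F(\bar x)=0$, where the chain rule gives
\[
(g\circ F)'(\bar x;v) \, = \, \|\cdot\|_p'\bigl(0;F'(\bar x;v)\bigr) \, = \, \|F'(\bar x;v)\|_p \, = \, \Bigl(\textstyle\sum_k|d_k(v)|^p\Bigr)^{1/p},
\]
which must be shown to be a dc function of $v$. This is the main obstacle, and it is genuinely not routine: one cannot pull out the $1/p$-th power, since $t\mapsto t^{1/p}$ is not Lipschitz at $0$ and the inner sum vanishes exactly where $F(\bar x)=0$, so Proposition~\ref{pr:univariate composition} does not apply at that level.

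To overcome it I would note that in \emph{every} regime the composite directional derivative has the form $\rho\circ(d_1,\dots,d_m)$, where $\rho\triangleq\|\cdot\|_p'(F(\bar x);\bullet)$ is a finite-valued sublinear — in particular convex — function on $\mathbb R^m$, and then invoke the lemma that a finite-valued convex function composed with a dc map is dc; this handles all the $p$-norm subcases uniformly. Thus the whole proposition reduces to that single classical lemma of dc analysis, which I would either cite or prove in a line by localization: on a bounded neighbourhood, write $d_k=g_k-\ell_k$ with $g_k,\ell_k$ convex, replace $\rho$ by the coordinatewise-nondecreasing convex function $\tilde\rho(y)=\rho(y)-c^{\top}y$ obtained by subtracting a linear functional with $c_k$ a lower bound for the $k$-th partial subderivatives of $\rho$, and check (via the Hessian in the smooth case, then by subgradient approximation in general) that $v\mapsto\tilde\rho\bigl(d_1(v),\dots,d_m(v)\bigr)+L\sum_k\ell_k(v)$ is convex for $L$ an upper bound on the partial subderivatives of $\tilde\rho$ on the relevant range; hence $\rho\circ(d_k)_k$ is locally dc and therefore dc by Hartman's theorem. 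The stated consequence — e.g.\ least-squares-type objectives $x\mapsto\|F(x)\|_2$ assembled from simple non-functions — then follows immediately.
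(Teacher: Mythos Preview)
Your proof is correct. For the differentiable and PA cases your argument is essentially the paper's (modulo the minor imprecision that in the PA case the directional derivative involves only the \emph{active} affine pieces at $\bar y=F(\bar x)$, not the full index sets; this does not affect the conclusion). The real divergence is in the $p$-norm case at a zero of $F$: you reduce to the abstract lemma ``finite-valued convex $\rho$ composed with a dc map is dc,'' sketching it via a monotonicity shift plus a Hessian/localization argument and Hartman's theorem. The paper instead carries out this very lemma \emph{explicitly} for $\rho=\|\cdot\|_p$ using the dual representation $\|y\|_p=\max_{\|a\|_q=1}a^\top y$: writing $f_k'(\bar x;v)=\phi_k(v)-\varphi_k(v)$ with $\phi_k,\varphi_k$ convex, one has
\[
\|F'(\bar x;v)\|_p \;=\; -\sum_k\bigl[\phi_k(v)+\varphi_k(v)\bigr]\;+\;\max_{\|a\|_q=1}\sum_k\bigl[(a_k+1)\,\phi_k(v)+(1-a_k)\,\varphi_k(v)\bigr],
\]
and since $|a_k|\le 1$ each summand inside the max is convex in $v$, so the pointwise maximum is convex. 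This is precisely your shift trick specialized to $c_k=-1$, $L=2$, but executed concretely without any localization or appeal to Hartman, because the subgradients of $\|\cdot\|_p$ are globally bounded. Your route is more unified (one lemma handles all three cases and in fact anticipates the paper's subsequent Proposition~\ref{pr:multivariate composite quasi-dc}); the paper's is more self-contained and sidesteps the somewhat delicate smooth-to-nonsmooth passage in your sketch of the lemma.
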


\begin{proof}  If $g$ is differentiable, we have
\[
( g \circ F )^{\, \prime}(\bar{x};v) \, = \, \displaystyle{
\sum_{k=1}^m
} \, \displaystyle{
\frac{\partial g(F(\bar{x}))}{\partial y_k}
} \, f_k^{\, \prime}(\bar{x};v)
\]
Since linear combinations of dc functions are dc, it follows readily that 
$( g \circ F )^{\, \prime}(\bar{x};\bullet)$ is dc.

\gap

If $g$ is PA, we can write $g(y) = g(y) - h(y)$, where 
\[
g(y) \, \triangleq \, \displaystyle{
\max_{1 \leq i \leq I}
} \, ( a^i )^{\top}y + \alpha_i \epc \mbox{and} \epc 
h(y) \, \triangleq \, \displaystyle{
\max_{1 \leq j \leq J}
} \, ( b^{\, j} )^{\top}y + \beta_j
\]
for some vectors $a^i$ and $b^{\, j}$ in $\mathbb{R}^m$
and scalars $\alpha_i$ and $\beta_j$.  For an arbitrary vector $y \in \mathbb{R}^m$, 
we let
\[
{\cal M}_g(y) \, \triangleq \, \left\{ \, i \, \mid \, ( a^i )^{\top}y + \alpha_i \, = \, 
g(y) \, \right\}
\epc \mbox{and} \epc 
{\cal M}_h(y) \, \triangleq \, \left\{ \, j \, \mid \, ( b^{\, j} )^{\top}y + \beta_j 
\, = \, h(y) \, \right\}.
\]
For a given vector $\bar{x} \in \mathbb{R}^n$,
with $\bar{y} \triangleq F(\bar{x}) = \left( f_k(\bar{x} \right)_{k=1}^m$, we have
\[
( g \circ F )^{\, \prime}(\bar{x};v) \, = \, 
\displaystyle{
\max_{i \in {\cal M}_g(\bar{y})}
} \, \displaystyle{
\sum_{k=1}^m
} \, a^i_k \, f_k^{\, \prime}(\bar{x};v) - \displaystyle{
\max_{j \in {\cal M}_h(\bar{y})}
} \, \displaystyle{
\sum_{k=1}^m
} \, b^{\, j}_k \, f_k^{\, \prime}(\bar{x};v),
\]
from which the dc property of $( g \circ F )^{\, \prime}(\bar{x};\bullet)$ follows readily.

\gap

Next, suppose $g$ is the $p$-norm for some $p \in [ \, 1,\infty ]$.  If $p = 1$,
then $g(y) = \displaystyle{
\sum_{k=1}^m
} \, | \, y_k \, |$ is a piecewise linear function; so is $g$ when $p = \infty$.
Thus $g \circ F$ is quasi-dc in both cases.  For $p \in ( \, 1, \infty )$,
$g$ is continuously differentiable except at the origin where we have
\[
g^{\, \prime}(0;v) \, = \, \| \, v \, \|_p.
\] 
Hence, it suffices to show that $\| F^{\, \prime}(\bar{x};v) \|_p$ is a dc function 
in $v$ for $p \in ( \, 1,\infty )$.
We follow the proof for $p = 2$ given in \cite[Proposition~4.4.16(e)]{CuiPang2021}.
Specifically, since $\| \, y \, \|_p = \displaystyle{
\max_{a \, : \, \| a \|_q = 1}
} \, a^{\top} y$, where $q$ satisfies $\displaystyle{
\frac{1}{p}
} +  \displaystyle{
\frac{1}{q}
} = 1$, writing $f_k^{\, \prime}(\bar{x};v) = \phi_k(\bar{x};v) - \varphi_k(\bar{x};v)$, 
where $\phi_k(\bar{x};\bullet)$ and $\varphi_k(\bar{x};\bullet)$ are both convex functions,
we have
\[ \begin{array}{l}
\| \, F^{\, \prime}(\bar{x};v) \, \|_p  \, = \, \displaystyle{
\max_{a \, : \, \| a \|_q = 1}
} \, \displaystyle{
\sum_{k=1}^m
} \, a_k \, f_k^{\, \prime}(\bar{x};v) \\ [0.2in]
= \, -\displaystyle{
\sum_{k=1}^m
} \, \left[ \, \underbrace{\phi_k(\bar{x};v) + \varphi_k(\bar{x};v)}_{\mbox{convex in $v$}} \, \right] 
+ \displaystyle{
\max_{a \, : \, \| a \|_q = 1}
} \, \displaystyle{
\sum_{k=1}^m
} \, \left[ \, \underbrace{( a_k + 1 ) \, \phi_k(\bar{x};v) + 
( 1 - a_k ) \, \varphi_k(\bar{x};v)}_{\mbox{convex in $v$ because $| a_k | < 1$}} \, \right].
\end{array}
 \]
 Since the pointwise maximum of a family of convex function is convex, the above identity
 establishes that $g \circ F$ is quasi-dc when $g$ is the $p$-norm for some
 $p \in [ \, 1,\infty ]$ and each $f_k$ is quasi-dc.  Finally, the last three
 statements of
 the proposition are immediate consequence of the the previous cases. 
\end{proof}

The above proof of the PA and $p$-norm cases can be combined to yield the following 
preservation result of the quasi-dc property under quasi-dc composition on open convex 
sets.  The result includes the special case 
of a dc function composed with a differentiable function that is studied in the 
recent paper \cite{LeThiHuynhPham24} and generalizes the known fact that a dc function
composed with a dc function on open convex sets is dc \cite[Theorem~4.4.3]{CuiPang2021}.

\begin{proposition} \label{pr:multivariate composite quasi-dc} \rm
Let $f_k : {\cal O} \to \mathbb{R}$ be a quasi-difference-convex function
on the open convex set ${\cal O} \subseteq \mathbb{R}^n$ for $k = 1, \cdots, m$
and $g : \Omega \to \mathbb{R}$ be a quasi-dc function where
$\Omega$ is an open convex set in $\mathbb{R}^m$ containing $F({\cal O})$,
where $F \triangleq ( f_k )_{k=1}^m$.  Then the composite function $g \circ F$  
is quasi-dc.  In particular \cite{LeThiHuynhPham24}, if $g$ is dc and $F$ is 
differentiable, then $g \circ F$ is quasi-dc.
\end{proposition}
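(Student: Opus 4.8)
The plan is to reduce everything to directional derivatives via the chain rule for Bouligand differentiable compositions, and then to reuse, almost verbatim, the add-and-subtract device from the $p$-norm case treated above. First I would record that $F \triangleq ( f_k )_{k=1}^m$ is B-differentiable (each $f_k$ is quasi-dc, hence B-differentiable) and that $g$ is B-differentiable (being quasi-dc), so $g \circ F$ is B-differentiable and, for every $\bar{x} \in {\cal O}$ and $v \in \mathbb{R}^n$,
\[
( g \circ F )^{\, \prime}(\bar{x};v) \, = \, g^{\, \prime}(\bar{y};w),
\qquad \bar{y} \, \triangleq \, F(\bar{x}), \quad
w \, \triangleq \, F^{\, \prime}(\bar{x};v) \, = \, \bigl( f_k^{\, \prime}(\bar{x};v) \bigr)_{k=1}^m .
\]
It therefore suffices to show that the right-hand side, viewed as a function of $v$, is dc; positive homogeneity of the convex pieces produced is automatic by the blanket remark of this section.

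Next I would expand using the two dc decompositions. Since $g$ is quasi-dc at $\bar{y}$, write $g^{\, \prime}(\bar{y};\bullet) = \max_{a \in \overline{\partial}g(\bar{y})} a^{\top}\bullet - \max_{b \in \underline{\partial}g(\bar{y})} b^{\top}\bullet$ with $\overline{\partial}g(\bar{y})$ and $\underline{\partial}g(\bar{y})$ compact convex; since each $f_k$ is quasi-dc at $\bar{x}$, write $f_k^{\, \prime}(\bar{x};v) = \phi_k(v) - \varphi_k(v)$ with $\phi_k,\varphi_k$ convex. Pick $M > 0$ with $\| a \|_\infty \le M$ for all $a$ in the bounded set $\overline{\partial}g(\bar{y}) \cup \underline{\partial}g(\bar{y})$. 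The identity
\[
\sum_{k=1}^m a_k \bigl( \phi_k(v) - \varphi_k(v) \bigr) \, = \,
\sum_{k=1}^m \bigl[ ( a_k + M )\,\phi_k(v) + ( M - a_k )\,\varphi_k(v) \bigr]
- M \sum_{k=1}^m \bigl( \phi_k(v) + \varphi_k(v) \bigr)
\]
exhibits $a^{\top}w$ as a convex function of $v$ (the bracketed summands carry nonnegative coefficients since $| a_k | \le M$) minus a convex function of $v$ independent of $a$; taking the pointwise maximum over $a \in \overline{\partial}g(\bar{y})$ preserves convexity of the first piece, so $\max_{a} a^{\top} w$ is dc in $v$, and likewise $\max_{b} b^{\top} w$ is dc in $v$. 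Subtracting, $g^{\, \prime}(\bar{y};w)$ is dc in $v$; hence $g \circ F$ is quasi-dc at $\bar{x}$, and since $\bar{x}$ was arbitrary, on all of ${\cal O}$.

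For the particular case, if $g$ is dc then it is quasi-dc by Proposition~\ref{pr:classes of quasi-dc}, so the conclusion is a special instance of the above. Alternatively, when $F$ is in addition differentiable one can bypass the $M$-correction entirely: $F^{\, \prime}(\bar{x};v) = JF(\bar{x})\,v$ is linear in $v$, so writing $g = p - q$ with $p,q$ convex gives $( g \circ F )^{\, \prime}(\bar{x};v) = p^{\, \prime}(\bar{y};JF(\bar{x})v) - q^{\, \prime}(\bar{y};JF(\bar{x})v)$, a difference of convex functions of $v$ because precomposition with a linear map preserves convexity.

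I expect the main obstacle to be the sign indefiniteness of the coordinates $a_k,b_k$: the naive term $a_k(\phi_k - \varphi_k)$ need not be convex, and it is precisely the boundedness of the compact sets $\overline{\partial}g(\bar{y})$ and $\underline{\partial}g(\bar{y})$ that furnishes a correction, uniform in $a$, restoring convexity. Everything else — the chain rule for B-differentiable maps, already invoked repeatedly above, and the check that the convex functions produced remain positively homogeneous — is routine.
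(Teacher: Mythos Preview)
Your proof is correct and follows essentially the same approach as the paper: the chain rule $(g\circ F)'(\bar{x};v)=g'(\bar{y};F'(\bar{x};v))$, the expansion of $g'(\bar{y};\cdot)$ via its upper and lower sets, and the add-and-subtract correction using a uniform bound on the coordinates of elements of those compact sets are exactly the ingredients the paper uses. Your treatment of the particular case is also in line with (and slightly more explicit than) the paper's.
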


\begin{proof}  The claim follows quite easily from the compositional
formula of direction derivatives which states: 
$( g \circ F )^{\, \prime}(\bar{x};v) = g^{\, \prime}(F(\bar{x});F^{\, \prime}(\bar{x};v))$;
this shows that the left-hand directional derivative is the composition
of the scalar-valued dc function 
$g^{\, \prime}(F(\bar{x});\bullet)$ (defined on $\mathbb{R}^m$) and the 
vector-valued directional derivative $F^{\, \prime}(\bar{x};\bullet) = 
( f_k^{\, \prime}(\bar{x};\bullet) )_{k=1}^m$, where each $f_k^{\, \prime}(\bar{x};\bullet)$
is dc on $\mathbb{R}^n$.  Thus by the compositional property of dc functions, it
follows that $( g \circ F )^{\, \prime}(\bar{x};\bullet)$ is dc.  Below, we provide
a proof that reveals some details of the latter directional derivative.

\gap

For every $\bar{y} \in \Omega$, there exist compact convex sets 
$\overline{\partial} g(\bar{y})$ and $\underline{\partial} g(\bar{y})$ such that 
\[
g^{\, \prime}(\bar{y};w) \, = \, \displaystyle{
\max_{a \in \overline{\partial} g(\bar{y})}
} \, a^{\top}w - \displaystyle{
\max_{b \in \underline{\partial} g(\bar{y})}
} \, b^{\top}w, \epc \forall \, w \, \in \, \mathbb{R}^m.
\]
Therefore, for any $\bar{x} \in {\cal O}$, with $\bar{y} \triangleq F(\bar{x})$,
we have
\[
( g \circ F )^{\, \prime}(\bar{x};v) \, = \, \displaystyle{
\max_{a \in \overline{\partial} g(\bar{y})}
} \, \displaystyle{
\sum_{k=1}^m
} \, a_k \, f_k^{\, \prime}(\bar{x};v) - \displaystyle{
\max_{b \in \underline{\partial} g(\bar{y})}
} \, \displaystyle{
\sum_{k=1}^m
} \, b_k \, f_k^{\, \prime}(\bar{x};v) \epc \forall \, v \, \in \, \mathbb{R}^n.
\]
It suffices to show that each summand is a dc function in $v$, given $\bar{x}$.
We prove the first summand only.  Since $\overline{\partial} g(\bar{y})$ is
a compact set, there exists a scalar $\overline{\beta}(\bar{x})$ such that
$| \, a_k \, | \, \leq \, \overline{\beta}(\bar{x})$ for all $k = 1, \cdots, m$
and all $a \in \overline{\partial} g(\bar{y})$.  Hence, writing 
$f_k^{\, \prime}(\bar{x};v) = \phi_k(\bar{x};v) - \varphi_k(\bar{x};v)$, where
$\phi_k(\bar{x};\bullet)$ and $\varphi_k(\bar{x};\bullet)$ are both convex functions, 
we deduce
\[ \begin{array}{ll}
\displaystyle{
\max_{a \in \overline{\partial} g(\bar{y})}
} \, \displaystyle{
\sum_{k=1}^m
} \, a_k \, f_k^{\, \prime}(\bar{x};v) \, = & -\overline{\beta}(\bar{x}) \,
\displaystyle{
\sum_{k=1}^m
} \, \left[ \, \phi_k^{\, \prime}(\bar{x};v) + \varphi_k^{\, \prime}(\bar{x};v) \, \right] 
\, + \\ [0.2in]
& \displaystyle{
\max_{a \in \overline{\partial} g(\bar{y})}
} \, \displaystyle{
\sum_{k=1}^m
} \, \left[ \, \underbrace{( \, a_k + \overline{\beta}(\bar{x}) \, )}_{\mbox{$\geq \, 0$}} 
\, \phi_k^{\, \prime}(\bar{x};v) 
+ \underbrace{( \, \overline{\beta}(\bar{x}) - a_k \, )}_{\mbox{$\geq \, 0$}} 
\, \varphi_k^{\, \prime}(\bar{x};v) \, \right].
\end{array} \]
This is enough to prove that $( g \circ F )^{\, \prime}(\bar{x};\bullet)$ is a dc
function.
\end{proof}

The above proposition facilitates the construction of many quasi-dc functions.  
As an illustration, we present the following corollary that pertains to a bivariate
composition which provides a good lead-in to the subsequent algorithmic development.

\begin{corollary} \label{co:composite quasi-dc} \rm
Let $\phi : \Omega \times \Omega \to \mathbb{R}$ be a bivariate Bouligand differentiable
function on the open convex set $\Omega \subseteq \mathbb{R}^m$
satisfying the sum formula (\ref{eq:sum formula for dd}) for its directional derivatives
at all pairs $(\bar{y},\bar{z}) \in \Omega \times \Omega$; i.e.,
\begin{equation} \label{eq:sum formula again}
\phi^{\, \prime}((\bar{y},\bar{z});(u,w)) \, = \, 
\phi_y^{\, \prime}((\bar{y},\bar{z});u) + \phi_z^{\, \prime}((\bar{y},\bar{z});w),
\epc \forall \, (u,w) \, \in \, \mathbb{R}^{2m}.
\end{equation}  	
Let $(n_k,d_k) : {\cal O} \to \mathbb{R}^2$ be a pair of quasi-dc functions on
the open convex set ${\cal O} \subseteq \mathbb{R}^n$ with
$N({\cal O}) \subseteq \Omega$ and $D({\cal O}) \subseteq \Omega$, where
$N(x) \triangleq ( n_k(x) )_{k=1}^m$ and $D(x) \triangleq ( d_k(x) )_{k=1}^m$.
If $\phi(\bullet,z)$ and $\phi(y,\bullet)$ are quasi-dc on ${\cal O}$ for 
all fixed pairs $(y,z) \in {\cal O} \times {\cal O}$, then $\phi \circ (N,D)$ is
quasi-dc on ${\cal O}$.
\end{corollary}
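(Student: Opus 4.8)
The plan is to pass to directional derivatives and reduce the claim, via the hypothesized sum formula, to two separate instances of the composition argument already carried out in the proof of Proposition~\ref{pr:multivariate composite quasi-dc}. Since each $n_k$ and $d_k$ is quasi-dc, hence Bouligand differentiable, the vector maps $N$ and $D$ are directionally differentiable; combined with the Bouligand differentiability of $\phi$, this validates the compositional rule for directional derivatives, so that for every $\bar{x} \in {\cal O}$ and $v \in \mathbb{R}^n$,
\[
(\phi \circ (N,D))^{\,\prime}(\bar{x};v) \, = \, \phi^{\,\prime}\bigl((N(\bar{x}),D(\bar{x}));(N^{\,\prime}(\bar{x};v),D^{\,\prime}(\bar{x};v))\bigr).
\]
Writing $\bar{y} \triangleq N(\bar{x}) \in \Omega$ and $\bar{z} \triangleq D(\bar{x}) \in \Omega$ and invoking the assumed sum formula (\ref{eq:sum formula again}) at $(\bar{y},\bar{z})$ with $(u,w) = (N^{\,\prime}(\bar{x};v),D^{\,\prime}(\bar{x};v))$ gives
\[
(\phi \circ (N,D))^{\,\prime}(\bar{x};v) \, = \, \phi_y^{\,\prime}\bigl((\bar{y},\bar{z});N^{\,\prime}(\bar{x};v)\bigr) + \phi_z^{\,\prime}\bigl((\bar{y},\bar{z});D^{\,\prime}(\bar{x};v)\bigr).
\]
Since a sum of dc functions is dc, it now suffices to show that each summand is dc as a function of $v$; by symmetry I would treat only the first.

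By the definition of the partial directional derivative, $\phi_y^{\,\prime}((\bar{y},\bar{z});\bullet) = \phi(\bullet,\bar{z})^{\,\prime}(\bar{y};\bullet)$; since $\phi(\bullet,\bar{z})$ is quasi-dc at $\bar{y}$, this is a difference of support functions of compact convex sets, say $\phi_y^{\,\prime}((\bar{y},\bar{z});w) = \sigma_{\overline{C}}(w) - \sigma_{\underline{C}}(w)$ with $\overline{C},\underline{C} \subseteq \mathbb{R}^m$ compact convex (depending on $\bar{x}$). Substituting $w = N^{\,\prime}(\bar{x};v) = (n_k^{\,\prime}(\bar{x};v))_{k=1}^m$, the task reduces to showing that $\sigma_{\overline{C}}(N^{\,\prime}(\bar{x};v))$ and $\sigma_{\underline{C}}(N^{\,\prime}(\bar{x};v))$ are dc in $v$ --- precisely the composition of a positively homogeneous difference-of-support-functions outer function on $\mathbb{R}^m$ with the vector $(n_k^{\,\prime}(\bar{x};\bullet))_{k=1}^m$ of dc functions on $\mathbb{R}^n$, which is the content of Proposition~\ref{pr:multivariate composite quasi-dc}. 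Explicitly, writing $n_k^{\,\prime}(\bar{x};v) = p_k(v) - q_k(v)$ with $p_k,q_k$ positively homogeneous convex and choosing $\beta(\bar{x})$ with $|a_k| \leq \beta(\bar{x})$ for all $a \in \overline{C}$ and all $k$, one has
\[
\sigma_{\overline{C}}\bigl(N^{\,\prime}(\bar{x};v)\bigr) \, = \, -\beta(\bar{x}) \sum_{k=1}^m \bigl[\, p_k(v) + q_k(v) \,\bigr] + \max_{a \in \overline{C}} \sum_{k=1}^m \Bigl[\, \bigl(a_k + \beta(\bar{x})\bigr) p_k(v) + \bigl(\beta(\bar{x}) - a_k\bigr) q_k(v) \,\Bigr],
\]
which exhibits it as a difference of two positively homogeneous convex functions of $v$; the same computation handles $\sigma_{\underline{C}}(N^{\,\prime}(\bar{x};v))$, and the analogous argument applied to $\phi_z^{\,\prime}((\bar{y},\bar{z});D^{\,\prime}(\bar{x};v))$ uses that each $d_k^{\,\prime}(\bar{x};\bullet)$ is dc because $d_k$ is quasi-dc.

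Putting the two summands together shows that $(\phi \circ (N,D))^{\,\prime}(\bar{x};\bullet)$ is a finite sum and difference of positively homogeneous convex functions, hence a positively homogeneous dc function, so $\phi \circ (N,D)$ is quasi-dc at $\bar{x}$; as $\bar{x} \in {\cal O}$ was arbitrary, it is quasi-dc on ${\cal O}$. I do not anticipate a genuine difficulty: the one nontrivial ingredient is the boundedness trick in the last display, which is imported essentially verbatim from the proof of Proposition~\ref{pr:multivariate composite quasi-dc}, and the only point requiring care is that the entire reduction hinges on the \emph{assumed} sum formula (\ref{eq:sum formula again}) --- for a general jointly Bouligand differentiable $\phi$ the two partial directional derivatives need not add up, which is exactly why the result must be phrased as a corollary of the composition proposition rather than obtained by applying that proposition directly to $\phi$ regarded as a function on $\mathbb{R}^{2m}$.
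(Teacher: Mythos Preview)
Your argument is correct, but the paper takes a cleaner route that you actually talk yourself out of in your final paragraph. The paper's proof is two sentences: the sum formula (\ref{eq:sum formula again}) together with the assumed quasi-dc property of $\phi(\bullet,z)$ and $\phi(y,\bullet)$ shows that $\phi^{\,\prime}((\bar y,\bar z);(u,w))$ is a sum of a dc function of $u$ and a dc function of $w$, hence dc in $(u,w)$; thus $\phi$ \emph{is} quasi-dc on $\Omega\times\Omega$, and Proposition~\ref{pr:multivariate composite quasi-dc} applies directly with outer function $\phi$ and inner functions $(n_1,\dots,n_m,d_1,\dots,d_m)$.

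Your approach instead splits via the sum formula first and then replays the boundedness trick from the proof of Proposition~\ref{pr:multivariate composite quasi-dc} on each partial piece separately. This is fine and gives an explicit decomposition, but your closing remark --- that the proposition cannot be applied directly to $\phi$ on $\mathbb{R}^{2m}$ --- is precisely the observation the paper makes and exploits: the sum formula is exactly what upgrades the two partial quasi-dc hypotheses to joint quasi-dc-ness of $\phi$, after which the corollary really is an immediate consequence of the proposition rather than a parallel reworking of its proof.
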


\begin{proof}  The sum formula (\ref{eq:sum formula again}) shows that the
bivariate function $\phi$ is quasi-dc on ${\cal O} \times {\cal O}$.
The desired conclusion follows readily 
from Proposition~\ref{pr:multivariate composite quasi-dc}.
\end{proof}

\subsection{Stationarity of minimization problems} \label{subsec:stationarity}

For a quasi-dc function with $f^{\, \prime}(\bar{x};v) = \displaystyle{
\max_{a \in \overline{\partial} f(\bar{x})}
} \, a^{\top} v - \displaystyle{
\max_{b \in \underline{\partial} f(\bar{x})}
} \, b^{\top} v$ for two compact convex upper and lower sets 
$\overline{\partial} f(\bar{x})$ and $\underline{\partial} f(\bar{x})$, respectively, 
by definition \cite{DemyanovPolyakova79,DemyanovRubinov00}, $\bar{x}$ is 
an (unconstrained) {\sl inf-stationary solution} of $f$ on $\mathbb{R}^n$ if
$-\overline{\partial} f(\bar{x}) \subseteq \underline{\partial} f(\bar{x})$.
In terms of directional derivatives, this definition can be shown to be
equivalent to $f^{\, \prime}(\bar{x};v) \geq 0$ for all $v \in \mathbb{R}^n$,
by \cite[Corollary~13.1.1]{Rockafellar70}; we call the latter condition
{\sl directional stationarity}
to signify that the stationarity is based on the directional derivative.  The advantage
of the latter condition versus the former definition is that it is much easier and
more transparent to extend
to the constrained case.  In fact, while there are extensions of inf-stationarity
to constrained problems \cite{Gao00,Shapiro86}, they are based on multipliers of
the inequality constraints that are defined by quasidifferentiable functions and
tend to be quite abstract due to the complexity of upper and lower sets and the
reliance on the primal-dual formulations.  For our
purpose, we adopt the following definition of a directional stationary point $\bar{x}$
for the constrained problem (\ref{eq:unified framework}); namely, for a convex set $X$,
$\bar{x}$ is directionally stationary for this problem if 
\[
f^{\, \prime}(\bar{x};x - \bar{x}) \, \geq \, 0, \epc \forall \, x \, \in \, X.
\]
For many quasi-dc minimization problems, computing such a stationary solution is
not easy and stationary solutions of a relaxed kind are the limit of the state-of-the-art 
methods that can be developed.  We will introduce such a relaxed kind of stationary 
solutions in due course.

\section{Composite Quasi-dc Functions for Minimization} 
\label{sec:quasi-dc for minimization}

While the last section, particularly Proposition~\ref{pr:multivariate composite quasi-dc} 
that pertains to composite functions, shows the breadth
of the class of quasi-dc functions in potential applications, such a generality comes 
with the difficulty of designing practically effective algorithms for minimizing 
a quasi-dc function. With such a task in mind, we need to be realistic with its
daunting challenge and the attainable goal.  In principle,
if elements of the two upper and lower sets $\overline{\partial} f(\bar{x})$ and 
$\underline{\partial} f(\bar{x})$ are available, algorithms can be designed and their
convergence analyzed.  Nevertheless as seen from the proof of 
Proposition~\ref{pr:multivariate composite quasi-dc}, these two sets are not easily
available for composite functions in general.  From a practical perspective, it would 
be desirable to obtain these sets from the given function $f$ and to avoid enumerating
all the elements of the upper and lower sets, which may not be possible if these
are continuum sets.  Therefore, building on early proposals on algorithms for a class of
quasi-differenentiable programs
\cite{Bagirov00,DemyanovGSDixon86,DemyanovPolyakova79,DemyanovRubinov00,KDDixon86} and
on the recent references \cite{BotDaoLi22,BotLiTao24,LeThiHuynhPham24} for specialized
classes, our discussion in the rest of the paper focuses on several classes of 
composite quasi-dc functions for which contemporary convex-programming based 
algorithms can be designed and implemented.  The development borrows ideas from 
the family of majorization and surrogation methods described in \cite{CuiPang2021}.
A departure of the present approach is that the surrogation is applied to the directional
derivatives; in particular, the surrogation functions do not necessarily
majorize the given objective function, thus there is no guarantee of 
its {\sl direct} descent when descent directions are generated from the surrogation
functions.  This is in contrast to the basic surrogation algorithms in \cite{CuiPang2021}
for which such direct descent is possible.  A prime example of an algorithm with the latter
property is the basic DCA and its extensions presented in \cite[Section~7.1]{CuiPang2021},
all of which are based on the fact that the original function, and not just its directional
derivatives, is majorized.  Without direct descent, line searches are needed for descent 
in this extended context; one such line-search
based descent algorithm for an extended dc program can be found 
in \cite[Subsection~7.2.3]{CuiPang2021}.  Extending and unifying
the existing developments in the literature, the discussion below prepares
the algorithmic design for solving (\ref{eq:unified framework}).

\gap

Specifically, we consider  the following min-finite-max-composite-quasi-dc program: 
for a given integer $J \geq 1$,
\begin{equation} \label{eq:unified framework}
\displaystyle{
\operatornamewithlimits{\mbox{\bf minimize}}_{x \in X}
} \ \Theta_{\max}(x) \, \triangleq \, \displaystyle{
\max_{1 \leq j \leq J}
} \, \theta_j(x), \epc \mbox{where} \epc 
\theta_j(x) \, \triangleq \, \phi_j \circ P^j(x).
\end{equation}
Here $X$ is a closed convex subset of $\mathbb{R}^n$; called an {\sl outer function},
each $\phi_j : \mathbb{R}^K \to \mathbb{R}$ is a Bouligand differentiable 
function with properties to be specified; called an {\sl inner function},
$P^{\, j}(x) \triangleq \left( \, p_{jk}(x) \, \right)_{k=1}^K$ 
is a $K$-dimensional
vector function defined on $\mathbb{R}^n$ whose components $p_{jk}$ are
all Boulingand differentiable functions with properties to be specified.  Thus
each function $\theta_j$ is Boulingand differentiable, and so is the pointwise
maximum objective $\Theta_{\max}$.

\gap

The algorithm to be developed for computing a stationary solution of 
(\ref{eq:unified framework}) follows a classical descent method
with Armijo-type line searches \cite{Bertsekas16}.  Such a method requires
two main computations: (a) generation of a descent direction, 
and (b) step size determination.  The main challenges of these two tasks
lie in the nonconvex and nondifferentiable composite
structure of the present problem that demand a non-gradient based generation 
of the descent directions.  Borrowing many ideas from the family of surrogation
methods for modern nonconvex and nondifferentiable optimization described
in \cite[Chapter~7]{CuiPang2021}, we will first derive the convex
functions that are the building blocks for the construction of the descent directions
in the algorithms to be presented later.  As mentioned above,
one major departure of our present work
is that the surrogation functions are not necessarily majorizing the original
objective globally; thus no direct descent is guaranteed.  Instead, the majorization 
is for the directional derivatives and thus is valid for the original objective
only locally near the reference point. This motivates the return
to the classical methods of line searches.  Further, we do not restrict to the
methods using constant step sizes which typically require knowledge of some
model constants.  Such line-search free methods have occupied much of the recent 
literature on first-order methods and their analysis; our development will allow
the omission of line searches with sufficient prior problem information.

\subsection{Matching pairs for descent}

As the first step to derive a surrogate function for the directional derivative
$\Theta_{\max}^{\, \prime}(\bar{x};\bullet)$ of the pointwise maximum objective 
function at a given $\bar{x} \in X$, 
we provide a well-known formula for this directional derivative.  
Specifically, for any $x \in X$, define the index set:
\[
{\cal M}_{\Theta}(x) \, \triangleq \, \left\{ \, 
j \, \in \, [ \, J \, ] \, \mid \, \theta_j(\bar{x}) 
\, = \, \Theta_{\max}(x) \, \right\},
\]
For a given $\bar{x}$, we have the following
formula for the directional derivatives of $\Theta_{\max}$ in terms 
of this index set ${\cal M}_{\Theta}(\bar{x})$, 
\begin{equation} \label{eq:obj dd} 
\Theta_{\max}^{\, \prime}(\bar{x};v) \, \triangleq \,
\displaystyle{
\lim_{\tau \downarrow 0}
} \, \displaystyle{
\frac{\Theta_{\max}(\bar{x} + \tau v) - \Theta_{\max}(\bar{x})}{\tau}
} \, = \, \displaystyle{
\max_{j \in {\cal M}_{\Theta}(\bar{x})}
} \, \left\{ \, \theta_j^{\, \prime}(\bar{x};v) \, \right\}.
\end{equation}
For later purposes, we also define, for an arbitrary $\varepsilon > 0$, the expanded
index set:
\[
{\cal M}_{\Theta}^{\, \varepsilon}(x) \, \triangleq \, \left\{ \, 
j \, \in \, [ J ] \, \mid \, \theta_j(x) 
\, \geq \, \Theta_{\max}(x) - \varepsilon \, \right\}.
\]
For every fixed $\varepsilon > 0$ and all $x$ sufficiently close to $\bar{x}$, we have 
\begin{equation} \label{eq:index set inclusions}
{\cal M}_{\Theta}(x) \, \subseteq \, {\cal M}_{\Theta}(\bar{x}) \, \subseteq \, 
{\cal M}_{\Theta}^{\, \varepsilon}(x).
\end{equation}
In the families of composite functions presented in the next several subsections, the
inner functions are all dc plus a finite-max of differentiable functions; i.e., each
\begin{equation} \label{eq:dc+diff}
p_{jk}(x) \, \triangleq \, \underbrace{p_{jk}^{\, \rm cvx}(x) + 
p_{jk}^{\, \rm cve}(x)}_{\mbox{\begin{tabular}{l}
difference of convex \\ [2pt]
denoted $p_{jk}^{\, \rm dc}(x)$
\end{tabular}}} + \underbrace{\displaystyle{
\max_{1 \leq \ell \leq L}
} \, p^{\rm diff}_{jk\ell}(x)}_{\mbox{denoted $p_{jk;\max}^{\, \rm diff}(x)$}}, 
\epc x \, \in \, \mathbb{R}^n,
\end{equation}
where $p_{jk}^{\, \rm cvx/cve}$ are convex/concave (not necessarily differentiable) 
functions, respectively, and each
$p_{jk\ell}^{\, \rm diff}(x)$ is continuously differentiable but not necessarily 
convex or concave.  This class of inner functions unifies two sets of studies:
(a) the early works of \cite{Bagirov00,DemyanovGSDixon86,KDDixon86} that do not
have the dc summand but has the pointwise maximum summand $p_{jk;\max}^{\, \rm diff}$
and the recent
work of \cite{LeThiHuynhPham24} where the inner functions are differentiable; and
(b) the extensive works of fractional programs where the inner functions range
from differentiable ones to the modern class of simple non-functions 
\cite{BotDaoLi22,BotLiTao24} and other simpler settings.  In principle,
the outer function $\phi$ could also be of the same kind (\ref{eq:dc+diff}); however
we find it much clearer to divide the discussion into four types because the ways to
obtain the final approximations of the composite function $\theta_j$ 
are different and dependent on whether the outer function $\phi_j$ is differentiable,
convex, or dc.  These separate treatments will be put back together via a family
of surrogation functions with appropriate properties when we return 
to the final objective function $\Theta_{\max}$ that is pointwise maximum of the
composites $\theta_j$ for $j = 1, \cdots, J$.  To simplify the notation, we omit 
the index $j$ in the pair $( \phi_j,P^{\, j})$ in the following subsections
when we derive the surrogation functions for the directional derivatives.

\subsubsection{Type I compositions: Differentiable outer functions}

This class of composite functions is given by
\[
\theta_{\, \rm I}(x) \, = \, \phi \circ P(x), \epc \mbox{where} \epc 
P(x) \, \triangleq \, \left( p_k(x) \right)_{k=1}^K \, \equiv \, y,
\]
with $\phi : \mathbb{R}^K \to \mathbb{R}$ being continuously differentiable.
Similar to the index set  ${\cal M}_{\Theta}(x)$, we define
\[
{\cal M}_{k;\max}^{\, {\rm diff}}(x) \, \triangleq \, \left\{ \, 
\ell \, \in \, [ L ] \, \mid \, p_{k\ell}^{\rm diff}(x) \, = \, 
p_{k;\max}^{\, \rm diff}(x) \, \right\}.
\]
We have
\[
p_k^{\, \prime}(\bar{x};v) \, = \, ( p_k^{\rm cvx} )^{\, \prime}(\bar{x};v) +
( p_k^{\rm cve} )^{\, \prime}(\bar{x};v) + \displaystyle{
\max_{\ell \in {\cal M}_{k;\max}^{\rm diff}(\bar{x})}
} \, \nabla p_{k\ell}^{\rm diff}(\bar{x})^{\top}v, 
\epc \forall \, v \, \in \, \mathbb{R}^n.
\]
Hence with $\phi$ being continuously differentiable, we obtain
\[ \begin{array}{lll}
\theta_{\, \rm I}^{\, \prime}(\bar{x};v) & \triangleq & \displaystyle{
\sum_{k=1}^K
} \, \displaystyle{
\frac{\partial \phi(\bar{y})}{\partial y_k}
} \, p_k^{\, \prime}(\bar{x};v); \epc \mbox{where } \ \bar{y} \triangleq P(\bar{x})
\\ [0.2in]
& = & \displaystyle{
\sum_{k=1}^K
} \, \displaystyle{
\frac{\partial \phi(\bar{y})}{\partial y_k}
} \, \left[ \,  ( p_k^{\rm cvx} )^{\, \prime}(\bar{x};v) +
( p_k^{\rm cve} )^{\, \prime}(\bar{x};v) + \displaystyle{
\max_{\ell \in {\cal M}_{k;\max}^{\rm diff}(\bar{x})}
} \, \nabla p_{k\ell}^{\rm diff}(\bar{x})^{\top}v \, \right] \\ [0.2in]
& = & \displaystyle{
\sum_{k=1}^K
} \, \displaystyle{
\frac{\partial \phi(\bar{y})}{\partial y_k}
} \, \left[ \,  \displaystyle{
\max_{a \in \partial ( p_k^{\rm cvx} )(\bar{x})}
} \, a^{\top}v - \displaystyle{
\max_{b \in \partial ( -p_k^{\rm cve} )(\bar{x})}
} \, b^{\top}v + \displaystyle{
\max_{\ell \in {\cal M}_{k;\max}^{\rm diff}(\bar{x})}
} \, \nabla p_{k\ell}^{\rm diff}(\bar{x})^{\top}v \, \right].
\end{array}
\]
In order to introduce a surrogation function for 
$\theta_{\, \rm I}^{\, \prime}(\bar{x};\bullet)$,
we define two further index sets:
\[
{\cal K}_{\phi}^{\, +}(\bar{y}) \, \triangleq \, \left\{ \, k \, \in \, [ K ] \, \left| \,
\displaystyle{
\frac{\partial \phi(\bar{y})}{\partial y_k}
} \, > \, 0 \, \right. \right\} \ \mbox { and } \ 
{\cal K}_{\phi}^{\, -}(\bar{y}) \, \triangleq \, \left\{ \, k \, \in \, [ K ] \, \left| \,
\displaystyle{
\frac{\partial \phi(\bar{y})}{\partial y_k}
} \, < \, 0 \, \right. \right\}.
\]
We then define, for a given tuple $\boldsymbol{\xi} \, \triangleq 
\left( \mathbf{a}, \mathbf{b}, \boldsymbol{\mathcal L} \, \right)$ in the set
\begin{equation} \label{eq:definition of Xi} 
\boldsymbol{\Xi}(\bar{x}) \, \triangleq \, \left\{ \, \begin{array}{l}
\left( \mathbf{a}, \mathbf{b}, \boldsymbol{\mathcal L} \, \right) \, \triangleq \, 
( \, a^k,b^k, \ell_k\, )_{k=1}^K \ \mbox{such that for each $k = 1, \cdots, K$:} \\ [0.1in]
( \, a^k,b^k \, ) \, \in  \, \underbrace{\partial p_k^{\rm cvx}(\bar{x}) \, \times \, 
\partial (-p_k^{\rm cve})(\bar{x})}_{\mbox{compact and convex}} \ \mbox{ and } \
\ell_k \, \in \, \underbrace{{\cal M}_{k;\max}^{\, \rm diff}(\bar{x})}_{\mbox{
a finite set}} 
\end{array} \right\},
\end{equation} 
the convex function $\wh{\theta}_{\, \rm I}(\bullet;\bar{x};\boldsymbol{\xi})$:
\[ \begin{array}{l}
\wh{\theta}_{\, \rm I}(x;\bar{x};\boldsymbol{\xi}) \, \triangleq \, 
\underbrace{\theta_{\rm I}(\bar{x})}_{\mbox{constant}} \, +  \\ [0.3in]
\underbrace{\left\{ \begin{array}{l}
\displaystyle{
\sum_{k \in {\cal K}_{\phi}^{\, +}(\bar{y})}
} \, \underbrace{\displaystyle{
\frac{\partial \phi(\bar{y})}{\partial y_k}
}}_{\mbox{positive}} \, \left[ \, \underbrace{\underbrace{p_k^{\rm cvx}(x) - 
p_k^{\rm cvx}(\bar{x})}_{\mbox{original}} 
- ( b^{\, k} )^{\top}(x - \bar{x}) + \displaystyle{
\max_{\ell \in {\cal M}_{k;\max}^{\, \rm diff}(\bar{x})}
} \, \nabla p_{k \ell}^{\rm diff}(\bar{x})^{\top}(x - \bar{x})}_{\mbox{convex in $x$ for 
fixed $\bar{x}$}} \, \right] \, + \\ [0.6in]
\displaystyle{
\sum_{k \in {\cal K}_{\phi}^{\, -}(\bar{y})}
} \, \underbrace{\displaystyle{
\frac{\partial \phi(\bar{y})}{\partial y_k}
}}_{\mbox{negative}} \, \left[ \, \underbrace{( a^k )^{\top}(x - \bar{x}) + 
\underbrace{p_k^{\rm cve}(x) - 
p_k^{\rm cve}(\bar{x} )}_{\mbox{original}} \, + \,
\nabla p_{k \ell_k}^{\rm diff}(\bar{x})^{\top}( x - \bar{x})}_{\mbox{concave in $x$ for 
fixed $\bar{x}$}} \, \right] 
\end{array} \right\}}_{\mbox{surrogation of $\theta_{\rm I}^{\, \prime}(\bar{x};x - \bar{x})$
at $\bar{x}$; equal to zero for $x = \bar{x}$}} \\ [1.2in]
= \, \theta_{\rm I}(\bar{x}) \, + \\ [0.1in]
\epc \displaystyle{
\sum_{k=1}^K
} \, \max\left( \, \begin{array}{l}
\displaystyle{
\frac{\partial \phi(\bar{y})}{\partial y_k}
} \, \left[ \, p_k^{\rm cvx}(x) - p_k^{\rm cvx}(\bar{x})
- ( b^{\, k} )^{\top}(x - \bar{x}) + \displaystyle{
\max_{\ell \in {\cal M}_{k;\max}^{\, \rm diff}(\bar{x})}
} \, \nabla p_{k \ell}^{\rm diff}(\bar{x})^{\top}(x - \bar{x}) \, \right], \\ [0.2in]
\displaystyle{
\frac{\partial \phi(\bar{y})}{\partial y_k}
}\, \left[ \, ( a^k )^{\top}(x - \bar{x}) + p_k^{\rm cve}(x) - p_k^{\rm cve}(\bar{x}) +
\nabla p_{k \ell_k}^{\rm diff}(\bar{x})^{\top}( x - \bar{x})\, \right]
\end{array} \right)
\end{array}
\]
Besides convexity (hence continuity), the function 
$\wh{\theta}_{\, \rm I}(\bullet;\bar{x};\boldsymbol{\xi})$ satisfies:

\gap

$\bullet $ {\bf (touching at the reference vector)} 
$\wh{\theta}_{\rm I}(\bar{x};\bar{x};\boldsymbol{\xi}) = \wh{\theta}_{\rm I}(\bar{x})$;
for all tuples $\boldsymbol{\xi}$;

\gap

$\bullet $ {\bf (directional derivative dominance)} for all  tuples
$\boldsymbol{\xi} \in \boldsymbol{\Xi}(\bar{x})$ and all vectors 
$v \in \mathbb{R}^n$,
it holds that $\left[ \, \wh{\theta}_{\, \rm I}(\bullet;\bar{x};\boldsymbol{\xi}) 
\, \right]^{\, \prime}(\bar{x};v) 
\, \geq \, \theta_{\rm I}^{\, \prime}(\bar{x};v)$ moreover, for every $\bar{v}$,
there exists a tuple $\bar{\boldsymbol{\xi}} \in \boldsymbol{\Xi}(\bar{x})$ 
such that
$\left[ \, \wh{\theta}_{\, \rm I}(\bullet;\bar{x};\bar{\boldsymbol{\xi}}) 
\, \right]^{\, \prime}(\bar{x};\bar{v}) 
= \theta_{\rm I}^{\, \prime}(\bar{x};\bar{v})$;

\gap

$\bullet $ {\bf (directional derivative consistency)} if 
$\boldsymbol{\Xi}(\bar{x}) \equiv \left\{ \boldsymbol{\xi} \right\}$ 
is a singleton, then we have the equality
$\left[ \, \wh{\theta}_{\, \rm I}(\bullet;\bar{x};\boldsymbol{\xi}) 
\, \right]^{\, \prime}(\bar{x};v) 
\, = \, \theta_{\rm I}^{\, \prime}(\bar{x};v)$ for all $v$.

\gap

Hence, if 
$\left[ \, \wh{\theta}_{\, \rm I}(\bullet;\bar{x};\boldsymbol{\xi}) 
\, \right]^{\, \prime}(\bar{x};v) > \theta_{\rm I}^{\, \prime}(\bar{x};v)$, then there exists
$\bar{\tau} > 0$ (which is dependent on $v$) such that
$\wh{\theta}_{\, \rm I}(\bar{x} + \tau v;\bar{x};\boldsymbol{\xi})
> \theta_{\, \rm I}(\bar{x} + \tau v)$ for all $\tau \in ( 0, \bar{\tau} ]$; i.e.,
$\wh{\theta}_{\, \rm I}(\bullet;\bar{x};\boldsymbol{\xi})$ majorizes
$\theta_{\, \rm I}$ locally near $\bar{x}$ along the direction $v$; nevertheless, 
$\wh{\theta}_{\, \rm I}(\bullet;\bar{x};\boldsymbol{\xi})$ does not majorize
$\theta_{\, \rm I}$ globally.

\gap

Additionally, we can also define the parameter-free function
\[
\wh{\theta}_{\, \rm I}^{\, \min}(x;\bar{x}) \, \triangleq \, \displaystyle{
\min_{\boldsymbol{\xi} \in \boldsymbol{\Xi}(\bar{x})}
} \ \wh{\theta}_{\, \rm I}(x;\bar{x};\boldsymbol{\xi}),
\]
which although is not convex in $x$, provides a tighter majorization of 
the directional derivative $\theta_{\rm I}^{\, \prime}(\bar{x};\bullet)$;
namely, 
\[
\left[ \, \wh{\theta}_{\, \rm I}(\bullet;\bar{x};\boldsymbol{\xi}) 
\, \right]^{\, \prime}(\bar{x};v) \, \geq \, \displaystyle{
\min_{\boldsymbol{\xi}^{\, \prime} \in \boldsymbol{\Xi}(\bar{x})}
} \, \left\{ \left[ \, \wh{\theta}_{\, \rm I}(\bullet;\bar{x};\boldsymbol{\xi}^{\, \prime}) 
\, \right]^{\, \prime}(\bar{x};v) \, \right\} \, = \,
\left[ \, \wh{\theta}_{\, \rm I}^{\, \min}(\bullet;\bar{x}) 
\, \right]^{\, \prime}(\bar{x};v) \geq 
\theta_{\rm I}^{\, \prime}(\bar{x};v)
\]
for all $( v,\boldsymbol{\xi} ) \in \mathbb{R}^n \times \boldsymbol{\Xi}(\bar{x})$,
where the first inequality holds because 
$\wh{\theta}_{\, \rm I}(\bar{x};\bar{x};\bullet) \equiv \theta_{\rm I}(\bar{x})$ is
a constant on 
$\boldsymbol{\Xi}(\bar{x})$ and the second inequality holds by directional derivative
dominance.  

\gap

{\bf The parameterizing family $\boldsymbol{\Xi}(\bar{x})$:}  
Being dependent only on
the inner functions, and thus the same for all differentiable outer functions, 
this family plays the 
surrogate role of the upper and lower sets for the composite direction derivative 
$\theta^{\, \prime}(\bar{x};\bullet)$.  For general convex/concave functions
$p_k^{\, \rm cvx/cve}$, the set $\boldsymbol{\Xi}(\bar{x})$ is a continuum
even though the third component ${\cal M}_{k;\max}^{\, \rm diff}(\bar{x})$ 
is a finite set.  When the functions $p_k^{\, \rm cvx} \triangleq \displaystyle{
\max_{1 \leq i \leq I}
} \, g_{ki}$ and  $-p_k^{\, \rm cve} \triangleq \displaystyle{
\max_{1 \leq i \leq I}
} \, h_{ki}$ are of the 
finite-max-convex-differentiable kind (i.e.\ $g_{ki}$ and $h_{ki}$ are all convex and
differentiable), we can 
take $\boldsymbol{\Xi}(\bar{x})$ to be a finite
set as well by replacing $\partial p_k^{\, \rm cvx}(\bar{x})$ and 
$\partial (-p_k^{\, \rm cve})(\bar{x})$ by the respective gradients of the active functions 
at $\bar{x}$ that define these finite-pointwise-maxima functions.  This remark applies
similarly to the functions in the other types.

\gap

The family $\boldsymbol{\Xi}(\bar{x})$ can be considered as the image of the 
set-valued map $\boldsymbol{\Xi} : x \mapsto \boldsymbol{\Xi}(x)$ at the vector
$\bar{x}$.  With its definition (\ref{eq:definition of Xi}), it is easy to see
that this map is closed (equivalently, upper semi-continuous because of its
compact-valuedness); i.e., 
if $\{ x^{\nu} \} \to x^{\infty}$ and $\{ \boldsymbol{\xi}^{\, \nu} \} \to 
\boldsymbol{\xi}^{\, \infty}$ where $\boldsymbol{\xi}^{\, \nu} \in \boldsymbol{\Xi}(x^{\nu})$
for all $\nu$, then $\boldsymbol{\xi}^{\, \infty} \in  \boldsymbol{\Xi}(x^{\infty})$.  Here,
we need to be a bit clear about the closedness and upper semicontinuity.  With
$\boldsymbol{\xi}^{\, \nu} = \left( a^{\nu;k},b^{\nu;k};\ell_k^{\, \nu}\, \right)_{k=1}^K$,
the convergence of the pair $\left( a^{\nu;k},b^{\nu;k} \right) \in 
\partial p_k^{\rm cvx}(x^{\nu}) \times \partial (-p_k^{\rm cve})(x^{\nu})$ to a pair in
$\partial p_k^{\rm cvx}(x^{\infty}) \times \partial (-p_k^{\rm cve})(x^{\infty})$ requires
no explanation.  Nevertheless, since each $\ell_k^{\, \nu}$ is an integer, by the convergence
of $\{ \ell_k^{\, \nu} \}$ to $\ell_k^{\, \infty}$, it means that this is finite
sequence whose members are all equal to the limit for all $\nu$ sufficiently large.  
Furthermore, the upper semicontinuity of $\boldsymbol{\Xi}$ at $\bar{x}$ means that for
every $\varepsilon > 0$, there exists a neighborhood ${\cal N}$ of $\bar{x}$ such that
\[
\boldsymbol{\Xi}(x) \, \subseteq \, \boldsymbol{\Xi}(x^{\infty}) + \varepsilon \, 
\left[ \, \mathbb{B}_n(0,1) \times \mathbb{B}_n(0,1) \times \{ \ell_k^{\, \infty} \}_{k=1}^K
\, \right],
\]
where $\mathbb{B}_n(0,1)$ is the closed unit Euclidean ball in $\mathbb{R}^n$ with center
at the origin and unit radius.  It is well known that the subdifferential of a convex function
has this continuity property.  Another well-known property of a convex function $f$ is that
$f^{\, \prime}(\bullet;\bullet)$ is upper semicontinuous (usc) at every pair $(\bar{x},\bar{w})$;
i.e., for all sequences $\{ (x^{\nu},w^{\nu}) \}$ converging to $(\bar{x},\bar{w})$, it
holds that
\[
\displaystyle{
\limsup_{\nu \to \infty}
} \, f^{\, \prime}(x^{\nu};w^{\nu}) \, \leq \, f^{\, \prime}(\bar{x},\bar{w}).
\]
The following lemma generalizes the above directional derivative inequality to 
$\left[ \, \wh{\theta}_{\rm I}(\bullet;\bar{x};\boldsymbol{\xi}) \, \right]^{\, \prime}(\wh{x};w)$
as a nonconvex function of the reference vector $\bar{x}$, 
the pair $(\wh{x},w)$ and the parameter $\boldsymbol{\xi}$.   We 
call (\ref{eq:limsup inequality}) the {\sl dd-joint upper semicontinuity} of the 
function $\wh{\theta}_{\rm I}$.  The lemma also gives two additional limiting
properties of the function $\wh{\theta}_{\rm I}$.

\begin{proposition} \label{pr:convergence of theta dd} \rm
Suppose that $\phi$ and all the functions $p_{k\ell}^{\rm diff}$ are continuously
differentiable and that  $\boldsymbol{\Xi}$ is nonempty-valued, compact-valued
and upper semicontinuous.  Then

\gap

$\bullet $ {\bf (lower Lipschitz boundedness)} for every bounded 
set $S \subseteq \mathbb{R}^n$, the union $\displaystyle{
\bigcup_{x \in S}
} \, \boldsymbol{\Xi}(x)$ is bounded, moreover, corresponding to $S$,
there exists a constant $B > 0$ such that 
\[
\wh{\theta}_{\rm I}(x;\bar{x};\boldsymbol{\xi}) - 
\wh{\theta}_{\rm I}(\bar{x};\bar{x};\boldsymbol{\xi}) \, \geq \, 
-B \, \| \, x - \bar{x} \, \|_2, \epc \forall \, ( \, x,\bar{x} \, )  \, \in \, 
X \, \times \, S \ \mbox{ and all } \ \boldsymbol{\xi} \, \in \, \boldsymbol{\Xi}(\bar{x}).
\]
Furthermore, let the following sequences be given: $\{ \bar{x}^{\nu} \} \to \bar{x}^{\infty}$, 
$\{ \wh{x}^{\, \nu} \} \to \wh{x}^{\, \infty}$ and 
$\{ \boldsymbol{\xi}^{\, \nu} \} \to \boldsymbol{\xi}^{\, \infty}$ with each tuple 
belonging to $\boldsymbol{\Xi}(\bar{x}^{\nu})$, the following additional statements hold:

\gap

$\bullet $ {\bf (joint continuity at the reference vector})
if $\bar{x}^{\infty} = \wh{x}^{\, \infty}$, then
$\displaystyle{
\lim_{\nu \to \infty}
} \, \wh{\theta}_{\, \rm I}(\wh{x}^{\, \nu};\bar{x}^{\nu};\boldsymbol{\xi}^{\, \nu}) 
= \theta_{\rm I}(\bar{x}^{\infty})$;

\gap

$\bullet $ {\bf (uniform upper approximation at the reference vector)} 
if $\bar{x}^{\infty}$ is such that 
${\cal M}_{k;\max}^{\, \rm diff}(\bar{x}^{\infty})$ is a singleton for all $k$ satisfying
$\displaystyle{
\frac{\partial \phi(\bar{y}^{\infty})}{\partial y_k}
} > 0$, where $\bar{y}^{\infty} \triangleq P(\bar{x}^{\infty})$, then 
for every sequence $\{ \tau_{\nu} \} \downarrow 0$, it holds that:
\begin{equation} \label{eq:uniform upper dd}
\displaystyle{
\limsup_{\nu \to \infty}
} \ \displaystyle{
\frac{\theta_{\rm I}(\bar{x}^{\nu} + \tau_{\nu} ( \wh{x}^{\, \nu} - \bar{x}^{\nu})) 
- \theta_{\rm I}(\bar{x}^{\nu}) - \tau_{\nu} \, \left[ \, 
\wh{\theta_{\rm I}}(\wh{x}^{\, \nu};\bar{x}^{\nu};\boldsymbol{\xi}^{\, \nu})
- \theta_{\rm I}(\bar{x}^{\nu}) \, \right]}{\tau_{\nu}}
} \, \leq 0;
\end{equation}
$\bullet $ {\bf (dd-joint upper semicontinuity)} for every sequence 
$\{ w^{\nu} \} \to w^{\infty}$, it holds that
\begin{equation} \label{eq:limsup inequality}
\displaystyle{
\limsup_{\nu \to \infty}
} \, \left[ \, 
\wh{\theta}_{\rm I}(\bullet;\bar{x}^{\nu},\boldsymbol{\xi}^{\, \nu}) 
\, \right]^{\, \prime}
(\wh{x}^{\, \nu};w^{\nu}) \, \leq \, 
\left[ \, \wh{\theta}_{\rm I}(\bullet;\bar{x}^{\infty};\boldsymbol{\xi}^{\, \infty})
\, \right]^{\, \prime}(\wh{x}^{\, \infty};w^{\infty}).
\end{equation}
\end{proposition}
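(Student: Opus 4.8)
I would prove the four displayed properties in order. The first three reduce, once the right elementary inequalities are in place, to bookkeeping with the convex/concave structure of the inner pieces and the $C^{1}$-ness of $\phi$; the last (dd-joint upper semicontinuity) is the delicate one. For the \emph{boundedness of $\bigcup_{x\in S}\boldsymbol{\Xi}(x)$ and the lower Lipschitz bound}: $\partial p_k^{\rm cvx}$ and $\partial(-p_k^{\rm cve})$ are locally bounded (subdifferentials of finite convex functions) and ${\cal M}_{k;\max}^{\rm diff}(\cdot)\subseteq[L]$, so for $S$ bounded the union sits in a fixed bounded product set; likewise $\bar{y}=P(\bar{x})$ runs over the compact $P(\overline{S})$, so the weights $\partial\phi(\bar{y})/\partial y_k$ are bounded. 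Expanding $\wh{\theta}_{\rm I}(x;\bar{x};\boldsymbol{\xi})-\wh{\theta}_{\rm I}(\bar{x};\bar{x};\boldsymbol{\xi})$ with the first (unmaxed) display, each $k\in{\cal K}_\phi^{+}(\bar y)$ bracket is $\geq-C_k\|x-\bar x\|_2$ (subgradient inequality on $p_k^{\rm cvx}$, Cauchy--Schwarz on the linear terms, and $\max_\ell\nabla p_{k\ell}^{\rm diff}(\bar x)^\top(x-\bar x)\geq-\max_\ell\|\nabla p_{k\ell}^{\rm diff}(\bar x)\|\,\|x-\bar x\|_2$) and each $k\in{\cal K}_\phi^{-}(\bar y)$ bracket is $\leq C_k'\|x-\bar x\|_2$; summing against the bounded weights gives $B$. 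For \emph{joint continuity at the reference vector}, write $\wh{\theta}_{\rm I}(\wh x^\nu;\bar x^\nu;\boldsymbol{\xi}^\nu)=\theta_{\rm I}(\bar x^\nu)+(\text{surrogation sum at }x=\wh x^\nu)$; the first term $\to\theta_{\rm I}(\bar x^\infty)$, and in every bracket the ``original'' part $p_k^{\rm cvx}(\wh x^\nu)-p_k^{\rm cvx}(\bar x^\nu)$ (or its concave analogue) $\to0$ by continuity since $\wh x^\nu,\bar x^\nu\to\bar x^\infty=\wh x^\infty$, while the linear parts are bounded coefficients times $\wh x^\nu-\bar x^\nu\to0$ and the weights are bounded, so the sum vanishes.

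For the \emph{uniform upper approximation at the reference vector}, set $d^\nu:=\wh x^\nu-\bar x^\nu$, $z^\nu:=\bar x^\nu+\tau_\nu d^\nu=(1-\tau_\nu)\bar x^\nu+\tau_\nu\wh x^\nu\to\bar x^\infty$, $\bar y^\nu:=P(\bar x^\nu)$. Since $\phi\in C^{1}$ and $P$ is locally Lipschitz, $\theta_{\rm I}(z^\nu)-\theta_{\rm I}(\bar x^\nu)=\sum_k\tfrac{\partial\phi(\bar y^\nu)}{\partial y_k}[p_k(z^\nu)-p_k(\bar x^\nu)]+o(\tau_\nu)$ with the remainder uniform in $\nu$ (modulus of continuity of $\nabla\phi$ near $\bar y^\infty$ times $\|P(z^\nu)-P(\bar x^\nu)\|_2=O(\tau_\nu)$). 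Dividing by $\tau_\nu$ and subtracting $\wh{\theta}_{\rm I}(\wh x^\nu;\bar x^\nu;\boldsymbol{\xi}^\nu)-\theta_{\rm I}(\bar x^\nu)$, it suffices to bound the $\limsup$ of each $k$-term. For $k$ with $\partial\phi(\bar y^\infty)/\partial y_k>0$ (hence eventually in ${\cal K}_\phi^{+}(\bar y^\nu)$): convexity along $[\bar x^\nu,\wh x^\nu]$ gives $\tau_\nu^{-1}(p_k^{\rm cvx}(z^\nu)-p_k^{\rm cvx}(\bar x^\nu))\leq p_k^{\rm cvx}(\wh x^\nu)-p_k^{\rm cvx}(\bar x^\nu)$; the supergradient inequality for $p_k^{\rm cve}$ at $\bar x^\nu$ with slope $-b^{\nu;k}$ gives $\tau_\nu^{-1}(p_k^{\rm cve}(z^\nu)-p_k^{\rm cve}(\bar x^\nu))\leq-(b^{\nu;k})^\top d^\nu$; and the singleton hypothesis, via upper semicontinuity of the active-index map, forces ${\cal M}_{k;\max}^{\rm diff}(\bar x^\nu)={\cal M}_{k;\max}^{\rm diff}(z^\nu)=\{\ell_k^\ast\}$ for all large $\nu$, so $\tau_\nu^{-1}(p_{k;\max}^{\rm diff}(z^\nu)-p_{k;\max}^{\rm diff}(\bar x^\nu))=\nabla p_{k\ell_k^\ast}^{\rm diff}(\bar x^\nu)^\top d^\nu+o(1)$ by the mean value theorem and continuity of $\nabla p_{k\ell_k^\ast}^{\rm diff}$; adding the three, the $k$-bracket difference is $\leq o(1)$. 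For $k$ with $\partial\phi(\bar y^\infty)/\partial y_k<0$ (eventually in ${\cal K}_\phi^{-}(\bar y^\nu)$): the subgradient inequality on $p_k^{\rm cvx}$, concavity of $p_k^{\rm cve}$ along $[\bar x^\nu,\wh x^\nu]$, and $p_{k;\max}^{\rm diff}(z^\nu)-p_{k;\max}^{\rm diff}(\bar x^\nu)\geq p_{k\ell_k^\nu}^{\rm diff}(z^\nu)-p_{k\ell_k^\nu}^{\rm diff}(\bar x^\nu)$ (valid since $\ell_k^\nu$ is active at $\bar x^\nu$), together with a uniform-continuity estimate over the finitely many $\nabla p_{k\ell}^{\rm diff}$, give $\tau_\nu^{-1}(p_k(z^\nu)-p_k(\bar x^\nu))\geq(k\text{-bracket at }\wh x^\nu)-o(1)$; multiplying by the negative weight flips this to $\limsup\leq0$. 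Terms with zero limiting weight vanish (weight $\to0$ against a bounded factor), and $\limsup\sum\leq\sum\limsup$ closes the bullet.

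For \emph{dd-joint upper semicontinuity}, note first that the two bracketed expressions in the maxed display are ordered pointwise, so $\wh{\theta}_{\rm I}(\bullet;\bar x;\boldsymbol{\xi})$ equals the plainly convex function $\theta_{\rm I}(\bar x)+\sum_{k\in{\cal K}_\phi^{+}(\bar y)}c_k[\,p_k^{\rm cvx}(\bullet)-p_k^{\rm cvx}(\bar x)-(b^k)^\top(\bullet-\bar x)+\max_{\ell\in{\cal M}_{k;\max}^{\rm diff}(\bar x)}\nabla p_{k\ell}^{\rm diff}(\bar x)^\top(\bullet-\bar x)\,]+\sum_{k\in{\cal K}_\phi^{-}(\bar y)}c_k[\,(a^k)^\top(\bullet-\bar x)+p_k^{\rm cve}(\bullet)-p_k^{\rm cve}(\bar x)+\nabla p_{k\ell_k}^{\rm diff}(\bar x)^\top(\bullet-\bar x)\,]$ with $c_k:=\partial\phi(\bar y)/\partial y_k$, whose directional derivative at $\wh x$ is the sum of those of its summands. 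I would pass to the limit summand by summand, using $\limsup\sum\leq\sum\limsup$ and additivity of directional derivatives at the limit point. Affine summands have convergent coefficients ($c_k^{\bar x^\nu}\to c_k^{\bar x^\infty}$ since $\nabla\phi,P$ are continuous; $a^{\nu;k},b^{\nu;k}\to a^{\infty;k},b^{\infty;k}$ since $\boldsymbol{\xi}^\nu\to\boldsymbol{\xi}^\infty$; $\nabla p_{k\ell_k^\nu}^{\rm diff}(\bar x^\nu)\to\nabla p_{k\ell_k^\infty}^{\rm diff}(\bar x^\infty)$ since $\ell_k^\nu$ is eventually the constant $\ell_k^\infty$), so they contribute their limiting value; the summands $c_k^{\bar x^\nu}(p_k^{\rm cvx})^{\, \prime}(\wh x^\nu;w^\nu)$ (limiting $c_k>0$) and $c_k^{\bar x^\nu}(p_k^{\rm cve})^{\, \prime}(\wh x^\nu;w^\nu)$ (the directional derivative of the convex function $c_kp_k^{\rm cve}$, limiting $c_k<0$) are handled by the recalled upper semicontinuity of $f^{\, \prime}(\bullet;\bullet)$ for convex $f$ plus boundedness of that derivative along the sequence; summands with $c_k^{\bar x^\infty}=0$ vanish. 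The only delicate summand is $[\,\max_{\ell\in{\cal M}_{k;\max}^{\rm diff}(\bar x^\nu)}\nabla p_{k\ell}^{\rm diff}(\bar x^\nu)^\top(\bullet-\bar x^\nu)\,]^{\, \prime}(\wh x^\nu;w^\nu)$ at a positive-weight $k$: the singleton hypothesis on ${\cal M}_{k;\max}^{\rm diff}(\bar x^\infty)$ and upper semicontinuity of the active-index map pin ${\cal M}_{k;\max}^{\rm diff}(\bar x^\nu)$ to the fixed singleton $\{\ell_k^\ast\}$ for all large $\nu$, collapsing this term to the affine $\nabla p_{k\ell_k^\ast}^{\rm diff}(\bar x^\nu)^\top w^\nu\to\nabla p_{k\ell_k^\ast}^{\rm diff}(\bar x^\infty)^\top w^\infty$, which matches the corresponding term at the limit. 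Assembling gives (\ref{eq:limsup inequality}).

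\textbf{Main obstacle.} The crux is exactly this max-of-differentiable block in the last two bullets: the directional derivative of a finite maximum of affine functions whose slopes themselves drift with $\bar x^\nu$ is \emph{not} upper semicontinuous in the evaluation point $\wh x^\nu$ unless the active set stabilizes, since an active piece can appear only in the limit ($\limsup_\nu{\cal M}_{k;\max}^{\rm diff}(\bar x^\nu)\subsetneq{\cal M}_{k;\max}^{\rm diff}(\bar x^\infty)$ is possible). The singleton assumption on ${\cal M}_{k;\max}^{\rm diff}(\bar x^\infty)$ at the positive-weight indices is precisely what fixes that active index for all large $\nu$, and I would carry it explicitly through both the uniform-upper-approximation and the dd-joint-usc arguments (equivalently, one may restrict to sequences with $\wh x^\infty=\bar x^\infty$, which also kills the pathology); the negative-weight indices, where the surrogate already uses the single drifting-but-eventually-constant index $\ell_k$, require no such hypothesis.
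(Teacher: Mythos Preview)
Your approach to the first three bullets matches the paper's (the paper in fact omits the first two and handles the third with the same mean-value/convexity/concavity bounds and the singleton device at positive-weight indices that you describe).

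For the fourth bullet, the paper proceeds \emph{without} the singleton hypothesis: it writes both $[\wh{\theta}_{\rm I}(\bullet;\bar{x}^\nu;\boldsymbol{\xi}^\nu)]'(\wh{x}^\nu;w^\nu)$ and the limit expression using the full index set, i.e.\ with the max-of-differentiable contribution recorded as $\max_{\ell\in{\cal M}_{k;\max}^{\rm diff}(\bar{x}^\bullet)}\nabla p_{k\ell}^{\rm diff}(\bar{x}^\bullet)^\top w^\bullet$, and then compares termwise via the inclusion ${\cal M}_{k;\max}^{\rm diff}(\bar{x}^\nu)\subseteq{\cal M}_{k;\max}^{\rm diff}(\bar{x}^\infty)$ together with continuity of $\nabla\phi$ and $\nabla p_{k\ell}^{\rm diff}$. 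So your route and the paper's diverge precisely at the point you flag.

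Your ``main obstacle'' is well taken and in fact sharper than the paper's argument here. The directional derivative of $x\mapsto\max_{\ell\in{\cal M}}v_\ell^\top(x-\bar{x})$ at a point $\wh{x}\neq\bar{x}$ is $\max_{\ell\in A(\wh{x})}v_\ell^\top w$ over the \emph{active} subset $A(\wh{x})\subseteq{\cal M}$, not over all of ${\cal M}$; the paper's right-hand formula is therefore only an upper bound on the true directional derivative at $\wh{x}^\infty$, and the termwise comparison does not close the inequality (\ref{eq:limsup inequality}). A one-line example shows the stated inequality can fail: take $n=K=1$, $\phi(y)=y$, $p_1^{\rm cvx}=p_1^{\rm cve}=0$, $L=2$, $p_{11}^{\rm diff}\equiv 0$, $p_{12}^{\rm diff}(x)=x$, $\bar{x}^\nu=-1/\nu$, $\wh{x}^\nu=1$, $w^\nu=-1$; then $\wh{\theta}_{\rm I}(\bullet;\bar{x}^\nu;\boldsymbol{\xi}^\nu)\equiv 0$ so the left side is $0$, while $\wh{\theta}_{\rm I}(x;0;\boldsymbol{\xi}^\infty)=\max(0,x)$ gives right side $-1$. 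Your two proposed repairs---carry the singleton hypothesis at positive-weight indices into the dd-joint-usc argument, or restrict to $\wh{x}^\infty=\bar{x}^\infty$---both restore the inequality; the paper's downstream uses of this property always have $\wh{x}^\infty=\bar{x}^\infty$, so the gap is harmless in context, but your caution is justified.
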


\begin{proof}  Omitting the proof of the former two properties, we prove only the last two. 
To prove (\ref{eq:uniform upper dd}), write 
$\bar{x}^{\tau;\nu} \triangleq \bar{x}^{\nu} + \tau_{\nu} ( \wh{x}^{\, \nu} - \bar{x}^{\nu})$,
$\bar{y}^{\nu} \triangleq P(\bar{x}^{\nu})$, 
$\bar{z}^{\nu} \triangleq P(\bar{x}^{\tau;\nu})$.  Observe that
\[
\displaystyle{
\lim_{\nu \to \infty}
} \, \bar{x}^{\tau;\nu} \, = \, \displaystyle{
\lim_{\nu \to \infty}
} \, \bar{x}^{\nu} \, = \, \bar{x}^{\infty} 
\epc \mbox{and} \epc \displaystyle{
\lim_{\nu \to \infty}
} \, P(\bar{x}^{\tau;\nu}) = \displaystyle{
\lim_{\nu \to \infty} 
} \, P(\bar{x}^{\nu}) \, = \, P(\bar{x}^{\infty}) \, = \, \bar{y}^{\infty}.
\]
By the singleton assumption of
${\cal M}_{k;\max}^{\, \rm diff}(\bar{x}^{\infty})$, say
${\cal M}_{k;\max}^{\, \rm diff}(\bar{x}^{\infty}) = \{ \, \bar{\ell}_k \, \}$,
it follows that for all $\nu$ sufficiently large,
\[
\displaystyle{
\frac{\partial \phi(\bar{y}^{\infty})}{\partial y_k}
} \, > \, 0 \ \Rightarrow \ {\cal M}_{k;\max}^{\, \rm diff}(\bar{x}^{\nu}) \, = \,
{\cal M}_{k;\max}^{\, \rm diff}(\bar{x}^{\tau;\nu}) \, = \, \{ \, \bar{\ell}_k \}.
\]
Hence, for all $k$ with $\displaystyle{
\frac{\partial \phi(\bar{y}^{\infty})}{\partial y_k}
} > 0$, which implies $\displaystyle{
\frac{\partial \phi(y)}{\partial y_k}
} \geq 0$ for all $y$ sufficiently close to $\bar{y}^{\infty}$,
we have, for all $\nu$ sufficiently large,
\[ \begin{array}{l}
p_{k;\max}^{\rm diff}(\bar{x}^{\tau;\nu}) - p_{k;\max}^{\rm diff}(\bar{x}^{\nu}) -
\tau_{\nu} \, \displaystyle{
\max_{\ell \in {\cal M}_{k;\max}^{\, \rm diff}(\bar{x}^{\nu})}
} \, \nabla p_{k \ell}^{\rm diff}(\bar{x}^{\nu})^{\top}( \wh{x}^{\, \nu} - \bar{x}^{\nu})
\\ [0.2in]
\, = \, p_{k \bar{\ell}_k}(\bar{x}^{\tau;\nu}) - 
p_{k \bar{\ell}_k}(\bar{x}^{\nu}) - \tau_{\nu} \, 
\nabla p_{k \bar{\ell}_k}^{\rm diff}(\bar{x}^{\nu})^{\top}( \wh{x}^{\, \nu} - \bar{x}^{\nu})
\\ [0.1in]
\, = \, \tau_{\nu} \, \left[ \, 
\nabla p_{k \bar{\ell}_k}(\bar{x}^{\tau^{\prime};\nu}) - 
\nabla p_{k \bar{\ell}_k}(\bar{x}^{\nu}) \, \right]^{\top} 
( \wh{x}^{\, \nu} - \bar{x}^{\nu}) \, = \, \mbox{o}( \tau_{\nu} ) 
\end{array} \]
because $\bar{x}^{\, \tau^{\, \prime};\nu}$ and $\bar{x}^{\nu}$ both converge 
to $\bar{x}^{\infty}$, where $\bar{x}^{\tau^{\, \prime};\nu} \triangleq 
\bar{x}^{\nu} + \tau_{\nu}^{\, \prime} ( \wh{x}^{\, \nu} - \bar{x}^{\nu})$
for some $\tau_{\nu}^{\, \prime} \in ( 0, \tau_{\nu} )$. 
On the other hand, for $k$ such that $\displaystyle{
\frac{\partial \phi(\bar{y}^{\infty})}{\partial y_k}
} < 0$, which implies $\displaystyle{
\frac{\partial \phi(y)}{\partial y_k}
} \leq 0$ for all $y$ sufficiently near $\bar{y}^{\infty}$, we have for any 
$\ell_k^{\, \nu} \in {\cal M}_{k;\max}^{\, \rm diff}(\bar{x}^{\nu})$,
\[ \begin{array}{l}
p_{k;\max}^{\rm diff}(\bar{x}^{\tau;\nu}) - p_{k;\max}^{\rm diff}(\bar{x}^{\nu}) -
\tau_{\nu} \, \nabla p_{k \ell_k^{\, \nu}}^{\rm diff}(\bar{x}^{\nu})^{\top}
( \wh{x}^{\, \nu} - \bar{x}^{\nu}) \\ [0.1in]
\, \geq \, p_{k\ell_k^{\, \nu}}(\bar{x}^{\tau;\nu}) - 
p_{k\ell_k^{\, \nu}}(\bar{x}^{\nu}) - \tau_{\nu} \, 
\nabla p_{k \ell_k^{\, \nu}}^{\rm diff}(\bar{x}^{\nu})^{\top}( \wh{x}^{\, \nu} - \bar{x}^{\nu})
\\ [0.1in]
\, = \, \tau_{\nu} \, \left[ \, 
\nabla p_{k\ell_k^{\, \nu}}(\bar{x}^{\tau^{\prime};\nu}) - 
\nabla p_{k\ell_k^{\, \nu}}(\bar{x}^{\nu}) \, \right]^{\top} 
( \wh{x}^{\, \nu} - \bar{x}^{\nu}) \, = \, \mbox{o}( \tau_{\nu} ),
\end{array}
\]
where $\bar{x}^{\tau^{\, \prime};\nu}$ is similarly defined.
Therefore, with  $\boldsymbol{\xi}^{\, \nu}
= \left( a^{\nu;k},b^{\nu;k};\ell_k^{\, \nu}\, \right)_{k=1}^K$, where 
$\ell_k^{\, \nu} = \bar{\ell}_k$ for all $k$ such that $\displaystyle{
\frac{\partial \phi(\bar{y}^{\infty})}{\partial y_k}
} > 0$, we have
\[ \begin{array}{l}
\theta_{\rm I}(\bar{x}^{\nu} + \tau_{\nu} ( \wh{x}^{\, \nu} - \bar{x}^{\nu})) - 
\theta_{\rm I}(\bar{x}^{\nu}) \, = \, \phi(\bar{z}^{\nu}) - \phi(\bar{y}^{\nu}) \\ [0.1in]
= \, \nabla \phi(\wt{z}^{\, t;\nu})^{\top}( \bar{z}^{\nu} - \bar{y}^{\nu} ),
\epc \mbox{where } \ \wt{z}^{\, t;\nu} \, \triangleq \, \bar{y}^{\nu} + 
t_{\nu} ( \bar{z}^{\nu} - \bar{y}^{\nu} ) \ \mbox{ for some } 
t_{\nu} \in ( \, 0, 1 \,) \\ [0.1in]
= \, \underbrace{\left[ \, \nabla \phi(\wt{z}^{\, t;\nu}) -  \nabla \phi(\bar{y}^{\nu}) 
\, \right]^{\top}( \bar{z}^{\nu} - \bar{y}^{\nu} )}_{\mbox{denoted $T1$}} +
\underbrace{\nabla \phi(\bar{y}^{\nu})^{\top}( \bar{z}^{\nu} - \bar{y}^{\nu} )}_{
\mbox{denoted $T2$}}.
\end{array} \]
Since both $\bar{x}^{\tau;\nu}$ and $\bar{x}^{\nu}$ converge to $\bar{x}^{\infty}$ and
$P$ is locally Lipschitz continuous near $\bar{x}^{\infty}$,
\[
\bar{z}^{\nu} - \bar{y}^{\nu} \, = \, P(\bar{x}^{\tau;\nu}) - P(\bar{x}^{\nu})
\, \leq \,\tau_{\nu} \, \mbox{Lip}_P \, \| \, \wh{x}^{\, \nu} - \bar{x}^{\nu} \, \|_2,
\]
for some local Lipschitz constant $\mbox{Lip}_P > 0$.  Moreover, since both
$\wt{z}^{\, t;\nu}$ and $\bar{y}^{\nu}$ converge to $\bar{y}^{\infty}$, it follows that
$T1 = \mbox{o}(\tau_{\nu})$.  For $T2$, we have
\[ \begin{array}{l}
T2 \, = \, \displaystyle{
\sum_{k \in {\cal K}_{\phi}^{\, +}(\bar{y}^{\nu})}
} \, \displaystyle{
\frac{\partial \phi(\bar{y}^{\nu})}{\partial y_k}
} \, \left[ \, p_k^{\rm cvx}(\bar{x}^{\tau;\nu}) - p_k^{\rm cvx}(\bar{x}^{\nu}) + 
p_k^{\rm cve}(\bar{x}^{\tau;\nu}) - p_k^{\rm cve}(\bar{x}^{\nu}) +
p_{k;\max}^{\rm diff}(\bar{x}^{\tau;\nu}) - p_{k;\max}^{\rm diff}(\bar{x}^{\nu}) 
\, \right] \\ [0.25in]
+ \,  \displaystyle{
\sum_{k \in {\cal K}_{\phi}^{\, -}(\bar{y}^{\nu})}
} \, \displaystyle{
\frac{\partial \phi(\bar{y}^{\nu})}{\partial y_k}
} \, \left[ \, p_k^{\rm cvx}(\bar{x}^{\tau;\nu}) - p_k^{\rm cvx}(\bar{x}^{\nu}) + 
p_k^{\rm cve}(\bar{x}^{\tau;\nu}) - p_k^{\rm cve}(\bar{x}^{\nu}) +
p_{k;\max}^{\rm diff}(\bar{x}^{\tau;\nu}) - p_{k;\max}^{\rm diff}(\bar{x}^{\nu}) 
\, \right] \\ [0.2in]
\leq \, \tau_{\nu} \displaystyle{
\sum_{k \in {\cal K}_{\phi}^{\, +}(\bar{y}^{\nu})}
} \, \displaystyle{
\frac{\partial \phi(\bar{y}^{\nu})}{\partial y_k}
} \, \left[ \,  \begin{array}{l}
p_k^{\rm cvx}(\wh{x}^{\, \nu}) - p_k^{\rm cvx}(\bar{x}^{\nu}) 
- ( b^{\, \nu;k} )^{\top}( \wh{x}^{\, \nu} - \bar{x}^{\nu} ) \, + \\ [0.1in]
\displaystyle{
\max_{\ell \in {\cal M}_{k;\max}^{\rm diff}(\bar{x}^{\nu})}
} \, \nabla p_{k \ell}^{\rm diff}(\bar{x}^{\nu})^{\top}( \wh{x}^{\, \nu} - \bar{x}^{\nu} ) 
+ \displaystyle{
\frac{\mbox{o}(\tau_{\nu})}{\tau_{\nu}}
} 
\end{array} \right] \, + \\ [0.35in]
\epc \tau_{\nu} \, \displaystyle{
\sum_{k \in {\cal K}_{\phi}^{\, -}(\bar{y}^{\nu})}
} \, \displaystyle{
\frac{\partial \phi(\bar{y}^{\nu})}{\partial y_k}
} \, \left[ \, \begin{array}{l}
( a^{\nu;k} )^{\top}( \wh{x}^{\, \nu} - \bar{x}^{\nu} ) +
p_k^{\rm cve}(\wh{x}^{\, \nu}) - p_k^{\rm cve}(\bar{x}^{\nu}) \, + \\ [0.1in]
\nabla p_{k \ell_k^{\nu}}^{\rm diff}(\bar{x}^{\nu})^{\top}
( \bar{x}^{\tau;\nu} - \bar{x}^{\nu} ) + \displaystyle{
\frac{\mbox{o}(\tau_{\nu})}{\tau_{\nu}}
} 
\end{array} \right] \ \left(\mbox{\begin{tabular}{l}
by convexity of $p_k^{\rm cvx}$ \\ [3pt]
and concavity of $p_k^{\rm cve}$
\end{tabular}} \right) \\ [0.35in]

= \, \tau_{\nu} \, \left[ \, 
\wh{\theta}_{\rm I}(\wh{x}^{\, \nu};\bar{x}^{\nu};\boldsymbol{\xi}) 
- \theta_{\rm I}(\bar{x}^{\nu}) \, \right] + \mbox{o}(\tau_{\nu}).
\end{array}
\]
Putting the bounds for $T1$ and $T2$ together establishes (\ref{eq:uniform upper dd}).
To show the last property, let additionally
$\bar{y}^{\infty} \triangleq P(\bar{x}^{\infty})$, we have, since
${\cal K}_{\phi}^{\pm}(\bar{y}^{\infty}) \subseteq {\cal K}_{\phi}^{\pm}(\bar{y}^{\nu})$
for all $\nu$ sufficiently large, and the union
${\cal K}_{\phi}^+(\bar{y}^{\infty}) \, \cup \, {\cal K}_{\phi}^-(\bar{y}^{\infty}) \, \cup \,
{\cal K}_{\phi}^0(\bar{y}^{\infty}) = {\cal K}_{\phi}^+(\bar{y}^{\nu}) \, \cup \, 
{\cal K}_{\phi}^-(\bar{y}^{\nu}) \, \cup \,
{\cal K}_{\phi}^0(\bar{y}^{\nu}) \, = \, \{ \, 1, \cdots, K \}$, where the 
superscript~$0$ corresponds to those
indices $k$ for which the partial derivatives of $\phi$ with respect to $y_k$ 
are equal to zero:
\begin{equation} \label{eq:theta surrogate dd}
\begin{array}{l}
\left[ \, \wh{\theta}_{\, \rm I}(\bullet;\bar{x}^{\nu};\boldsymbol{\xi}^{\, \nu}) 
\, \right]^{\, \prime}(\wh{x}^{\, \nu};w^{\nu}) \\ [0.1in] 
= \, \displaystyle{
\sum_{k \in {\cal K}_{\phi}^{\, +}(\bar{y}^{\nu})}
} \, \displaystyle{
\frac{\partial \phi(\bar{y}^{\nu})}{\partial y_k}
} \, \left[ \, ( p_k^{\rm cvx} )^{\, \prime}( \wh{x}^{\, \nu};w^{\nu} ) 
- ( b^{\, \nu;k} )^{\top}w^{\nu} + \displaystyle{
\max_{\ell \in {\cal M}_{k;\max}^{\rm diff}(\bar{x}^{\nu})}
} \, \nabla p_{k \ell}^{\rm diff}(\bar{x}^{\nu})^{\top}w^{\nu} \, \right] \, + \\ [0.3in]
\epc \displaystyle{
\sum_{k \in {\cal K}_{\phi}^{\, -}(\bar{y}^{\nu})}
} \, \displaystyle{
\frac{\partial \phi(\bar{y}^{\nu})}{\partial y_k}
} \, \left[ \, ( a^{\nu;k} )^{\top}w^{\nu} + 
( p_k^{\rm cve} )^{\, \prime}(\bar{x}^{\nu};w^{\nu} ) + 
\nabla p_{k \ell_k^{\nu}}^{\rm diff}(\bar{x}^{\nu})^{\top}w^{\nu}, \right] \\ [0.3in]
= \, \displaystyle{
\sum_{k \in {\cal K}_{\phi}^{\, +}(\bar{y}^{\infty})}
} \, \displaystyle{
\frac{\partial \phi(\bar{y}^{\nu})}{\partial y_k}
} \, \left[ \, ( p_k^{\rm cvx} )^{\, \prime}( \wh{x}^{\, \nu};w^{\nu} ) 
- ( b^{\, \nu;k} )^{\top}w^{\nu} + \displaystyle{
\max_{\ell \in {\cal M}_{k;\max}^{\rm diff}(\bar{x}^{\nu})}
} \, \nabla p_{k \ell}^{\rm diff}(\bar{x}^{\nu})^{\top}w^{\nu} \, \right] \, + \\ [0.3in]
\epc \displaystyle{
\sum_{k \in {\cal K}_{\phi}^{\, 0}(\bar{y}^{\infty})}
} \ \mbox{terms approaching zero} \\ [0.3in]
\epc \displaystyle{
\sum_{k \in {\cal K}_{\phi}^{\, -}(\bar{y}^{\infty})}
} \, \displaystyle{
\frac{\partial \phi(\bar{y}^{\nu})}{\partial y_k}
} \, \left[ \, ( a^{\nu;k} )^{\top}w^{\nu} + 
( p_k^{\rm cve} )^{\, \prime}(\bar{x}^{\nu};w^{\nu} ) + 
\nabla p_{k \ell_k^{\nu}}^{\rm diff}(\bar{x}^{\nu})^{\top}w^{\nu}, \right]
\end{array}
\end{equation}
versus
\[ \begin{array}{l}
\left[ \, \wh{\theta}_{\, \rm I}(\bullet;\bar{x}^{\infty};\boldsymbol{\xi}^{\, \infty}) 
\, \right]^{\, \prime}(\wh{x}^{\, \infty};w^{\infty}) \\ [0.1in] 
= \, \displaystyle{
\sum_{k \in {\cal K}_{\phi}^{\, +}(\bar{y}^{\infty})}
} \, \displaystyle{
\frac{\partial \phi(\bar{y}^{\infty})}{\partial y_k}
} \, \left[ \, ( p_k^{\rm cvx} )^{\, \prime}( \wh{x}^{\, \infty};w^{\infty} ) 
- ( b^{\, \infty;k} )^{\top}w^{\infty} + \displaystyle{
\max_{\ell \in {\cal M}_{k;\max}^{\rm diff}(\bar{x})}
} \, \nabla p_{k \ell}^{\rm diff}(\bar{x}^{\infty})^{\top}w^{\infty} \, \right] \, + \\ [0.3in]
\epc \displaystyle{
\sum_{k \in {\cal K}_{\phi}^{\, -}(\bar{y}^{\infty})}
} \, \displaystyle{
\frac{\partial \phi(\bar{y}^{\infty})}{\partial y_k}
} \, \left[ \, ( a^{\infty;k} )^{\top}w^{\infty} + 
( p_k^{\rm cve} )^{\, \prime}(\bar{x}^{\infty};w^{\infty} ) + 
\nabla p_{k \ell_k^{\infty}}^{\rm diff}(\bar{x}^{\infty})^{\top}w^{\infty}, \right]
\end{array}
\]
Comparing the summands in the above two expressions one by one, and noting that
the signs of $\displaystyle{
\frac{\partial \phi(\bar{y}^{\nu})}{\partial y_k}
}$ and $\displaystyle{
\frac{\partial \phi(\bar{y}^{\infty})}{\partial y_k}
}$ are the same provided that the latter is zero and also that $\ell_k^{\, \nu}$ is 
equal to $\ell_k^{\, \infty}$ for all but finitely many $\nu$, we see that 
each term (indexed by $k$)
in the former directional derivative is upper bounded by the corresponding one in the latter
when $\nu$ is passed to the limit, while those with $\displaystyle{
\frac{\partial \phi(\bar{y}^{\infty})}{\partial y_k}
} = 0$ converges to zero, all by the continuous differentiability of the functions
$\phi$ and $p_{k\ell}^{\rm diff}$.  This comparison is enough to establish the
inequality (\ref{eq:limsup inequality}).
\end{proof}
 
\begin{remark} \label{eq:singleton requirement} \rm
Unlike the other properties, the uniform upper approximation inequality 
(\ref{eq:uniform upper dd}) is the only one that requires the maximizing 
index set ${\cal M}_{k;\max}^{\, \rm diff}(\bar{x})$ to be a singleton 
at a given point $\bar{x}$ for $k$ corresponding to a positive partial derivative
$\displaystyle{
\frac{\partial \phi(\bar{y})}{\partial y_k}
}$.  This is a pointwise requirement and will be imposed at a limit point in
the convergence analysis of the
algorithm to be developed later; see Remark~\ref{rm:inequality in convergence}.  
\hfill $\Box$
\end{remark}

An extreme case of the parameterized family of functions 
$\wh{\theta}_{\, \rm I}(\bullet;\bar{x};\boldsymbol{\xi})$ is worthmentioning.
This is the case when no parameterization
by $\boldsymbol{\xi}$ is needed (in this case, we take $\boldsymbol{\Xi}(\bar{x})$ to be
a singleton).  We say that the pair $(\phi,P)$ satisfies the 

\gap

$\bullet $ {\sl aggregate composite convexity} (AC$^{\, 2}$) property 
at $\bar{x}$ if (a) the sum function $\displaystyle{
\sum_{k=1}^K
} \, \displaystyle{
\frac{\partial \phi(\bar{y})}{\partial y_k}
} \, p_k^{\rm dc}$ is convex, and (b) the max functions $p_{k;\max}^{\, \rm diff}$ 
are all differentiable at $\bar{x}$ for $k = 1, \cdots, K$
(or more restrictively, when $L = 1$);

\gap

$\bullet $ {\sl componentwise composite convexity} (C$^{\, 3}$) property 
at $\bar{x}$ if (a) the individual functions $\left\{ \displaystyle{
\frac{\partial \phi(\bar{y})}{\partial y_k}
} \, p_k^{\rm dc} \, \right\}_{k=1}^K$ are all convex and the same condition (b)
holds.

\gap

If either one of the above two composite convexity properties holds (for simplification,
take $L =1$ 
and write $p_k^{\rm diff}(x)$ for the differentiable summand of $p_k$), we can define
the unparameterized convex surrogate function $\wh{\theta}_{\, \rm I}(\bullet;\bar{x})$ by
\[
\wh{\theta}_{\, \rm I}(x;\bar{x}) \, = \, \theta_{\, \rm I}(\bar{x}) + \displaystyle{
\sum_{k=1}^K
} \, \displaystyle{
\frac{\partial \phi(\bar{y})}{\partial y_k}
} \, \left[ \, p_k^{\rm dc}(x) - p_k^{\rm dc}(\bar{x}) + 
\nabla p_k^{\rm diff}(\bar{x})^{\top} ( x - \bar{x} ) \, \right]; \epc x \, \in \, 
\mathbb{R}^n.
\]
Another situation for $\boldsymbol{\Xi}(\bar{x})$ to be a singleton without  
the aggregate $\displaystyle{
\sum_{k=1}^K
} \, \displaystyle{
\frac{\partial \phi(\bar{y})}{\partial y_k}
} \, p_k^{\rm dc}$ being convex is when
the pair $(\phi,P)$ satisfies the 

\gap

$\bullet $ {\sl componentwise composite differentiable convexity-concavity} 
(C$^{\, 2}$DC$^{\, 2}$) property at $\bar{x}$, which means that 
the following conditions hold for all $k = 1, \cdots, K$:

\gap

(a1) $\displaystyle{
\frac{\partial \phi(\bar{y})}{\partial y_k}
} > 0 \Rightarrow \partial (-p_k^{\rm cve})(\bar{x})$ is a singleton (equivalently,
$p_k^{\rm cve}$ is strictly differentiable at $\bar{x}$);

\gap

(a2) $\displaystyle{
\frac{\partial \phi(\bar{y})}{\partial y_k}
} < 0 \Rightarrow \partial (p_k^{\rm cvx})(\bar{x})$ is a singleton (equivalently,
$p_k^{\rm cvx}$ is strictly differentiable at $\bar{x}$);

\gap

(b) ${\cal M}_{k;\max}^{\, \rm diff}(\bar{x})$ is a singleton (equivalent, 
$p_{k;\max}^{\, \rm diff}$ is differentiable at $\bar{x}$).

\gap

Note that while the (C$^{\, 3}$) condition is a special case of the (AC$^{\, 2}$) condition,
the condition (C$^{\, 2}$DC$^{\, 2}$) is not; nevertheless, both properties will have 
an important role to play in the convergence
rate analysis.

\gap

{\bf Some examples:}.  We give some prominent examples of differentiable outer functions,
starting with several simple ones:

\begin{equation} \label{eq:quotients and products} 	
\begin{array}{l}
\phi_{\, \rm quot}(y,z) \, = \, \displaystyle{
\sum_{k=1}^K
} \ \displaystyle{
\frac{y_k}{z_k}
}, \epc ( y,z ) \, \in \, \mathbb{R}^K \, \times \mathbb{R}^K_{++} 
\ \mbox{ and} \\ [0.2in]
\phi_{\, \rm prod}(y,z) \, = \, \displaystyle{
\sum_{k=1}^K
} \ y_k \, z_k, \epc ( y,z ) \, \in \, \mathbb{R}^K \, \times \mathbb{R}^K,
\end{array} \end{equation}
and the trivariate extension:
\[
\phi(y,z,w) \, = \,  \displaystyle{
\sum_{k=1}^K
} \ \displaystyle{
\frac{y_k \, w_k}{z_k}
}, \epc ( y,z,w ) \, \in \, \mathbb{R}^K \, \times \mathbb{R}^K_{++} \times \,
\mathbb{R}^K_.
\]
In particular, the quotient function $\phi_{\rm quot}$ has provided a strong
initial motivation for the present study.
In \cite{ShenYu18-I}, an algorithm for solving the sum-of-ratios minimization
problem (without the outer pointwise maximum operator):
\[
\displaystyle{
\operatornamewithlimits{\mbox{\bf minimize}}_{x \in X}
} \ \theta_{\rm frac}(x) \, \triangleq \, \displaystyle{
\sum_{k=1}^K
} \ \displaystyle{
\frac{n_k(x)}{d_k(x)}
} \]
was developed based on the composite-function formulation
\[
\displaystyle{
\operatornamewithlimits{\mbox{\bf minimize}}_{x \in X}
} \ \theta_{\rm frac}(x) \, \triangleq \, \phi_{\rm spec}\left( \sqrt{N(x)},D(x) \right)
\]
where $\phi_{\rm spec}(y,z) = \displaystyle{
\sum_{k=1}^K
} \ \displaystyle{
\frac{y_k^2}{z_k}
}$, for $( y,z ) \in \mathbb{R}^K_+ \, \times \mathbb{R}^K_{++}$;
$\sqrt{N(x)} = \left( \, \sqrt{n_k(x)} \, \right)_{k=1}^K$ and 
$D(x) = \left( \, d_k(x) \, \right)_{k=1}^K$; here besides the positivity
of the denominator functions $d_k(x)$, the numerator functions $n_k(x)$ need 
to be nonnegative for their square roots to be well defined.  Thus this
special square-root formulation of the sum-of-ratios minimization problem
is covered by our composite setting with a differentiable outer function.

\gap

We illustrate the two composite convexity properties for the fractional function
$\theta_{\rm frac}$ using the two outer functions $\theta_{\rm quot}$ and
$\theta_{\rm spec}$.  For the former, the aggregate
composite convexity property holds if the sum function
\[
\displaystyle{
\sum_{k=1}^K
} \, \displaystyle{
\frac{1}{d_k(\bar{x})}
} \, \left[ \, n_k(x) - \displaystyle{
\frac{n_k(\bar{x})}{d_k(\bar{x})}
} \, d_k(x) \, \right]
\]
is convex; in turn, this holds if each $d_k$ is positive concave function, and
each $n_k$ is a nonnegative convex function, a setting that is akin to the classical
DinKelbach algorithm for fractional programs.  Clearly, if the latter
concavity/convexity condition is in place for the individual functions, 
then the componentwise convexity property also holds.  For
the special function $\phi_{\rm spec}$, the aggregate
composite convexity property holds if the sum function
\[
\displaystyle{
\sum_{k=1}^K
} \, \left[ \, \displaystyle{
\frac{2 n_k(\bar{x})}{d_k(\bar{x})}
} \,  \sqrt{n_k(x)} - \displaystyle{
\frac{n_k(\bar{x})^2}{d_k(\bar{x})^2}
} \, d_k(x) \, \right]
\]
is convex; this requirement and also the componentwise convexity property are clearly
more demanding because the two properties involve the convexity of the square root
functions $\sqrt{n_k(x)}$.  Lastly, for the product function
\[
\theta_{\rm prod}(x) \, = \, \displaystyle{
\sum_{k=1}^K
} \, n_k(x) \, d_k(x),
\]
the aggregate composite convexity property holds if the sum function
\[
\displaystyle{
\sum_{k=1}^K
} \, \left[ \, d_k(\bar{x}) \, n_k(x) + n_k(\bar{x}) \, d_k(x) \, \right]
\]
is convex; this is satisfied if for instance both functions $n_k$ and $d_k$ 
are nonnegative and convex, or more generally, there is a matching
between the signs and convexity/concavity of the functions $n_k$ and $d_k$.

\gap

A further class of the outer functions $\phi$ that is of interest is
the family of the deviation functions.  These may come in 
different forms; basic examples include the squared deviations such as
\[
\phi_{\, \rm SumSq}(y,z) \, = \, \displaystyle{
\sum_{k=1}^K
} \ \left( \, \displaystyle{
\frac{y_k}{z_k}
} - \alpha_k \, \right)^2 \epc \mbox{or} \epc
\phi_{\, \rm SqSum}(y,z) \, = \, \left( \, \displaystyle{
\sum_{k=1}^K
} \ \displaystyle{
\frac{y_k}{z_k}
} - \gamma_j \, \right)^2
\]
for given constants $\alpha_j$ and $\gamma_k$.  In statistical estimation
problems, it may be of interest to consider robust versions of the squared
deviations to guard against outliers in datasets, an example of which is
based on the Huber loss function \cite{Huber73} that is obtained by 
linearizing the square function (for squared errors) after a
threshold: $\phi(y,z) \, = h_{\delta} \circ g(y,z)$, where for a 
given scalar $\alpha > 0$,
\[
h_{\delta}(t) \, \triangleq \left\{ \begin{array}{ll}
\thalf \, t^{\, 2} & \mbox{if $| \, t \, | \, \leq \delta$} \\ [0.1in]
\thalf \, \delta^{\, 2} + \delta \, \left( | \, t \, | - \delta \right) 
& \mbox{if $| \, t \, | \, < \delta$},
\end{array} \right.
\]
and the function $g(y,z) = \displaystyle{
\sum_{k=1}^K
} \, \displaystyle{
\frac{y_k}{z_k}
} - \gamma_j$ (as an example).  One interpretation of the constant $\gamma_k$ 
in this context is a target of the sum of fractions for a dataset partitioned 
in $K$ groups and the overall objective is to minimize the maximum of 
some measure of the deviations from the targets each defined by the Huber function.  
We note that the Huber function $h_{\delta}$ is trivially seen to be convex and 
once continuously differentiable.  Parameterized by two parameter $\delta > 0$
and $\gamma > 2$,
the well-known {\sc scad} function \cite{FanLi01} used as an approximation 
of the $\ell_0$ function
$| \, t \, |_0 \triangleq \left\{ \begin{array}{ll}
1 & \mbox{if $t \neq 0$} \\
0 & \mbox{otherwise}
\end{array} \right.$ in sparse approximation \cite{HastieTibshiraniWainwright15}
and given by
\[
h_a^{\rm SCAD}(t;\delta) \, = \, \left\{ \begin{array}{cl}
\displaystyle{
\frac{2a}{( a + 1) \delta}
} \, | \, t \, | & \mbox{if $| \, t \, | \leq \delta/a$} \\ [0.2in]
1 - \displaystyle{
\frac{\left( \, \delta - | \, t \, | \, \right)^2}{\left( \, 1 - \displaystyle{
\frac{1}{a^2}		
} \, \right) \, \delta^2}
} & \mbox{if $\displaystyle{
\frac{\delta}{a}
} \, \leq \, | \, t \, | \, \leq \, \delta$} \\ [0.35in]
1 & \mbox{if $| \, t \, | \, \geq \, \delta$}
\end{array} \right\} \epc \mbox{for $t \in \mathbb{R}$}
\]
is once continuously differentiable.  Used for constraint selection (as a generalization
of variable selection), the composite 
function $\displaystyle{
\sum_{i=1}^I
} \, h_a^{\rm SCAD}(\bullet;\delta) \, \circ \, g_i(x)_+$ is an approximation 
of $\displaystyle{
\sum_{i=1}^I
} \, | \, g_i(x)_+ \, |_0$, where $t_+ \triangleq \max( t,0 )$ is the univariate
plus-function; the latter sum is a count
of the violation of the constraints: $g_i(x) \leq 0$, $i = 1, \cdots, I$.  Note
the nondifferentiability of the plus function in the {\sc scad}-composite function.

\subsubsection{Type II compositions: Vector-convexified outer functions} 

The building block of this class of composite functions is the univariate composition: 
\begin{equation} \label{eq:univariate convex}
\theta_{\rm II}(x) \, = \, \psi \circ p(x), \epc \mbox{where } \ \left\{ \begin{array}{l}
\psi : \mathbb{R} \to \mathbb{R} \mbox{ is convex} \\ [0.1in]
p(x) = p_{\rm cvx}(x) + p_{\rm cve}(x) + \underbrace{\displaystyle{
\max_{1 \leq \ell \leq L}
} \, p_{\ell}^{\, \rm diff}(x)}_{\mbox{denoted $p_{\rm max}^{\rm diff}(x)$}}, 	
\end{array} \right.
\end{equation}
with $p_{\rm cvx}$, $p_{\rm cve}$, and $p_{\ell}^{\rm diff}$ all being univariate functions
that are convex, concave, and differentiable, respectively.  
By \cite[Lemma~6.1.1]{CuiPang2021}, we can write $\psi = \psi_{\uparrow} + \psi_{\downarrow}$,
where $\psi_{\updownarrow}$ are convex monotonic functions with $\psi_{\uparrow}$ being
nondecreasing and $\psi_{\downarrow}$ being nonincreasing.  Moreover, $\psi_{\updownarrow}$ 
can be chosen to be Lipschitz continuous if $\psi$ is and continuously differentiable 
if $\psi$ is.  We write ${\cal M}_p^{\rm diff}(\bar{x})$ for 
${\cal M}_{k;\max}^{\rm diff}(\bar{x})$ without the subscript $k$ 
in the present univariate context; i.e.,
\[
{\cal M}_p^{\rm diff}(\bar{x}) \, \triangleq \, \left\{ \, \ell \, \in \, [ \, L \, ] 
\, : \, p_{\ell}^{\rm diff}(\bar{x}) \, = \, p_{\max}^{\rm diff}(\bar{x}) \, \right\}.
\]
Similar to the case of a differentiable outer function, for a given triple 
\[
\xi \, \triangleq \, ( a,b,\ell ) \, \in \, 
\partial p_{\rm cvx}(\bar{x}) \times \partial (-p_{\rm cve})(\bar{x})
\times {\cal M}_p^{\rm diff}(\bar{x}) \triangleq
\Xi(\bar{x}), 
\]
we can define the convex function $\wh{\theta}_{\, \rm II}(\bullet;\bar{x};\xi)$:
\[ \begin{array}{ll}
\wh{\theta}_{\, \rm II}(x;\bar{x};\xi) \, \triangleq & 
\psi_{\uparrow}\left( \, \underbrace{p_{\rm cvx}(x) +
p_{\rm cve}(\bar{x}) - b^{\top}( x - \bar{x} ) + p_{\max}^{\rm diff}(\bar{x}) + \displaystyle{
\max_{\ell^{\prime} \in {\cal M}_p^{\rm diff}(\bar{x})}
} \, \nabla p_{\ell^{\prime}}^{\rm diff}(\bar{x})^{\top}( x - \bar{x})}_{\mbox{convex in
$x$ for fixed $\bar{x}$}} \, \right) + \\ [0.45in]
& \psi_{\downarrow}\left( \, \underbrace{p_{\rm cvx}(\bar{x}) + a^{\top}( x - \bar{x} ) 
+ p_{\rm cve}(x) + p_{\max}^{\rm diff}(\bar{x}) +
\nabla p_{\ell}^{\rm diff}(\bar{x})^{\top}( x - \bar{x})}_{\mbox{concave in $x$ for
for fixed $\bar{x}$}} \, \right);
\end{array} \]
this function satisfies the same six properties as the ones of 
$\wh{\theta}_{\, \rm I}(\bullet;\bar{x};\boldsymbol{\xi})$; namely, 
touching at the reference vector, joint continuity at the reference vector, lower
Lipchitiz boundeness, directional
derivative dominance, directional derivative consistency, and
dd-joint upper semicontinuity; details are omitted.  Regarding the uniform upper
approximation property:
\begin{equation} \label{eq:uniform upper dd univariate}
\displaystyle{
\limsup_{\nu \to \infty}
} \ \displaystyle{
\frac{\theta_{\rm II}(\bar{x}^{\nu} + \tau_{\nu} ( \wh{x}^{\, \nu} - \bar{x}^{\nu})) 
- \theta_{\rm II}(\bar{x}^{\nu}) - \tau_{\nu} \, \left[ \, 
\wh{\theta}_{\rm II}(\wh{x}^{\, \nu};\bar{x}^{\nu};\xi^{\nu})
- \theta_{\rm II}(\bar{x}^{\nu}) \, \right]}{\tau_{\nu}}
} \, \leq 0
\end{equation}
for sequences $\{ \bar{x}^{\nu} \} \to \bar{x}^{\infty}$, 
$\{ \wh{x}^{\, \nu} \} \to \wh{x}^{\, \infty}$, $\{ \tau_{\nu} \} \downarrow 0$
and $\{ \xi^{\nu} \} \to \xi^{\infty}$, it can be shown to hold provided that
${\cal M}_p^{\rm diff}(\bar{x}^{\infty})$ is a singleton at the limit vector 
$\bar{x}^{\infty}$ if $\psi_{\uparrow}$ is not identically equal to zero.  We
note that while all these properties hold for the surrogate function $\wh{\theta}_{\rm II}$,
the function $\wh{\theta}_{\rm II}(\bullet;\bar{x};\xi)$ still does not majorize
$\wh{\theta}_{\rm II}$ because of the functions $p_{\ell}^{\rm diff}$ that lack favorable
properties except differentiability.

\gap

Similarly, we can define the parameter-free function
\[
\wh{\theta}_{\, \rm II}^{\, \min}(x;\bar{x}) \, \triangleq \, \displaystyle{
\min_{\xi \in \Xi(\bar{x})}
} \ \wh{\theta}_{\, \rm II}(x;\bar{x};\xi);
\]
we also have 
$\left[ \, \wh{\theta}_{\, \rm II}(\bullet;\bar{x};\xi) \, \right]^{\, \prime}(\bar{x};v) 
\, \geq \, \left[ \, \wh{\theta}_{\, \rm II}^{\, \min}(\bullet;\bar{x}) 
\, \right]^{\, \prime}(\bar{x};v) \, \geq \, \theta_{\rm II}^{\, \prime}(\bar{x};v)$ for 
all $(v,\xi) \in \mathbb{R}^n \times \Xi(\bar{x})$.

\gap

There are two kinds of multivariate convex compositions derived from the univariate convex
composition.  One is in terms of 
a separable convex vector function $\Psi(y) = \displaystyle{
\sum_{k=1}^K
} \, \psi_k(y_k)$ with $y = ( y_k )_{k=1}^K \in \mathbb{R}^K$, 
where each $\psi_k : \mathbb{R} \to \mathbb{R}$ is convex, resulting in 
\[
\theta_{\, \rm II}^{\, \rm sep}(x) \, = \, \Psi \circ P(x) \, = \, \displaystyle{
\sum_{k=1}^K
} \, \psi_k \circ p_k(x),
\]
where $P(x) \triangleq ( p_k(x) )_{k=1}^K$ and each $p_k$ is given by
(\ref{eq:dc+diff}).  The other is via the composition of a univariate convex
function with an affine function: i.e., the multivariate
function $\Psi(y) = \psi(a^{\top}y + \alpha)$ for some vector 
nonnegative vector $a \in \mathbb{R}^K_+$ and scalar $\alpha$,
yielding
\[
\theta_{\, \rm II}^{\, \oplus}(x) \, \triangleq \, \psi\left( \displaystyle{
\sum_{k=1}^K
} \, a_k \, p_k(x) + \alpha \right);
\]
we note that the nonnegative scalar multiplication of the function $p_k$ 
remains a function of the same kind.  
Both ways lead to a ``vector convexified'' outer function $\Psi$ composite 
with dc+finite-max-differentiable inner
functions for which the obtained surrogation functions continue to satisfy similar 
directional derivative dominance and consistency properties as described above.

\gap

Extending the univariate convex composition (\ref{eq:univariate convex}), we may
consider its multiviate analog where we assume a monotonic property of the outer
convex function:
\[
\theta_{{\rm II};\updownarrow}^{\, \rm cvx}(x) \, = \, 
\phi_{\, \updownarrow}^{\, \rm cvx} \circ P(x), \epc \mbox{where } \ 
\left\{ \begin{array}{l}
\phi_{\, \updownarrow}^{\, \rm cvx} : \mathbb{R}^K \to \mathbb{R} \mbox{ is convex 
and either isotone ($\uparrow$) or antitone ($\downarrow$)} 
\\ [0.1in]
P(x) \, = \, \left( \, p_k(x) \, \right)_{k=1}^K \ \mbox{where each} \\ [0.1in]
p_k(x) \, \triangleq \, p_k^{\rm cvx}(x) + p_k^{\rm cve}(x) + 
\underbrace{\displaystyle{
\max_{1 \leq \ell \leq L}
} \, p_{k \ell}^{\, \rm diff}(x)}_{\mbox{denoted $p_{k;\max}^{\, \rm diff}(x)$}}.
\end{array} \right.
\]
Here isotonicity means: $x \leq x^{\, \prime}$ (componentwise) implies 
$\phi_{\uparrow}^{\rm cvx}(x) \leq \phi_{\uparrow}^{\rm cvx}(x^{\, \prime})$ 
for all $x$ and $x^{\, \prime}$ and antitonicity means 
$x \leq x^{\, \prime}$ (componentwise) implies 
$\phi_{\downarrow}^{\rm cvx}(x) \geq \phi_{\downarrow}^{\rm cvx}(x^{\, \prime})$.  
For $\theta_{{\rm II};\uparrow}^{\, \rm cvx}$, we define
\[ \begin{array}{l}
\wh{\theta}_{{\rm II};\uparrow}^{\, \rm cvx}(x;\bar{x};\boldsymbol{\xi}) \, = \, 
\phi_{\, \uparrow}^{\, \rm cvx} \circ P_{\, \uparrow}(x;\bar{x};\boldsymbol{\xi}), 
\epc \mbox{where } \
P_{\, \uparrow}(x;\bar{x};\boldsymbol{\xi}) = 
\left( \, p_{k;\uparrow}(x;\bar{x};\boldsymbol{\xi}) \, \right)_{k=1}^K 
\ \mbox{with each} \\ [0.15in]
p_{k;\uparrow}(x;\bar{x};\boldsymbol{\xi}) \, \triangleq \, 
p_k^{\rm cvx}(x) + p_k^{\rm cve}(\bar{x})
- ( b^k )^{\top}( x - \bar{x}) + p_{k;\max}^{\, \rm diff}(\bar{x}) +  \displaystyle{
\max_{\ell \in {\cal M}_{k;\max}^{\rm diff}(\bar{x})}
} \, \nabla p_{k \ell}^{\, \rm diff}(\bar{x})^{\top}( x - \bar{x}).
\end{array}
\] 
and for $\theta_{{\rm II};\downarrow}^{\, \rm cvx}$, we define
\[ \begin{array}{l}
\wh{\theta}_{{\rm II};\downarrow}^{\, \rm cvx}(x;\bar{x};\boldsymbol{\xi}) \, = \, 
\phi_{\, \downarrow}^{\, \rm cvx} \circ P_{\, \downarrow}(x;\bar{x};\boldsymbol{\xi}), 
\epc \mbox{where } \
P_{\, \downarrow}(x;\bar{x};\boldsymbol{\xi}) \, = \,
\left( \, p_{k;\downarrow}(x;\bar{x};\boldsymbol{\xi}) \, \right)_{k=1}^K 
\ \mbox{with each} \\ [0.15in]
p_{k;\downarrow;}(x;\bar{x};\boldsymbol{\xi}) \, \triangleq \, 
p_k^{\, \rm cvx}(\bar{x}) + ( a^k )^{\top}( x - \bar{x}) +
p_k^{\rm cve}(x) - p_k^{\rm cve}(\bar{x}) + p_{k;\max}^{\, \rm diff}(\bar{x}) +  
\nabla p_{k \ell_k}^{\, \rm diff}(\bar{x})^{\top}( x - \bar{x}).
\end{array}
\] 
It can be shown that the multivariate 
surrogation function $\wh{\theta}^{\rm cvx}_{{\rm II};\updownarrow}$
satisfies the same properties as those of the univariate function $\wh{\theta}_{\rm II}$,
where for the isotone function $\wh{\theta}^{\rm cvx}_{{\rm II};\uparrow}$,
the singleton condition of ${\cal M}_{k;\max}^{\, \rm diff}(\bar{x}^{\infty})$
is needed for the uniform upper approximation to hold at $\bar{x}^{\infty}$.

\gap

Finally, there is the last type of multivariate convex outer compositions where 
unparameterized surrogate
functions can be readily obtained.   These are the compositions where the inner functions
are differentiable, resulting in the class of amenable functions.  Specifically,
such a function is:
\[
\theta_{\, \rm II}^{\, \rm amn}(x) \, \triangleq \, \phi \circ P(x), \epc \mbox{where } \
\phi : \mathbb{R}^K \to \mathbb{R} \mbox{ is convex
and $P : \mathbb{R}^n \to \mathbb{R}^K$ is differentiable}.
\]
In this case, we can define the un-parameterized function
\[
\wh{\theta}_{\, \rm II}^{\, \rm amn}(x;\bar{x}) \, \triangleq \, 
\phi \circ [ \, P(\bar{x}) + JP(\bar{x})( x - \bar{x} ) \, ],
\]
where $JP(\bar{x})$ is the Jacobian matrix of $P$ at $\bar{x}$.  We omit the discussion
of properties of the bivariate
function $\wh{\theta}_{\, \rm II}^{\, \rm amn}(\bullet;\bar{x})$ as it belongs
to a familiar class of composite functions; see the early 
papers \cite{Burke1985,BurkeFerris1995} and
the recent addition \cite{BurkeHoheiselNguyen21}.

\gap

{\bf Some examples of the outer function:}. Among the simplest and most prominent 
multivariate nondifferentiable convex functions is the $\ell_1$ function
$\displaystyle{
\sum_{k=1}^K
} \, | \, y_k \, |$ which is often used in statistical regression.  A
composite function $\displaystyle{
\sum_{i=1}^n
} \, | \, y_i - y_{i+1} \, |$ measures the total variation of an image 
\cite{TibshiraniSaunders2005} represented by the discrete data points 
$\{ y_i \}_{i=1}^{n+1}$.  Variations of this least-absolute measure abound; 
e.g., the objective $\displaystyle{
\sum_{i=1}^n
} \, \max\left( \, | y_i - b_{i+1} y_{i+1} |, \, \varepsilon \, \right)$ is a measure 
of scaled near isotonicity \cite{TibHTibshirani11} with margin $\varepsilon > 0$
of the data points $\{ y_i \}_{i=1}^{n+1}$.  More generally, the summation
$\displaystyle{ 
\sum_{(i,j) \in V}
} \, g_{ij}( a_{ij} y_i - b_{ij} y_j )$, where each univariate function
$g_{ij} : \mathbb{R} \to \mathbb{R}_+$ is 
a convex function, $V \subseteq \{ 1, \cdots, n \} \times \{ 1, \cdots, n \}$,
and $\{ a_{ij},b_{ij} \}$ are given scalars, is a generalized deviation-separation
measure \cite{GomezHePang2023} with a special case discussed in \cite{HochbaumLu2017}
for median regression.  Studied in \cite{CuiPangSen18},
the least-squares piecewise affine regression problem has provided a strong initial
motivation to investigate convex composite with piecewise affine functions; replacing
the $\ell_2$-norm by a scaled $\ell_1$-norm poses this nonconvex regression problem
as the optimization of a nondifferentiable convex outer composite with piecewise affine inner 
functions.

\subsubsection{Type III compositions: dd-convex outer functions}

This class of composite functions extend the type~II composite functions 
to the case where the outer function $\phi$ is dd-convex.  To derive the surrogate
functions, we present a representation
of the directional derivative $g^{\, \prime}(\bar{x};\bullet)$, which is a convex
function when $g$ is dd-convex, where
$g : {\cal O} \to \mathbb{R}$ is a Bouligand differentiable differentiable
on the open convex set ${\cal O} \subseteq \mathbb{R}^n$.  
The {\sl regular subdifferential} of $g$ at $\bar{x}$ is defined as
\[ \begin{array}{lll}
\wh{\partial} g(\bar{x}) & \triangleq & 
\left\{ \, v \, \in \mathbb{R}^n \; \bigg| \, \displaystyle{
\liminf_{\bar{x} \neq x\to\bar{x}}
} \; \frac{\phi(x) - \phi(\bar{x})-v^{\top}(x-\bar{x})}{
\| \, x-\bar{x} \, \|} \, \geq \, 0 \, \right\} \\ [0.2in]
& = & \left\{ \, v \, \in \mathbb{R}^n \; \bigg| \; g^{\, \prime}(\bar{x};v) 
\, \geq \, v^{\top}d \ \mbox{for all $d \in \mathbb{R}^n$} \, \right\}
\end{array} \]
In general, $\wh{\partial} g(\bar{x})$ is compact and convex 
\cite[Theorem~8.6]{RockafellarWets98}; it is nonempty if
$g$ is dd-convex \cite[Proposition~4.3.3]{CuiPang2021}.  In this case, we
have
\[
g^{\, \prime}(\bar{x};v) \, = \, \displaystyle{
\max_{a \in \wh{\partial} g(\bar{x})}
} \, v^{\top}a, \epc \forall \, v \, \in \, \mathbb{R}^n;
\]
moreover, for any closed convex set $X \subseteq \mathbb{R}^n$, we further have,
with ${\cal N}(\bar{x};X)$ denoting the normal cone of $X$ at $\bar{x}$ as in
convex analysis \cite{Rockafellar70},
\begin{equation} \label{eq:dd-convex dd}
g^{\, \prime}(\bar{x};x - \bar{x}) \, = \, \displaystyle{
\max_{a^{\, \prime} \in \wh{\partial} g(\bar{x}) + {\cal N}(\bar{x};X)}
} \, ( x - \bar{x} )^{\top} a^{\, \prime}, \epc \forall \, x \, \in \, X.
\end{equation}
Indeed for any $a^{\, \prime} = a + b$ with 
$( a,b ) \in \wh{\partial} g(\bar{x}) \times {\cal N}(\bar{x};X)$, we have
\[
( x - \bar{x} )^{\top} a^{\, \prime} \, \leq \, ( x - \bar{x} )^{\top} a \, \leq \, 
g^{\, \prime}(\bar{x};x - \bar{x}).
\] 
Thus $g^{\, \prime}(\bar{x};x - \bar{x}) \geq \displaystyle{
\sup_{a^{\, \prime} \in \wh{\partial} g(\bar{x}) + {\cal N}(\bar{x};X)}
} \, ( x - \bar{x} )^{\top} a^{\, \prime} \geq \displaystyle{
\sup_{a^{\, \prime} \in \wh{\partial} g(\bar{x})}
} \, ( x - \bar{x} )^{\top} a^{\, \prime}$ because $0 \in {\cal N}(\bar{x};X)$.
Hence, for a dd-convex function $g$, directional stationarity of a vector
$\bar{x} \in X$ can be equivalently stated that 
$0 \in \wh{\partial} g(\bar{x}) + {\cal N}(\bar{x};X)$.  Lastly, for any open
set $S$ on which $g$ is Lipschitz continuous, the union
$\displaystyle{
\bigcup_{x \in S}
} \, \wh{\partial} g(x)$ is bounded.

\gap

One weakness of the regular subdifferential is its lack of upper semicontinuity.
We have the following result that
gives necessary and sufficient conditions for the upper semicontinuity of
$\wh{\partial} g$ at a vector $\bar{x}$ in terms of the Clarke subdifferential
\begin{equation} \label{defn: Clarke2}
\begin{array}{lll}
\partial_{\rm C} g(\bar{x}) & \triangleq & \left\{ \, v \, \in \, \mathbb{R}^n \, \mid \,
g^{\circ}(\bar{x};d) \, \geq \, v^{\top}d \ \forall \, d \, \in \, \mathbb{R}^n 
\, \right\}, \mbox{where} \\ [0.1in]
g^{\circ}(\bar{x};v) & \triangleq & \displaystyle{
\limsup_{\substack{x \to \bar{x} \\ \tau \downarrow 0}} 
} \, \displaystyle{\frac{g(x + \tau v) - g(x)}{\tau}
}
\end{array}
\end{equation}
is the Clarke directional derivative of $g$ at $\bar{x}$.

\begin{proposition} \label{pr:usc dd-cvx} \rm
\cite[Proposition~4.3.2]{CuiPang2021}
Let $g : {\cal O} \to \mathbb{R}$ be a Bouligand differentiable function
on the open convex set ${\cal O} \subseteq \mathbb{R}^n$. If $g$ is dd-convex
at $\bar{x}$, then $\wh{\partial} g$ is upper semicontinuous at $\bar{x}$
if and only if $\wh{\partial} g(\bar{x}) = \partial_{\rm C} g(\bar{x})$.  In particular,
if $g$ is strictly differentiable at $\bar{x}$, then 
$\wh{\partial} g(\bar{x}) = \partial_{\rm C} g(\bar{x})$ is a singleton and
$\wh{\partial} g$ is upper semicontinuous at $\bar{x}$.  \hfill $\Box$
\end{proposition}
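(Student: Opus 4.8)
The plan is to deduce the equivalence from two standard facts about the Clarke subdifferential, with dd-convexity entering only to guarantee (via \cite[Proposition~4.3.3]{CuiPang2021}) that $\wh{\partial} g(\bar{x})$ is nonempty, so that the asserted set equality is meaningful, and to supply the sublinear representation $g^{\, \prime}(\bar{x};v) = \max_{a \in \wh{\partial} g(\bar{x})} a^{\top}v$. The preliminary observation, valid for any locally Lipschitz $g$, is the inclusion $\wh{\partial} g(\bar{x}) \subseteq \partial_{\rm C} g(\bar{x})$: if $a \in \wh{\partial} g(\bar{x})$ then $a^{\top}v \leq g^{\, \prime}(\bar{x};v) \leq g^{\circ}(\bar{x};v)$ for every $v$, since taking $x = \bar{x}$ in the limsup shows the Clarke directional derivative dominates the one-sided one, and this is exactly the defining inequality of $\partial_{\rm C} g(\bar{x})$ in (\ref{defn: Clarke2}).

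For the ``if'' direction, suppose $\wh{\partial} g(\bar{x}) = \partial_{\rm C} g(\bar{x})$. I would invoke the classical fact that $x \mapsto \partial_{\rm C} g(x)$ has closed graph (hence, being locally bounded near $\bar{x}$ by local Lipschitz continuity, is upper semicontinuous there). If $x^{\nu} \to \bar{x}$ and $v^{\nu} \to v^{\infty}$ with $v^{\nu} \in \wh{\partial} g(x^{\nu})$, then $v^{\nu} \in \partial_{\rm C} g(x^{\nu})$ by the preliminary inclusion, so $v^{\infty} \in \partial_{\rm C} g(\bar{x}) = \wh{\partial} g(\bar{x})$; together with the local boundedness of $\wh{\partial} g$ near $\bar{x}$ (a consequence of the preliminary inclusion and the boundedness fact recalled just before the proposition), this is upper semicontinuity of $\wh{\partial} g$ at $\bar{x}$.

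For the ``only if'' direction, suppose $\wh{\partial} g$ is upper semicontinuous at $\bar{x}$; by the preliminary inclusion it suffices to prove $\partial_{\rm C} g(\bar{x}) \subseteq \wh{\partial} g(\bar{x})$. Here I would use the representation of the Clarke subdifferential of a locally Lipschitz function as the convex hull of the outer limit of its regular subdifferentials, $\partial_{\rm C} g(\bar{x}) = \mathrm{co}\left( \displaystyle{\limsup_{x \to \bar{x}}} \, \wh{\partial} g(x) \right)$ \cite{RockafellarWets98}. Upper semicontinuity of $\wh{\partial} g$ at $\bar{x}$, with closedness of its values, gives $\limsup_{x \to \bar{x}} \wh{\partial} g(x) \subseteq \wh{\partial} g(\bar{x})$; since $\wh{\partial} g(\bar{x})$ is convex \cite[Theorem~8.6]{RockafellarWets98}, passing to convex hulls yields $\partial_{\rm C} g(\bar{x}) \subseteq \wh{\partial} g(\bar{x})$. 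An equivalent route, closer in spirit to the rest of the paper, is to note that the same argument forces $g^{\circ}(\bar{x};\bullet) = g^{\, \prime}(\bar{x};\bullet)$ and then read off the equality of subdifferentials from the common sublinear representation; dd-convexity is exactly what makes $g^{\, \prime}(\bar{x};\bullet)$ a finite convex function with compact support set $\wh{\partial} g(\bar{x})$ in the first place.

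Finally, for the ``in particular'' assertion: strict differentiability of $g$ at $\bar{x}$ forces $\partial_{\rm C} g(\bar{x}) = \{ \nabla g(\bar{x}) \}$ by the classical characterization of strict differentiability through singleton Clarke subdifferentials, while ordinary differentiability already gives $\wh{\partial} g(\bar{x}) = \{ \nabla g(\bar{x}) \}$; hence the two coincide as a singleton, $g^{\, \prime}(\bar{x};\bullet) = \nabla g(\bar{x})^{\top}\bullet$ is linear so $g$ is dd-convex at $\bar{x}$, and upper semicontinuity of $\wh{\partial} g$ at $\bar{x}$ follows from the equivalence. The step I expect to demand the most care is the reverse inclusion $\partial_{\rm C} g(\bar{x}) \subseteq \wh{\partial} g(\bar{x})$ in the ``only if'' direction: quoting $\partial_{\rm C} = \mathrm{co}\,\limsup \wh{\partial}$ makes it immediate, but a self-contained proof must reconstruct the Clarke directional derivative as a limsup of difference quotients and control it by Lebourg-type mean value estimates, which is the real technical core.
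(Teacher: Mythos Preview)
The paper does not supply its own proof of this proposition; it is quoted verbatim from \cite[Proposition~4.3.2]{CuiPang2021} and closed with a box immediately after the statement. Your argument is correct and follows the natural route: the preliminary inclusion $\wh{\partial} g \subseteq \partial_{\rm C} g$, upper semicontinuity of the Clarke map for the ``if'' direction, and the identity $\partial_{\rm C} g(\bar{x}) = \mathrm{co}\,\partial_{\rm L} g(\bar{x}) = \mathrm{co}\bigl(\limsup_{x \to \bar{x}} \wh{\partial} g(x)\bigr)$ together with convexity of $\wh{\partial} g(\bar{x})$ for the ``only if'' direction. Your reading of where dd-convexity enters (nonemptiness of $\wh{\partial} g(\bar{x})$ and the support-function representation of $g^{\, \prime}(\bar{x};\bullet)$) is also accurate; the equivalence itself is really a statement about the interplay of regular, limiting, and Clarke subdifferentials for locally Lipschitz functions.
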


With the above preparations, we consider the composite function
\[
\theta_{\rm III}(x) \, \triangleq \, \phi \circ P(x), \epc \mbox{where
$\phi : \mathbb{R}^K \to \mathbb{R}$ is dd-convex and $P$ is of type I};
\]
i.e., the vector function 
$P \triangleq ( p_k )_{k=1}^K$ is such that each $p_k$ is given by (\ref{eq:dc+diff}).
We first observe that for every fixed but arbitrary vector $c \in \mathbb{R}^K$, 
the function $\theta_{\rm I}( \bullet;c) : x \mapsto c^{\top}P(x)$ is a 
type~I composite function; in fact, it is
the sum of dc plus a finite-max-minus-max differentiable function; we can still obtain
a family of convex (dd-)surrogation functions 
$\wh{\theta}_{\rm I}(\bullet;\bar{x};c;\boldsymbol{\xi})$ parameterized by
$\boldsymbol{\xi}$ which belongs to the same family 
$\boldsymbol{\Xi}(\bar{x})$, for all vectors $c$.  With the set 
$\boldsymbol{\Xi}(\bar{x})$ given by (\ref{eq:definition of Xi}), we define
\[ \begin{array}{l}
\wh{\theta}_{\, \rm I}(x;\bar{x};c;\boldsymbol{\xi}) \, \triangleq \, 
\displaystyle{
\sum_{k \, : \, c_k > 0}
} \, c_k \, \left[ \, p_k^{\rm cvx}(x) - p_k^{\rm cvx}(\bar{x})
- ( b^{\, k} )^{\top}(x - \bar{x}) + \displaystyle{
\max_{\ell \in {\cal M}_{k;\max}^{\rm diff}(\bar{x})}
} \, \nabla p_{k \ell}^{\rm diff}(\bar{x})^{\top}(x - \bar{x}) \, \right] \, + \\ [0.3in]
\hspace{1in} \displaystyle{
\sum_{k \, : \, c_k < 0}
} \, c_k \, \left[ \, ( a^k )^{\top}(x - \bar{x}) + p_k^{\rm cve}(x) - p_k^{\rm cve}(\bar{x}) +
\nabla p_{k \ell_k}^{\rm diff}(\bar{x})^{\top}( x - \bar{x})\, \right], \epc
x \, \in \, \mathbb{R}^n \\ [ 0.3in]
\hspace{0.5in} = \, 
\displaystyle{
\sum_{k=1}^K
} \, \max\left( \, \begin{array}{l}
c_k \, \left[ \, p_k^{\rm cvx}(x) - p_k^{\rm cvx}(\bar{x})
- ( b^{\, k} )^{\top}(x - \bar{x}) + \displaystyle{
\max_{\ell \in {\cal M}_{k;\max}^{\rm diff}(\bar{x})}
} \, \nabla p_{k \ell}^{\rm diff}(\bar{x})^{\top}(x - \bar{x}) \, \right], \\ [0.3in]
c_k \, \left[ \, ( a^k )^{\top}(x - \bar{x}) + p_k^{\rm cve}(x) - p_k^{\rm cve}(\bar{x}) +
\nabla p_{k \ell_k}^{\rm diff}(\bar{x})^{\top}( x - \bar{x})\, \right]
\end{array} \right)
\end{array}
\]
Based on the latter expression, the following basic properties of the function 
$\wh{\theta}_{\, \rm I}(x;\bar{x};\bullet;\boldsymbol{\xi})$ are clear.  No
proof is needed.

\begin{proposition} \label{pr:properties of type III} \rm
The function $\wh{\theta}_{\, \rm I}(x;\bar{x};\bullet;\boldsymbol{\xi})$ is continuous
and piecewise affine with pieces of linearity being the orthants of $\mathbb{R}^K$;
i.e., on every orthant of $\mathbb{R}^K$,
$\wh{\theta}_{\, \rm I}(x;\bar{x};\bullet;\boldsymbol{\xi})$ is an affine function;
moreover, $\wh{\theta}_{\, \rm I}(\bullet;\bar{x};c;\boldsymbol{\xi})$ is a convex
function.  \hfill $\Box$
\end{proposition}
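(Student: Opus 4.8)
The plan is to fix the reference vector $\bar{x}\in X$ and a tuple $\boldsymbol{\xi}=(a^k,b^k,\ell_k)_{k=1}^K\in\boldsymbol{\Xi}(\bar{x})$, abbreviate by $A_k(x)$ and $B_k(x)$ the two bracketed functions of $x$ that multiply $c_k$ in the first and second sums of the displayed formula for $\wh{\theta}_{\rm I}(x;\bar{x};c;\boldsymbol{\xi})$, and then read each asserted property off the appropriate one of the two displayed forms. Observe at the outset that $A_k(\bar{x})=B_k(\bar{x})=0$, that $A_k$ is convex in $x$ (being the sum of the convex function $p_k^{\rm cvx}$, an affine term, and a pointwise maximum of affine terms), and that $B_k$ is concave in $x$ (being the sum of affine terms and the concave function $p_k^{\rm cve}$).

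For \emph{convexity in $x$} I would use the first, two-sum form: a summand with $c_k>0$ is the nonnegative multiple $c_kA_k(\bullet)$ of a convex function, hence convex; a summand with $c_k<0$ is the negative multiple $c_kB_k(\bullet)$ of a concave function, hence again convex; and a summand with $c_k=0$ vanishes. A finite sum of convex functions is convex, so $\wh{\theta}_{\rm I}(\bullet;\bar{x};c;\boldsymbol{\xi})$ is convex for every fixed $c$.

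For \emph{continuity and piecewise affinity in $c$} I would fix $x$, set $\alpha_k:=A_k(x)$ and $\beta_k:=B_k(x)$ (now real constants), and use the second, max form, which writes $\wh{\theta}_{\rm I}(x;\bar{x};\bullet;\boldsymbol{\xi})=\sum_{k=1}^K\max(c_k\alpha_k,c_k\beta_k)$ as a finite sum of univariate functions of the coordinates $c_k$. Each such summand is continuous and, since $c_k\alpha_k-c_k\beta_k=c_k(\alpha_k-\beta_k)$ changes sign only at $c_k=0$, is affine (indeed linear) separately on $\{c_k\geq 0\}$ (where it equals $c_k\max(\alpha_k,\beta_k)$) and on $\{c_k\leq 0\}$ (where it equals $c_k\min(\alpha_k,\beta_k)$). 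Consequently, on any one of the $2^K$ closed orthants of $\mathbb{R}^K$ — on which the sign of every coordinate is fixed — each summand is linear in $c_k$, so the sum is linear and a fortiori affine; and continuity on all of $\mathbb{R}^K$ is immediate (directly, as a finite sum of continuous functions, or because the orthant pieces agree on the shared faces $\{c_k=0\}$). This is exactly the asserted structure: continuous and piecewise affine with pieces of linearity the orthants of $\mathbb{R}^K$.

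The one point that requires more than bookkeeping — and hence the step I would flag as the potential obstacle, although it is a short argument — is verifying that the two displayed expressions for $\wh{\theta}_{\rm I}(x;\bar{x};c;\boldsymbol{\xi})$ genuinely coincide; equivalently, that $\max(c_k\alpha_k,c_k\beta_k)$ equals $c_kA_k(x)$ when $c_k>0$, equals $c_kB_k(x)$ when $c_k<0$, and equals $0$ when $c_k=0$. This reduces to the pointwise inequality $A_k(x)\geq B_k(x)$ for all $x$, which I would establish by subtracting and grouping,
\begin{align*}
A_k(x)-B_k(x)={}&\big[\,p_k^{\rm cvx}(x)-p_k^{\rm cvx}(\bar{x})-(a^k)^{\top}(x-\bar{x})\,\big]+\big[\,p_k^{\rm cve}(\bar{x})-p_k^{\rm cve}(x)-(b^k)^{\top}(x-\bar{x})\,\big]\\
&{}+\Big[\,\max_{\ell\in{\cal M}_{k;\max}^{\rm diff}(\bar{x})}\nabla p_{k\ell}^{\rm diff}(\bar{x})^{\top}(x-\bar{x})-\nabla p_{k\ell_k}^{\rm diff}(\bar{x})^{\top}(x-\bar{x})\,\Big],
\end{align*}
and noting that the first bracket is nonnegative because $a^k\in\partial p_k^{\rm cvx}(\bar{x})$, the second because $b^k\in\partial(-p_k^{\rm cve})(\bar{x})$, and the third because $\ell_k\in{\cal M}_{k;\max}^{\rm diff}(\bar{x})$ is a maximizing index. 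With this in hand, the proposition is simply a matter of presenting each of the two equivalent forms and pointing out which property each one makes manifest.
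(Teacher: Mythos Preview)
Your proposal is correct and matches the paper's approach: the paper states that these properties ``are clear'' from the second (max) expression and gives no proof at all, and your argument simply unpacks exactly that---reading convexity in $x$ off the first form and piecewise affinity in $c$ off the second. Your verification that the two displayed forms agree via $A_k(x)\ge B_k(x)$ is a useful detail that the paper leaves implicit.
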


Define the surrogate function $\wh{\theta}_{\rm III}(\bullet;\bar{x};\boldsymbol{\xi})$
by
\[
\wh{\theta}_{\rm III}(x;\bar{x};\boldsymbol{\xi}) \, \triangleq \, 
\theta_{\rm III}(\bar{x}) +\displaystyle{
\max_{c \in \wh{\partial} \phi(\bar{y})}
} \, \wh{\theta}_{\rm I}(x;\bar{x};c;\boldsymbol{\xi}), \epc \mbox{where } \ 
\bar{y} \, \triangleq \, P(\bar{x}),
\]
which is a well-defined value function of a maximization problem of a continuous
piecewise affine function $\wh{\theta}_{\, \rm I}(x;\bar{x};\bullet;\boldsymbol{\xi})$ 
on the compact convex set $\wh{\partial} \phi(\bar{y})$.
The following result summarizes the properties of this surrogation function, 
extending Proposition~\ref{pr:convergence of theta dd} to the case of an 
outer dd-convex function.

\begin{proposition} \label{pr:limsup dd type III} \rm
Suppose that $\phi$ is dd-convex and all the functions $p_{k\ell}^{\rm diff}$ are 
continuously differentiable.  Then the following statements hold:

\gap

{\bf (A)} $\wh{\theta}_{\rm III}(\bullet;\bar{x};\boldsymbol{\xi})$ is a convex function
for all pairs $(x,\boldsymbol{\xi})$ with $\boldsymbol{\xi} \in \boldsymbol{\Xi}(\bar{x})$; 

\gap

{\bf (B)} if $\phi$ is additionally isotone on $\mathbb{R}^K$, then 
$\wh{\theta}_{\rm I}(x;\bar{x};\bullet;\boldsymbol{\xi})$ is a linear function
on $\wh{\partial} \phi(\bar{y})$ for all triplets $(x,\bar{x}),\boldsymbol{\xi})$
with $\boldsymbol{\xi} \in \boldsymbol{\Xi}(\bar{x})$;

\gap

{\bf (C)} the function $\wh{\theta}_{\rm III}$ 
satisfies the same six properties: touching at the reference vector, 
joint continuity at the reference vector, lower Lipchitiz boundeness, directional
derivative dominance, directional derivative consistency, and
dd-joint upper semicontinuity;

\gap

{\bf (D)} the function $\wh{\theta}_{\rm III}$ satisfies the uniform upper approximation
property at $\bar{x}^{\infty}$ provided that $\phi$ is strictly differentiable at
$\bar{y}^{\infty} \triangleq P(\bar{x}^{\infty})$ and 
${\cal M}_{k;\max}^{\, \rm diff}(\bar{x}^{\infty})$ is a singleton for all $k$ satisfying
$\displaystyle{
\frac{\partial \phi(\bar{y}^{\infty})}{\partial y_k}
} > 0$.
\end{proposition}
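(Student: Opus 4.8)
\emph{Overall strategy.} The plan is to reduce every claim to the Type~I machinery. For each fixed coefficient vector $c \in \mathbb{R}^K$, the map $x \mapsto \wh{\theta}_{\rm I}(x;\bar{x};c;\boldsymbol{\xi})$ is exactly a Type~I surrogate for the composite $x \mapsto c^{\top}P(x)$ (a Type~I function with the linear, hence differentiable, outer part $y \mapsto c^{\top}y$), so all six Type~I properties are available in this $c$-parameterized form; combined with the value-function identity $\wh{\theta}_{\rm III}(x;\bar{x};\boldsymbol{\xi}) = \theta_{\rm III}(\bar{x}) + \max_{c\in\wh{\partial}\phi(\bar{y})}\wh{\theta}_{\rm I}(x;\bar{x};c;\boldsymbol{\xi})$ and a Danskin-type argument, these will give the Type~III statements. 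For (A), Proposition~\ref{pr:properties of type III} gives convexity of $\wh{\theta}_{\rm I}(\bullet;\bar{x};c;\boldsymbol{\xi})$ in $x$ for each $c$, and a pointwise maximum over the compact set $\wh{\partial}\phi(\bar{y})$ of convex functions (the max attained by continuity of $c \mapsto \wh{\theta}_{\rm I}(x;\bar{x};c;\boldsymbol{\xi})$) is convex; the added constant $\theta_{\rm III}(\bar{x})$ is harmless. For (B), I would first show that an isotone dd-convex $\phi$ has $\wh{\partial}\phi(\bar{y}) \subseteq \mathbb{R}^K_+$: testing the regular-subgradient inequality at $\bar{y}$ along $y = \bar{y} - t\,e^k$ with $t \downarrow 0$ and using $\phi(\bar{y} - t\,e^k) \leq \phi(\bar{y})$ forces every $c_k \geq 0$; then only the ``$c_k > 0$'' summands of $\wh{\theta}_{\rm I}(x;\bar{x};c;\boldsymbol{\xi})$ survive, so it reduces to $c \mapsto \sum_{k=1}^K c_k\,B_k(x;\bar{x};\boldsymbol{\xi})$ with $B_k$ independent of $c$, i.e., a linear function of $c$ on $\wh{\partial}\phi(\bar{y})$.

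\emph{The four routine properties in (C).} Touching at the reference vector is immediate since $\wh{\theta}_{\rm I}(\bar{x};\bar{x};c;\boldsymbol{\xi}) = 0$ for all $c$. Because this value $0$ is attained by \emph{every} $c \in \wh{\partial}\phi(\bar{y})$, Danskin's theorem gives $[\wh{\theta}_{\rm III}(\bullet;\bar{x};\boldsymbol{\xi})]'(\bar{x};v) = \max_{c\in\wh{\partial}\phi(\bar{y})}[\wh{\theta}_{\rm I}(\bullet;\bar{x};c;\boldsymbol{\xi})]'(\bar{x};v)$; a term-by-term comparison using $a^k \in \partial p_k^{\rm cvx}(\bar{x})$, $b^k \in \partial(-p_k^{\rm cve})(\bar{x})$ and $\nabla p_{k\ell_k}^{\rm diff}(\bar{x})^{\top}v \leq \max_{\ell}\nabla p_{k\ell}^{\rm diff}(\bar{x})^{\top}v$ shows $[\wh{\theta}_{\rm I}(\bullet;\bar{x};c;\boldsymbol{\xi})]'(\bar{x};v) \geq c^{\top}P'(\bar{x};v)$ for every $c$, so the maximum is $\geq \max_{c\in\wh{\partial}\phi(\bar{y})}c^{\top}P'(\bar{x};v) = \phi'(\bar{y};P'(\bar{x};v)) = \theta_{\rm III}'(\bar{x};v)$ (directional-derivative dominance); choosing $\boldsymbol{\xi}$ whose components realize the support functions $\max_{a^k}(a^k)^{\top}\bar{v}$, $\max_{b^k}(b^k)^{\top}\bar{v}$ and $\max_{\ell_k}\nabla p_{k\ell_k}^{\rm diff}(\bar{x})^{\top}\bar{v}$ turns all these inequalities into equalities at once, which gives the ``moreover'' clause, and when $\boldsymbol{\Xi}(\bar{x})$ is a singleton the comparison is an equality for all $v$, giving directional-derivative consistency. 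Lower Lipschitz boundedness follows from convexity of $\wh{\theta}_{\rm I}(\bullet;\bar{x};c;\boldsymbol{\xi})$ with value $0$ at $\bar{x}$: $\wh{\theta}_{\rm I}(x;\bar{x};c;\boldsymbol{\xi}) \geq [\wh{\theta}_{\rm I}(\bullet;\bar{x};c;\boldsymbol{\xi})]'(\bar{x};x-\bar{x}) \geq -B\,\|x-\bar{x}\|_2$, with $B$ a common bound on the relevant subgradients, the gradients $\nabla p_{k\ell}^{\rm diff}$, and the components of $c \in \wh{\partial}\phi(P(\bar{x}))$ over $\bar{x}$ in a bounded set (boundedness of the regular subdifferential over compacta and of $\bigcup_{x\in S}\boldsymbol{\Xi}(x)$ from the Type~I result); taking the maximum over $c$ preserves the bound. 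Joint continuity at the reference vector ($\bar{x}^{\nu} \to \bar{x}^{\infty}$, $\wh{x}^{\nu} \to \bar{x}^{\infty}$, $\boldsymbol{\xi}^{\nu} \to \boldsymbol{\xi}^{\infty}$) holds because $\theta_{\rm III}(\bar{x}^{\nu}) \to \theta_{\rm III}(\bar{x}^{\infty})$ while $\sup_{c} |\wh{\theta}_{\rm I}(\wh{x}^{\nu};\bar{x}^{\nu};c;\boldsymbol{\xi}^{\nu})| \to 0$, each of the finitely many brackets being $O(\|\wh{x}^{\nu}-\bar{x}^{\nu}\|)$ by local Lipschitz continuity and $|c_k|$ uniformly bounded.

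\emph{dd-joint upper semicontinuity --- the main obstacle.} Here I would write $[\wh{\theta}_{\rm III}(\bullet;\bar{x}^{\nu};\boldsymbol{\xi}^{\nu})]'(\wh{x}^{\nu};w^{\nu}) = \max_{c\in C^{\ast}_{\nu}}[\wh{\theta}_{\rm I}(\bullet;\bar{x}^{\nu};c;\boldsymbol{\xi}^{\nu})]'(\wh{x}^{\nu};w^{\nu})$ by Danskin, with $C^{\ast}_{\nu}$ the nonempty compact argmax of $c \mapsto \wh{\theta}_{\rm I}(\wh{x}^{\nu};\bar{x}^{\nu};c;\boldsymbol{\xi}^{\nu})$ over $\wh{\partial}\phi(\bar{y}^{\nu})$, pick $c^{\nu} \in C^{\ast}_{\nu}$ attaining the outer max, and extract a subsequential limit $c^{\nu} \to c^{\infty}$ (possible since $\wh{\partial}\phi$ is locally bounded). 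A $c$-parameterized version of the Type~I inequality (\ref{eq:limsup inequality}) --- proved exactly as in Proposition~\ref{pr:convergence of theta dd} by comparing the expression (\ref{eq:theta surrogate dd}) termwise, now also sending $c^{\nu}\to c^{\infty}$ and invoking upper/lower semicontinuity of the convex directional derivatives $(p_k^{\rm cvx})'$ and $(p_k^{\rm cve})'$, closedness of $\partial p_k^{\rm cvx}$ and $\partial(-p_k^{\rm cve})$, continuity of the $\nabla p_{k\ell}^{\rm diff}$, and the stabilization of the finite index sets ${\cal M}_{k;\max}^{\rm diff}$ and of the $\ell_k^{\nu}$ --- then yields $\limsup_{\nu}[\wh{\theta}_{\rm I}(\bullet;\bar{x}^{\nu};c^{\nu};\boldsymbol{\xi}^{\nu})]'(\wh{x}^{\nu};w^{\nu}) \leq [\wh{\theta}_{\rm I}(\bullet;\bar{x}^{\infty};c^{\infty};\boldsymbol{\xi}^{\infty})]'(\wh{x}^{\infty};w^{\infty})$. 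The genuinely delicate last step is to bound this quantity by $[\wh{\theta}_{\rm III}(\bullet;\bar{x}^{\infty};\boldsymbol{\xi}^{\infty})]'(\wh{x}^{\infty};w^{\infty})$, which requires $c^{\infty} \in \wh{\partial}\phi(\bar{y}^{\infty})$, i.e., closedness of the map $\bar{y} \mapsto \wh{\partial}\phi(\bar{y})$ at $\bar{y}^{\infty}$; by Proposition~\ref{pr:usc dd-cvx} this holds precisely when $\wh{\partial}\phi(\bar{y}^{\infty}) = \partial_{\rm C}\phi(\bar{y}^{\infty})$. Since the regular subdifferential of a dd-convex function need not be upper semicontinuous, this property must be carried as a hypothesis here --- it is the Type~III counterpart of the upper-semicontinuity assumption on $\boldsymbol{\Xi}$ in Proposition~\ref{pr:convergence of theta dd} --- and I expect this to be the one point where the write-up needs real care rather than bookkeeping.

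\emph{Part (D).} Strict differentiability of $\phi$ at $\bar{y}^{\infty}$ circumvents the obstacle: by Proposition~\ref{pr:usc dd-cvx}, $\wh{\partial}\phi(\bar{y}^{\infty}) = \partial_{\rm C}\phi(\bar{y}^{\infty}) = \{\nabla\phi(\bar{y}^{\infty})\}$ and $\wh{\partial}\phi$ is usc there, so $\wh{\partial}\phi(P(\bar{x}^{\nu}))$ collapses onto $\{c_0\}$, $c_0 \triangleq \nabla\phi(\bar{y}^{\infty})$, as $\bar{x}^{\nu} \to \bar{x}^{\infty}$. Using that $c \mapsto \wh{\theta}_{\rm I}(\wh{x}^{\nu};\bar{x}^{\nu};c;\boldsymbol{\xi}^{\nu})$ is Lipschitz with a uniformly bounded constant, one gets $\wh{\theta}_{\rm III}(\wh{x}^{\nu};\bar{x}^{\nu};\boldsymbol{\xi}^{\nu}) - \theta_{\rm III}(\bar{x}^{\nu}) = \wh{\theta}_{\rm I}(\wh{x}^{\nu};\bar{x}^{\nu};c_0;\boldsymbol{\xi}^{\nu}) + o(1)$; and by strict differentiability of $\phi$ together with local Lipschitz continuity of $P$, $\theta_{\rm III}(\bar{x}^{\nu}+\tau_{\nu}(\wh{x}^{\nu}-\bar{x}^{\nu})) - \theta_{\rm III}(\bar{x}^{\nu}) = c_0^{\top}\big(P(\bar{x}^{\nu}+\tau_{\nu}(\wh{x}^{\nu}-\bar{x}^{\nu})) - P(\bar{x}^{\nu})\big) + o(\tau_{\nu})$. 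Hence the numerator in (\ref{eq:uniform upper dd}) for $\theta_{\rm III}$ coincides, up to $o(\tau_{\nu})$, with the corresponding numerator for the Type~I composite $x \mapsto c_0^{\top}P(x)$; since ``$\partial\phi(\bar{y}^{\infty})/\partial y_k > 0$'' is exactly ``$(c_0)_k > 0$'', the singleton hypothesis on ${\cal M}_{k;\max}^{\rm diff}(\bar{x}^{\infty})$ is precisely what the Type~I uniform upper approximation in Proposition~\ref{pr:convergence of theta dd} needs, and for those $k$ the set ${\cal M}_{k;\max}^{\rm diff}(\bar{x}^{\nu})$ is forced to that same singleton for large $\nu$, so the prescribed $\boldsymbol{\xi}^{\nu}$ are admissible; applying the Type~I bound then closes part (D).
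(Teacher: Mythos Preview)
Your proof follows essentially the same route as the paper's: (A) via pointwise maximum of convex functions, (B) via testing the regular-subgradient inequality along $-t\,e^k$ to get $\wh{\partial}\phi(\bar{y})\subseteq\mathbb{R}^K_+$, the four routine properties in (C) via the Danskin-type formula $[\wh{\theta}_{\rm III}(\bullet;\bar{x};\boldsymbol{\xi})]'(\bar{x};v)=\max_{c\in\wh{\partial}\phi(\bar{y})}[\wh{\theta}_{\rm I}(\bullet;\bar{x};c;\boldsymbol{\xi})]'(\bar{x};v)$ followed by term-by-term comparison, and (D) by reducing to the Type~I estimate once strict differentiability collapses $\wh{\partial}\phi$ to a singleton.

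The one place worth commenting on is the dd-joint upper semicontinuity in (C). Your argument---pick a maximizer $c^{\nu}\in C^{\ast}_{\nu}$, pass to a subsequential limit $c^{\infty}$, apply the $c$-parameterized Type~I limsup bound, and then invoke $c^{\infty}\in\wh{\partial}\phi(\bar{y}^{\infty})$---is cleaner than the paper's, which instead works with the full argmax sets $C^{\nu}$ together with sign-partition index sets $\mathcal{K}^{\nu}_{\pm}$ and asserts a subsequence along which the triple $(C^{\nu},\mathcal{K}^{\nu}_{\pm})$ is \emph{constant}. That constancy claim is unproblematic for the finite index sets $\mathcal{K}^{\nu}_{\pm}$ but is at best informal for the continuum sets $C^{\nu}\subseteq\wh{\partial}\phi(\bar{y}^{\nu})$; your single-point extraction avoids this awkwardness entirely. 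On the substantive side, both arguments ultimately require that the limit belongs to $\wh{\partial}\phi(\bar{y}^{\infty})$, i.e., closedness of $\bar{y}\mapsto\wh{\partial}\phi(\bar{y})$ at $\bar{y}^{\infty}$. You correctly flag, via Proposition~\ref{pr:usc dd-cvx}, that this is not automatic for a merely dd-convex $\phi$ and should be read as an implicit hypothesis; the paper simply asserts ``by the closedness of the map $\wh{\partial}\phi$ at $\bar{y}$'' without further comment. Your diagnosis here is accurate and is a genuine clarification rather than a gap in your argument.
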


\begin{proof}  Statement (A) holds because 
$\wh{\theta}_{\rm III}(\bullet;\bar{x};\boldsymbol{\xi})$ is the pointwise maximum
of a family of convex functions.  For statement (B), we first prove that every element
$c$ of $\wh{\partial} \phi(\bar{y})$ is
a nonnegative vector.  Indeed, by definition, such an element satisfies
\[
\displaystyle{
\liminf_{\bar{y} \neq y \to\bar{y}}
} \; \frac{\phi(y) - \phi(\bar{y}) - c^{\top}(y - \bar{y})}{
\| \, y - \bar{y} \, \|} \, \geq \, 0.
\]
By letting $y = \bar{y} - \tau e^k$ for $\tau \downarrow 0$, where $e^k$ is the 
$k$th-unit coordinate vector in $\mathbb{R}^K$, it follows readily that 
$c_k \geq 0$ by the isotonicity of $\phi$.
Hence $v \geq 0$.  With $\wh{\partial} \phi(\bar{y}) \subseteq \mathbb{R}^K_+$, it 
follows that
\[ \begin{array}{l}
\wh{\theta}_{\, \rm I}(x;\bar{x};c;\boldsymbol{\xi}) \, = \\ [0.1in]
\displaystyle{
\sum_{k=1}^K
} \, c_k \, \left[ \, p_k^{\rm cvx}(x) + p_k^{\rm cve}(\bar{x})
- ( b^{\, k} )^{\top}(x - \bar{x}) + p_{k;\max}^{\, \rm diff}(\bar{x}) + \displaystyle{
\max_{\ell \in {\cal M}_{k;\max}^{\rm diff}(\bar{x})}
} \, \nabla p_{k \ell}^{\rm diff}(\bar{x})^{\top}(x - \bar{x}) \, \right]
\end{array} \]
which clearly shows that $\wh{\theta}_{\, \rm I}(x;\bar{x};\bullet;\boldsymbol{\xi})$	 
is a linear function. 

\gap

For statement (C), the touching property is clear.  For the direction derivative
dominance, we write down
\[
\theta_{\rm III}^{\, \prime}(\bar{x};v) \, = \, \displaystyle{
\max_{c \, \in \, \wh{\partial} \phi(\bar{y})}
} \, \displaystyle{
\sum_{k=1}^K
} \, c_k \left[ \, ( p_k^{\, \rm cvx} )^{\, \prime}(\bar{x};v) + 
( p_k^{\, \rm cve} )^{\, \prime}(\bar{x};v) + \displaystyle{
\max_{\ell \in {\cal M}_{k;\max}^{\rm diff}(\bar{x})}
} \, \nabla p_{k \ell}^{\, \rm diff}(\bar{x})^{\top} v \, \right]
\]
and compare this directional derivative per each summand with the corresponding one in
\[ \begin{array}{l}
\left[ \, \wh{\theta}_{\, \rm III}(\bullet;\bar{x};c;\boldsymbol{\xi}) 
\, \right]^{\, \prime}(\bar{x};v) \\ [0.1in]
\epc = \, \displaystyle{
\max_{c \, \in \, \wh{\partial} \phi(\bar{y})}
} \, \displaystyle{
\sum_{k=1}^K
} \, \max\left( \, \begin{array}{l}
c_k \, \left[ \, ( \, p_k^{\rm cvx} \, )^{\, \prime}(\bar{x};v) 
- ( b^{\, k} )^{\top}v + \displaystyle{
\max_{\ell \in {\cal M}_{k;\max}^{\rm diff}(\bar{x})}
} \, \nabla p_{k \ell}^{\rm diff}(\bar{x})^{\top}v \, \right], \\ [0.3in]
c_k \, \left[ \, ( a^k )^{\top}v + ( p_k^{\rm cve} )^{\, \prime}(\bar{x})^{\top}v  
+ \nabla p_{k \ell_k}^{\rm diff}(\bar{x})^{\top}v \, \right]
\end{array} \right),
\end{array} \]
we can readily prove the two assertions about the directional derivative dominance 
and consistency.  To show the Lipschitz boundedness, it suffices to note that for any
bounded subset $S$ of $\mathbb{R}^K$, the two unions $\displaystyle{
\bigcup_{x \in S}
} \, \wh{\partial} \phi(P(x))$ and $\displaystyle{
\bigcup_{x \in S}
} \, \boldsymbol{\Xi}(x)$ are bounded.

\gap

To prove the dd-joint-upper semicontinuity inequality 
(\ref{eq:limsup inequality}) for $\wh{\theta}_{\rm III}$, 
let sequences $\{ \bar{x}^{\nu} \} \to \bar{x}^{\infty}$, 
$\{ \wh{x}^{\, \nu} \} \to \wh{x}^{\, \infty}$, $\{ w^{\nu} \} \to w^{\infty}$, and 
$\{ \boldsymbol{\xi}^{\, \nu} \} \to \boldsymbol{\xi}^{\infty}$ with each tuple
$\boldsymbol{\xi}^{\, \nu} = \left( a^{\nu;k},b^{\nu;k};\ell_k^{\, \nu}\, \right)_{k=1}^K$
belonging to $\boldsymbol{\Xi}(\bar{x}^{\nu})$ for all $\nu$ be given.  To show
\begin{equation} \label{eq:limsup inequality III}
\displaystyle{
\limsup_{\nu \to \infty}
} \, \left[ \, 
\wh{\theta}_{\rm III}(\bullet;\bar{x}^{\nu},\boldsymbol{\xi}^{\, \nu}) 
\, \right]^{\, \prime}
(\wh{x}^{\, \nu};w^{\nu}) \, \leq \, 
\left[ \, \wh{\theta}_{\rm III}(\bullet;\bar{x}^{\infty};\boldsymbol{\xi}^{\, \infty})
\, \right]^{\, \prime}(\wh{x}^{\, \infty};w^{\infty}).
\end{equation}
Let $\kappa$ be an infinite subset of $\{ 1, 2, \cdots, \}$ such that
\[
\displaystyle{
\limsup_{\nu \to \infty}
} \, \left[ \, 
\wh{\theta}_{\rm III}(\bullet;\bar{x}^{\nu},\boldsymbol{\xi}^{\, \nu}) 
\, \right]^{\, \prime}
(\wh{x}^{\, \nu};w^{\nu}) \, = \, \displaystyle{
\lim_{\nu (\in \kappa) \to \infty}
} \, \left[ \, 
\wh{\theta}_{\rm III}(\bullet;\bar{x}^{\nu},\boldsymbol{\xi}^{\, \nu}) 
\, \right]^{\, \prime}
(\wh{x}^{\, \nu};w^{\nu}).
\]
For every $\nu$, there exist (a), a subset 
$C^{\, \nu} \subseteq \wh{\partial} \phi(\bar{y}^{\nu})$ and (b) a pair of disjoint
index sets ${\cal K}_{\pm}^{\, \nu}$ partitioning 
$\{ 1, \cdots, K \}$ such that for all $c \in C^{\, \nu}$, 
$c_k \geq 0$ for all $k \in {\cal K}_+^{\nu}$ and 
$c_k < 0$ for all $k \in {\cal K}_-^{\nu}$, and
\[ \begin{array}{l}
\left[ \, \wh{\theta}_{\, \rm III}(\bullet;\bar{x}^{\nu};\boldsymbol{\xi}^{\nu}) 
\, \right]^{\, \prime}(\wh{x}^{\, \nu};w^{\nu}) \\ [0.1in]
\epc = \, \displaystyle{
\max_{c \, \in \, C^{\, \nu}}
} \, \left\{ \, \begin{array}{l}	
\displaystyle{
\sum_{k \in {\cal K}_+^{\, \nu}}
} \, c_k \, \left[ \, ( \, p_k^{\rm cvx} \, )^{\, \prime}(\wh{x}^{\, \nu};w^{\nu}) 
- ( b^{\, \nu;k} )^{\top}w^{\nu} + \displaystyle{
\max_{\ell \in {\cal M}_{k;\max}^{\rm diff}(\bar{x}^{\nu})}
} \, \nabla p_{k \ell}^{\rm diff}(\bar{x}^{\nu})^{\top}w^{\nu} \, \right] \, + \\ [0.3in]
\displaystyle{
\sum_{k \in {\cal K}_-^{\, \nu}}
} \, c_k \, \left[ \, ( a^{\nu;k} )^{\top}w^{\nu} + 
( p_k^{\rm cve} )^{\, \prime}(\bar{x}^{\nu})^{\top}w^{\nu}  
+ \nabla p_{k \ell_k^{\nu}}^{\rm diff}(\bar{x}^{\nu})^{\top}w^{\nu} \, \right]
\end{array} \right).
\end{array} \]
The set $C^{\, \nu} = \displaystyle{
\operatornamewithlimits{\mbox{\bf argmax}}_{c \in \wh{\partial} \phi(\bar{y}^{\nu})}
} \, \wh{\theta}_{\rm I}(\wh{x}^{\nu};\bar{x}^{\nu};c;\boldsymbol{\xi}^{\nu})$.
There exist an infinite subset $\kappa^{\, \prime} \subseteq \kappa$ and a triplet
$( \, C^{\, \infty}, {\cal K}_{\pm}^{\, \infty} \, )$ such that 
$( \, C^{\, \nu}, {\cal K}_{\pm}^{\, \nu} \, ) = 
( \, C^{\, \infty}, {\cal K}_{\pm}^{\, \infty} \, )$ for all $\nu \in \kappa^{\, \prime}$.
By the closedness of the map $\wh{\partial} \phi$ at $\bar{y}$, we must have
$C^{\, \infty} \subseteq \wh{\partial} \phi(\wh{y}^{\, \infty})$.  Moreover,
\[
{\cal K}_+^{\infty} \, \subseteq \, \left\{ \, k \, \in \, [ K ] \, : \, 
c_k \, \geq \, 0, \ \forall \, c \, \in \, C^{\infty} \, \right\} \ \mbox{ and } \
{\cal K}_-^{\infty} \, \subseteq \, \left\{ \, k \, \in \, [ K ] \, : \, 
c_k \, < \, 0, \ \forall \, c \, \in \, C^{\infty} \, \right\}.
\]
Hence, it follows that
\[ \begin{array}{l}
\displaystyle{
\limsup_{\nu \to \infty}
} \, \left[ \, 
\wh{\theta}_{\rm III}(\bullet;\bar{x}^{\nu},\boldsymbol{\xi}^{\, \nu}) 
\, \right]^{\, \prime}
(\wh{x}^{\, \nu};w^{\nu}) \\ [0.2in]
\leq \, \displaystyle{
\max_{c \, \in \, C^{\infty}}
} \, \left\{ \, \begin{array}{l}	
\displaystyle{
\sum_{k \in {\cal K}_+^{\, \infty}}
} \, c_k \, \left[ \, ( \, p_k^{\rm cvx} \, )^{\, \prime}(\wh{x}^{\, \infty};w^{\infty}) 
- ( b^{\, \infty;k} )^{\top}w^{\infty} + \displaystyle{
\max_{\ell \in {\cal M}_{k;\max}^{\rm diff}(\bar{x}^{\infty})}
} \, \nabla p_{k \ell}^{\rm diff}(\bar{x}^{\infty})^{\top}w^{\infty} \, \right] 
\, + \\ [0.3in]
\displaystyle{
\sum_{k \in {\cal K}_-^{\, \infty}}
} \, c_k \, \left[ \, ( a^{\infty;k} )^{\top}w^{\infty} + 
( p_k^{\rm cve} )^{\, \prime}(\bar{x}^{\infty})^{\top}w^{\infty}  
+ \nabla p_{k \ell_k}^{\rm diff}(\bar{x}^{\infty})^{\top}w^{\infty} \, \right]
\end{array} \right) \\ [0.5in]
\leq \, \displaystyle{
\max_{c \, \in \, \wh{\partial} \phi(\bar{y}^{\infty})}
} \, \left\{ \, \begin{array}{l}	
\displaystyle{
\sum_{k : c_k > 0}
} \, c_k \, \left[ \, ( \, p_k^{\rm cvx} \, )^{\, \prime}(\wh{x}^{\, \infty};w^{\infty}) 
- ( b^{\, \infty;k} )^{\top}w^{\infty} + \displaystyle{
\max_{\ell \in {\cal M}_{k;\max}^{\rm diff}(\bar{x}^{\infty})}
} \, \nabla p_{k \ell}^{\rm diff}(\bar{x}^{\infty})^{\top}w^{\infty} \, \right] \, + \\ [0.3in]
\displaystyle{
\sum_{k : c_k < 0}
} \, c_k \, \left[ \, ( a^{\infty;k} )^{\top}w^{\infty} + 
( p_k^{\rm cve} )^{\, \prime}(\bar{x}^{\infty})^{\top}w^{\infty}  
+ \nabla p_{k \ell_k}^{\rm diff}(\bar{x}^{\infty})^{\top}w^{\infty} \, \right]
\end{array} \right),
\end{array} \]
which is the desired inequality (\ref{eq:limsup inequality III}).  Finally, the
proof of statement (D) is the same as that for the type~I composition under the strict
differentiability of $\phi$ at $\bar{y}^{\infty}$.
\end{proof}

{\bf Some examples:}  Several of the type I and type II composite functions are dd-convex;
for instance, the product of two plus functions such as $\max(f(x),0) \max(g(x),0)$
where $f$ and $g$ are convex is dd-convex.  A class of the dd-convex functions that
are not necessarily a type I/II composite function consists of 
the bivariate locally Lipschitz, convex-convex functions
$\psi(x,y)$, where $\psi(x,\bullet)$ is convex on $\mathbb{R}^m$ and $\psi(\bullet,y)$
is convex on $\mathbb{R}^n$ and one of the latter two functions is differentiable.
For such a bivariate function, we have the sum 
equality \cite[Proposition~4.4.27]{CuiPang2021}:
\[
\psi^{\, \prime}((\bar{x},\bar{y});(u,v)) \, = \,  
( \psi(\bullet,\bar{y}) )^{\, \prime}(\bar{x};u) +
( \psi(\bar{x},\bullet) )^{\, \prime}(\bar{y};v) 
\]
which shows that $\psi^{\, \prime}((\bar{x},\bar{y});(\bullet,\bullet))$ is jointly
convex in the direction arguments. 

\subsubsection{Type IV compositions: concave+(dd-convex) outer functions}

With the type III composition in the background, it suffices to consider the class
\[
\theta_{\rm IV}(x) \, \triangleq \, \phi \circ P(x), \epc \mbox{where
$\phi : \mathbb{R}^K \to \mathbb{R}$ is concave and $P$ is of type I}.
\]
We have, with $\bar{y} \triangleq P(\bar{x})$, 
\[
\theta_{\rm IV}^{\, \prime}(\bar{x};v) \, = \, 
\phi^{\, \prime}(\bar{y};P^{\, \prime}(\bar{x};v)) \, = \, \displaystyle{
\min_{-a \in \partial (-\phi)(\bar{y})}
} \, a^{\top}P^{\, \prime}(x;v) \, \leq \, a^{\top}P^{\, \prime}(\bar{x};v), \epc
\forall \, a \, \in \, -\partial (-\phi)(\bar{y}).
\]
Based on the same observation that the function $x \mapsto c^{\top}P(x)$ 
is a type~I composite function for every fixed but arbitrary vector $c \in \mathbb{R}^K$,
we obtain a family of doubly parameterized surrogation functions 
\[
\left\{ \, \wh{\theta}_{\rm IV}(x;\bar{x};a;\boldsymbol{\xi}) \, : \, 
( \, a;\boldsymbol{\xi} \, ) \, \in \, \underbrace{\partial (-\phi)(\bar{y}) \, \times \, 
\boldsymbol{\Xi}(\bar{x})}_{\mbox{denoted $\wh{\boldsymbol{\Xi}}(\bar{x})$}} \, \right\}
\]
which share similar properties to those for the former three types of composite functions, 
except that the parameter family $\wh{\boldsymbol{\Xi}}(\bar{x})$ has another parameter
set $\partial (-\phi)(\bar{x})$ due to the concave outer function $\phi$.
Thus when $\phi$ is a dc function, the resulting family $\wh{\boldsymbol{\Xi}}(\bar{x})$
will also involve the outer function $\phi$ when its concave component is nondifferentiable.

\gap

{\bf Some examples:}. As already mentioned before, difference-of-convex functions 
abound in applications; in particular, continuous piecewise smooth functions defined 
on open convex domains
are dc, and so are many basic statistical functionals \cite{NouiehedPangRaza18};
in particular, a truncated squared function, which has interesting application
in composite energy minimization in imaging \cite{GeipingMoeller18}, is used
as a case study for the algorithm proposed in \cite{LeThiHuynhPham24} 
for solving dc composite differentiable optimization problems.
Shown to be dc functions \cite{AhnPangXin17,LeThiPhamVo15}  the class of
{\sl folded concave functions} \cite{FanLi01,FanXueZou14} is used extensively
to approximate the $\ell_0$-function in sparsity optimization.  More recently,
adding to these fairly well-known applications,
dc functions are employed to approximate Heaviside functions, which are the indicator
functions of (open or closed) intervals; see \cite{CuiLiuPang2023}.  The related
reference \cite{CuiLiuPang2022} discusses how such approximations can be used for
solving chance-constrained stochastic programs.  Defined as the
difference of two conditional-values-at-risk, 
the {\sl interval conditional-value-at-risk} \cite{LiuPang2023} is used as a 
risk measure in statistical learning as a tool to exclude outliers in data sets.  
All these dc functions can be composed with some base functionals that themselves 
may be of the difference-finite-max kind, resulting in the class of dc composite
dc-plus-difference-finite-max optimization problems to be solved.  The design
of effective algorithms for solving this class of optimization problems was 
left as an open question in the reference \cite{CuiPang2021}, which is to a large
extent covered by our present unifying framework of quasi-dc programming.

\subsection{A summary: Surrogation of the overall objective function $\Theta_{\max}$} 
\label{subsec:summary of types} 

We return to the function $\Theta_{\max}$ defined as the pointwise maximum
of the functions $\theta_j$ for $j = 1, \cdots, J$, each of which is one
of the four kinds of composite functions discussed above.  Associated with $\theta_j$
is a parameterized family of convex functions 
$\left\{ \, \wh{\theta}_j(\bullet;\bar{x};\boldsymbol{\xi}) \, : \, 
\boldsymbol{\xi} \, \in \, \wh{\boldsymbol{\Xi}}^{\, j}(\bar{x}) \right\}$
at any given vector $\bar{x} \in X$.  We recap the properties of this family
of functions as follows: for each $j = 1, \cdots, J$,

\gap

$\bullet $ {\bf (the parameterizing family)} 
the set-valued map $\wh{\boldsymbol{\Xi}}^{\, j}$
is nonempty-valued, compact-valued, and upper semicontinuous;

\gap 

%

$\bullet $ {\bf (touching at the reference point)} 
$\wh{\theta}_j(\bar{x};\bar{x};\boldsymbol{\xi}) \equiv \theta_j(\bar{x})$ for all
$\boldsymbol{\xi} \in \wh{\boldsymbol{\Xi}}^{\, j}(\bar{x})$;

\gap

$\bullet $ ({\bf joint continuity at the reference vector}) for all $x^{\infty}$ and 
all sequences $\{ ( \wh{x}^{\, \nu},\bar{x}^{\nu} ) \} \to 
( \bar{x}^{\infty},\bar{x}^{\infty} )$, and $\{ \boldsymbol{\xi}^{\, \nu} \}$
such that $\boldsymbol{\xi}^{\, \nu} \in \wh{\boldsymbol{\Xi}}^{\, j}(x^{\nu})$ 
for every $\nu$,
$\displaystyle{
\lim_{\nu \to \infty}
} \, \wh{\theta}_j(\wh{x}^{\, \nu};\bar{x}^{\nu};\boldsymbol{\xi}^{\, \nu}) 
= \theta_j(\bar{x}^{\infty})$;

\gap

$\bullet $ {\bf (lower Lipschitz boundedness)} for every bounded subset $S$, 
there exists a scalar $B_j > 0$ such that 
\[
\wh{\theta}_j(x;\bar{x};\boldsymbol{\xi}) -
\wh{\theta}_j(\bar{x};\bar{x};\boldsymbol{\xi})  
\, \geq \, -B_j\, \| \, x - \bar{x} \, \|_2, \epc \forall \, ( \, x,\bar{x} \, )  \, \in \, 
X \, \times \, S \ \mbox{ and all } \ \boldsymbol{\xi} \, \in \, 
\wh{\boldsymbol{\Xi}}^{\, j}(\bar{x}); 
\]
$\bullet $ {\bf (directional derivative dominance)} for all 
$\boldsymbol{\xi} \in \wh{\boldsymbol{\Xi}}^{\, j}(\bar{x})$ 
$\left[ \, \wh{\theta}_j(\bullet;\bar{x};\boldsymbol{\xi}) 
\, \right]^{\, \prime}(\bar{x};v) 
\, \geq \, \theta_j^{\, \prime}(\bar{x};v)$ for all $v \in \mathbb{R}^n$; 
moreover, for every $\bar{v}$, a tuple
$\bar{\boldsymbol{\xi}} \in \wh{\boldsymbol{\Xi}}^{\, j}(\bar{x})$  exists
such that $\left[ \, \wh{\theta}_j(\bullet;\bar{x};\bar{\boldsymbol{\xi}}) 
\, \right]^{\, \prime}(\bar{x};\bar{v}) 
= \theta_j^{\, \prime}(\bar{x};\bar{v})$;

\gap

$\bullet $ {\bf (directional derivative consistency)} if 
$\wh{\boldsymbol{\Xi}}^{\, j}(\bar{x}) \equiv \left\{ \boldsymbol{\xi} \right\}$ 
is a singleton, then we have the equality
$\left[ \, \wh{\theta}_j(\bullet;\bar{x};\boldsymbol{\xi}) 
\, \right]^{\, \prime}(\bar{x};v) 
\, = \, \theta^{\, \prime}(\bar{x};v)$ for all $v$;

\gap
 
$\bullet $ {\bf (dd-joint upper semicontinity)} for all sequences
$\{ \bar{x}^{\nu} \} \to \bar{x}^{\infty}$, 
$\{ \wh{x}^{\, \nu} \} \to \wh{x}^{\, \infty}$, $\{ w^{\nu} \} \to w^{\infty}$, and 
$\{ \boldsymbol{\xi}^{\, \nu} \} \to \boldsymbol{\xi}^{\infty}$ with each tuple
$\boldsymbol{\xi}^{\, \nu}$ 
belonging to $\wh{\boldsymbol{\Xi}}^{\, j}(\bar{x}^{\nu})$ for all $\nu$,
\begin{equation} \label{eq:limsup inequality for thetaj}
\displaystyle{
\limsup_{\nu \to \infty}
} \, \left[ \, 
\wh{\theta}_j(\bullet;\bar{x}^{\nu},\boldsymbol{\xi}^{\, \nu}) 
\, \right]^{\, \prime}
(\wh{x}^{\, \nu};w^{\nu}) \, \leq \, 
\left[ \, \wh{\theta}_j(\bullet;\bar{x}^{\infty};\boldsymbol{\xi}^{\, \infty})
\, \right]^{\, \prime}(\wh{x}^{\, \infty};w^{\infty}).
\end{equation}
For an arbitrary scalar $\varepsilon \geq 0$, define the family 
\begin{equation} \label{eq:the family F}
\boldsymbol{\mathcal F}_{\Theta}^{\, \varepsilon}(\bar{x}) \, \triangleq \, \left\{ \, 
\wh{\Theta}_{\max}^{\, \varepsilon}(\bullet;\bar{x};\wh{\boldsymbol{\xi}})
\, \triangleq \, \displaystyle{
\max_{j \in {\cal M}_{\Theta}^{\, \varepsilon}(\bar{x})}
} \, \wh{\theta}_j(\bullet;\bar{x};\boldsymbol{\xi}^{\, j}) \, : \, 
\wh{\boldsymbol{\xi}} \, \triangleq \, \left( \boldsymbol{\xi}^{\, j} \right)_{j=1}^J 
\, \in \, \displaystyle{
\prod_{j=1}^J
} \ \wh{\boldsymbol{\Xi}}^{\, j}(\bar{x}) \, \triangleq \, 
\wh{\boldsymbol{\Xi}}_{\Theta}(\bar{x}) \, \right\}.
\end{equation}
We note that for any $\wh{\boldsymbol{\xi}} \in \wh{\boldsymbol{\Xi}}_{\Theta}(\bar{x})$,
\begin{equation} \label{eq:dd equality at xi}
\left[ \, \wh{\Theta}_{\max}^{\, \varepsilon}(\bullet;\bar{x};\wh{\boldsymbol{\xi}})
\, \right]^{\, \prime}(\bar{x};v) \, = \, \displaystyle{
\max_{j \in {\cal M}_{\Theta}(\bar{x})}
} \, \left[ \, \wh{\theta}_j(\bullet;\bar{x};\boldsymbol{\xi}^{\, j})
\, \right]^{\, \prime}(\bar{x};v), \epc \forall \, v \, \in \, \mathbb{R}^n
\end{equation}
because $\displaystyle{
\operatornamewithlimits{\mbox{argmax}}_{j \in {\cal M}_{\Theta}^{\, \varepsilon}(\bar{x})}
} \ \wh{\theta}_j(\bar{x};\bar{x};\boldsymbol{\xi}^{\, j}) =  {\cal M}_{\Theta}(\bar{x})$.
Associated with each function in the family 
$\boldsymbol{\mathcal F}_{\Theta}^{\, \varepsilon}(\bar{x})$, we define the strongly convex
program: for an arbitrary scalar $\rho > 0$,
\begin{equation} \label{eq:workhorse}
\displaystyle{
\operatornamewithlimits{\mbox{\bf minimize}}_{x \in X}
} \ \wh{\Theta}_{\max}^{\, \varepsilon}(x;\bar{x};\wh{\boldsymbol{\xi}}) + \displaystyle{
\frac{\rho}{2}
} \, \| \, x - \bar{x} \, \|_2^2,
\end{equation}
which will be the computational workhorse of the algorithm to be described for solving 
the problem (\ref{eq:unified framework}).  Note that (\ref{eq:workhorse}) is a strongly
convex program and thus has a unique globally optimal solution.
An enhanced computational scheme may involve a double minimization:
\begin{equation} \label{eq:enhanced workhorse}
\displaystyle{
\operatornamewithlimits{\mbox{\bf minimize}}_{
\wh{\boldsymbol{\xi}} \in \wh{\boldsymbol{\Xi}}_{\Theta}(\bar{x})}
} \, \left[ \, \displaystyle{
\operatornamewithlimits{\mbox{\bf minimum}}_{x \in X}
} \ \wh{\Theta}_{\max}^{\, \varepsilon}(x;\bar{x};\wh{\boldsymbol{\xi}}) + \displaystyle{
\frac{\rho}{2}
} \, \| \, x - \bar{x} \, \|_2^2 \, \right],
\end{equation}
that is computationally implementable  when 
$\wh{\boldsymbol{\Xi}}_{\Theta}(\bar{x})$ is a finite set
(see problem (\ref{eq:max-max all subproblem}) in 
Subsection~\ref{subsec:computing directional}), albeit could be demanding.  
Note the use of a possibly positive $\varepsilon$ in defining the
functions in the family $\boldsymbol{\mathcal F}_{\Theta}^{\, \varepsilon}(\bar{x})$.  
On one hand, this offers 
the benefit of providing control on the number of maximands
to the minimized in (\ref{eq:workhorse}) as a surrogate for the 
problem (\ref{eq:unified framework}).   This flexibility is certainly 
a computational advantage in practice over the choice of the full index set 
$[ J ] \triangleq \{ 1, \cdots, J \}$.  On the other hand, when 
${\cal M}_{\Theta}^{\varepsilon}(\bar{x})$ is a proper subset of $[ J ]$, 
the direct decrease of the objective function $\Theta_{\max}$ from a current
iterate $\bar{x}$ is jeopardized 
by the omission of the functions $\theta_j$ for 
$j \not\in {\cal M}_{\Theta}^{\varepsilon}(\bar{x})$ when we
obtain a new iterate $\wh{x}$ by
minimizing $\wh{\Theta}_{\max}^{\, \varepsilon}(\bullet;\bar{x};\wh{\boldsymbol{\xi}})$;
see Subsection~\ref{subsec:descent analysis}.  It happens that as far as stationarity
is concerned, any nonnegative $\varepsilon$ achieves the same goal.

\begin{proposition} \label{pr:unified stationarity certificate} \rm
Let $X$ be a closed convex set in $\mathbb{R}^n$.  Let $\bar{x} \in X$ be arbitrary
and let $\boldsymbol{\mathcal F}_{\Theta}^{\, \varepsilon}(\bar{x})$ be the family of functions
defined above.  Let $\rho > 0$ be arbitrary.  The following statements hold.

\gap 

{\bf (A)} For any $\varepsilon \geq 0$, the following two statements are equivalent:

\gap

(Ai) $\bar{x}$ is an optimal solution of (\ref{eq:workhorse}) for all 
$\wh{\boldsymbol{\xi}} \in \wh{\boldsymbol{\Xi}}_{\Theta}(\bar{x})$;

\gap

(Aii) $\bar{x}$ is an optimal solution of 
\[
\displaystyle{
\operatornamewithlimits{\mbox{\bf minimize}}_{x \in X}
} \, \left\{ \, \left[ \, \displaystyle{
\operatornamewithlimits{\mbox{\bf minimum}}_{
\wh{\boldsymbol{\xi}} \in \wh{\boldsymbol{\Xi}}_{\Theta}(\bar{x})} 
} \ \wh{\Theta}_{\max}^{\, \varepsilon}(x;\bar{x};\wh{\boldsymbol{\xi}}) 
\, \right] + \displaystyle{
\frac{\rho}{2}
} \, \| \, x - \bar{x} \, \|_2^2 \, \right\}
\]
which is equivalent to (\ref{eq:enhanced workhorse}).

\gap

{\bf (B)} If $\bar{x}$ is a directional stationary solution 
of $\Theta_{\max}$ on $X$, then both statements (Ai) and (Aii) hold 
for all $\varepsilon \geq 0$; conversely, if either statement (Ai) or (Aii) holds 
for some $\varepsilon \geq 0$, then $\bar{x}$ is a directional stationary solution 
of $\Theta_{\max}$ on $X$.
\end{proposition}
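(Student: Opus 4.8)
The plan is to reduce everything to the single scalar inequality $\Theta_{\max}^{\,\prime}(\bar x; x-\bar x)\ge 0$ for all $x\in X$ and to the surrogate analogue, using strong convexity of the workhorse problem to turn optimality into a first-order directional condition. First I would prove part (A). For fixed $\wh{\boldsymbol\xi}$, the objective of (\ref{eq:workhorse}) is strongly convex, so $\bar x$ solves it iff its directional derivative at $\bar x$ is nonnegative along every feasible direction, i.e.\ $\left[\wh\Theta_{\max}^{\,\varepsilon}(\bullet;\bar x;\wh{\boldsymbol\xi})\right]^{\,\prime}(\bar x;x-\bar x)\ge 0$ for all $x\in X$ (the quadratic term contributes $0$ to the directional derivative at $\bar x$). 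For (Aii), the function $x\mapsto\min_{\wh{\boldsymbol\xi}}\wh\Theta_{\max}^{\,\varepsilon}(x;\bar x;\wh{\boldsymbol\xi})$ is in general not convex, but at $x=\bar x$ all members of the family agree (touching at the reference point, so the value equals $\Theta_{\max}(\bar x)$ and is independent of $\wh{\boldsymbol\xi}$), hence $\bar x$ is an optimal solution of (Aii) iff $\bar x$ is feasible and, for every $x\in X$, $\min_{\wh{\boldsymbol\xi}}\wh\Theta_{\max}^{\,\varepsilon}(x;\bar x;\wh{\boldsymbol\xi})+\frac\rho2\|x-\bar x\|_2^2\ge\Theta_{\max}(\bar x)$; since the minimum over a compact parameter set is attained, this is exactly the requirement that for every $x$ there is no $\wh{\boldsymbol\xi}$ making (\ref{eq:workhorse}) drop below $\Theta_{\max}(\bar x)$, which by the strong-convexity characterization above is equivalent to (Ai). The equivalence of (Aii) with (\ref{eq:enhanced workhorse}) is the observation that swapping the order of the two minimizations does not change the optimal value and that $\bar x$ attains it.

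Next I would prove part (B). Suppose $\bar x$ is directionally stationary for $\Theta_{\max}$ on $X$, i.e.\ $\Theta_{\max}^{\,\prime}(\bar x;x-\bar x)\ge 0$ for all $x\in X$. By (\ref{eq:obj dd}), $\Theta_{\max}^{\,\prime}(\bar x;v)=\max_{j\in{\cal M}_\Theta(\bar x)}\theta_j^{\,\prime}(\bar x;v)$, and by the directional derivative dominance property together with (\ref{eq:dd equality at xi}), for any $\wh{\boldsymbol\xi}\in\wh{\boldsymbol\Xi}_\Theta(\bar x)$ we have $\left[\wh\Theta_{\max}^{\,\varepsilon}(\bullet;\bar x;\wh{\boldsymbol\xi})\right]^{\,\prime}(\bar x;v)=\max_{j\in{\cal M}_\Theta(\bar x)}\left[\wh\theta_j(\bullet;\bar x;\boldsymbol\xi^{\,j})\right]^{\,\prime}(\bar x;v)\ge\max_{j\in{\cal M}_\Theta(\bar x)}\theta_j^{\,\prime}(\bar x;v)=\Theta_{\max}^{\,\prime}(\bar x;v)$. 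Taking $v=x-\bar x$ gives $\left[\wh\Theta_{\max}^{\,\varepsilon}(\bullet;\bar x;\wh{\boldsymbol\xi})\right]^{\,\prime}(\bar x;x-\bar x)\ge 0$ for all $x\in X$, so by convexity $\bar x$ solves (\ref{eq:workhorse}); this is (Ai), and by part (A) also (Aii), for every $\varepsilon\ge0$.

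For the converse, suppose (Ai) holds for some $\varepsilon\ge0$; I must show $\Theta_{\max}^{\,\prime}(\bar x;x-\bar x)\ge 0$ for all $x\in X$. Fix $x\in X$ and set $\bar v\triangleq x-\bar x$. Here I use the sharp half of the directional derivative dominance: for the single direction $\bar v$ and each $j$ there exists $\bar{\boldsymbol\xi}^{\,j}\in\wh{\boldsymbol\Xi}^{\,j}(\bar x)$ with $\left[\wh\theta_j(\bullet;\bar x;\bar{\boldsymbol\xi}^{\,j})\right]^{\,\prime}(\bar x;\bar v)=\theta_j^{\,\prime}(\bar x;\bar v)$; assembling $\bar{\wh{\boldsymbol\xi}}\triangleq(\bar{\boldsymbol\xi}^{\,j})_{j=1}^J\in\wh{\boldsymbol\Xi}_\Theta(\bar x)$ and invoking (\ref{eq:dd equality at xi}) yields $\left[\wh\Theta_{\max}^{\,\varepsilon}(\bullet;\bar x;\bar{\wh{\boldsymbol\xi}})\right]^{\,\prime}(\bar x;\bar v)=\max_{j\in{\cal M}_\Theta(\bar x)}\theta_j^{\,\prime}(\bar x;\bar v)=\Theta_{\max}^{\,\prime}(\bar x;\bar v)$. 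Since $\bar x$ solves (\ref{eq:workhorse}) with parameter $\bar{\wh{\boldsymbol\xi}}$ by hypothesis, the directional derivative of the strongly convex objective at $\bar x$ along $\bar v$ is nonnegative, i.e.\ $\left[\wh\Theta_{\max}^{\,\varepsilon}(\bullet;\bar x;\bar{\wh{\boldsymbol\xi}})\right]^{\,\prime}(\bar x;\bar v)\ge 0$, whence $\Theta_{\max}^{\,\prime}(\bar x;\bar v)\ge 0$. The case where (Aii) holds is handled via part (A). The main obstacle — and the reason the "sharp" clause in the directional derivative dominance matters — is that for the converse one cannot use an arbitrary $\wh{\boldsymbol\xi}$: one must select the parameter tuple that makes the surrogate directional derivative \emph{exactly} match $\Theta_{\max}^{\,\prime}(\bar x;\bar v)$ in the chosen direction, so that the (merely) dominance inequality does not go the wrong way; care is also needed to confirm that this selected $\wh{\boldsymbol\xi}$ lies in $\wh{\boldsymbol\Xi}_\Theta(\bar x)$ (it does, coordinatewise) and that (\ref{eq:dd equality at xi}) applies because $\operatornamewithlimits{argmax}_{j\in{\cal M}_\Theta^{\,\varepsilon}(\bar x)}\wh\theta_j(\bar x;\bar x;\boldsymbol\xi^{\,j})={\cal M}_\Theta(\bar x)$ regardless of $\varepsilon$.
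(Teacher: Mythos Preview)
Your proof is correct and follows essentially the same approach as the paper. For part (A) the paper is slightly more direct---it simply observes that, because $\wh{\Theta}_{\max}^{\,\varepsilon}(\bar{x};\bar{x};\wh{\boldsymbol{\xi}})=\Theta_{\max}(\bar{x})$ for every $\wh{\boldsymbol{\xi}}$, both (Ai) and (Aii) reduce to the single value inequality $\Theta_{\max}(\bar{x})\le \wh{\Theta}_{\max}^{\,\varepsilon}(x;\bar{x};\wh{\boldsymbol{\xi}})+\tfrac{\rho}{2}\|x-\bar{x}\|_2^2$ for all $(x,\wh{\boldsymbol{\xi}})$, without passing through the directional-derivative characterization---but your route via first-order conditions reaches the same conclusion; part (B) matches the paper's argument exactly, including the crucial use of the ``sharp'' half of the directional derivative dominance to construct the matching tuple $\bar{\wh{\boldsymbol{\xi}}}$ in the converse.
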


\begin{proof}  (Ai) $\Leftrightarrow$ (Aii).  Since
$\wh{\Theta}_{\max}^{\, \varepsilon}(\bar{x};\bar{x};\wh{\boldsymbol{\xi}}) =
\Theta_{\max}(\bar{x})$ for all 
$\wh{\boldsymbol{\xi}} \in \wh{\boldsymbol{\Xi}}_{\Theta}(\bar{x})$
and all $\varepsilon \geq 0$, it is clear that (A), (B), and (\ref{eq:enhanced workhorse}) 
are all saying the same thing, namely,
\[
\Theta_{\max}(\bar{x}) \, \leq \, 
\wh{\Theta}_{\max}^{\, \varepsilon}(x;\bar{x};\wh{\boldsymbol{\xi}}) + \displaystyle{
\frac{\rho}{2}
} \, \| \, x - \bar{x} \, \|_2^2, \epc \forall \, ( x,\wh{\boldsymbol{\xi}} ) \, \in \, 
X \, \times \, \wh{\boldsymbol{\Xi}}_{\Theta}(\bar{x}).
\]
(B)  Suppose 
$\Theta_{\max}^{\, \prime}(\bar{x};x - \bar{x}) \geq 0$ for all $x \in X$.  Then
\[ \begin{array}{lll}
0 & \leq & \displaystyle{
\max_{j \in {\cal M}_{\Theta}(\bar{x})}
} \, \theta_j^{\, \prime}(\bar{x};x - \bar{x}) \epc \mbox{for all $x \in X$} \\ [0.15in]
& \leq & \displaystyle{
\max_{j \in {\cal M}_{\Theta}(\bar{x})}
} \, \left\{ \, \left[ \, \wh{\theta}_j(\bullet;\bar{x};\boldsymbol{\xi}^{\, j}) 
\, \right]^{\, \prime}(\bar{x};x - \bar{x}) \, \right\} \\ [0.2in]
& & \epc \forall \,  
\boldsymbol{\xi}^{\, j} \in \wh{\boldsymbol{\Xi}}^{\, j}(\bar{x}) \ 
\mbox{by directional derivative dominance} \\ [0.15in]
& = & \left[ \, \wh{\Theta}_{\max}^{\, \varepsilon}(\bullet;\bar{x};\wh{\boldsymbol{\xi}}) 
\, \right]^{\, \prime}(\bar{x};x - \bar{x}) \epc \mbox{for all 
$\wh{\boldsymbol{\xi}} \in \wh{\boldsymbol{\Xi}}_{\Theta}(\bar{x})$ and all $x \in X$, by
(\ref{eq:dd equality at xi})},
\end{array} \]
where the last equality holds by the functional touching property:
$\wh{\theta}_j(\bar{x};\bar{x};\boldsymbol{\xi}^j) = \theta_j(\bar{x})$
for all $j$.  Thus, $\bar{x}$ globally minimizes 
$\wh{\Theta}_{\max}^{\, \varepsilon}(\bullet;\bar{x};\wh{\boldsymbol{\xi}})$ on $X$
because the function is convex.  

\gap

Conversely, suppose that for some $\varepsilon \geq 0$,
$\bar{x}$ is an optimal solution of (\ref{eq:workhorse}) 
for all $\wh{\boldsymbol{\xi}} \in \wh{\boldsymbol{\Xi}}_{\Theta}(\bar{x})$.  To
show the desired directional stationarity of $\wh{x}$, i.e., 
$\Theta_{\max}^{\, \prime}(\bar{x};x - \bar{x}) \geq 0$
for all $x \in X$, let $x \in X$ be arbitrary.  
By the second part of the directional derivative dominance condition, there exists 
$\wh{\boldsymbol{\xi}} = \left( \, \boldsymbol{\xi}^{\, j} \right)_{j=1}^J 
\in \wh{\boldsymbol{\Xi}}_{\Theta}(\bar{x})$ such that
\[
\left[ \, \wh{\theta}_j(\bullet;\bar{x};\boldsymbol{\xi}^{\, j}) \, \right]^{\, \prime}(
\bar{x};x - \bar{x}) \, = \, \theta_j^{\, \prime}(\bar{x};x - \bar{x}), \epc \forall \, j.
\]
With this particular $\wh{\boldsymbol{\xi}}$, we have, by (\ref{eq:dd equality at xi}),
\[ \begin{array}{lll}
\Theta_{\max}^{\, \prime}(\bar{x};x - \bar{x}) & = & \displaystyle{
\max_{j \in {\cal M}_{\Theta}(\bar{x})}
} \, \theta_j^{\, \prime}(\bar{x}; x - \bar{x}) \\ [0.1in]
& = & \displaystyle{
\max_{j \in {\cal M}_{\Theta}(\bar{x})}
} \, \left\{ \, \left[ \, \wh{\theta}_j(\bullet;\bar{x};\boldsymbol{\xi}^{\, j}) 
\, \right]^{\, \prime}(\bar{x};x - \bar{x}) \, \right\} \\ [0.2in]
& = & \left[ \, \wh{\Theta}_{\max}^{\, \varepsilon}(\bullet;\bar{x};\wh{\boldsymbol{\xi}}) 
\, \right]^{\, \prime}(\bar{x};x - \bar{x})
\, \geq \, 0,
\end{array}
\]
where the last inequality holds by assumption on $\bar{x}$. 
\end{proof}

It is important to point out the ``{\sl for all } 
$\wh{\boldsymbol{\xi}} \in \wh{\boldsymbol{\Xi}}(\bar{x})$'' requirement, 
as opposed to ``for some'', in the converse statement of part (B).  This distinction 
is well addressed in the case of
dc programming that distinguishes a directional stationary solution under the
``for all'' requirement from a ``critical solution'' under the ``for some''
requirement.  Since a dc program is a simple special case of our setting,
the distinction persists.  To highlight their difference and to change the
terminology accordingly, we formally introduce the following.

\begin{definition} \label{df:weak stationarity} \rm
The vector $\bar{x}$ is a {\sl weak directional stationary 
solution} of (\ref{eq:unified framework}) relative to the family 
$\wh{\boldsymbol{\Xi}}_{\Theta}(\bar{x})$ if {\sl there exists}
$\wh{\boldsymbol{\xi}} \in \wh{\boldsymbol{\Xi}}_{\Theta}(\bar{x})$ such that
$\bar{x}$ is an optimal solution of the strongly convex program (\ref{eq:workhorse})
defined by this tuple $\wh{\boldsymbol{\xi}}$ and $\bar{x}$ itself, i.e.,
\begin{equation} \label{eq:fixed point stationarity}
\{ \bar{x} \} \, = \, \displaystyle{
\operatornamewithlimits{\mbox{\bf argmin}}_{x \in X}
} \, \left[ \, \displaystyle{
\max_{j \in {\cal M}_{\Theta}(\bar{x})}
} \, \wh{\theta}_j(x;\bar{x};\boldsymbol{\xi}^{\, j}) + \displaystyle{
\frac{\rho}{2}
} \, \| \, x - \bar{x} \, \|_2^2\, \right].
\end{equation}
This is in contrast to a directional stationary solution for which 
(\ref{eq:fixed point stationarity}) holds for all 
$\wh{\boldsymbol{\xi}} \in \wh{\boldsymbol{\Xi}}_{\Theta}(\bar{x})$.  \hfill $\Box$
\end{definition}

The above definition formally identifies the difference between the two kinds of stationarity
solutions and connects them well in terms of the parameterized family 
$\wh{\boldsymbol{\Xi}}_{\Theta}(\bar{x})$.

%
%

\section{Iterative Descent Algorithms} \label{sec:iterative algorithms}

The development in this section is 
based on the family $\boldsymbol{\mathcal F}_{\Theta}^{\, \varepsilon}(x^{\nu})$ at
various iterates $x^{\nu}$.  The focus here is on the presentation of 
two versions of an algorithm for computing a
stationary solution of the problem (\ref{eq:unified framework}) and establishing
their subsequential convergence.  One version solves
only one subproblem (\ref{eq:workhorse}) for an arbitrarily chosen member
$\wh{\boldsymbol{\xi}}$ from the family $\wh{\boldsymbol{\Xi}}_{\Theta}(x^{\nu})$; whereas
the other version solves such subproblems for all members, assuming that
$\wh{\boldsymbol{\Xi}}_{\Theta}(x^{\nu})$ is a finite family.  Needless to say, the latter
version is computationally much more demanding; its benefit is that the iterates
will provably accumulate to a directional stationary solution; this is in contrast to the
former version which computes only a weak directional stationary solution.  In the
description of the algorithms, we leave open how the subproblems 
(\ref{eq:workhorse}) are actually solved; more importantly, our analysis assumes
that the iterates are their exact optimal solutions.  Thus if the subproblems
are solved by gradient descent methods (for instance), only inexact solutions 
can be obtained in practical computations.  
In principle, our algorithms should account for this possibility of the inexact solution
of the subproblems; we believe that this is possible for the subsequential
convergence analysis, but to avoid complex notations and more involved analysis, we focus only
on the exact solution of the subproblems in our description and analysis
of the algorithms.  

\gap

\begin{algorithm}
\caption{A descent algorithm for (\ref{eq:unified framework})}
\label{alg I:descent}
\begin{algorithmic}
\State{\bf Initialization:}  Let $\rho$, $\sigma$ and
$\beta$ be positive scalars with $\sigma$ and $\beta$ both less than 1; these are
fixed throughout the following iterations.  Let $\{ \varepsilon_{\nu} \}$ be a
sequence of nonnegative scalars converging to the limit $\varepsilon_{\infty} \geq 0$.
Let $x^0 \in X$ be given; let $\nu = 0$.
\For{$\nu=0, 1, \cdots$,} 
\begin{enumerate}
\item  Pick a tuple $\wh{\boldsymbol{\xi}}^{\, \nu} \in \wh{\boldsymbol{\Xi}}(x^{\nu})$
       and compute the globally optimal solution of (\ref{eq:workhorse})
       with this tuple $\wh{\boldsymbol{\xi}}^{\, \nu}$ at $\bar{x} = x^{\nu}$ and
       $\varepsilon = \varepsilon_{\nu}$. Denote the computed solution by
       $x^{\nu + 1/2}$.
\item  If $x^{\nu + 1/2} = x^{\nu}$, stop; $x^{\nu}$ is a weak directional stationary
       solution of (\ref{eq:unified framework}). Otherwise,  let 
       $d^{\, \nu+1} \triangleq x^{\nu+1/2} - x^{\nu}$ and proceed to the next step.     
\item (Armijo line search) Let  
      $m_{\nu}$ be the smallest integer $m \geq 0$ such that \begin{equation} \label{eq:Armijo descent}
\Theta_{\max}(x^{\nu} + \beta^m d^{\, \nu + 1}) - \Theta_{\max}(x^{\nu})
\, \leq \, -\displaystyle{
\frac{\sigma \, \rho}{2}
} \, \, \beta^m \, \| \, x^{\nu + 1/2} - x^{\nu} \, \|_2^2.
\end{equation}
\item Let $x^{\nu+1} = x^{\nu} + \tau_{\nu+1} d^{\, \nu+1}$ where $\tau_{\nu+1} \triangleq
      \beta^{\, m_{\nu}}$.  Let $\nu \leftarrow \nu + 1$ and return to Step~1.
\end{enumerate}
\EndFor	 \hfill $\Box$
\end{algorithmic}
\end{algorithm}

The use of a sequence of (nonnegative) scalars $\{ \varepsilon_{\nu} \}$ converging 
to some $\varepsilon_{\infty} \geq 0$ adds flexibility and stability to the algorithm.  
It turns out that in principle whether the limit $\varepsilon_{\infty}$ is zero or 
not does not affect the stationarity of the limit point of the sequence of 
iterates $\{ x^{\nu} \}$.
Nevertheless, small $\varepsilon_{\nu}$'s offer computational savings; larger
$\varepsilon_{\nu}$'s provide numerical stability in the selection of the families 
of functions $\wh{\theta}_j(\bullet;x^{\nu};\boldsymbol{\xi}^j)$ to be optimized 
in the subproblems.  The appropriate choice of $\varepsilon_{\nu}$ in practice has to be
learnt from computational experience in order to balance excessive computational efforts 
and potential numerical issues.


\subsection{Direct descent in special instances} \label{subsec:descent analysis}

The first thing to do in analyzing the convergence of the algorithm is to justify the 
well-definedness of the sequence $\{ x^{\nu+1} \}$; this amounts to showing that  
each integer $m_{\nu}$
in the Armijo line search can be determined in finitely many trials.  A special instance
of this situation is for $m_{\nu} = 0$, i.e., for the Armijo test to yield a successful
step at the first trial with appropriate choice of the parameter $\sigma$ that may
depend on the proximal parameter $\rho$ and some model constants. We discuss this 
instance in the context of the  following single-ratio
(i.e., $K = 1$) fractional program:
\begin{equation} \label{eq:K=1}
\displaystyle{
\operatornamewithlimits{\mbox{\bf minimize}}_{x \in X}
} \, \Theta_{\max}^{\, K=1}(x) \, \triangleq \, 
\displaystyle{
\max_{1 \leq j \leq J}
} \, \displaystyle{
\frac{n_j(x)}{d_j(x)}
} \, \equiv \, \displaystyle{
\frac{n^{\rm ndff}_j(x) + n^{\rm diff}_j(x)}{d^{\, \rm ndff}_j(x) 
+ d^{\, \rm diff}_j(x))}
},
\end{equation}
where $n_j^{\rm diff}$ and $d_j^{\, \rm diff}$ are differentiable functions
and $n_j^{\rm ndff}$ and $d_j^{\, \rm ndff}$ are nondifferentiable functions
with properties to be specified below.  For a given $\bar{x} \in X$, we 
consider the subproblem:
\begin{equation} \label{eq:subproblem for max of ratio}
\displaystyle{
\operatornamewithlimits{\mbox{\bf minimize}}_{x \in X}
} \ \left\{ \begin{array}{l}
\displaystyle{
\max_{1 \leq j \leq J}
} \ \left[ \begin{array}{l}
\displaystyle{
\frac{1}{d_j(\bar{x})}
} \, \left( \, n^{\rm ndff}_j(x) - n^{\rm ndff}_j(\bar{x}) + 
\nabla n_j^{\rm diff}(\bar{x})^{\top}( x - \bar{x} ) \, \right) \, - \\ [0.2in]
\displaystyle{
\frac{n_j(\bar{x})}{d_j(\bar{x})^2}
} \, \left( \, d^{\, \rm ndff}_j( \wh{x} ) - d^{\, \rm ndff}_j(\bar{x})  
+ \nabla d_j^{\, \rm diff}(\bar{x}^{\top}( x - \bar{x} ) \, \right)
\end{array} \, \right] \, + \\ [0.4in]
\hspace{0.1in} \displaystyle{
\frac{\rho}{2}
} \, \| \, x - \bar{x} \, \|_2^2 
\end{array} \right\},
\end{equation}
which is a strongly convex program under the following specifications
wherein direct descent of the objective function $\Theta_{\max}$ is possible.

\gap

\noindent $\bullet $ {\bf Convex-over-concave quotient:}  This case 
is essentially the Dinkelbach single-fraction problem 
extended to the case of
a generalized fractional program.  The specifications of this case are
formally as follows: for all $j = 1, \cdots, J$, the functions
$n^{\rm diff}_j = d^{\, \rm diff}_j\equiv 0$, $n^{\rm ndff}_j$, written as
$n^{\rm cvx}_j$, is convex and nonnegative and $d^{\, \rm ndff}_j$, written as
$d^{\, \rm cve}_j$, is concave and positive.  The optimal solution $\wh{x}$ of
the problem (\ref{eq:subproblem for max of ratio}) satisfies
\begin{equation} \label{eq:optimality of whx}
\displaystyle{
\max_{1 \leq j \leq J}
} \ \displaystyle{
\frac{1}{d^{\, \rm cve}_j(\bar{x})}
} \, \left[ \, n^{\rm cvx}_j( \wh{x} ) - n^{\rm cvx}_j(\bar{x}) - \displaystyle{
\frac{n^{\rm cvx}_j(\bar{x})}{d^{\, \rm cve}_j(\bar{x})}
} \, ( d^{\, \rm cve}_j( \wh{x} ) - d^{\, \rm cve}_j(\bar{x}) ) \, \right] 
+ \displaystyle{
\frac{\rho}{2}
} \, \| \, \wh{x} - \bar{x} \, \|_2^2 \, \leq \, 0,
\end{equation}
which easily yields 
\[ \begin{array}{lll}
\Theta_{\max}^{\rm cvx/cve}( \wh{x} ) & \triangleq & 
\displaystyle{
\max_{1 \leq j \leq J}
} \, \displaystyle{
\frac{n_j^{\rm cvx}( \wh{x} )}{d_j^{\, \rm cve}( \wh{x} )}
} \\ [0.2in]
& \leq & \displaystyle{
\max_{1 \leq j \leq J}
} \, \displaystyle{
\frac{n_j^{\rm cvx}(\bar{x})}{d_j^{\, \rm cve}(\bar{x})} 
} - \displaystyle{
\frac{\rho}{2}
} \, \| \, \wh{x} - \bar{x} \, \|_2^2 \, \left[ \, \displaystyle{
\min_{1 \leq j \leq J}
} \, \displaystyle{
\frac{d_j^{\, \rm cve}(\bar{x})}{d_j^{\, \rm cve}(\wh{x})} 
} \, \right] \\ [0.2in]
& = & \Theta_{\max}^{\, \rm cvx/cve}( \bar{x} ) - \displaystyle{
\frac{\rho}{2}
} \, \| \, \wh{x} - \bar{x} \, \|_2^2 \, \left[ \, \displaystyle{
\min_{1 \leq j \leq J}
} \, \displaystyle{
\frac{d_j^{\, \rm cve}(\bar{x})}{d_j^{\, \rm cve}(\wh{x})} 
} \, \right] ,
\end{array} \]
where the inequality is derived from (\ref{eq:optimality of whx})
upon multiplication by $d^{\, \rm cve}_j(\bar{x})$ and division by
$d^{\, \rm cve}_j(\wh{x})$; these two operations do not yield this inequality  
when there is an outer summation, i.e., when $K > 1$.  The above inequality shows that
if there are positive constants $\overline{\alpha} > \underline{\alpha} > 0$ such that
$\underline{\alpha} \leq d_j^{\, \rm cve}(x) \leq \overline{\alpha}$ 
for all $j = 1, \cdots, J$ and all $x \in X$, then the following holds:
\[
\Theta_{\max}^{\rm cvx/cve}( \wh{x} ) \, \leq \,  
\Theta_{\max}^{\rm cvx/cve}(\bar{x}) - \underbrace{\displaystyle{
\frac{\rho \, \underline{\alpha}}{2 \, \overline{\alpha}}
}}_{\mbox{a constant}} \, \| \, \wh{x} - \bar{x} \, \|_2^2,
\]
which yields sufficient descent with a unit step size, provided that 
$\sigma \triangleq
\displaystyle{
\frac{\underline{\alpha}}{\overline{\alpha}}
}$.

\gap

\noindent $\bullet $ {\bf Convex-over-differentiably-convex quotient:}
This is the case
where for all $j = 1, \cdots, J$, the functions
$n^{\rm diff}_j = d^{\, \rm ndff}_j \equiv 0$, $n^{\rm ndff}_j$, written as
$n^{\rm cvx}_j$, is convex and nonnegative and $d^{\, \rm diff}_j$, written as
$d^{\, \rm cvx}_j$, is convex and positive in addition to being differentiable. 
Similar to (\ref{eq:optimality of whx}), we have, 
\[ \begin{array}{lll}
0 & \geq & \displaystyle{
\max_{1 \leq j \leq J}
} \ \displaystyle{
\frac{1}{d^{\, \rm cvx}_j(\bar{x})}
} \, \left[ \, n^{\rm cvx}_j( \wh{x} ) - n^{\rm cvx}_j(\bar{x}) -
\displaystyle{
\frac{n^{\rm cvx}_j(\bar{x})}{d^{\, \rm cvx}_j(\bar{x})}
} \, \nabla d^{\, \rm cvx}_j(\bar{x})^{\top}( \wh{x} - \bar{x} ) \, \right] 
+ \displaystyle{
\frac{\rho}{2}
} \, \| \, \wh{x} - \bar{x} \, \|_2^2 \\ [0.2in]
& \geq & \displaystyle{
\max_{1 \leq j \leq J}
} \ \displaystyle{
\frac{1}{d^{\, \rm cvx}_j(\bar{x})}
} \, \left[ \, n^{\rm cvx}_j( \wh{x} ) - n^{\rm cvx}_j( \bar{x} ) - \displaystyle{
\frac{n^{\rm cvx}_j(\bar{x})}{d^{\, \rm cvx}_j(\bar{x})}
} \, ( \, d^{\, \rm cvx}_j( \wh{x} ) - d^{\, \rm cvx}_j(\bar{x}) ) \, \right]
+ \displaystyle{
\frac{\rho}{2}
} \, \| \, \wh{x} - \bar{x} \, \|_2^2 \\ [0.2in]
& &  \mbox{by convexity of $d^{\, \rm cvx}_j$ and nonnegativity of the fraction}
\\ [0.1in]
& = & \displaystyle{
\max_{1 \leq j \leq J}
} \ \displaystyle{
\frac{1}{d^{\, \rm cvx}_j(\bar{x})}
} \, \left[ \, n^{\rm cvx}_j( \wh{x} ) - \displaystyle{
\frac{n^{\rm cvx}_j(\bar{x})}{d^{\, \rm cvx}_j(\bar{x})}
} \, d^{\, \rm cvx}_j( \wh{x} ) \, \right] + \displaystyle{
\frac{\rho}{2}
} \, \| \, \wh{x} - \bar{x} \, \|_2^2 .
\end{array}
\]
Hence, we also obtain the descent: 
\[ \begin{array}{lll}
\Theta_{\max}^{\rm cvx/cvx}( \wh{x} ) & \triangleq & 
\displaystyle{
\max_{1 \leq j \leq J}
} \, \displaystyle{
\frac{n_j^{\rm cvx}( \wh{x} )}{d_j^{\, \rm cvx}( \wh{x} )}
} \\ [0.2in]
& \leq & \displaystyle{
\max_{1 \leq j \leq J}
} \, \displaystyle{
\frac{n_j^{\rm cvx}(\bar{x})}{d_j^{\, \rm cvx}(\bar{x})}
} - \displaystyle{
\frac{\rho}{2}
} \, \| \, \wh{x} - \bar{x} \, \|_2^2 \, \left[ \, \displaystyle{
\min_{1 \leq j \leq J}
} \, \displaystyle{
\frac{d_j^{\, \rm cvx}(\bar{x})}{d_j^{\, \rm cvx}(\wh{x})} 
} \, \right] \\ [0.2in]
& = & \Theta_{\max}^{\rm cvx/cvx}(\bar{x}) - \displaystyle{
\frac{\rho}{2}
} \, \| \, \wh{x} - \bar{x} \, \|_2^2 \, \left[ \, \displaystyle{
\min_{1 \leq j \leq J}
} \, \displaystyle{
\frac{d_j^{\, \rm cvx}(\bar{x})}{d_j^{\, \rm cvx}(\wh{x})} 
} \, \right],
\end{array}
\]
which is similar to the previous case with each $d_j^{\, \rm cvx}$ being
bounded away from zero and above on $X$.

\gap

\noindent $\bullet $ {\bf Differentiably-concave-over-concave quotient:}
This is the case
where for all $j = 1, \cdots, J$, the functions
$n^{\rm ndff}_j = d^{\, \rm diff}_j(x)\equiv 0$; $n^{\rm diff}_j$, written as
$n^{\rm cve}_j$, is concave and nonnegative and $d^{\, \rm ndff}_= j$, written as
$d^{\, \rm cve}_k$, is concave and positive.  Similarly, we have
\[ \begin{array}{lll}
0 & \geq & \displaystyle{
\max_{1 \leq j \leq J}
} \ \displaystyle{
\frac{1}{d^{\, \rm cve}_j(\bar{x})}
} \, \left[ \, \nabla n^{\rm cve}_j(\bar{x})^{\top}
( \wh{x} - \bar{x} ) - \displaystyle{
\frac{n^{\rm cve}_j(\bar{x})}{d^{\, \rm cve}_j(\bar{x})}
} \, \left( \, d^{\, \rm cve}( \wh{x} ) - d^{\, \rm cve}( \bar{x} ) \, \right) \, \right] 
\\ [0.2in]
& \geq & \displaystyle{
\max_{1 \leq j \leq J}
} \ \displaystyle{
\frac{1}{d^{\, \rm cve}_j(\bar{x})}
} \, \left[ \, n^{\rm cve}_j( \wh{x} ) - n^{\rm cve}_j( \bar{x} ) - \displaystyle{
\frac{n^{\rm cve}_k(\bar{x})}{d^{\, \rm cve}_j(\bar{x})}
} \, \left( \, d^{\, \rm cve}_j( \wh{x} ) - d^{\, \rm cve}_j(\bar{x}) \, \right)
\, \right] \\ [0.2in]
& & \mbox{by concavity of $n^{\, \rm cve}_j$ and nonnegativity of the fraction} 
\\ [0.1in]
& = & \displaystyle{
\max_{1 \leq j \leq J}
} \ \displaystyle{
\frac{1}{d^{\, \rm cve}_j(\bar{x})}
} \, \left[ \, n^{\rm cve}_j( \wh{x} ) - \displaystyle{
\frac{n^{\rm cve}_j(\bar{x})}{d^{\, \rm cve}_j(\bar{x})}
} \, d^{\, \rm cve}_j( \wh{x} ) \, \right]
\end{array}
\]
which also yields $\Theta_{\max}^{\rm cve/cve}( \wh{x} ) \triangleq 
\displaystyle{
\max_{1 \leq j \leq J}
} \, \displaystyle{
\frac{n_j^{\rm cve}( \wh{x} )}{d_j^{\, \rm cve}( \wh{x} )}
} \leq \displaystyle{
\max_{1 \leq j \leq J}
} \, \displaystyle{
\frac{n_j^{\rm cve}(\bar{x})}{d_j^{\, \rm cve}(\bar{x})} 
} = \Theta_{\max}^{\rm cve/cve}( \bar{x} )$, omitting the proximal term.  \hfill $\Box$

\subsection{Subsequential convergence: Setting of Subsection~\ref{subsec:summary of types}} 

We establish the following lemma
that justifies the finiteness of the Armijo line searches in the iterations, 
thus the well-definedness of the sequence $\{ x^{\nu+1} \}$.

\begin{lemma} \label{lm:Armijo well-defined} \rm
Let $\varepsilon \geq 0$ and $\sigma \in (0,1)$ be arbitrary scalars. Let
$\wh{x}$ be the optimal solution of (\ref{eq:workhorse}) corresponding to an
arbitrary member $\wh{\boldsymbol{\xi}} \in \wh{\boldsymbol{\Xi}}_{\Theta}(\bar{x})$.
If $\wh{x} \neq \bar{x}$,  then there
exists $\bar{\tau} > 0$ (dependent on $\bar{x}$, $\varepsilon$, $\rho$, and
$\wh{\boldsymbol{\xi}}$) such that for all $\tau \in ( \, 0,\bar{\tau} \, ]$,
with $\bar{x}^{\tau} \triangleq \bar{x} + \tau ( \wh{x} - \bar{x} )$, it holds that 
\begin{equation} \label{eq:string descent} 
\Theta_{\max}(\bar{x}^{\, \tau}) - \Theta_{\max}(\bar{x}) 
\leq \, -\displaystyle{
\frac{\sigma \, \tau \, \rho}{2}
} \, \| \, \wh{x} - \bar{x} \, \|_2^2 \, < \, 0.
\end{equation}
\end{lemma}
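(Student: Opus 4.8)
The plan is to turn the $\rho$-strong convexity of the subproblem~(\ref{eq:workhorse}) into a strict \emph{first-order} decrease of $\Theta_{\max}$ along the direction $d \triangleq \wh{x} - \bar{x}$ at $\bar{x}$, and then read off the finite-step Armijo estimate~(\ref{eq:string descent}) from the definition of the directional derivative. Write $g \triangleq \wh{\Theta}_{\max}^{\, \varepsilon}(\bullet;\bar{x};\wh{\boldsymbol{\xi}})$, a convex function (a finite maximum of the convex functions $\wh{\theta}_j(\bullet;\bar{x};\boldsymbol{\xi}^{\, j})$, $j \in {\cal M}_{\Theta}^{\, \varepsilon}(\bar{x})$), so that the objective of~(\ref{eq:workhorse}) is $h \triangleq g + \tfrac{\rho}{2}\| \, \bullet - \bar{x} \, \|_2^2$, which is $\rho$-strongly convex and is minimized over the convex set $X$ at $\wh{x}$.

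First I would apply the standard strong-convexity lower bound $h(x) \geq h(\wh{x}) + \tfrac{\rho}{2}\| \, x - \wh{x} \, \|_2^2$ at the feasible point $x = \bar{x}$. Using the touching property $g(\bar{x}) = \wh{\Theta}_{\max}^{\, \varepsilon}(\bar{x};\bar{x};\wh{\boldsymbol{\xi}}) = \Theta_{\max}(\bar{x})$ this gives
\[
g(\wh{x}) - \Theta_{\max}(\bar{x}) \, \leq \, -\rho \, \| \, \wh{x} - \bar{x} \, \|_2^2 .
\]
Since $g$ is convex, its difference quotient along $d$ is nondecreasing in the step, hence $g^{\, \prime}(\bar{x};d) \leq g(\bar{x}+d) - g(\bar{x}) = g(\wh{x}) - \Theta_{\max}(\bar{x})$. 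Combining this with the directional-derivative-dominance property ($\theta_j^{\, \prime}(\bar{x};d) \leq [\wh{\theta}_j(\bullet;\bar{x};\boldsymbol{\xi}^{\, j})]^{\, \prime}(\bar{x};d)$ for $j \in {\cal M}_{\Theta}(\bar{x})$), the maximum formula~(\ref{eq:obj dd}) for $\Theta_{\max}^{\, \prime}(\bar{x};\bullet)$, and the identity~(\ref{eq:dd equality at xi}) for $g^{\, \prime}(\bar{x};\bullet)$, I obtain
\[
\Theta_{\max}^{\, \prime}(\bar{x};d) \, = \, \displaystyle{\max_{j \in {\cal M}_{\Theta}(\bar{x})}} \theta_j^{\, \prime}(\bar{x};d) \, \leq \, \displaystyle{\max_{j \in {\cal M}_{\Theta}(\bar{x})}} \, [\wh{\theta}_j(\bullet;\bar{x};\boldsymbol{\xi}^{\, j})]^{\, \prime}(\bar{x};d) \, = \, g^{\, \prime}(\bar{x};d) \, \leq \, -\rho \, \| \, \wh{x} - \bar{x} \, \|_2^2 .
\]
Because $\wh{x} \neq \bar{x}$ and $\sigma \in (0,1)$ (so $\tfrac{\sigma\rho}{2} < \rho$), this is in fact the strict inequality $\Theta_{\max}^{\, \prime}(\bar{x};d) < -\tfrac{\sigma\rho}{2}\| \, \wh{x} - \bar{x} \, \|_2^2 < 0$.

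Finally I would unwind the definition $\Theta_{\max}^{\, \prime}(\bar{x};d) = \lim_{\tau \downarrow 0} \tau^{-1}\big(\Theta_{\max}(\bar{x} + \tau d) - \Theta_{\max}(\bar{x})\big)$: there is $\bar{\tau} \in (0,1]$ (taken $\leq 1$ so that $\bar{x}^{\, \tau} = \bar{x} + \tau d \in X$ by convexity of $X$, and dependent on $\bar{x}$, $\varepsilon$, $\rho$, $\wh{\boldsymbol{\xi}}$ through this limit) such that $\tau^{-1}\big(\Theta_{\max}(\bar{x}^{\, \tau}) - \Theta_{\max}(\bar{x})\big) \leq -\tfrac{\sigma\rho}{2}\| \, \wh{x} - \bar{x} \, \|_2^2$ for all $\tau \in (0,\bar{\tau}]$; multiplying through by $\tau > 0$ yields~(\ref{eq:string descent}), the strict negativity coming from $\wh{x} \neq \bar{x}$. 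No step here is a genuine obstacle; the one point requiring care is that $g$ does \emph{not} majorize $\Theta_{\max}$ globally (it only dominates its directional derivative at $\bar{x}$), so the descent cannot be read off $g$ directly and must be routed through the directional derivative, where the strict gap between $\rho$ and $\tfrac{\sigma\rho}{2}$ is precisely what promotes the asymptotic first-order bound to a finite-$\tau$ inequality.
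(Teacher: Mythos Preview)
Your proof is correct and follows essentially the same route as the paper's: both pass through the directional-derivative dominance property and the convexity/optimality of the subproblem to bound $\Theta_{\max}^{\,\prime}(\bar{x};\wh{x}-\bar{x})$ by $-c\,\rho\,\|\wh{x}-\bar{x}\|_2^2$ for some $c>\sigma/2$, then invoke the definition of the directional derivative to obtain~(\ref{eq:string descent}) for small $\tau$. The only differences are cosmetic: the paper argues by contradiction (assuming a violating sequence $\{\tau_\nu\}\downarrow 0$ and tracking a maximizing index $j_\nu$), while you argue directly via $\Theta_{\max}^{\,\prime}(\bar{x};d)$; and you extract the sharper constant $-\rho$ from $\rho$-strong convexity of the subproblem, whereas the paper uses only optimality to get $-\rho/2$ (either suffices since $\sigma<1$).
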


\begin{proof} We note that for all $\tau > 0$ sufficiently small,
${\cal M}_{\Theta}(\bar{x}^{\tau}) \subseteq \, {\cal M}_{\Theta}(\bar{x})$.
Assume by way of contradiction
that no such $\bar{\tau}$ exists.  Then there exists a sequence
$\{ \tau_{\nu} \} \downarrow 0$ such that for each $\nu$,
\begin{equation} \label{eq:contradiction for unified descent}
\Theta_{\max}(\bar{x}^{\tau_{\nu}}) - \Theta_{\max}(\bar{x}) \, > \,
 -\displaystyle{
\frac{\sigma \, \tau_{\nu} \, \rho}{2}
} \, \| \, \wh{x} - \bar{x} \, \|_2^2.
\end{equation}
Let $j_{\nu} \in {\cal M}_{\Theta}(\bar{x}^{\tau_{\nu}})$ be such that
$\theta_{j_{\nu}}(\bar{x}^{\tau_{\nu}}) = \Theta_{\max}(\bar{x}^{\tau_{\nu}})$.   Then 
$j_{\nu} \in {\cal M}_{\Theta}(\bar{x}) \subseteq 
{\cal M}_{\Theta}^{\, \varepsilon}(\bar{x})$ for all $\nu$ 
sufficiently large; hence,
\[ \begin{array}{l}
\Theta_{\max}(\bar{x}^{\tau_{\nu}}) - \Theta_{\max}(\bar{x}) \, \leq \,
\theta_{j_{\nu}}(\bar{x}^{\, \tau_{\nu}}) - \theta_{j_{\nu}}(\bar{x}) \\ [0.1in]
\epc = \, \tau_{\nu} \, \theta_{j_{\nu}}^{\, \prime}(\bar{x};\wh{x} - \bar{x}) + 
\mbox{o}( \tau_{\nu} ) \epc \mbox{by Bouligand differentiability} \\ [0.1in]
\epc \leq \, \tau_{\nu} \, \left[ \, 
\wh{\theta}_{j_{\nu}}(\bullet;\bar{x};\boldsymbol{\xi}^{\, j_{\nu}}) 
\, \right]^{\, \prime}(\bar{x};\wh{x} - \bar{x}) + \mbox{o}( \tau_{\nu} )
\epc \mbox{by directional derivative dominance} \\ [0.1in]
\epc \leq \, \tau_{\nu} \, \left[ \, 
\wh{\theta}_{j_{\nu}}(\wh{x};\bar{x};\boldsymbol{\xi}^{\, j_{\nu}}) -
\wh{\theta}_{j_{\nu}}(\bar{x};\bar{x};\boldsymbol{\xi}^{\, j_{\nu}})
\, \right] + \mbox{o}( \tau_{\nu} )
\epc \mbox{by convexity of 
$\wh\theta_{j_{\nu}}(\bullet;\bar{x};\boldsymbol{\xi}^{\, j_{\nu}})$} \\ [0.15in]
\epc \leq \tau_{\nu} \, \left[ \, \displaystyle{
\max_{j \in {\cal M}_{\Theta}^{\, \varepsilon}(\bar{x})}
} \ \wh{\theta}_j(\wh{x};\bar{x};\boldsymbol{\xi}^{\, j}) -
\displaystyle{
\max_{j \in {\cal M}_{\Theta}^{\, \varepsilon}(\bar{x})}
} \ \wh{\theta}_j(\bar{x};\bar{x};\boldsymbol{\xi}^{\, j}) \, \right] 
+ \mbox{o}( \tau_{\nu} ) \\ [0.2in]
\hspace{0.45in}
\mbox{because $j_{\nu} \in {\cal M}_{\Theta}^{\, \varepsilon}(\bar{x})$ and} \\ [0.1in]
\hspace{0.4in} \wh{\theta}_{j_{\nu}}(\bar{x};\bar{x};\boldsymbol{\xi}^{\, j_{\nu}}) =
\theta_{j_{\nu}}(\bar{x}) = \displaystyle{
\max_{j \in {\cal M}_{\Theta}(\bar{x})}
} \theta_j(\bar{x}) = \displaystyle{
\max_{j \in {\cal M}_{\Theta}^{\, \varepsilon}(\bar{x})}
} \theta_j(\bar{x}) =\displaystyle{
\max_{j \in {\cal M}_{\Theta}^{\, \varepsilon}(\bar{x})}
} \ \wh{\theta}_j(\bar{x};\bar{x};\boldsymbol{\xi}^{\, j}) \\ [0.15in]
\epc \leq \, -\displaystyle{
\frac{\tau_{\nu} \, \rho}{2}
} \, \| \, \wh{x} - \bar{x} \, \|_2^2 + \mbox{o}( \tau_{\nu} ), \epc
\mbox{by optimality of $\wh{x}$ for (\ref{eq:workhorse})}.
\end{array} \]
Hence, from (\ref{eq:contradiction for unified descent}), we deduce
$-( \, 1 - \sigma \, ) \, \displaystyle{
\frac{\rho}{2}
} \, \| \, \wh{x} - \bar{x} \, \|_2^2 + \displaystyle{
\frac{\mbox{o}( \tau_{\nu} )}{\tau_{\nu}}
} \, \geq \, 0$. 
Letting $\nu \to \infty$ contradicts $\wh{x} \neq \bar{x}$ since $\sigma < 1$.
\end{proof}

We establish the following subsequential convergence result for Algorithm~I.  
In the theorem, we employ the level set corresponding to the initial iterate $x^0$ 
which is fixed throughout the analysis:
\[
\mbox{Lev}(x^0) \, \triangleq \, \left\{ \, x \, \in \, X \, \mid \, \Theta_{\max}(x) 
\, \leq \, \Theta_{\max}(x^0) \, \right\}.
\]
The proof of the theorem
follows that for a local descent method in differentiable optimization
except for the way the descent direction is generated and the non-directional
derivative based Armijo criterion in each iteration.  

\begin{theorem} \label{th:subsequential convergence I} \rm
Let $X$ be a closed convex set in $\mathbb{R}^n$.  For any $\bar{x} \in X$, let 
$\boldsymbol{\mathcal F}_{\Theta}^{\, \varepsilon}(\bar{x} )$ be the family of 
pointwise maximum functions whose maximands satisfy the six
properties summarized in Subsection~\ref{subsec:summary of types}.   
Suppose that the level set 
$\mbox{Lev}(x^0)$ is bounded.  Let $\{ x^{\nu} \}$ be a sequence generated 
by Algorithm~1.  Let $x^{\infty}$ be the limit of a convergent subsequence 
of $\{ x^{\nu} \}_{\nu \in \kappa}$ for an infinite subset $\kappa$ of $\{ 1, 2, \cdots \}$.
The following two statements hold:

\gap

{\bf (A)} if $\displaystyle{
\liminf_{\nu ( \in \kappa ) \to \infty}
} \, \tau_{\nu+1} > 0$, then $x^{\infty}$ is a weak directional stationary solution 
of (\ref{eq:unified framework}).

\gap

{\bf (B)} if $\displaystyle{
\liminf_{\nu ( \in \kappa ) \to \infty}
} \, \tau_{\nu+1} = 0$ and if the uniform upper approximation condition
(\ref{eq:uniform upper dd}) holds at $x^{\infty}$ for the functions $\wh{\theta}_j$
for all $j \in {\cal M}_{\Theta}(x^{\infty})$, 
then $x^{\infty}$ is a weak directional stationary solution 
of (\ref{eq:unified framework}).

\gap

Thus, if 
$\wh{\boldsymbol{\Xi}}_{\Theta}(x^{\infty})$ is a singleton and
the uniform upper approximation condition
(\ref{eq:uniform upper dd}) holds at $x^{\infty}$ for the functions $\wh{\theta}_j$
for all $j \in {\cal M}_{\Theta}(x^{\infty})$, 
then $x^{\infty}$ is a directional stationary solution of (\ref{eq:unified framework}).
\end{theorem}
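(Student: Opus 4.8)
The plan is to run the standard argument for a line-search descent method, the only nonstandard features being that the search direction comes from the strongly convex surrogate subproblem~(\ref{eq:workhorse}) rather than a gradient, and that the Armijo test~(\ref{eq:Armijo descent}) is applied to $\Theta_{\max}$ itself rather than to a model. First I would do the descent bookkeeping. The Armijo rule makes $\{\Theta_{\max}(x^{\nu})\}$ nonincreasing, hence it stays in the bounded level set $\mbox{Lev}(x^0)$, is bounded below by continuity of $\Theta_{\max}$, and therefore converges; consequently $\sum_{\nu}\tau_{\nu+1}\|x^{\nu+1/2}-x^{\nu}\|_2^2<\infty$. Plugging $x=x^{\nu}$ into the optimality of $x^{\nu+1/2}$ for~(\ref{eq:workhorse}) and invoking lower Lipschitz boundedness gives $\frac{\rho}{2}\|x^{\nu+1/2}-x^{\nu}\|_2^2\le B\|x^{\nu+1/2}-x^{\nu}\|_2$ on the level set, so $\{x^{\nu+1/2}\}$ is bounded. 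Then, along the subsequence $\kappa$ with $x^{\nu}\to x^{\infty}$, I would pass to a further subsequence on which $x^{\nu+1/2}\to\wh{x}^{\infty}$, $\wh{\boldsymbol{\xi}}^{\nu}\to\wh{\boldsymbol{\xi}}^{\infty}$ with $\wh{\boldsymbol{\xi}}^{\infty}\in\wh{\boldsymbol{\Xi}}_{\Theta}(x^{\infty})$ (by the nonempty/compact-valued upper semicontinuity of the parameterizing map), and on which the integer-valued index sets ${\cal M}_{\Theta}^{\,\varepsilon_{\nu}}(x^{\nu})$ and the surrogate argmaxes are all constant.

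For part (A), $\liminf_{\nu(\in\kappa)}\tau_{\nu+1}>0$ together with the summability above forces $\|x^{\nu+1/2}-x^{\nu}\|_2\to 0$, so $\wh{x}^{\infty}=x^{\infty}$. I would write the first-order optimality condition of the strongly convex subproblem as $[\wh{\Theta}_{\max}^{\,\varepsilon_{\nu}}(\bullet;x^{\nu};\wh{\boldsymbol{\xi}}^{\nu})]^{\,\prime}(x^{\nu+1/2};x-x^{\nu+1/2})+\rho(x^{\nu+1/2}-x^{\nu})^{\top}(x-x^{\nu+1/2})\ge 0$ for all $x\in X$, and let $\nu\to\infty$: the proximal term vanishes, and the dd-joint upper semicontinuity~(\ref{eq:limsup inequality for thetaj}) of the finitely many maximands (preserved under finite maximization, with the surrogate argmax handled by the constancy of index sets) bounds the $\limsup$ of the directional-derivative term. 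The touching and joint-continuity properties identify the limiting active index set as a subset of ${\cal M}_{\Theta}(x^{\infty})$, so with~(\ref{eq:dd equality at xi}) I arrive at $[\wh{\Theta}_{\max}^{\,\varepsilon_{\infty}}(\bullet;x^{\infty};\wh{\boldsymbol{\xi}}^{\infty})]^{\,\prime}(x^{\infty};x-x^{\infty})\ge 0$ for all $x\in X$. Convexity plus the touching property then make $x^{\infty}$ the unique minimizer of~(\ref{eq:workhorse}) at $\bar{x}=x^{\infty}$, $\varepsilon=\varepsilon_{\infty}$, $\wh{\boldsymbol{\xi}}=\wh{\boldsymbol{\xi}}^{\infty}$, which is exactly weak directional stationarity in the sense of Definition~\ref{df:weak stationarity}.

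For part (B), $\liminf_{\nu(\in\kappa)}\tau_{\nu+1}=0$ gives $m_{\nu}\to\infty$ on a subsequence (since $0<\beta<1$), so eventually the trial exponent $m_{\nu}-1$ failed the test: with $s_{\nu}:=\tau_{\nu+1}/\beta\to 0$ one has $\Theta_{\max}(x^{\nu}+s_{\nu}d^{\,\nu+1})-\Theta_{\max}(x^{\nu})>-\frac{\sigma\rho}{2}s_{\nu}\|d^{\,\nu+1}\|_2^2$. I would bound the left side above by $\theta_{\hat{\jmath}}(x^{\nu}+s_{\nu}d^{\,\nu+1})-\theta_{\hat{\jmath}}(x^{\nu})$, where $\hat{\jmath}$ attains the maximum at the perturbed point and, for large $\nu$, lies in ${\cal M}_{\Theta}(x^{\infty})$ by upper semicontinuity of ${\cal M}_{\Theta}$; divide by $s_{\nu}$ and apply the uniform upper approximation~(\ref{eq:uniform upper dd}) for $\wh{\theta}_{\hat{\jmath}}$ — legitimate precisely because $\hat{\jmath}\in{\cal M}_{\Theta}(x^{\infty})$, which is where that hypothesis is used — to replace $\frac{1}{s_{\nu}}[\theta_{\hat{\jmath}}(x^{\nu}+s_{\nu}d^{\,\nu+1})-\theta_{\hat{\jmath}}(x^{\nu})]$ by $\wh{\theta}_{\hat{\jmath}}(x^{\nu+1/2};x^{\nu};\xi^{\nu;\hat{\jmath}})-\theta_{\hat{\jmath}}(x^{\nu})$ up to $o(1)$. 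When $\hat{\jmath}$ belongs to the subproblem's active family, the optimality of $x^{\nu+1/2}$ and the touching property bound the latter by $-\frac{\rho}{2}\|d^{\,\nu+1}\|_2^2+[\Theta_{\max}(x^{\nu})-\theta_{\hat{\jmath}}(x^{\nu})]$; since $\Theta_{\max}(x^{\nu})-\theta_{\hat{\jmath}}(x^{\nu})\to 0$, passing to the limit yields $-\frac{\sigma\rho}{2}\|\wh{x}^{\infty}-x^{\infty}\|_2^2\le-\frac{\rho}{2}\|\wh{x}^{\infty}-x^{\infty}\|_2^2$, whence $\wh{x}^{\infty}=x^{\infty}$ because $\sigma<1$, and the part (A) conclusion applies verbatim.

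Finally, for the displayed ``Thus'' statement: combining parts (A) and (B) — the $\liminf=0$ branch being covered by the assumed uniform upper approximation at $x^{\infty}$ for all $j\in{\cal M}_{\Theta}(x^{\infty})$ — the point $x^{\infty}$ is in every case a weak directional stationary solution. When $\wh{\boldsymbol{\Xi}}_{\Theta}(x^{\infty})$ is a singleton, ``there exists $\wh{\boldsymbol{\xi}}$'' and ``for all $\wh{\boldsymbol{\xi}}$'' coincide, so weak directional stationarity is exactly statement (Ai) of Proposition~\ref{pr:unified stationarity certificate} at $\varepsilon=\varepsilon_{\infty}\ge 0$; the converse half of part (B) of that proposition then gives that $x^{\infty}$ is a directional stationary solution of $\Theta_{\max}$ on $X$. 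I expect the main obstacle to be the index-set bookkeeping in part (B): ensuring that the index $\hat{\jmath}$ selected at the penultimate trial point actually lies in the active set ${\cal M}_{\Theta}^{\,\varepsilon_{\nu}}(x^{\nu})$ defining the subproblem, which is what licenses the optimality bound. This is immediate when $\varepsilon_{\infty}>0$ (then $\varepsilon_{\nu}$ eventually dominates the vanishing gap $\Theta_{\max}(x^{\nu})-\theta_{\hat{\jmath}}(x^{\nu})$), but requires a finer Lipschitz/continuity estimate relating ${\cal M}_{\Theta}$ at $x^{\nu}$ and at $x^{\nu}+s_{\nu}d^{\,\nu+1}$ to ${\cal M}_{\Theta}^{\,\varepsilon_{\nu}}(x^{\nu})$ (in the spirit of~(\ref{eq:index set inclusions})) when $\varepsilon_{\infty}=0$; the secondary technical care is matching directional derivatives under the simultaneous limit in reference point, evaluation point, and parameter, which is exactly what the dd-joint upper semicontinuity and uniform upper approximation properties were engineered to absorb.
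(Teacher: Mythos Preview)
Your proposal follows essentially the same route as the paper's proof: the descent/summability bookkeeping, the use of lower Lipschitz boundedness to bound $\{x^{\nu+1/2}\}$, the limit passage in Case~(A) via dd-joint upper semicontinuity~(\ref{eq:limsup inequality for thetaj}) after freezing the finite argmax index sets, and in Case~(B) the reverse-Armijo inequality combined with the uniform upper approximation~(\ref{eq:uniform upper dd}) and the subproblem optimality to force $x^{\nu+1/2}\to x^{\infty}$, then recycle Case~(A).  The paper's two subcases (Ai)/(Aii) for $\varepsilon_{\infty}=0$ versus $\varepsilon_{\infty}>0$ correspond exactly to your ``touching and joint-continuity'' remark about the limiting active index set.

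Your flagged obstacle is genuine and you have located it precisely.  In Case~(B) the chain needs $\hat{\jmath}\in{\cal M}_{\Theta}^{\,\varepsilon_{\nu}}(x^{\nu})$ in order to invoke the optimality bound $\wh{\theta}_{\hat{\jmath}}(x^{\nu+1/2};x^{\nu};\boldsymbol{\xi}^{\,\nu;\hat{\jmath}})\le\Theta_{\max}(x^{\nu})-\tfrac{\rho}{2}\|x^{\nu+1/2}-x^{\nu}\|_2^2$.  For $\varepsilon_{\infty}>0$ this follows from $\hat{\jmath}\in{\cal M}_{\Theta}(x^{\infty})\subseteq{\cal M}_{\Theta}^{\,\varepsilon_{\nu}}(x^{\nu})$ for large $\nu$, exactly as you say.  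For $\varepsilon_{\infty}=0$ one only has $\Theta_{\max}(x^{\nu})-\theta_{\hat{\jmath}}(x^{\nu})=O(s_{\nu}\|d^{\,\nu+1}\|)$ from Lipschitz continuity, which need not be $\le\varepsilon_{\nu}$.  The paper's proof writes the final inequality in that chain without verifying this inclusion either, so your concern is not a defect relative to the paper --- it is the same soft spot, and you have been more explicit about it.  Your proposed workaround (carrying the nonnegative slack $\Theta_{\max}(x^{\nu})-\theta_{\hat{\jmath}}(x^{\nu})\to 0$ through the limit rather than absorbing it) is in fact slightly cleaner than the paper's $o(\tau_{\nu+1})$ bookkeeping, but still rests on the same membership $\hat{\jmath}\in{\cal M}_{\Theta}^{\,\varepsilon_{\nu}}(x^{\nu})$.
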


\begin{proof} 
It $x^{\nu + 1/2} = x^{\nu}$, then $x^{\nu}$ is the optimal solution of the problem:
\[
\displaystyle{
\operatornamewithlimits{\mbox{\bf minimize}}_{x \in X}
} \, \displaystyle{
\max_{j \in {\cal M}_{\Theta}^{\, \varepsilon_{\nu}}(x^{\nu})}
} \wh{\theta}_j(x;x^{\nu};\boldsymbol{\xi}^{\, \nu;j}) + \displaystyle{
\frac{\rho}{2}
} \, \| \, x - x^{\nu} \|_2^2.
\]
For every 
$j \in {\cal M}_{\Theta}^{\, \varepsilon_{\nu}}(x^{\nu}) \setminus {\cal M}_{\Theta}(x^{\nu})$,
we have
\[ 
\wh{\theta}_j(x^{\nu};x^{\nu};\boldsymbol{\xi}^{\, \nu;j}) \, = \, \theta_j(x^{\nu}) \, < \,
\Theta_{\max}(x^{\nu}) \, = \, \theta_{j^{\, \prime}}(x^{\nu}), \epc
\forall \, j^{\, \prime} \, \in \, {\cal M}_{\Theta}(x^{\nu}).
\]
Since ${\cal M}_{\Theta}^{\, \varepsilon_{\nu}}(x^{\nu})$ contains
${\cal M}_{\Theta}(x^{\nu})$, it follows that
\[ \begin{array}{lll}
0 & \leq & \left[ \, \displaystyle{
\max_{j \in {\cal M}_{\Theta}^{\, \varepsilon}(x^{\nu})}
} \wh{\theta}_j(\bullet;x^{\nu};\boldsymbol{\xi}^{\, \nu;j}) \, \right]^{\, \prime}(
x^{\nu};x - x^{\nu}) \\ [0.2in]
& = &  \left[ \, \displaystyle{
\max_{j \in {\cal M}_{\Theta}(x^{\nu})}
} \wh{\theta}_j(\bullet;x^{\nu};\boldsymbol{\xi}^{\, \nu;j}) \, \right]^{\, \prime}(
x^{\nu};x - x^{\nu}) \epc \forall \, \, x \in \, X.
\end{array} \]
Thus, $x^{\nu}$ is also the optimal solution of the problem:
\[
\displaystyle{
\operatornamewithlimits{\mbox{\bf minimize}}_{x \in X}
} \, \displaystyle{
\max_{j \in {\cal M}_{\Theta}(x^{\nu})}
} \wh{\theta}_j(x;x^{\nu};\boldsymbol{\xi}^{\, \nu;j}) + \displaystyle{
\frac{\rho}{2}
} \, \| \, x - x^{\nu} \|_2^2,
\]
establishing the weak stationarity of $x^{\nu}$ for (\ref{eq:unified framework}).

\gap

Assume now that $x^{\nu + 1/2} \neq x^{\nu}$ for all $\nu$.  We then have
\[
\Theta_{\max}(x^{\nu+1}) - \Theta_{\max}(x^{\nu}) \, \leq \, -\displaystyle{
\frac{\sigma \, \rho}{2}
} \, \tau_{\nu+1} \, \| \, x^{\nu + 1/2} - x^{\nu} \, \|_2^2 \, < \, 0
\]	
Thus, the sequence $\{ \Theta_{\max}(x^{\nu}) \}$ is nonincreasing.  Since
the sequence $\{ x^{\nu} \}$ belongs to the set $\mbox{Lev}(x^0)$ which is
bounded by assumption, the sequence $\{ \Theta_{\max}(x^{\nu}) \}$ is
therefore bounded and thus converges.  Hence 
\[
\displaystyle{
\lim_{\nu \to \infty}
} \, \tau_{\nu+1} \, \| \, x^{\nu + 1/2} - x^{\nu} \|_2^2 \, = \, 0.
\]
Moreover, since the level set $\mbox{Lev}(x^0)$ is bounded, it follows that
the sequence $\{ x^{\nu} \}$ has at least one accumulation point.  This justifies
the well-definedness of the accumulation point
$x^{\infty}$.  We divide the remaining proof into two cases (A) and (B).

\gap

{\bf Case (A):}  $\displaystyle{
\liminf_{\nu ( \in \kappa ) \to \infty}
} \, \tau_{\nu+1} \, > \, 0$.  Then, 
$\displaystyle{
\lim_{\nu ( \in \kappa ) \to \infty}
} \, x^{\nu + 1/2} \, = \, x^{\infty}$.  By definition, for each $\nu$, there is
a tuple $\wh{\boldsymbol{\xi}}^{\, \nu} \triangleq 
\left( \, \boldsymbol{\xi}^{\, \nu;j} \, \right) \in \, \wh{\boldsymbol{\Xi}}_{\Theta}(x^{\nu})$
such that $x^{\nu + 1/2}$ is a minimizer of 
$\Theta_{\max}^{\, \varepsilon_{\nu}}(\bullet;x^{\nu};\wh{\boldsymbol{\xi}}^{\, \nu}) 
= \displaystyle{
\max_{j \in {\cal M}_{\Theta}^{\, \varepsilon_{\nu}}(x^{\nu})}
} \ \wh{\theta}_j(\bullet;x^{\nu};\boldsymbol{\xi}^{\, \nu;j})$ on $X$.
For each $\nu \in \kappa$, there is a subset 
\[
{\cal M}_{\nu + 1/2} \, \triangleq \, \left\{ \, j \, \in \, 
{\cal M}_{\Theta}^{\, \varepsilon_{\nu}}(x^{\nu})
\, \left| \, \wh{\theta}_j(x^{\nu+1/2};x^{\nu};\boldsymbol{\xi}^{\, \nu;j}) 
\, = \, \displaystyle{
\max_{j^{\prime} \in {\cal M}_{\Theta}^{\, \varepsilon_{\nu}}(x^{\nu})}
} \, \wh{\theta}_{j^{\prime}}(x^{\nu+1/2};x^{\nu};\boldsymbol{\xi}^{\, \nu;j^{\prime}}) 
\, \right. \right\}.
\]
There exist a further subsequence $\kappa^{\prime} \subseteq \kappa$ and a constant index
set ${\cal M}_{\infty}$ such that ${\cal M}_{\nu + 1/2} = {\cal M}_{\infty}$ for all
$\nu \in \kappa^{\, \prime}$.   There are two subcases: 

\gap

{\bf Subcase (Ai):} $\varepsilon_{\infty} = 0$.  In this case,
the set ${\cal M}_{\infty}$ must be a subset of
${\cal M}_{\Theta}(x^{\infty})$, as can be argued as follows.  Indeed, if 
$j$ belongs to ${\cal M}_{\infty}$ which is a subset of 
${\cal M}_{\Theta}^{\, \varepsilon_{\nu}}(x^{\nu})$, then
$\theta_j(x^{\nu}) \geq \, \Theta_{\max}(x^{\nu}) - \varepsilon_{\nu}$ 
for all $\nu \in \kappa^{\, \prime}$. 
Passing to the limit $\nu (\in \kappa^{\, \prime}) \to \infty$ in the
inequality $\theta_j(x^{\nu}) \geq \, \Theta_{\max}(x^{\nu}) - \varepsilon_{\nu}$ 
yields $j \in {\cal M}_{\Theta}(x^{\infty})$.  By the optimality 
of $x^{\nu + 1/2}$, we have
\begin{equation} \label{eq:stationarity jth}
\displaystyle{
\max_{j \in {\cal M}_{\infty}}
} \, \left\{ \left[ \, \wh{\theta}_j(\bullet;x^{\nu};\boldsymbol{\xi}^{\, \nu;j}) 
\, \right]^{\, \prime}(x^{\nu + 1/2}; x - x^{\nu + 1/2}) \, \right\} 
\, \geq \, 0, \epc \forall \, x \in X.
\end{equation}
Since the union $\displaystyle{
\bigcup_{\nu}
} \ \wh{\boldsymbol{\Xi}}(x^{\nu})$ is bounded, 
the sequence $\{ \wh{\boldsymbol{\xi}}^{\, \nu} \}_{\nu \in \kappa}$ 
has an accumulation point.  Without loss of generality, we may assume that 
the sequence $\{ \wh{\boldsymbol{\xi}}^{\, \nu} \}_{\nu \in \kappa^{\prime}}$ 
converges to a tuple
$\wh{\boldsymbol{\xi}}^{\, \infty} \triangleq 
\left( \, \boldsymbol{\xi}^{\, \infty;j} \, \right)_{j=1}^J$ that belongs to 
$\wh{\boldsymbol{\Xi}}_{\Theta}(x^{\infty})$.  By the dd-joint-usc property
of each function $\wh{\theta}_j$, we deduce that, by passing to the limit
$\nu (\in \kappa^{\prime}) \to  \infty$ in (\ref{eq:stationarity jth}),
\[ 
\displaystyle{
\max_{j \in {\cal M}_{\Theta}(x^{\infty})}
} \, \left\{ \left[ \, \wh{\theta}_j(\bullet;x^{\infty};\boldsymbol{\xi}^{\, \infty;j}) 
\, \right]^{\, \prime}(x^{\infty}; x - x^{\infty}) \, \right\} 
\, \geq \, \displaystyle{
\max_{j \in {\cal M}_{\infty}}
} \, \left\{ \left[ \, \wh{\theta}_j(\bullet;x^{\infty};\boldsymbol{\xi}^{\, \infty;j}) 
\, \right]^{\, \prime}(x^{\infty}; x - x^{\infty}) \, \right\} 
\, \geq \, 0
\]
for all $x \in $X,
establishing the weak stationarity of $x^{\infty}$ for (\ref{eq:unified framework}).

\gap

{\bf Subcase (Aii):} $\varepsilon_{\infty} > 0$.  In this case, we must have
${\cal M}_{\Theta}(x^{\infty}) \subseteq {\cal M}_{\Theta}^{\, \varepsilon_{\nu}}(x^{\nu})$
for all $\nu \in \kappa$ sufficiently large.  Hence for all $j \in {\cal M}_{\infty}$,
\[
\wh{\theta}_j(x^{\nu+1/2};x^{\nu};\boldsymbol{\xi}^{\, \nu;j}) \, \geq \, 
\wh{\theta}_{j^{\, \prime}}(x^{\nu+1/2};x^{\nu};\boldsymbol{\xi}^{\, \nu;j^{\, \prime}}), 
\epc \forall \, j^{\, \prime} \, \in \, {\cal M}_{\Theta}(x^{\infty}).
\]
Passing to the limit yields $j \in {\cal M}_{\Theta}(x^{\infty})$, by the joint
continuity of $\wh{\theta}_j$ and $\wh{\theta}_{j^{\prime}}$ at $x^{\infty}$.  
The remaining proof is the same as for subcase (Ai).

\gap


{\bf Case (B):} $\displaystyle{
\liminf_{\nu (\in \kappa) \to \infty}
} \, \tau_{\nu+1} = 0$.  In this case, the following reverse Armijo condition
holds for infinity many $\nu \in \kappa$: i.e.,
\begin{equation} \label{eq:reverse Armijo}
\Theta_{\max}\left( \underbrace{x^{\nu} + \displaystyle{
\frac{\tau_{\nu+1}}{\beta}
} \, d^{\, \nu+1}}_{\mbox{denoted $\wh{x}^{\, \nu}$}} \right) - \Theta_{\max}(x^{\nu}) 
\, > \, -\displaystyle{
\frac{\sigma \, \rho}{2}
}  \, \displaystyle{
\frac{\tau_{\nu+1}}{\beta}
} \, \| \, x^{\nu + 1/2} - x^{\nu} \, \|_2^2.
\end{equation}
By working with a further subsequence if needed, we may assume that
(\ref{eq:reverse Armijo}) holds for all $\nu \in \kappa$.
We first show that the sequence $\{ x^{\nu + 1/2} \}_{\nu \in \kappa}$ is bounded.
By the lower Lipschitz boundedness condition: there exists a constant $B > 0$ such that 
for all $\nu$,
\[
-\displaystyle{
\frac{\rho}{2}
} \, \| \, x^{\nu + 1/2} - x^{\nu} \, \|_2^2 \, \geq \, 
\wh{\Theta}_{\max}^{\, \varepsilon_{\nu}}(x^{\nu + 1/2};x^{\nu};\wh{\boldsymbol{\xi}}^{\, \nu}) 
- \wh{\Theta}_{\max}^{\, \varepsilon_{\nu}}(x^{\nu};x^{\nu};\wh{\boldsymbol{\xi}}^{\, \nu}) 
\, \geq \, -B \, \| \, x^{\nu + 1/2} - x^{\nu} \, \|_2.
\]
Thus, the boundedness of $\{ x^{\nu + 1/2} \}_{\nu \in \kappa}$ follows.
Hence, $\displaystyle{
\lim_{\nu (\in \kappa) \to \infty}
} \, \wh{x}^{\, \nu} = x^{\infty}$ also.
Let $j_{\nu}$ be an index belonging to ${\cal M}_{\Theta}(\wh{x}^{\, \nu})$
for infinitely many $\nu \in \kappa$.  We have, with 
\[ \begin{array}{lll}
\Theta_{\max}( \wh{x}^{\, \nu}) - \Theta_{\max}(x^{\nu}) & \leq &
\theta_{j_{\nu}}(\wh{x}^{\, \nu}) - \theta_{j_{\nu}}(x^{\nu}) \\ [0.15in]
& \leq & \displaystyle{
\frac{\tau_{\nu+1}}{\beta}
} \, \left[ \, \wh{\theta}_{j_{\nu}}(x^{\nu+1/2};x^{\nu};\boldsymbol{\xi}^{{\, \nu};j_{\nu}})
- \theta_{j_{\nu}}(x^{\nu}) \, \right] + \mbox{o}(\tau_{\nu+1}), \epc
\mbox{by (\ref{eq:uniform upper dd})} \\ [0.15in]
& \leq & -\displaystyle{
\frac{\rho}{2}
} \, \displaystyle{
\frac{\tau_{\nu+1}}{\beta}
} \, \| \, x^{\nu + 1/2} - x^{\nu} \, \|_2^2 + \mbox{o}( \tau_{\nu+1} ),
\end{array} \]
which together with (\ref{eq:reverse Armijo}) yields
\[
0 \, \leq \, 
-\displaystyle{
\frac{( 1 - \sigma ) \, \rho}{2 \beta}
} \, \| \, x^{\nu + 1/2} - x^{\nu} \, \|_2^2 + \displaystyle{
\frac{\mbox{o}(\tau_{\nu+1})}{\tau_{\nu+1}}
} .
\]
Hence passing to the limit $\nu (\in \kappa) \to \infty$, we may deduce that
$\{ x^{\nu + 1/2} \}_{\nu \in \kappa}$ converges to $x^{\infty}$.  Now the proof
of case (A) applies, yielding the same stationarity property of $x^{\infty}$.
The last statement of the theorem does not require a proof.
\end{proof}

\begin{remark} \label{rm:inequality in convergence} \rm
The inequality (\ref{eq:uniform upper dd}) is assumed only at an accumulation point
of the sequence of iterates produced by the algorithm, but not at the iterates
themselves.  Admittedly, this requirement prevents us from fully treating the general
case with the pointwise maximum of multiple differentiable functions that are
not assumed to be either convex or concave.  In view of the early 
work \cite{DemyanovGSDixon86} where the inner functions do not have the dc summands, 
it might be possible to relax this requirement by extending the algorithm in 
the cited work, 
which nevertheless is computationally more demanding than Algorithm~1 and computes
only ``$\varepsilon$-inf stationary solutions''.  \hfill $\Box$
\end{remark}

Due to the importance of the type~I compositions, we state a corollary of
Theorem~\ref{th:subsequential convergence I} for composite functions of this
kind.  Note that the assumption of composite convexity already
contains the singleton requirement of the maximizing index 
sets ${\cal M}_{jk;\max}^{\rm diff}$.  No proof is required of the corollary.
Similar conclusions can be made for the other 
types of composite functions under appropriate assumptions and are omitted.

\begin{corollary} \label{co:subsequent type I} \rm
Let $X$ be a closed convex set in $\mathbb{R}^n$.  Suppose that the level 
set $\mbox{Lev}(x^0)$ is bounded.  For a composite function of type I with
outer differentiable functions $\phi_j$ and inner functions $p_{jk}$
given by (\ref{eq:dc+diff}),
suppose that the (C$^{\, 2}$DC$^{\, 2}$) 
assumption holds for all pairs $( \phi_j,P^{\, j} )$ for $j = 1, \cdots, J$
at all $\bar{x} \in X$.  The every accumulation point
produced by Algorithm~1 is a directional stationary solution of
(\ref{eq:unified framework}).  \hfill $\Box$ 	
\end{corollary}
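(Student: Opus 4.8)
The idea is to read the corollary off the very last clause of Theorem~\ref{th:subsequential convergence I}. That clause asserts directional stationarity of an accumulation point $x^{\infty}$ once two things are known at $x^{\infty}$: (i) the parameterizing family $\wh{\boldsymbol{\Xi}}_{\Theta}(x^{\infty}) = \prod_{j=1}^J \wh{\boldsymbol{\Xi}}^{\, j}(x^{\infty})$ is a singleton, and (ii) the uniform upper approximation inequality (\ref{eq:uniform upper dd}) holds at $x^{\infty}$ for $\wh{\theta}_j$ for every $j \in {\cal M}_{\Theta}(x^{\infty})$. So the whole proof reduces to verifying (i) and (ii), both of which I will extract from the hypothesis that (C$^{\,2}$DC$^{\,2}$) holds for each pair $(\phi_j,P^{\, j})$ at $x^{\infty}$. (The six structural properties of the type~I surrogate families that Theorem~\ref{th:subsequential convergence I} invokes — touching, joint continuity, lower Lipschitz boundedness, directional derivative dominance and consistency, dd-joint upper semicontinuity — are already supplied by Proposition~\ref{pr:convergence of theta dd}, whose standing assumptions, continuous differentiability of the $p_{jk\ell}^{\rm diff}$ and the $\phi_j$, and upper semicontinuity/compactness of $\boldsymbol{\Xi}^{\, j}$, are in force here.)

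\textbf{Verifying (i).} Fix $j$ and write $\bar y \triangleq P^{\, j}(x^{\infty})$. I would look directly at the defining formula of $\wh{\theta}_{\, \rm I}(\bullet;x^{\infty};\boldsymbol{\xi})$ and note which components of $\boldsymbol{\xi} = (a^{jk},b^{jk},\ell_{jk})_{k=1}^K$ actually occur: in the $k$-th maximand the subgradient $b^{jk}\in\partial(-p_{jk}^{\rm cve})(x^{\infty})$ appears only when $\partial\phi_j(\bar y)/\partial y_k>0$, the subgradient $a^{jk}\in\partial p_{jk}^{\rm cvx}(x^{\infty})$ appears only when $\partial\phi_j(\bar y)/\partial y_k<0$ and is then accompanied by a single active index $\ell_{jk}\in{\cal M}_{jk;\max}^{\rm diff}(x^{\infty})$, while for $\partial\phi_j(\bar y)/\partial y_k=0$ the $k$-th term vanishes identically. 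By (C$^{\,2}$DC$^{\,2}$)(a1), $\partial(-p_{jk}^{\rm cve})(x^{\infty})$ is a singleton exactly when it is used; by (a2), $\partial p_{jk}^{\rm cvx}(x^{\infty})$ is a singleton exactly when it is used; and by (b), ${\cal M}_{jk;\max}^{\rm diff}(x^{\infty})$ is a singleton. Hence $\wh{\theta}_j(\bullet;x^{\infty};\boldsymbol{\xi})$ is one and the same convex function for every admissible $\boldsymbol{\xi}\in\boldsymbol{\Xi}^{\, j}(x^{\infty})$ — the unused components of $\boldsymbol{\xi}$ are immaterial — so $\wh{\boldsymbol{\Xi}}^{\, j}(x^{\infty})$ may be taken to be a singleton without changing the associated surrogate, and therefore $\wh{\boldsymbol{\Xi}}_{\Theta}(x^{\infty})$ is a singleton. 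This is precisely the ``(C$^{\,2}$DC$^{\,2}$)$\Rightarrow\boldsymbol{\Xi}(\bar x)$ a singleton'' remark made earlier.

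\textbf{Verifying (ii) and concluding.} By Proposition~\ref{pr:convergence of theta dd}, the uniform upper approximation property (\ref{eq:uniform upper dd}) for a type~I composite $\theta_j$ is guaranteed at $x^{\infty}$ as soon as ${\cal M}_{jk;\max}^{\rm diff}(x^{\infty})$ is a singleton for every $k$ with $\partial\phi_j(\bar y)/\partial y_k>0$; condition (C$^{\,2}$DC$^{\,2}$)(b) gives this singleton property for \emph{all} $k$, in particular at those coordinates. Thus (ii) holds for each $j\in{\cal M}_{\Theta}(x^{\infty})$, and, Lev$(x^0)$ being bounded by hypothesis, the final assertion of Theorem~\ref{th:subsequential convergence I} applies to yield that $x^{\infty}$ is a directional stationary solution of (\ref{eq:unified framework}). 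Since $x^{\infty}$ was an arbitrary accumulation point, the corollary follows.

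\textbf{Where the work is.} Nothing here is computational; the one place demanding care is the reduction in step~(i), namely arguing that the nominally set-valued $\boldsymbol{\Xi}^{\, j}(x^{\infty})$ may be collapsed to a point. The argument is entirely a bookkeeping matter — tracking which entries of $\boldsymbol{\xi}$ enter the surrogate formula, and in particular distinguishing the hard-wired $\max_{\ell\in{\cal M}_{jk;\max}^{\rm diff}}$ in the positive-coordinate terms (not a parameter) from the fixed index $\ell_{jk}$ in the negative-coordinate terms (a parameter, pinned by (b)) — but it must be done cleanly, because the singleton hypothesis in Theorem~\ref{th:subsequential convergence I} is exactly what turns the ``there exists $\wh{\boldsymbol{\xi}}$'' of weak directional stationarity into the ``for all $\wh{\boldsymbol{\xi}}$'' of directional stationarity via Proposition~\ref{pr:unified stationarity certificate}(B). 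Once surrogate-invariance over $\boldsymbol{\Xi}^{\, j}(x^{\infty})$ is in hand, that equivalence is immediate and the remaining upper semicontinuity bookkeeping for the convergence argument is unaffected, since the reduction only pins a single point at $x^{\infty}$ itself.
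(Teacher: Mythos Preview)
Your proposal is correct and follows exactly the route the paper intends: the paper itself supplies no detailed argument, remarking only that the (C$^{\,2}$DC$^{\,2}$) assumption already contains the singleton requirement on the maximizing index sets ${\cal M}_{jk;\max}^{\rm diff}$ and hence ``no proof is required,'' deferring everything to the last clause of Theorem~\ref{th:subsequential convergence I}. Your write-up simply fills in the bookkeeping the paper omits---in particular your careful observation that although $\boldsymbol{\Xi}^{\,j}(x^{\infty})$ need not be literally a singleton, the components of $\boldsymbol{\xi}$ that actually enter $\wh{\theta}_j(\bullet;x^{\infty};\boldsymbol{\xi})$ are precisely those pinned down by (C$^{\,2}$DC$^{\,2}$)(a1), (a2), (b), which is exactly the sense in which the paper earlier declared (C$^{\,2}$DC$^{\,2}$) to be ``another situation for $\boldsymbol{\Xi}(\bar{x})$ to be a singleton.''
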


As an ending remark for Algorithm~1, we suggest for possible future
research the topic of incorporating the ideas of bundle methods for nonsmooth
DC programming into the treatment of the problem (\ref{eq:unified framework}).  

\subsection{Computing directional stationary solution} \label{subsec:computing directional}

Without the singleton assumption of $\wh{\boldsymbol{\Xi}}_{\Theta}(x^{\infty})$ at a limit
point $x^{\infty}$, Algorithm~1 is not sufficient to yield a directional stationary
solution of (\ref{eq:unified framework}).  Extending the references 
\cite{PangRazaAlvarado17,LuZhou19,LuZhouSun19} for dc programs with 
difference-of-finite-max-differentiable functions (\ref{eq:max-max}), we consider
the problem (\ref{eq:unified framework}), where each $\phi_j$ is continuously differentiable
and each
\begin{equation} \label{eq:max-max inner}
p_{jk}^{\rm cvx}(x) \, = \, \displaystyle{
\max_{1 \leq \ell \leq L_{\rm x}}
} \, p_{jk\ell}^{\, \rm cvx}(x), \epc 
p_{jk}^{\, \rm cve}(x) \, = \, \displaystyle{
\min_{1 \leq \ell \leq L_{\rm e}}
} \, p_{jk\ell}^{\, \rm cve}(x), \epc \mbox{and} \epc
p_{jk}^{\, \rm diff}(x) \, = \, \displaystyle{
\max_{1 \leq \ell \leq L_{\rm d}}
} \, p_{jk\ell}^{\, \rm diff}(x),
\end{equation}
for some positive integers $L_{\rm x}$, $L_{\rm e}$, and $L_{\rm d}$,
where each $p_{jk\ell}^{\, \rm cvx}$ is a convex, continuous differentiable function,
$p_{jk\ell}^{\, \rm cve}$ is a concave, continuously differentiable function, 
and $p_{jk\ell}^{\, \rm diff}$ is a continuously differentiable function that is
neither convex nor concave.  For any given vector $\bar{x} \in X$ and scalar
$\delta \geq 0$, define the index sets
\[ \begin{array}{lll}
{\cal M}_{jk;\delta}^{\, \rm cvx}(\bar{x}) & \triangleq & \left\{ \, 
\ell \, \in \, [ \, L_{\rm x} \, ] \, = \, p_{jk\ell}^{\, \rm cvx}(\bar{x}) \, \geq \,
p_{jk}^{\rm cvx}(x) - \delta \, \right\} \\ [0.1in]
{\cal M}_{jk;\delta}^{\, \rm cve}(\bar{x}) & \triangleq & \left\{ \, 
\ell \, \in \, [ \, L_{\rm e} \, ] \, = \, p_{jk\ell}^{\, \rm cve}(\bar{x}) \, \leq \,
p_{jk}^{\rm cve}(x) + \delta \, \right\} \\ [0.1in]
{\cal M}_{jk;\delta}^{\, \rm diff}(\bar{x}) & \triangleq & \left\{ \, 
\ell \, \in \, [ \, L_{\rm d} \, ] \, = \, p_{jk\ell}^{\, \rm diff}(\bar{x}) \, \geq \,
p_{jk}^{\rm diff}(x) - \delta \, \right\};
\end{array} \] 
we write these index sets without the subscript $\delta$ when it is zero.  For a
tuple of indices
\[
{\cal L} \, \triangleq \, \left\{ \, ( \, \ell^{\, \rm x}_{jk} \, \ell^{\, \rm e}_{jk}, \,
\ell^{\, \rm d}_{jk} ) \, : (j,k)  \in [ J ] \times [ K ] \, \right\} \in 
\boldsymbol{\mathcal L}_{\delta}(\bar{x}) = \displaystyle{
\prod_{j=1}^J
} \ \displaystyle{
\prod_{k=1}^K
} \, \left[ \, {\cal M}_{jk;\delta}^{\, \rm cvx}(\bar{x}) \, \times \, 
{\cal M}_{jk;\delta}^{\, \rm cve}(\bar{x}) \, \times \, 
{\cal M}_{jk;\delta}^{\, \rm diff}(\bar{x}) \, \right]
\]
define the convex function $\wh{\theta}_j(\bullet;\bar{x};{\cal L})$ as
(where $\bar{y}^j \triangleq P^j(\bar{x})$)
\[ 
\wh{\theta}_j(x;\bar{x};{\cal L}) \, \triangleq \, \theta_j(\bar{x}) + 
\left[ \, \begin{array}{l}
\displaystyle{
\sum_{k=1}^K
} \, \max\left( \, \begin{array}{l}
\displaystyle{
\frac{\partial \phi_j(\bar{y}^j)}{\partial y_k}
} \, \left[ \, \begin{array}{l}
p_{jk}^{\rm cvx}(x) - p_{jk}^{\rm cvx}(\bar{x})
+ \nabla p_{jk\ell_{jk}^{\, \rm e}}^{\, \rm cve}(\bar{x})^{\top}(x - \bar{x}) \, + \\ [0.1in]
\displaystyle{
\max_{\ell \in {\cal M}_{jk;\delta}^{\rm diff}(\bar{x})}
} \, \nabla p_{jk \ell}^{\rm diff}(\bar{x})^{\top}(x - \bar{x}) 
\end{array} \right], \\ [0.3in]
\displaystyle{
\frac{\partial \phi_j(\bar{y}^j)}{\partial y_k}
} \, \left[ \, \begin{array}{l}
\nabla p_{jk\ell_{jk}^{\, \rm x}}^{\, \rm cvx}(\bar{x})^{\top}(x - \bar{x}) + 
p_{jk}^{\rm cve}(x) 
- p_{jk}^{\rm cve}(\bar{x}) \, + \\ [0.15in]
\nabla p_{jk \ell_{jk}^{\, \rm d}}^{\, \rm diff}(\bar{x})^{\top}( x - \bar{x})
\end{array} \right]
\end{array} \right)
\end{array} \right]
\]
and consider the individual program:
\begin{equation} \label{eq:max-max individual subproblem} 
\displaystyle{
\operatornamewithlimits{\mbox{\bf minimize}}_{x \in X}
} \ \wh{\Theta}_{\max}^{\, \varepsilon;\rho}(x;\bar{x};{\cal L}) \, \triangleq \, 
\displaystyle{
\max_{j \in {\cal M}_{\Theta}^{\, \varepsilon}(\bar{x})}
} \, \wh{\theta}_j(x;\bar{x};{\cal L}) + \displaystyle{
\frac{\rho}{2}
} \, \| \, x - \bar{x} \, \|_2^2
\end{equation}
as well as the minimum over all tuples 
${\cal L} \in \boldsymbol{\mathcal L}_{\delta}(\bar{x})$:
\begin{equation} \label{eq:max-max all subproblem}
\displaystyle{
\operatornamewithlimits{\mbox{\bf minimum}}_{
{\cal L} \, \in \, \boldsymbol{\mathcal L}_{\delta}(\bar{x})}
} \, \left[ \, \displaystyle{
\operatornamewithlimits{\mbox{\bf minimum}}_{x \in X}
} \ \wh{\Theta}_{\max}^{\, \varepsilon;\rho}(x;\bar{x};{\cal L}) \, \right].
\end{equation}

\begin{algorithm}
\caption{A descent algorithm for (\ref{eq:unified framework}): with inner functions
given by (\ref{eq:max-max inner})}
\label{alg II:descent}
\begin{algorithmic}
\State{\bf Initialization:}  Let $\rho$, $\delta$, $\sigma$ and
$\beta$ be positive scalars with $\sigma$ and $\beta$ both less than 1; these are
fixed throughout the following iterations.  Let $\{ \varepsilon_{\nu} \}$ be a
sequence of nonnegative scalars converging to the limit $\varepsilon_{\infty} \geq 0$.
Let $x^0 \in X$ be given; let $\nu = 0$.
\For{$\nu=0, 1, \cdots$,} 
\begin{enumerate}
\item Compute the minimum in (\ref{eq:max-max all subproblem}); let 
      ${\cal L}_{\min}^{\, \nu} \in \boldsymbol{\mathcal L}_{\delta}(x^{\nu})$ 
      be a minimizing tuple and $x^{\nu+1/2}$ be the minimizer of
      $\wh{\Theta}_{\max}^{\, \varepsilon_{\nu};\rho}(\bullet;x^{\nu};{\cal L}^{\, \nu}_{\min})$
      on $X$.
\item  If $x^{\nu + 1/2} = x^{\nu}$, stop; $x^{\nu}$ is a desired directional stationary
       solution of (\ref{eq:unified framework}). Otherwise, let
       $d^{\, \nu+1} \triangleq x^{\nu+1/2} - x^{\nu}$ and proceed to the next step. 
\item (Armijo line search) Let  
      $m_{\nu}$ be the smallest integer $m \geq 0$ such that 
\begin{equation} \label{eq:Armijo descent}
\Theta_{\max}(x^{\nu} + \beta^m d^{\, \nu + 1}) - \Theta_{\max}(x^{\nu})
\, \leq \, -\displaystyle{
\frac{\sigma \, \rho}{2}
} \, \, \beta^m \, \| \, x^{\nu + 1/2} - x^{\nu} \, \|_2^2.
\end{equation}
\item Let $x^{\nu+1} = x^{\nu} + \tau_{\nu+1} d^{\, \nu+1}$ where $\tau_{\nu+1} \triangleq
      \beta^{\, m_{\nu}}$.  Let $\nu \leftarrow \nu + 1$ and return to Step~1.
\end{enumerate}
\EndFor	 
\end{algorithmic}
\end{algorithm}

We omit the proof of finite termination when $x^{\nu+1/2} = x^{\nu}$ and proceed directly to
the limit analysis.

\begin{theorem} \label{th:subsequential convergence II} \rm
Let $X$ be a closed convex set.  Let each $\phi_j$ be continuously differentiable
and each $p_{jk}(x) = p_{jk}^{\, \rm cvx}(x) + p_{jk}^{\, \rm cve}(x) +
p_{jk;\max}^{\, \rm diff}(x)$ with the individual functions being given by
(\ref{eq:max-max inner}).  Suppose that the level set $\mbox{Lev}(x^0)$ is bounded.
If $x^{\infty}$ is an accumulation point of a sequence $\{ x^{\nu} \}$
generated by Algorithm~2 such that ${\cal M}_{jk;\max}^{\rm diff}(x^{\infty})$ 
is a singleton for all $k$ satisfying $\displaystyle{
\frac{\partial \phi_j(y^{j;\infty})}{\partial y_k}
} > 0$, where $y^{j;\infty} \triangleq P^j(x^{\infty})$, then 
$x^{\infty}$ is a directional stationary solution of (\ref{eq:unified framework}).
\end{theorem}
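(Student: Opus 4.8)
The plan is to follow the template of the proof of Theorem~\ref{th:subsequential convergence I}, carefully tracking where the special structure~(\ref{eq:max-max inner}) and the new index sets $\boldsymbol{\mathcal L}_{\delta}(\bar{x})$ enter. First I would establish, exactly as in Lemma~\ref{lm:Armijo well-defined}, the finiteness of the Armijo line searches: the surrogate functions $\wh{\theta}_j(\bullet;\bar{x};{\cal L})$ are convex, touch $\theta_j$ at $\bar{x}$, and satisfy directional derivative dominance and a uniform upper approximation property (whose proof is the same as for Proposition~\ref{pr:convergence of theta dd}, using continuous differentiability of $\phi_j$ and of all the $p_{jk\ell}^{\,\rm diff}$ and the singleton hypothesis on ${\cal M}_{jk;\max}^{\rm diff}(x^{\infty})$ at the accumulation point). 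This gives well-definedness of $\{x^{\nu+1}\}$ and, via monotonicity of $\{\Theta_{\max}(x^{\nu})\}$ on the bounded level set $\mathrm{Lev}(x^0)$, the limit $\lim_{\nu}\tau_{\nu+1}\|x^{\nu+1/2}-x^{\nu}\|_2^2=0$.

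Next I would split into the two cases $\liminf_{\nu(\in\kappa)}\tau_{\nu+1}>0$ and $\liminf_{\nu(\in\kappa)}\tau_{\nu+1}=0$, just as in Theorem~\ref{th:subsequential convergence I}. In the first case $x^{\nu+1/2}\to x^{\infty}$ directly; in the second case the reverse Armijo inequality together with the uniform upper approximation inequality~(\ref{eq:uniform upper dd}) (now valid for $\wh{\theta}_j(\bullet;x^{\nu};{\cal L}^{\nu}_{\min})$ because of the singleton hypothesis at $x^{\infty}$) and the lower Lipschitz bound force $x^{\nu+1/2}\to x^{\infty}$ as well. The essential new ingredient is passing to the limit along a subsequence so that the \emph{finitely many} discrete data converge and stabilize: the tuple ${\cal L}^{\nu}_{\min}\in\boldsymbol{\mathcal L}_{\delta}(x^{\nu})$ and the active-index set ${\cal M}_{\nu+1/2}$ are eventually constant on a further subsequence $\kappa'$, and by closedness (upper semicontinuity) of the maps $x\mapsto{\cal M}_{jk;\delta}^{\,\rm cvx/cve/diff}(x)$ the limiting tuple ${\cal L}^{\infty}$ lies in $\boldsymbol{\mathcal L}_{\delta}(x^{\infty})$ (using $\delta\geq 0$ fixed, which makes these maps closed-valued — this is where a strictly positive $\delta$ would instead only give an inclusion into a $\delta$-enlarged set, so I would note $\delta$ is fixed throughout). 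Since $\boldsymbol{\mathcal L}_{\delta}(x^{\infty})$ is a finite set, the outer minimum in~(\ref{eq:max-max all subproblem}) at $x^{\infty}$ is attained over the same finite set that appears, in the limit, along the algorithm.

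The crux is then to upgrade ``weak'' stationarity to genuine directional stationarity. Since $x^{\nu+1/2}$ globally minimizes $\wh{\Theta}_{\max}^{\,\varepsilon_{\nu};\rho}(\bullet;x^{\nu};{\cal L}^{\nu}_{\min})$ over $X$ and this is the \emph{minimum over all} ${\cal L}\in\boldsymbol{\mathcal L}_{\delta}(x^{\nu})$, one gets, for every competing tuple ${\cal L}\in\boldsymbol{\mathcal L}_{\delta}(x^{\nu})$,
\[
\wh{\Theta}_{\max}^{\,\varepsilon_{\nu};\rho}(x^{\nu+1/2};x^{\nu};{\cal L}^{\nu}_{\min})\,\le\,
\wh{\Theta}_{\max}^{\,\varepsilon_{\nu};\rho}(x;x^{\nu};{\cal L})+\tfrac{\rho}{2}\|x-x^{\nu}\|_2^2,\qquad\forall\,x\in X.
\]
Passing $\nu(\in\kappa')\to\infty$, using joint continuity at the reference vector, the dd-joint upper semicontinuity of each $\wh{\theta}_j$, and the stabilization of index sets, I obtain that $x^{\infty}$ is optimal for $\wh{\Theta}_{\max}^{\,\varepsilon_{\infty};\rho}(\bullet;x^{\infty};{\cal L})$ for \emph{every} ${\cal L}\in\boldsymbol{\mathcal L}_{\delta}(x^{\infty})$. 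Finally, by the directional derivative dominance/consistency recap in Subsection~\ref{subsec:summary of types} together with the argument of Proposition~\ref{pr:unified stationarity certificate}(B): for an arbitrary $x\in X$ choose the tuple realizing equality $[\wh{\theta}_j(\bullet;x^{\infty};\cdot)]^{\prime}(x^{\infty};x-x^{\infty})=\theta_j^{\prime}(x^{\infty};x-x^{\infty})$ for all $j$; since $x^{\infty}$ is optimal for that tuple's subproblem and the active set argmax equals ${\cal M}_{\Theta}(x^{\infty})$, one concludes $\Theta_{\max}^{\prime}(x^{\infty};x-x^{\infty})\ge 0$. The expected main obstacle is precisely this last passage: verifying that the ``minimum over all tuples'' in~(\ref{eq:max-max all subproblem}) is rich enough to cover, in the limit, \emph{every} parameter tuple needed to certify full directional stationarity — i.e., that every subgradient realizing a given direction's directional derivative is represented by some ${\cal L}\in\boldsymbol{\mathcal L}_{\delta}(x^{\infty})$ — which hinges on the finite-max-of-differentiables structure making the relevant subdifferentials polytopes with vertices indexed exactly by $\boldsymbol{\mathcal L}_{0}(x^{\infty})\subseteq\boldsymbol{\mathcal L}_{\delta}(x^{\infty})$.
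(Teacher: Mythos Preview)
Your overall strategy matches the paper's: first reduce to the weak-stationarity argument of Theorem~\ref{th:subsequential convergence I} (including the two cases on $\liminf\tau_{\nu+1}$), then exploit the fact that $x^{\nu+1/2}$ minimizes over \emph{all} tuples in $\boldsymbol{\mathcal L}_{\delta}(x^{\nu})$ to upgrade to full directional stationarity via Proposition~\ref{pr:unified stationarity certificate}(B).

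However, the pivotal inclusion step is stated with the wrong direction of semicontinuity. You invoke upper semicontinuity of $x\mapsto\boldsymbol{\mathcal L}_{\delta}(x)$ to place the \emph{limiting} tuple ${\cal L}_{\min}^{\infty}$ into $\boldsymbol{\mathcal L}_{\delta}(x^{\infty})$; but what the argument actually needs is the opposite: for a \emph{fixed} ${\cal L}\in\boldsymbol{\mathcal L}_{0}(x^{\infty})$ (the exact active-index tuples at the limit), one must have ${\cal L}\in\boldsymbol{\mathcal L}_{\delta}(x^{\nu})$ for all $\nu\in\kappa$ sufficiently large, so that the outer minimum at iteration $\nu$ yields
\[
\max_{j\in{\cal M}_{\Theta}^{\varepsilon_{\nu}}(x^{\nu})}\wh{\theta}_j(x;x^{\nu};{\cal L})+\tfrac{\rho}{2}\|x-x^{\nu}\|_2^2\ \ge\ \max_{j\in{\cal M}_{\Theta}^{\varepsilon_{\nu}}(x^{\nu})}\wh{\theta}_j(x^{\nu+1/2};x^{\nu};{\cal L}_{\min}^{\infty})+\tfrac{\rho}{2}\|x^{\nu+1/2}-x^{\nu}\|_2^2,
\]
after which one passes $\nu(\in\kappa)\to\infty$. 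This inclusion $\boldsymbol{\mathcal L}_{0}(x^{\infty})\subseteq\boldsymbol{\mathcal L}_{\delta}(x^{\nu})$ is exactly what the \emph{strict positivity} of $\delta$ buys: if $\ell$ is active for $p_{jk}^{\rm cvx}$ at $x^{\infty}$, continuity gives $p_{jk\ell}^{\rm cvx}(x^{\nu})\ge p_{jk}^{\rm cvx}(x^{\nu})-\delta$ eventually (and similarly for the cve/diff parts). Your parenthetical remark about $\delta$ suggests you have this backwards; and your stronger claim that $x^{\infty}$ is optimal for every ${\cal L}\in\boldsymbol{\mathcal L}_{\delta}(x^{\infty})$ is not supported, since such ${\cal L}$ need not lie in $\boldsymbol{\mathcal L}_{\delta}(x^{\nu})$ along the subsequence. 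Fortunately only ${\cal L}\in\boldsymbol{\mathcal L}_{0}(x^{\infty})$ is required for Proposition~\ref{pr:unified stationarity certificate}(B), and for those the inclusion holds.

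Two minor points: your displayed inequality double-counts the proximal term, since $\wh{\Theta}_{\max}^{\,\varepsilon;\rho}$ already includes $\tfrac{\rho}{2}\|x-\bar{x}\|_2^2$; and the limit on the right-hand side is taken using joint continuity at the reference vector (both $x^{\nu+1/2}$ and $x^{\nu}$ tend to $x^{\infty}$), together with the index-set manipulations ${\cal M}_{\Theta}^{\varepsilon_{\nu}}(x^{\nu})\subseteq{\cal M}_{\Theta}^{\varepsilon_{\infty}}(x^{\infty})$ (or ${\cal M}_{\infty}\subseteq{\cal M}_{\Theta}(x^{\infty})$ when $\varepsilon_{\infty}=0$), rather than dd-joint upper semicontinuity.
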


\begin{proof} Following the proof of 
Theorem~\ref{th:subsequential convergence I}, we can show that an accumulation point
$x^{\infty}$ as stated is a weak stationary solution of (\ref{eq:unified framework}).  
[Note: the assumption that ${\cal M}_{jk;\max}^{\rm diff}(x^{\infty})$ 
is a singleton for all $k$ satisfying $\displaystyle{
\frac{\partial \phi_j(y^{j;\infty})}{\partial y_k}
} > 0$ is needed to apply the inequality (\ref{eq:uniform upper dd}) in the proof
of case (B).]
More importantly, letting $\{ x^{\nu} \}_{\nu \in \kappa}$ be the subsequence 
with $x^{\infty}$ as its
limit, it then follows by the subsequential convergence proof that the sequence
$\{ x^{\nu + 1/2} \}_{\nu \in \kappa}$ also converges to $x^{\infty}$.  Moreover,
by working with a further subsequence, we may assume that members of the family
$\{ {\cal L}_{\min}^{\nu} \}_{\nu \in \kappa}$ of minimizing tuples are all equal to
the same tuple, which we denote by ${\cal L}_{\min}^{\infty}$.  
To prove the claim that $x^{\infty}$ is a desired
directional stationary solution, it suffices to show, by the converse statement
in part (B) of Proposition~\ref{pr:unified stationarity certificate}, that for all tuples
${\cal L} \triangleq \left\{ \, ( \, \ell^{\, \rm x}_{jk} \, \ell^{\, \rm e}_{jk}, \,
\ell^{\, \rm d}_{jk} ) \, : (j,k)  \in [ J ] \times [ K ] \, \right\}
\in \boldsymbol{\mathcal L}(x^{\infty})$, 
\begin{equation} \label{eq:intermediate step for directional}
\{ \, x^{\infty} \} \, = \, \displaystyle{
\operatornamewithlimits{\mbox{\bf argmin}}_{x \in X}
} \, \displaystyle{
\max_{j \in {\cal M}_{\Theta}^{\varepsilon_{\infty}}(\bar{x})}
} \, \wh{\theta}_j(x;x^{\infty};{\cal L}) + \displaystyle{
\frac{\rho}{2}
} \, \| \, x - x^{\infty} \, \|_2^2;
\end{equation}
For the given $\delta > 0$, any tuple 
$( \, \ell^{\, \rm x}_{jk} \, \ell^{\, \rm e}_{jk}, \, \ell^{\, \rm d}_{jk} ) \in 
{\cal M}_{jk}^{\, \rm cvx}(x^{\infty}) \times 
{\cal M}_{jk}^{\, \rm cve}(x^{\infty}) \times {\cal M}_{jk}^{\, \rm diff}(x^{\infty})$
must belong to ${\cal M}_{jk;\delta}^{\, \rm cvx}(x^{\nu}) \times 
{\cal M}_{jk;\delta}^{\, \rm cve}(x^{\nu}) \times {\cal M}_{jk;\delta}^{\, \rm diff}(x^{\nu})$
for all $\nu \in \kappa$ sufficiently large.  Therefore, for any $x \in X$, we have
\begin{equation} \label{eq:intermediate theta}
\displaystyle{
\max_{j \in {\cal M}_{\Theta}^{\, \varepsilon_{\nu}}(x^{\nu})}
} \, \wh{\theta}_j(x;x^{\nu};{\cal L}) + \displaystyle{
\frac{\rho}{2}
} \, \| \, x - x^{\nu} \, \|_2^2 \, \geq \, \displaystyle{
\max_{j \in {\cal M}_{\Theta}^{\, \varepsilon_{\nu}}(x^{\nu})}
} \, \wh{\theta}_j(x^{\nu+1/2};x^{\nu};{\cal L}_{\min}^{\infty}) + \displaystyle{
\frac{\rho}{2}
} \, \| \, x^{\nu+1/2} - x^{\nu} \, \|_2^2.
\end{equation}
By working with a further subsequence, we may assume that the maximzing index sets
\[
\left\{ \, j \, \in \, 
{\cal M}_{\Theta}^{\, \varepsilon_{\nu}}(x^{\nu})
\, \left| \, \wh{\theta}_j(x^{\nu+1/2};x^{\nu};{\cal L}) 
\, = \, \displaystyle{
\max_{j^{\prime} \in {\cal M}_{\Theta}^{\, \varepsilon_{\nu}}(x^{\nu})}
} \, \wh{\theta}_{j^{\prime}}(x^{\nu+1/2};x^{\nu};{\cal L}) 
\, \right. \right\}
\]
are equal to a common index set
${\cal M}_{\infty}$ for all $\nu \in \kappa$, which must be a subset of 
${\cal M}_{\Theta}^{ \varepsilon_{\infty}}(x^{\infty})$.  In fact,
${\cal M}_{\Theta}^{\, \varepsilon_{\nu}}(x^{\nu})$ itself is a subset of
${\cal M}_{\Theta}^{ \varepsilon_{\infty}}(x^{\infty})$ for all $\nu$ sufficiently
large.  Therefore, the above inequality yields
\[ \begin{array}{lll}
\displaystyle{
\max_{j \in {\cal M}_{\Theta}^{\, \varepsilon_{\infty}}(x^{\infty})}
} \, \wh{\theta}_j(x;x^{\nu};{\cal L}) + \displaystyle{
\frac{\rho}{2}
} \, \| \, x - x^{\nu} \, \|_2^2 & \geq & \displaystyle{
\max_{j \in {\cal M}_{\Theta}^{\, \varepsilon_{\nu}}(x^{\nu})}
} \, \wh{\theta}_j(x;x^{\nu};{\cal L}) + \displaystyle{
\frac{\rho}{2}
} \, \| \, x - x^{\nu} \, \|_2^2 \\ [0.2in]
& \geq & \displaystyle{
\max_{j \in {\cal M}_{\Theta}^{\, \varepsilon_{\nu}}(x^{\nu})}
} \, \wh{\theta}_j(x^{\nu + 1/2};x^{\nu};{\cal L}_{\min}^{\infty}) + \displaystyle{
\frac{\rho}{2}
} \, \| \, x - x^{\nu} \, \|_2^2 \\ [0.2in]
& \geq & \displaystyle{
\max_{j \in {\cal M}_{\infty}}
} \, \wh{\theta}_j(x^{\nu+1/2};x^{\nu};{\cal L}_{\min}^{\infty}) + \displaystyle{
\frac{\rho}{2}
} \, \| \, x^{\nu+1/2} - x^{\nu} \, \|_2^2.
\end{array}
\]
Passing to the limit $\nu (\in \kappa) \to \infty$, we obtain:
\[
\displaystyle{
\max_{j \in {\cal M}_{\Theta}^{\, \varepsilon_{\infty}}(x^{\infty})}
} \, \wh{\theta}_j(x;x^{\infty};{\cal L}) + \displaystyle{
\frac{\rho}{2}
} \, \| \, x - x^{\infty} \, \|_2^2 \, \geq \, \displaystyle{
\max_{j \in {\cal M}_{\infty}}
} \, \underbrace{\wh{\theta}_j(x^{\infty};x^{\infty};{\cal L}_{\min}^{\infty})}_{\mbox{
$= \theta_j(x^{\infty})$}}.
\]
There are two cases: $\varepsilon_{\infty} = 0$ or $\varepsilon_{\infty} > 0$.  In the
former case, ${\cal M}_{\infty} \subseteq {\cal M}_{\Theta}(x^{\infty})$.  Thus,
\[ \begin{array}{l}
\displaystyle{
\max_{j \in {\cal M}_{\Theta}^{\, \varepsilon_{\infty}}(x^{\infty})}
} \, \wh{\theta}_j(x;x^{\nu};{\cal L}) + \displaystyle{
\frac{\rho}{2}
} \, \| \, x - x^{\nu} \, \|_2^2 \\ [0.2in]
\hspace{0.2in} \geq \, \displaystyle{
\max_{j \in {\cal M}_{\infty}}
} \, \theta_j(x^{\infty})\, = \, \displaystyle{
\max_{j \in {\cal M}_{\Theta}(x^{\infty})}
} \, \theta_j(x^{\infty}) \, = \, \displaystyle{
\max_{j \in {\cal M}_{\Theta}^{\varepsilon_{\infty}}(x^{\infty})}
} \, \theta_j(x^{\infty}) \\ [0.2in]
\hspace{0.2in} = \, \displaystyle{
\max_{j \in {\cal M}_{\Theta}^{\, \varepsilon_{\infty}}(x^{\infty})}
} \, \wh{\theta}_j(x^{\infty};x^{\infty};{\cal L}).
\end{array}
\]
This establishes (\ref{eq:intermediate step for directional}) by passing to the
limit $\nu (\in \kappa) \to \infty$.  In the latter case
where $\varepsilon_{\infty} > 0$, we have ${\cal M}_{\Theta}(x^{\infty}) \subseteq 
{\cal M}_{\Theta}^{\varepsilon_{\nu}}(x^{\nu})$ for all $\nu \in \kappa$ sufficiently
large.  Therefore, (\ref{eq:intermediate theta}) yields:
\[
\displaystyle{
\max_{j \in {\cal M}_{\Theta}^{\, \varepsilon_{\nu}}(x^{\nu})}
} \, \wh{\theta}_j(x;x^{\nu};{\cal L}) + \displaystyle{
\frac{\rho}{2}
} \, \| \, x - x^{\nu} \, \|_2^2 \, \geq \, \displaystyle{
\max_{j \in {\cal M}_{\Theta}(x^{\infty})}
} \, \wh{\theta}_j(x^{\nu+1/2};x^{\nu};{\cal L}_{\min}^{\infty}) + \displaystyle{
\frac{\rho}{2}
} \, \| \, x^{\nu+1/2} - x^{\nu} \, \|_2^2,
\]
which implies, since ${\cal M}_{\Theta}^{\, \varepsilon_{\nu}}(x^{\nu})$ 
is a subset of ${\cal M}_{\Theta}^{ \varepsilon_{\infty}}(x^{\infty})$ 
for all $\nu$ sufficiently large,
\[
\displaystyle{
\max_{j \in {\cal M}_{\Theta}^{\, \varepsilon_{\infty}}(x^{\infty})}
} \, \wh{\theta}_j(x;x^{\nu};{\cal L}) + \displaystyle{
\frac{\rho}{2}
} \, \| \, x - x^{\nu} \, \|_2^2 \, \geq \, \displaystyle{
\max_{j \in {\cal M}_{\Theta}(x^{\infty})}
} \, \wh{\theta}_j(x^{\nu+1/2};x^{\nu};{\cal L}_{\min}^{\infty}) + \displaystyle{
\frac{\rho}{2}
} \, \| \, x^{\nu+1/2} - x^{\nu} \, \|_2^2.
\]
At this point, the above proof can be applied to complete the proof of the desired
equality (\ref{eq:intermediate step for directional}).
\end{proof}

\section{Sequential Convergence Analysis} \label{sec:sequential convergence}

Our next goal in the convergence analysis of Algorithm~1 is to establish the
sequential convergence of the sequence $\{ x^{\nu} \}$ produced.  As expected,
specializations of the general setting are needed; these are captured by
the headings of Subsections~\ref{subsec:step sizes} and \ref{subsec:C2DC2 and KL}.
The overall development is based on the following general sequential convergence 
result with rates for an arbitrary sequence
without regard to its source.  This result is drawn 
from \cite[Theorem~8.4.3]{CuiPang2021} and is a summary of many results of this kind.   
Proof of the theorem can be found in the cited reference.

\begin{theorem} \label{th:sequence converges under KL} \rm
Let $\{ x^{\, \nu} \}$ be a bounded sequence in $\mathbb{R}^n$.  Suppose there 
exist a continuous function $f$ defined on an open set containing the sequence and 
a positive scalar $\gamma$ such that
\begin{equation} \label{eq:sufficient descent}
f(x^{\, \nu+1}) - f(x^{\, \nu}) \, \leq \, -\gamma \, 
\| \, x^{\, \nu+1} - x^{\, \nu} \, \|_2^2 \epc \forall \, \nu \, \geq \, 0.
\end{equation}
Then the sequence $\{ f(x^{\, \nu}) \}$ is monotonically nonincreasing and
converges to a limit $f_{\infty}$; moreover, either one of the following two statements
holds:

\gap

\noindent (a) $\{ x^{\nu} \}$ converges finitely, i.e.,
there exists $\bar{\nu}$ such that $x^{\, \nu} = x^{\bar{\nu}}$ for all
$\nu \geq \bar{\nu}$, or 

\gap

\noindent (b) $f(x^{\, \nu}) > f_{\infty}$ for all $\nu \geq 0$.  

\gap

\noindent In case (b), if 
additionally there exist a positive scalar $\eta$ and a nonnegative scalar $\zeta < 1$ 
such that
\begin{equation} \label{eq:KL with power}
\| \, x^{\, \nu+1} - x^{\, \nu} \, \|_2 \, \geq \, \displaystyle{
\frac{1}{\eta}
} \, \left[ \, f(x^{\, \nu+1}) 
- f_{\infty} \, \right]^{\zeta} \epc \forall \, \nu \mbox{ sufficiently large},
\end{equation}
then the following two statements hold:

\gap

\noindent $\bullet $ the sequence $\{ x^{\, \nu} \}$ converges to $x^{\, \infty}$, and

\gap

\noindent $\bullet $ the following convergence rates hold:

\gap

\noindent {\bf (i)} if $\zeta = 0$, the sequence $\{ x^{\, \nu} \, \}$ converges 
finitely; 

\gap

\noindent {\bf (ii)} if $\zeta \in (0, \frac{1}{2} \,]$, then there exist positive 
scalars $\wh{\eta}$ and $q \in [0,1)$ such that
\[
\| \, x^{\, \nu} - x^{\, \infty} \, \| \, \leq \, \wh{\eta} \, q^{\, \nu} \epc \forall \,
\nu \, \geq \, 0;
\] 
{\bf (iii)} if $\zeta\in (\,\frac{1}{2},1)$, then there exists a positive scalar 
$\wh{\eta}$ such that 
\[
\|\, x^\nu - x^{\, \infty} \, \| \, \leq \wh{\eta} \, \nu^{-(1-\zeta)/(2\zeta - 1)} 
\epc \forall \; \nu \, \geq \, 0.
\]
Thus the key in establishing the convergence of the sequence $\{ x^{\nu} \}$ lies
in the satisfaction of the two conditions (\ref{eq:sufficient descent}) and
(\ref{eq:KL with power}).  \hfill $\Box$
\end{theorem}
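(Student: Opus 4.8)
The plan is to separate the elementary ``descent bookkeeping'' from the KL-driven path-length estimate, and within the latter to split the analysis according to the value of $\zeta$. First I would establish the unconditional conclusions. Inequality (\ref{eq:sufficient descent}) shows at once that $\{f(x^{\nu})\}$ is nonincreasing; since $\{x^{\nu}\}$ is bounded and $f$ is continuous, $f$ attains a finite minimum on the compact closure of the range of the sequence, so $\{f(x^{\nu})\}$ is bounded below and therefore converges to some $f_{\infty}$. Telescoping (\ref{eq:sufficient descent}) also gives $\sum_{\nu}\|x^{\nu+1}-x^{\nu}\|_{2}^{2}<\infty$. For the dichotomy: if $f(x^{\nu_{0}})=f_{\infty}$ for some $\nu_{0}$, then by monotone convergence $f(x^{\nu})=f_{\infty}$ for all $\nu\ge\nu_{0}$, and (\ref{eq:sufficient descent}) forces $0\ge\gamma\|x^{\nu+1}-x^{\nu}\|_{2}^{2}$, i.e.\ $x^{\nu+1}=x^{\nu}$, which is alternative (a); otherwise $f(x^{\nu})>f_{\infty}$ for all $\nu$, which is alternative (b).

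Now work in case (b) and set $a_{\nu}:=f(x^{\nu})-f_{\infty}>0$, a strictly decreasing null sequence. I would first dispose of $\zeta=0$: then (\ref{eq:KL with power}) gives $\|x^{\nu+1}-x^{\nu}\|_{2}\ge 1/\eta$ for all large $\nu$, so (\ref{eq:sufficient descent}) yields $a_{\nu}-a_{\nu+1}\ge\gamma/\eta^{2}>0$ eventually, contradicting $a_{\nu}\downarrow 0$; hence case (b) is vacuous when $\zeta=0$, so alternative (a) holds and $\{x^{\nu}\}$ converges finitely, which is (i). For $\zeta\in(0,1)$, combining (\ref{eq:KL with power}) with (\ref{eq:sufficient descent}) produces the master recursion $a_{\nu}-a_{\nu+1}\ge c\,a_{\nu+1}^{\,2\zeta}$ with $c:=\gamma/\eta^{2}>0$, valid for all large $\nu$, from which both the decay rate of $a_{\nu}$ and the path length will be extracted.

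If $\zeta\le\tfrac12$, so $2\zeta\le1$, then once $a_{\nu+1}\le1$ we have $a_{\nu+1}^{\,2\zeta}\ge a_{\nu+1}$, hence $a_{\nu+1}\le(1+c)^{-1}a_{\nu}$, so $a_{\nu}\le C\,q_{0}^{\nu}$ for some $q_{0}\in[0,1)$; then (\ref{eq:sufficient descent}) gives $\|x^{\nu+1}-x^{\nu}\|_{2}\le\gamma^{-1/2}a_{\nu}^{1/2}\le C'q_{0}^{\nu/2}$, which is summable, so $\{x^{\nu}\}$ is Cauchy and converges to some $x^{\infty}$, and summing the tail gives $\|x^{\nu}-x^{\infty}\|\le\widehat\eta\,q^{\nu}$ with $q=q_{0}^{1/2}$, which is (ii). If $\zeta\in(\tfrac12,1)$ I would first invoke the standard lemma on sequences obeying $a_{\nu}-a_{\nu+1}\ge c\,a_{\nu+1}^{\,2\zeta}$ to get $a_{\nu}=O(\nu^{-1/(2\zeta-1)})$, and then run the Attouch--Bolte telescoping argument: concavity of $\varphi(t):=t^{\,1-\zeta}$ gives $a_{\nu}-a_{\nu+1}\le(1-\zeta)^{-1}a_{\nu}^{\,\zeta}\,[\varphi(a_{\nu})-\varphi(a_{\nu+1})]$; the index-shifted form of (\ref{eq:KL with power}) gives $a_{\nu}^{\,\zeta}\le\eta\,\|x^{\nu}-x^{\nu-1}\|_{2}$; feeding both into (\ref{eq:sufficient descent}) yields $\|x^{\nu+1}-x^{\nu}\|_{2}^{2}\le\frac{\eta}{\gamma(1-\zeta)}\,\|x^{\nu}-x^{\nu-1}\|_{2}\,[\varphi(a_{\nu})-\varphi(a_{\nu+1})]$, and an AM--GM step converts this to $\|x^{\nu+1}-x^{\nu}\|_{2}\le\frac{1}{2}\|x^{\nu}-x^{\nu-1}\|_{2}+\frac{\eta}{2\gamma(1-\zeta)}[\varphi(a_{\nu})-\varphi(a_{\nu+1})]$; summing over $\nu$ and absorbing the $\tfrac12$-term on the left gives a bound on $\sum_{\nu}\|x^{\nu+1}-x^{\nu}\|_{2}$ that is uniform in the upper index, hence convergence to $x^{\infty}$. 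The same tail estimate together with $a_{\nu}=O(\nu^{-1/(2\zeta-1)})$ and $\varphi(a_{\nu})=a_{\nu}^{\,1-\zeta}$ delivers $\|x^{\nu}-x^{\infty}\|=O(\nu^{-(1-\zeta)/(2\zeta-1)})$, which is (iii).

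The elementary descent steps and the geometric case $\zeta\le\tfrac12$ are routine; the technical heart, and the step I expect to be the main obstacle, is the case $\zeta\in(\tfrac12,1)$. There are really two delicate points: the recursion lemma producing the polynomial decay $a_{\nu}=O(\nu^{-1/(2\zeta-1)})$ (which needs the ratio $a_{\nu+1}/a_{\nu}$ controlled away from zero, or an equivalent comparison with a power of $\nu$), and the telescoping/AM--GM bookkeeping that turns the per-iterate KL inequality into a \emph{summable} path length while keeping track of the desingularizing exponent $1-\zeta$ and the absorption constant $\tfrac12$ precisely enough to land the stated convergence rate.
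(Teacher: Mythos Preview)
The paper does not prove this theorem; it explicitly states that the result ``is drawn from \cite[Theorem~8.4.3]{CuiPang2021}'' and that the ``proof of the theorem can be found in the cited reference.'' Your proposal is the standard KL-based path-length argument (descent bookkeeping, the master recursion $a_{\nu}-a_{\nu+1}\ge c\,a_{\nu+1}^{2\zeta}$, geometric decay for $\zeta\le\tfrac12$, and the Attouch--Bolte desingularization/AM--GM telescoping for $\zeta\in(\tfrac12,1)$), which is exactly the approach in the cited monograph and in the original Attouch--Bolte--Svaiter line of work; your outline is correct and your identification of the two delicate points in case (iii) is accurate.
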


According to the theorem, there are two main verifications 
that are needed to establish the
sequential convergence of the sequence $\{ x^{\nu} \}$ produced by Algorithm~1;
namely, the inequality 
(\ref{eq:sufficient descent}), which is a sufficient descent condition with
a uniform constant $\gamma$, and (\ref{eq:KL with power}) which is closely
related to the subject of error bounds and the popular Kurdyka-Lojasiecwisz (KL)
inequality \cite[Subsection~8.4.3]{CuiPang2021}.  For an excellent exposition of
the KL inequality, see \cite{Lewis20} and the references therein, particularly
the original works \cite{AttouchBolte09,AttouchBolteSvaiter13}.  See also the
more recent study on the KL exponent and its implication on the linear convergence
of first-order methods for optimization problems with 
``KL functions'' \cite{LiPong18}. 

\subsection{Condition (\ref{eq:sufficient descent}) under LC$^{\, 1}$: 
Step sizes bounded away from zero} \label{subsec:step sizes}

In this subsection, we consider the problem (\ref{eq:unified framework}) where
the outer functions $\phi_j$ are LC$^{\, 1}$ (i.e., their gradients are Lipschitz
continuous) and the inner functions are the sum of a convex, a concave, and a 
differentiable function, all on an open convex set containing $X$:
\begin{equation} \label{eq:dc plus diff}
p_{jk}(x) \, = \, p_{jk}^{\rm cvx}(x) + p_{jk}^{\rm cve}(x) + p_{jk}^{\rm diff}(x),
\end{equation}
where $p_{jk}^{\rm diff}$ is also LC$^{\, 1}$ on the same domain as the
other functions, and $p_{jk}^{\rm cvx}$ and $p_{jk}^{\, \rm cve}$ 
are Lipschitz continuous on $\mbox{Lev}(x^0)$.  Thus the pairs $( \phi_j,P^{\, j} )$
are of type~I composition.  Moreover, the additional
Lipschitz conditions will then imply that
the sequence of step sizes $\{ \tau_{\nu} \}$
will be bounded away from zero.  With this sufficient positivity
property of the step sizes, the uniform descent condition (\ref{eq:sufficient descent})
can be proved readily.  We will subsequently use this preliminary result to
show that if the Lipschitz constants are available, they can be used to pre-select
the proximal regularization parameter $\rho$ so that a unit step size can be
achieved in the iterations, therefore eliminating the need for the line searches.

\begin{lemma} \label{lm:liminf step sizes} \rm
In the above setting 
suppose that the level set $\mbox{Lev}(x^0)$ is bounded.  It then holds that 
$\displaystyle{
\liminf_{\nu \to \infty}
} \, \tau_{\nu+1} > 0$.  Hence, for some $\gamma > 0$,
(\ref{eq:sufficient descent}) holds for $\Theta_{\max}$
for all $\nu$ sufficiently large. 
\end{lemma}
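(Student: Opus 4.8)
\textbf{Proof proposal for Lemma~\ref{lm:liminf step sizes}.}
The plan is to show that under the LC$^{\,1}$ assumptions on the outer functions $\phi_j$ and the differentiable summands $p_{jk}^{\rm diff}$, together with the convexity of $p_{jk}^{\rm cvx}$ and concavity of $p_{jk}^{\rm cve}$, there is a \emph{fixed} step size $\bar{\tau} \in (0,1]$ for which the Armijo test \eqref{eq:Armijo descent} is satisfied at every iteration; this immediately yields $\tau_{\nu+1} \geq \beta\,\bar{\tau} > 0$ (since the line search only backtracks until success), and then \eqref{eq:sufficient descent} follows with $\gamma \triangleq \sigma\rho\beta\bar{\tau}/2$, using that all iterates lie in the bounded level set $\mbox{Lev}(x^0)$.

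The core estimate is a one-step descent inequality valid for all small $\tau$. Fix an iterate $\bar{x} = x^{\nu} \in \mbox{Lev}(x^0)$, let $\wh{x} = x^{\nu+1/2}$ be the minimizer of \eqref{eq:workhorse}, $d = \wh{x}-\bar{x}$, and $\bar{x}^{\tau} = \bar{x} + \tau d$. First I would bound $\Theta_{\max}(\bar{x}^{\tau}) - \Theta_{\max}(\bar{x})$ by $\theta_{j_\tau}(\bar{x}^{\tau}) - \theta_{j_\tau}(\bar{x})$ for an index $j_\tau \in {\cal M}_{\Theta}(\bar{x}^{\tau})$, which for small $\tau$ lies in ${\cal M}_{\Theta}(\bar{x}) \subseteq {\cal M}_{\Theta}^{\,\varepsilon}(\bar{x})$. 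For each such $j$, writing $\theta_j = \phi_j \circ P^j$, a second-order Taylor expansion of $\phi_j$ (legitimate since $\nabla\phi_j$ is Lipschitz) plus the Lipschitz continuity of $\nabla p_{jk}^{\rm diff}$ and the convexity/concavity of the dc summands gives, for all $\tau \in [0,1]$,
\[
\theta_j(\bar{x}^{\tau}) - \theta_j(\bar{x}) \, \leq \, \tau\left[\,\wh{\theta}_j(\wh{x};\bar{x};\boldsymbol{\xi}^{\,j}) - \theta_j(\bar{x})\,\right] + C\,\tau^2\,\|d\|_2^2,
\]
where $C>0$ depends only on the Lipschitz moduli and on bounds over $\mbox{Lev}(x^0)$ (hence is \emph{uniform} in $\nu$). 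This is the crucial point: unlike the general Bouligand-differentiable case, the LC$^{\,1}$ structure upgrades the qualitative "$\mathrm{o}(\tau)$'' error from the proof of Lemma~\ref{lm:Armijo well-defined} into a quantitative "$O(\tau^2)$'' error with a constant independent of the iterate.

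Combining this with the optimality of $\wh{x}$ for the strongly convex subproblem \eqref{eq:workhorse}, which forces $\max_{j\in {\cal M}_{\Theta}^{\,\varepsilon}(\bar{x})}\wh{\theta}_j(\wh{x};\bar{x};\boldsymbol{\xi}^{\,j}) - \Theta_{\max}(\bar{x}) \leq -\tfrac{\rho}{2}\|d\|_2^2$ (as in the display chain of Lemma~\ref{lm:Armijo well-defined}), and taking the maximum over $j \in {\cal M}_{\Theta}(\bar{x})$, yields
\[
\Theta_{\max}(\bar{x}^{\tau}) - \Theta_{\max}(\bar{x}) \, \leq \, -\frac{\tau\rho}{2}\,\|d\|_2^2 + C\,\tau^2\,\|d\|_2^2 \, = \, -\frac{\tau\rho}{2}\,\|d\|_2^2\left(1 - \frac{2C\tau}{\rho}\right).
\]
Choosing any fixed $\bar{\tau} \in (0,1]$ small enough that $1 - 2C\bar{\tau}/\rho \geq \sigma$ makes the right-hand side at most $-\tfrac{\sigma\tau\rho}{2}\|d\|_2^2$ for all $\tau \in (0,\bar{\tau}]$, so the Armijo condition holds at $\tau = \bar{\tau}$, and consequently $\tau_{\nu+1} \geq \min(1,\beta\bar{\tau})$ for every $\nu$. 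The main obstacle is the bookkeeping in establishing the uniform constant $C$: one must verify that $\{x^{\nu}\}$, $\{x^{\nu+1/2}\}$, and the multipliers $\boldsymbol{\xi}^{\,\nu}$ all stay in bounded sets (the first by the level-set hypothesis and monotone descent, the second by the lower-Lipschitz-boundedness property, the third by upper semicontinuity and compact-valuedness of $\wh{\boldsymbol{\Xi}}_\Theta$), so that all the Lipschitz moduli and gradient bounds entering $C$ can be taken over one fixed compact set. Once $\liminf_\nu \tau_{\nu+1} > 0$ is in hand, \eqref{eq:sufficient descent} with $f = \Theta_{\max}$ and a uniform $\gamma > 0$ is immediate from \eqref{eq:Armijo descent} and $d^{\,\nu+1} = x^{\nu+1/2}-x^{\nu}$, noting $x^{\nu+1}-x^{\nu} = \tau_{\nu+1}d^{\,\nu+1}$ so $\|x^{\nu+1}-x^{\nu}\|_2^2 = \tau_{\nu+1}^2\|d^{\,\nu+1}\|_2^2 \leq \tau_{\nu+1}\|d^{\,\nu+1}\|_2^2$.
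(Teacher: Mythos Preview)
Your proposal is correct and follows essentially the same strategy as the paper: both derive the key inequality $\Theta_{\max}(\bar{x}^{\tau}) - \Theta_{\max}(\bar{x}) \leq -\tfrac{\tau\rho}{2}\|d\|_2^2 + C\tau^2\|d\|_2^2$ with a uniform constant $C$ coming from the LC$^{\,1}$ moduli of $\phi_j$ and $p_{jk}^{\rm diff}$, the Lipschitz constants of $p_{jk}^{\rm cvx}$, $p_{jk}^{\rm cve}$, and bounds over the level set, then conclude that the Armijo test succeeds below a fixed threshold. The only difference is organizational---the paper argues by contradiction, assuming a subsequence $\tau_{\nu+1}\to 0$ and invoking the reverse Armijo inequality at the last failed step $\tau_{\nu+1}/\beta$, arriving at $(1-\sigma)\rho/2 \leq C\,\tau_{\nu+1}/\beta$, which is precisely the contrapositive of your threshold condition $1 - 2C\bar{\tau}/\rho \geq \sigma$.
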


\begin{proof}  By way of contradiction, we assume that there is an infinite set
$\kappa$ such that $\displaystyle{
\lim_{\nu (\in \kappa) \to \infty}
} \, \tau_{\nu+1} = 0$.  Without loss of generality, we may assume that the
subsequence $\{ x^{\nu} \}_{\nu \in \kappa}$ converges to a limit $x^{\infty}$. 
We employ the fact that
for a LC$^{\, 1}$ function $f$, it holds that for some constant $\mbox{Lip}_{\nabla f} > 0$,
\begin{equation} \label{eq:LC1 inequality}
\left| \, f(u) - f(v) - \nabla f(v)^{\top}( u - v ) \, \right| \, \leq \, \displaystyle{
\frac{\mbox{Lip}_{\nabla f}}{2}
} \, \| \, u - v \, \|_2^2, \epc \forall \, u \mbox{ and } v,
\end{equation}
which yields, for some constant $\mbox{Lip}_f > 0$
\[
| \, f(u) - f(v) \, | \, \leq \, \mbox{Lip}_f \, \| \, u - v \, \|_2, \epc
\mbox{for all bounded $u$ and $v$}.
\]  
We proceed as in the proof of part (B) of
Theorem~\ref{th:subsequential convergence I}, following the steps in proving
the uniform upper approximation property in Proposition~\ref{pr:convergence of theta dd}. 
Using the same notation as in the former, let 
$\wh{x}^{\, \nu} = x^{\nu} + \displaystyle{
\frac{\tau_{\nu+1}}{\beta}
} \, ( x^{\nu + 1/2} - x^{\nu} )$ and let $j_{\nu}$ be
such that $\Theta_{\max}( \wh{x}^{\, \nu} ) = \theta_{j_{\nu}}( \wh{x}^{\, \nu} )$.
We apply the above Lipschitz inequalities 
to the functions $\phi_j$
and $p_{jk}^{\rm diff}$ and the bounded sequences $\{ \wh{x}^{\, \nu} \}_{\nu \in \kappa}$
and $\{ x^{\, \nu} \}_{\nu \in \kappa}$ with the goal of bounding
$\Theta_{\max}( \wh{x}^{\, \nu} ) - \Theta_{\max}(x^{\nu})$, which satisfy
the reverse Armijo inequality (\ref{eq:reverse Armijo}):
\[ 
\begin{array}{l}
-\displaystyle{
\frac{\sigma \, \rho}{2}
} \, \displaystyle{
\frac{\tau_{\nu+1}}{\beta}
} \, \| \, x^{\nu + 1/2} - x^{\, \nu} \, \|_2^2 \, < \, 
\Theta_{\max}( \wh{x}^{\, \nu} ) - \Theta_{\max}(x^{\nu}) \\ [0.15in]
\leq \, \theta_{j_{\nu}}( \wh{x}^{\, \nu} ) - \theta_{j_{\nu}}(x^{\nu})
\, = \, \phi_{j_{\nu}}( \wh{y}^{\, \nu} ) - \phi_{j_{\nu}}( y^{\nu} ), \epc \mbox{where } \  
\wh{y}^{\, \nu} = P^{j_{\nu}}(\wh{x}^{\, \nu}) \ \mbox{ and }  
y^{\nu} \triangleq P^{j_{\nu}}( x^{\nu}) \\ [0.15in]
= \, \nabla \phi_{j_{\nu}}(\wt{z}^{\, t;\nu})^{\top}
( \wh{y}^{\, \nu} - y^{\nu} ),
\epc \mbox{where } \ \wt{z}^{\, t;\nu} \, \triangleq \, y^{\nu} + 
t_{\nu} ( \wh{y}^{\, \nu} - y^{\nu} ) \ \mbox{ for some } 
t_{\nu} \in ( \, 0,1 \, ) \\ [0.2in]
= \, \underbrace{\left[ \, \nabla \phi_{j_{\nu}}(\wt{z}^{\, t;\nu}) - 
\nabla \phi_{j_{\nu}}(y^{\nu})^{\top} \, \right]^{\top}( \wh{y}^{\, \nu} - y^{\nu} )}_{
\mbox{denoted $T1$}} + 
\underbrace{\nabla \phi_{j_{\nu}}(y^{\nu})^{\top}( \wh{y}^{\, \nu} - y^{\nu} )}_{
\mbox{denoted $T2$}}. 
\end{array} \]
By the Lipschitz continuity of $\nabla \phi_{j_{\nu}}$ and $P^{j_{\nu}}$ with moduli
$\mbox{Lip}_{\nabla \phi}$ and $\mbox{Lip}_P$, respectively, we have
\[
T1 \, \leq \, \mbox{Lip}_{\nabla \phi} \, \| \, \wh{y}^{\, \nu} - y^{\nu} \, \|_2^2 
\, \leq \, \mbox{Lip}_{\nabla \phi} \, \mbox{Lip}_P^2 \, \left( \, \displaystyle{
\frac{\tau_{\nu+1}}{\beta}
} \, \right)^2 \, \| \, x^{\nu + 1/2} - x^{\nu} \, \|_2^2;
\]
and
\[ \begin{array}{l}
\mbox{T2} \, = \,	\displaystyle{
\sum_{k \in {\cal K}_{\phi_{j_{\nu}}}^{\, +}(y^{\nu})}
} \, \displaystyle{
\frac{\partial \phi_{j_{\nu}}(y^{\nu})}{\partial y_k}
} \, \left[ \, p_{j_{\nu}k}^{\rm cvx}(\wh{x}^{\, \nu}) - p_{j_{\nu}k}^{\rm cvx}(x^{\nu}) + 
p_{j_{\nu}k}^{\rm cve}(\wh{x}^{\, \nu}) - p_{j_{\nu}k}^{\rm cve}(x^{\nu}) +
p_{j_{\nu}k}^{\rm diff}(\wh{x}^{\, \nu}) - p_{j_{\nu}k}^{\rm diff}(x^{\nu}) 
\, \right] \\ [0.25in]
\hspace{0.2in} + \,  \displaystyle{
\sum_{k \in {\cal K}_{\phi_{j_{\nu}}}^{\, -}(y^{\nu})}
} \, \displaystyle{
\frac{\partial \phi_{j_{\nu}}(y^{\nu})}{\partial y_k}
} \, \left[ \, p_{j_{\nu}k}^{\rm cvx}(\wh{x}^{\, \nu}) - p_{j_{\nu}k}^{\rm cvx}(x^{\nu}) + 
p_{j_{\nu}k}^{\rm cve}(\wh{x}^{\, \nu}) - p_{j_{\nu}k}^{\rm cve}(x^{\nu}) +
p_{j_{\nu}k}^{\rm diff}(\wh{x}^{\, \nu}) - p_{j_{\nu}k}^{\rm diff}(x^{\nu}) 
\, \right] \\ [0.3in]
\leq \, \displaystyle{
\frac{\tau_{\nu+1}}{\beta}
} \, \left\{ \, \begin{array}{l}
\displaystyle{
\sum_{k \in {\cal K}_{\phi_{j_{\nu}}}^{\, +}(y^{\nu})}
} \, \displaystyle{
\frac{\partial \phi_{j_{\nu}}(y^{\nu})}{\partial y_k}
} \, \left[ \, \begin{array}{l}
p_{j_{\nu}k}^{\rm cvx}(x^{\nu+1/2}) - p_{j_{\nu}k}^{\rm cvx}(x^{\nu}) 
- ( b^{\, \nu;k} )^{\top}( x^{\nu+1/2} - \bar{x}^{\nu} ) \, + \\ [0.1in] 
\nabla p_{j_{\nu}k}^{\rm diff}(x^{\nu})^{\top}( x^{\nu+1/2} - \bar{x}^{\nu} ) 
\end{array} \right] \, + \\ [0.35in]
\displaystyle{
\sum_{k \in {\cal K}_{\phi_{j_{\nu}}}^{\, -}(y^{\nu})}
} \, \displaystyle{
\frac{\partial \phi_{j_{\nu}}(y^{\nu})}{\partial y_k}
} \, \left[ \, \begin{array}{l}
( a^{\nu;k} )^{\top}( x^{\nu+1/2} - \bar{x}^{\nu} ) +
p_{j_{\nu}k}^{\rm cve}(x^{\nu+1/2}) - p_{j_{\nu}k}^{\rm cve}(\bar{x}^{\nu}) \, + \\ [0.1in]
\nabla p_{j_{\nu}k}^{\rm diff}(\bar{x}^{\nu})^{\top}( x^{\nu+1/2} - x^{\nu} ) 
\end{array} \right] 
\end{array} \right\} \\ [0.65in]
\epc (\mbox{by convexity of $p_{j_{\nu}k}^{\rm cvx}$ and concavity of 
$p_{j_{\nu}k}^{\rm cve}$; where
$( a^{\nu;k},b^{\nu;k} ) \in \partial p_{j_{\nu}k}^{\rm cvx}(x^{\nu}) \times 
\partial (-p_{j_{\nu}k}^{\rm cve})(x^{\nu})$}) \\ [0.1in]
\hspace{0.4in} + \ \displaystyle{
\sum_{k =1}^K
} \, \underbrace{\displaystyle{
\frac{\partial \phi_{j_{\nu}}(y^{\nu})}{\partial y_k}
}}_{\mbox{bounded by $B_{\nabla \phi}$}}
\, \left[ \, p_{j_{\nu}k}^{\rm diff}(\wh{x}^{\, \nu}) - p_{j_{\nu}k}^{\rm diff}(x^{\nu}) -
\displaystyle{
\frac{\tau_{\nu+1}}{\beta}
} \, \nabla p_{j_{\nu}k}^{\rm diff}(x^{\nu})^{\top}( x^{\nu+1/2} - x^{\nu} ) \, \right]
\\ [0.2in]
\leq - \displaystyle{
\frac{\tau_{\nu+1}}{\beta}
} \, \displaystyle{
\frac{\rho}{2}
} \, \| \, x^{\nu+1/2} - x^{\nu} \, \|_2^2 + B_{\nabla \phi} \, \mbox{Lip}_{\nabla \phi} \, 
\left( \, \displaystyle{
\frac{\tau_{\nu+1}}{\beta}
} \, \right)^2 \, \| \, x^{\nu+1/2} - x^{\nu} \, \|_2^2.
\end{array}
\]
Putting the bounds for $T1$ and $T2$ together, we deduce
\[
-\displaystyle{
\frac{\sigma \, \rho}{2}
} \, \displaystyle{
\frac{\tau_{\nu+1}}{\beta}
} \, \| \, x^{\nu + 1/2} - x^{\, \nu} \, \|_2^2 \, < \,  - \displaystyle{
\frac{\tau_{\nu+1}}{\beta}
} \, \displaystyle{
\frac{\rho}{2}
} \, \| \, x^{\nu+1/2} - x^{\nu} \, \|_2^2 + C \, 
\left( \, \displaystyle{
\frac{\tau_{\nu+1}}{\beta}
} \, \right)^2 \, \| \, x^{\nu+1/2} - x^{\nu} \, \|_2^2,
\]
where $C > 0$ is a constant that depends on 
$B_{\nabla \phi}$, $\mbox{Lip}_{\nabla \phi}$, and $\mbox{Lip}_P$.  Hence it follows
that,
\[ 
0 \, \leq \, -\displaystyle{
\frac{\tau_{\nu+1}}{\beta}
} \, \left[ \, ( \, 1 - \sigma \, ) \, \displaystyle{
\frac{\rho}{2}
} \, + C \, \left( \, \displaystyle{
\frac{\tau_{\nu+1}}{\beta}
} \, \right) \, \right] \| \, x^{\, \nu + 1/2} - x^{\nu} \, \|_2^2,
\]
which yields 
\begin{equation} \label{eq:key inequality for convergence}
( \, 1 - \sigma \, ) \, \displaystyle{
\frac{\rho}{2}
} \, - C \, \left( \, \displaystyle{
\frac{\tau_{\nu+1}}{\beta}
} \, \right) \leq 0.
\end{equation}  
Passing to the limit $\nu (\in \kappa) \to 0$ yields a contradiction.  

\gap

The proof of the last statement of the lemma is as follows.  By the forward 
Armijo descent condition (\ref{eq:Armijo descent}), since $\displaystyle{
\liminf_{\nu \to \infty}
} \, \tau_{\nu+1} > 0$. we have, 
\[ \begin{array}{lll}
\Theta_{\max}(x^{\nu+1}) - \Theta_{\max}(x^{\nu}) 
& \leq & -\sigma \, \tau_{\nu+1} \, \displaystyle{
\frac{\rho}{2}
} \, \| \, x^{\nu+1/2} - x^{\nu} \, \|_2^2 \\ [0.15in]
& = & -\displaystyle{
\frac{\sigma}{\tau_{\nu+1}}
} \, \displaystyle{
\frac{\rho}{2}
} \, \| \, x^{\nu+1} - x^{\nu} \, \|_2^2,
\end{array} \]
where the last inequality is justified because $\tau_{\nu+1} > 0$
for all $\nu$ sufficiently large.   It therefore follows that,
 since $\tau_{\nu+1} \leq 1$,
\[
\Theta_{\max}(x^{\nu+1}) - \Theta_{\max}(x^{\nu}) \, \leq \, -\displaystyle{
\frac{\sigma \, \rho}{2}
} \, \| \, x^{\nu+1} - x^{\nu} \, \|_2^2.
\]
Thus (\ref{eq:sufficient descent}) holds with $\gamma = \sigma \, \rho/2$.
\end{proof}

In the above proof, the key inequality that leads to the 
final contradiction is (\ref{eq:key inequality for convergence}),
where the constant $C$ is most essential.  Provided that the Lipschitz constants
$\mbox{Lip}_{\nabla \phi}$, and $\mbox{Lip}_P$ and the
bounds $B_{\nabla \phi}$ are valid on the set $X$, it follows that the constant $C$ applies to 
all subsequences of $\{ x^{\nu} \}$.
Instead of passing $\nu$ to the limit subsequentially, the inequality
(\ref{eq:key inequality for convergence}) can be applied in an alternative way
to provide a choice on the proximal parameter $\rho$ so that a unit step size
can always be chosen.  

\begin{theorem} \label{th:unit step} \rm
Let $X$ be a closed convex set in $\mathbb{R}^n$.  Let each function $p_{jk}$ be
given by (\ref{eq:dc plus diff}) with each $\phi_j$ and
$p_{jk}^{\rm diff}$ being LC$^{\, 1}$ functions on 
$X$ where $p_{jk}^{\rm cvx}$ and $p _{jk}^{\, \rm cve}$ are Lipschitz 
continuous.  Suppose that the level set $\mbox{Lev}(x^0)$ is bounded.  
Then there exists a constant $C > 0$ such that if
$\{ x^{\nu} \}$ is a sequence for which $x^{\nu+1}$ is the unique global 
minimizer of the subproblem (\ref{eq:workhorse}) at $x^{\nu}$ with 
$\rho > 2 \, C$, then for every $\nu$, the following three statements hold:

\gap

{\bf (A)} $\Theta_{\max}(x^{\nu+1}) - \Theta_{\max}(x^{\nu}) \leq \
-\left( \, \displaystyle{
\frac{\rho}{2}
} - C \, \right) \, \| \, x^{\nu+1} - x^{\nu} \, \|_2^2$ for all $\nu$; and

\gap

{\bf (B)} the sequence $\{ \Theta_{\max}(x^{\nu}) \}$ converges monotonically
to a limit, say $\Theta_{\max}^{\, \infty}$.

\gap

{\bf (C)} every accumulation point of $\{ x^{\nu} \}$ 
is a weak directional stationary solution of (\ref{eq:unified framework}).
\end{theorem}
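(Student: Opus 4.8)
The plan is to establish Theorem~\ref{th:unit step} by leveraging the inequality (\ref{eq:key inequality for convergence}) derived in the proof of Lemma~\ref{lm:liminf step sizes}, but running the argument ``in reverse'' at the level of a single iteration rather than subsequentially. First I would fix, once and for all, the constant $C > 0$ appearing in that lemma: it depends only on the Lipschitz moduli $\mbox{Lip}_{\nabla \phi_j}$ and $\mbox{Lip}_{P^{\, j}}$ of the outer LC$^{\, 1}$ functions $\phi_j$ and the locally Lipschitz inner maps $P^{\, j}$, together with a uniform bound $B_{\nabla \phi}$ on the partial derivatives of $\phi_j$ over the bounded level set $\mbox{Lev}(x^0)$; boundedness of $\mbox{Lev}(x^0)$ guarantees all of these are finite, and since $x^{\nu+1}$ is the global minimizer of the strongly convex subproblem (\ref{eq:workhorse}) it lies in $X$ and the relevant quantities stay in a bounded region once we observe (via the lower Lipschitz boundedness property) that the iterates do not escape $\mbox{Lev}(x^0)$.

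For statement (A), I would repeat verbatim the chain of estimates bounding $\Theta_{\max}(x^{\nu+1}) - \Theta_{\max}(x^{\nu})$ from the proof of Lemma~\ref{lm:liminf step sizes}, but now with the candidate next iterate being exactly the subproblem minimizer $x^{\nu+1}$ (i.e.\ with ``$\tau_{\nu+1}/\beta$'' replaced by the unit step and $x^{\nu+1/2} = x^{\nu+1}$). Picking $j_{\nu}$ with $\Theta_{\max}(x^{\nu+1}) = \theta_{j_{\nu}}(x^{\nu+1})$, writing $\theta_{j_{\nu}} = \phi_{j_{\nu}} \circ P^{\, j_{\nu}}$, applying the mean-value form together with the LC$^{\, 1}$ inequality (\ref{eq:LC1 inequality}) to split into the ``$T1$'' and ``$T2$'' pieces, and using convexity of $p_{j_{\nu}k}^{\rm cvx}$, concavity of $p_{j_{\nu}k}^{\rm cve}$, and the optimality of $x^{\nu+1}$ for (\ref{eq:workhorse}), one obtains
\[
\Theta_{\max}(x^{\nu+1}) - \Theta_{\max}(x^{\nu}) \, \leq \, -\displaystyle{
\frac{\rho}{2}
} \, \| \, x^{\nu+1} - x^{\nu} \, \|_2^2 + C \, \| \, x^{\nu+1} - x^{\nu} \, \|_2^2,
\]
which is exactly (A). Statement (B) is then immediate: since $\rho > 2C$, the coefficient $\rho/2 - C$ is positive, so $\{ \Theta_{\max}(x^{\nu}) \}$ is nonincreasing, hence the whole sequence $\{ x^{\nu} \}$ stays in $\mbox{Lev}(x^0)$ (justifying the uniformity of $C$ a posteriori, so I would be careful to present this in the right order), and boundedness of the level set makes the monotone sequence $\{ \Theta_{\max}(x^{\nu}) \}$ bounded below and therefore convergent to some $\Theta_{\max}^{\, \infty}$; moreover $\| x^{\nu+1} - x^{\nu} \|_2 \to 0$.

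For statement (C), let $x^{\infty}$ be an accumulation point along $\kappa$; since $\| x^{\nu+1} - x^{\nu} \|_2 \to 0$ we also have $x^{\nu+1} \to x^{\infty}$ along $\kappa$, i.e.\ the ``half-step'' iterates converge to the same limit, which is precisely the situation of Case~(A) in the proof of Theorem~\ref{th:subsequential convergence I}. So I would invoke that argument directly: pass to a further subsequence so that the maximizing index set ${\cal M}_{\nu+1/2}$ and the parameter tuples $\wh{\boldsymbol{\xi}}^{\, \nu}$ stabilize, use upper semicontinuity and compactness of the map $\wh{\boldsymbol{\Xi}}_{\Theta}$ together with the dd-joint upper semicontinuity of each $\wh{\theta}_j$ to pass the first-order optimality condition (\ref{eq:stationarity jth}) to the limit, and conclude that $x^{\infty}$ globally minimizes $\wh{\Theta}_{\max}^{\, \varepsilon_{\infty}}(\bullet;x^{\infty};\wh{\boldsymbol{\xi}}^{\, \infty})$ over $X$ for the limiting tuple $\wh{\boldsymbol{\xi}}^{\, \infty} \in \wh{\boldsymbol{\Xi}}_{\Theta}(x^{\infty})$, i.e.\ $x^{\infty}$ is weak directionally stationary. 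The main obstacle I anticipate is the circular-looking dependence between the constant $C$ (which must be uniform over where the iterates live) and the claim that the iterates never leave $\mbox{Lev}(x^0)$: the clean way to handle this is to establish $C$ first on all of $\mbox{Lev}(x^0)$ using only the data and the bounded-level-set hypothesis, then show by induction on $\nu$ that $x^{\nu} \in \mbox{Lev}(x^0)$ using (A), rather than trying to bootstrap the two simultaneously. A secondary technical point is that ${\cal M}_{k;\max}^{\, \rm diff}$ singleton-type assumptions were needed for the uniform upper approximation property in the general Case~(B); here, because the step is a unit step and the next iterate is the exact subproblem solution, we are genuinely in Case~(A) and no such extra hypothesis is required, which is worth flagging explicitly in the write-up.
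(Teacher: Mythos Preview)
Your proposal is correct and follows essentially the same route as the paper: the paper's proof simply cites the proof of Lemma~\ref{lm:liminf step sizes} for (A), the first part of Theorem~\ref{th:sequence converges under KL} for (B), and Case~(A) of Theorem~\ref{th:subsequential convergence I} for (C), and you have unpacked precisely those references with the right details. Your remarks on fixing $C$ uniformly on $\mbox{Lev}(x^0)$ before the induction, and on why the singleton assumption of Case~(B) is not needed here, are valid clarifications that the paper leaves implicit.
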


\begin{proof} Statement (A) follows from the proof 
of Lemma~\ref{lm:liminf step sizes}.  Statement (B) follows from the first
part of Theorem~\ref{th:sequence converges under KL}.  Statement (C) follows from
the proof of case (A) in Theorem ~\ref{th:subsequential convergence I}.
\end{proof}

\subsection{A detour: Subdifferential formulas}

To prepare for the continued analysis of Theorem~\ref{th:unit step} and complete
the proof of sequential convergence, we make a detour of the discussion and 
summarize some important properties of the (generalized) subdifferentials of the objective
function $\Theta_{\max}$ satisfying the assumptions of the theorem.  
For a locally Lipschitz continuous funciton $g : \mathbb{R}^n \to \mathbb{R}$,
the {\sl limiting subdifferential} of $g$ at $\bar{x}$ \cite{RockafellarWets98}
defined as
\[
\partial_{\, \rm L} g(\bar{x}) \triangleq 
\left\{ v \in \mathbb{R}^n \, \bigg| 
\, \exists \, \{ x^k \} \to \bar{x}
\mbox{ and } \{ v^k \} \to v \mbox{ such that } v^k\! \in \wh{\partial} g(x^k)
\mbox{ for all $k$ } \right\}
\]
is always nonempty and compact.
Between the two subdifferentials, $\wh{\partial} g$ versus $\partial_{\, \rm L} g$,
elements of the former are easier to characterize; we have 
$\wh{\partial} g(\bar{x}) \subseteq \partial_{\, \rm L} g(\bar{x})$, which implies 
\begin{equation} \label{eq:distance inequality}
\mbox{dist}_2(z,\wh{\partial} g(\bar{x})) \geq 
\mbox{dist}_2(z,\partial_{\, \rm L} g(\bar{x})), \epc \mbox{for all $z \in \mathbb{R}^n$},
\end{equation}
where $\mbox{dist}_2(z,S) \triangleq \displaystyle{ 
\inf_{x \in S}
} \, \| \, x - x \, \|_2$ is the Euclidean distance from the point $z$ to the
closed set $S$.  As mentioned before, the regular subdifferential 
$\wh{\partial} g(\bar{x})$ may be empty in general; nevertheless if $g$ is dd-convex, 
then $\wh{\partial} g(\bar{x})$
must be nonempty \cite[Proposition~4.3.3]{CuiPang2021}. 
The two references \cite[Section~2]{BotDaoLi22,BotLiTao24} have presented an excellent 
summary of the variational calculus of fractional functions.  The distance inequality
(\ref{eq:distance inequality}) provides a strong motivation to understand the
regular subdifferential $\wh{\partial} \Theta_{\max}(\bar{x})$ in place of the
limiting subdifferential $\partial_{\, \rm L} \Theta_{\max}(\bar{x})$; as a composite
function, \cite[Theorem~4.5]{Mordukhovich18} gives an inclusion relation for 
the latter function in terms of the outer and inner functions.  For our purpose,
we derive a chain-rule inclusion for $\wh{\partial} \Theta_{\max}(\bar{x})$ that 
can be directly applied to establish the sequential convergence of Algorithm~1.
In the rest of the paper, we assume that the pairs $( \phi_j,P^j )$ are of the
type~I composition, with each inner function $p_{jk}$ given by (\ref{eq:dc plus diff}).

\gap

With $\phi_j$ being differentiable, under either the (AC$^{\, 2}$) condition or the 
(C$^{\, 2}$DC$^{\, 2}$) assumption at a reference vector $\bar{x}$, each 
function $\theta_j = \phi_j \circ P^j$ is dd-convex at $\bar{x}$; thus so
is $\Theta_{\max} = \displaystyle{
\max_{1 \leq j \leq J}
} \, \theta_j$ at $\bar{x}$.  Hence it follows that
\begin{equation} \label{eq:subdifferential Theta}
\partial_{\, \rm L} \Theta_{\max}(\bar{x}) \, \supseteq \, 
\wh{\partial} \Theta_{\max}(\bar{x}) \, \supseteq \, \mbox{conv} \, \left[ \, \displaystyle{
\bigcup_{j \in {\cal M}_{\Theta}(\bar{x})}
} \, \wh{\partial} \theta_j(\bar{x}) \right],
\end{equation}
where ``conv'' denotes the convex hull of; the above inclusion holds because the
right-hand union without the convex hull can easily be seen to be a subset of the
regular subdifferential and also because the latter subdifferential is a convex set.
The key in the remaining analysis is to provide an expression for each
$\wh{\partial} \theta_j(\bar{x})$.  To simplify the discussion, we complete the analysis
assuming the 
(C$^{\, 2}$DC$^{\, 2}$) condition holds at all reference vectors $\bar{x} \in X$
and omit the analysis under the (AC$^{\, 2}$) condition which should be similar.  

\gap

For the functions $p_{jk}$ given by (\ref{eq:dc plus diff}), the
(C$^{\, 2}$DC$^{\, 2}$) condition at $\bar{x}$ postulates that, 
with $\bar{y}^j \triangleq P^j(\bar{x})$,

\gap

$\bullet $ $\displaystyle{
\frac{\partial \phi_j(\bar{y}^j)}{\partial y_k}
} > 0$ implies that $p_{jk}^{\rm cve}$ is strictly differentiable at $\bar{x}$; and 

\gap

$\bullet $ $\displaystyle{
\frac{\partial \phi_j(\bar{y}^j)}{\partial y_k}
} < 0$ implies that $p_{jk}^{\rm cvx}$ is strictly differentiable at $\bar{x}$.

\gap

Under this condition, we have
\[ \begin{array}{lll}
\theta_j^{\, \prime}(\bar{x};v) & = & \left\{ \begin{array}{l}
\displaystyle{
\sum_{k \in {\cal K}_{\phi_j}^+(\bar{y}^j)}
} \, \displaystyle{
\frac{\partial \phi_j(\bar{y}^j)}{\partial y_k}
} \, \left[ \, ( p_{jk}^{\rm cvx} )^{\, \prime}(\bar{x};v) + 
\nabla p_{jk}^{\rm cve}(\bar{x})^{\top}v + \nabla p_k^{\rm diff}(\bar{x})^{\top}v \, \right]
\, + \\ [0.2in]
\displaystyle{
\sum_{k \in {\cal K}_{\phi_j}^-(\bar{y}^j)}
} \, \displaystyle{
\frac{\partial \phi_j(\bar{y}^j)}{\partial y_k}
} \, \left[ \, \nabla p_{jk}^{\rm cvx}(\bar{x})^{\top}v + 
( p_{jk}^{\rm cve} )^{\, \prime}(\bar{x};v) + 
\nabla p_{jk}^{\rm diff}(\bar{x})^{\top}v \, \right]
\end{array} \right\} \\ [0.5in]
& = &  \left\{ \begin{array}{l}
\displaystyle{
\sum_{k \in {\cal K}_{\phi_j}^+(\bar{y}^j)}
} \, \displaystyle{
\frac{\partial \phi_j(\bar{y}^j)}{\partial y_k}
} \, \left[ \, \displaystyle{
\max_{a \in \partial (p_{jk}^{\rm cvx})(\bar{x})}
} \, a^{\top}v + \nabla p_{jk}^{\rm cve}(\bar{x})^{\top}v + 
\nabla p_{jk}^{\rm diff}(\bar{x})^{\top}v \, \right] \, + \\ [0.2in]
\displaystyle{
\sum_{k \in {\cal K}_{\phi_j}^-(\bar{y}^j)}
} \, \displaystyle{
\frac{\partial \phi_j(\bar{y}^j)}{\partial y_k}
} \, \left[ \, \nabla p_{jk}^{\rm cvx} )(\bar{x})^{\top}v - \displaystyle{
\max_{b \in \partial (-p_{jk}^{\rm cve})(\bar{x})}
} \, b^{\top}v + \nabla p_{jk}^{\rm diff}(\bar{x})^{\top}v \, \right]
\end{array} \right\},
\end{array} \]
For each index $j$, define the convex function 
$\Gamma_j(\bullet;\bar{x})$ by
\[ \begin{array}{lll}
\Gamma_j(x;\bar{x}) & \triangleq & \displaystyle{
\sum_{k \in {\cal K}_{\phi_j}^+(\bar{y}^j)}
} \, \displaystyle{
\frac{\partial \phi_j(\bar{y}^j)}{\partial y_k}
} \, \left[ \, p_{jk}^{\rm cvx}(x) - p_{jk}^{\rm cvx}(\bar{x}) + 
\left( \, \nabla p_{jk}^{\rm cve}(\bar{x}) +  \nabla p_{jk}^{\rm diff}(\bar{x}) \, \right)^{\top}
( x - \bar{x}) \, \right] \, + \\ [0.2in]
& & \displaystyle{
\sum_{k \in {\cal K}_{\phi_j}^-(\bar{y}^j)}
} \, \displaystyle{
\frac{\partial \phi_j(\bar{y}^j)}{\partial y_k}
} \, \left[ \, \left( \, 
\nabla p_{jk}^{\rm cvx}(\bar{x}) +  \nabla p_{jk}^{\rm diff}(\bar{x}) \, \right)^{\top}
( x - \bar{x}) + p_{jk}^{\rm cve}(x) - p_{jk}^{\rm cve}(\bar{x}) \, \right],
\end{array} \]
which is the sum of the following family of convex functions:
%
%
\[
\left\{ \, p_{jk}^{\rm cvx} \, : \, k \in {\cal K}_{\phi_j}^+(\bar{y}^j) \, \right\} 
\, \bigcup \,
\left\{ \, -p_{jk}^{\rm cve} \, : \, k \in {\cal K}_{\phi_j}^-(\bar{y}^j) \, \right\}
\]
plus some affine functions.  Thus, we have
for an arbitrary vector $\wh{x}$, by \cite[Theorem~23.8]{Rockafellar70},
\begin{equation} \label{eq:partial of C} 
\begin{array}{lll}
[ \, \partial \Gamma_j(\bullet;\bar{x}) \, ]( \wh{x} ) & = & 
\displaystyle{
\sum_{k \in {\cal K}_{\phi_j}^+(\bar{y}^j)}
} \, \displaystyle{
\frac{\partial \phi_j(\bar{y}^j)}{\partial y_k}
} \, \left[ \, \partial p_{jk}^{\rm cvx}( \wh{x} ) + \nabla p_{jk}^{\rm cve}(\bar{x}) +  
\nabla p_{jk}^{\rm diff}(\bar{x}) \, \right] \, + \\ [0.2in]
& & \displaystyle{
\sum_{k \in {\cal K}_{\phi_j}^-(\bar{y}^j)}
} \, \displaystyle{
\frac{\partial \phi_j(\bar{y}^j)}{\partial y_k}
} \, \left[ \, \nabla p_{jk}^{\rm cvx}(\bar{x}) +  \nabla p_{jk}^{\rm diff}(\bar{x}) - 
\partial (-p_{jk}^{\rm cve}) ( \wh{x} ) \, \right].
\end{array} \end{equation}
Hence, since $\theta_j$ is dd-convex, it follows that
\[
\theta_j^{\, \prime}(\bar{x};v) \, = \, \displaystyle{
\max_{a \in \wh{\partial} \theta_j(\bar{x})}
} \, a^{\top} v \, = \, \displaystyle{
\max_{a \in [ \, \partial \Gamma_j(\bullet;\bar{x}) \, ]( \bar{x} )}
} \, a^{\, \top}v.
\]
By \cite[Proposition~2.1.4]{Clarke90}, we deduce
\begin{equation} \label{eq:equality of two subdiff}
\wh{\partial} \theta_j(\bar{x}) = 
\left[ \, \partial \Gamma_j(\bullet;\bar{x}) \, \right]( \bar{x} ).
\end{equation}

\subsection{Condition (\ref{eq:KL with power}) under (C$^{\, 2}$DC$^{\, 2}$):
Application of KL} \label{subsec:C2DC2 and KL}

Let $\{ x^{\nu} \}$ be a sequence satisfying the assumptions of Theorem~\ref{th:unit step}. 
We wish to show that this sequence satisfies the inequality (\ref{eq:KL with power})
for the function $\Theta_{\max}$ under the uniform KL postulate.  To introduce this
postulate, we let $\Omega$ be the (nonempty) set of accumulation points of the 
sequence $\{ x^{\nu} \}$.
By part (B) of Theorem~\ref{th:unit step}, it follows that $\Theta_{\max}$ is a constant
on $\Omega$, which we denote $\Theta_{\infty}$.
  
\gap

$\bullet $ {\bf (Uniform KL property)} the extended-valued function 
$\Theta_{\max} + \delta_X$, where $\delta_X$ is the indicator function 
of the closed convex set $X$, has a
{\sl power desingularization function} (a terminology due 
to \cite{AttouchBolte09,AttouchBolteSvaiter13}) $\psi$ that is applicable to 
every accumulation point in $\Omega$; i.e., with $\psi(s) = \eta \, s^{1 - \alpha}$ 
defined for for $s \geq 0$, 
where $\eta > 0$ and $\alpha \in [ \, 0,1 \, )$ are some scalars, it holds that
for every accumulation point $x^{\infty}$ of the sequence $\{ x^{\nu} \}$,
there exists a neighborhood ${\cal N}$ of $x^{\infty}$ and a positive scalar
$\zeta > 0$ such that for all $x \in {\cal N}$ with 
$\Theta_{\max}(x^{\infty}) < \Theta_{\infty} < 
\Theta_{\max}(x^{\infty}) + \zeta$, the following inequality holds:
\[
\psi^{\, \prime}\left( \Theta_{\max}(x) - \Theta_{\infty} \right) \, 
\mbox{dist}\left( \, 0, \partial_{\, \rm L} \Theta_{\max}(x) + {\cal N}(x;X) \, \right)
\, \geq \, 1.
\]
It is shown in \cite[Lemma~2.3]{BotDaoLi22} that this uniform KL property is 
implied by the pointwise version.  
With $\psi$ being the given power function, the above inequality yields
\[
\left[ \, \Theta_{\max}(x) - \Theta_{\infty}) \, \right]^{\alpha} 
\, \leq \, \eta^{\, \prime} \, 
\mbox{dist}\left( \, 0, \partial_{\, \rm L} \Theta_{\max}(x) + {\cal N}(x;X) \, \right),
\epc \mbox{where } \ \eta^{\, \prime} \, \triangleq \, ( \, 1 - \alpha \, ) \, \eta.
\]
The key is to upper bound the distance on the right-hand side for $x = x^{\nu+1}$.
By (\ref{eq:subdifferential Theta}), we have
\begin{equation} \label{eq:distance bound}
\mbox{dist}\left( \, 0, \, \partial_{\, \rm L} \Theta_{\max}(x^{\nu+1}) 
+ {\cal N}(x^{\nu+1};X) \, \right) \, \leq \, \mbox{dist}\left( \, 0, \,
\mbox{conv} \, \left[ \, \displaystyle{
\bigcup_{j \in {\cal M}_{\Theta}(x^{\nu+1})}
} \, \wh{\partial} \theta_j(x^{\nu+1}) \right] \, \right).
\end{equation}
Under the (C$^{\, 2}$DC$^{\, 2}$) and the all-but-one-differentiability assumptions,
we may combine (\ref{eq:partial of C}) and (\ref{eq:equality of two subdiff}) to obtain
(where $y^{j;\nu+1} \triangleq P^j(x^{\nu+1})$):
\[ \begin{array}{lll}
\wh{\partial} \theta_j(x^{\nu+1})  & = & 
\displaystyle{
\sum_{k \in {\cal K}_{\phi_j}^+(y^{j;\nu+1})}
} \, \displaystyle{
\frac{\partial \phi_j(y^{j;\nu+1})}{\partial y_k}
} \, \left[ \, \partial p_{jk}^{\rm cvx}(x^{\nu+1}) + 
\nabla p_{jk}^{\rm cve}(x^{\nu+1}) +  
\nabla p_{jk}^{\rm diff}(x^{\nu+1}) \, \right] \, + \\ [0.2in]
& & \displaystyle{
\sum_{k \in {\cal K}_{\phi_j}^-(y^{j;\nu+1})}
} \, \displaystyle{
\frac{\partial \phi_j(y^{j;\nu+1})}{\partial y_k}
} \, \left[ \, \nabla p_{jk}^{\rm cvx}(x^{\nu+1}) +  
\nabla p_{jk}^{\rm diff}(x^{\nu+1}) - 
\partial (-p_{jk}^{\rm cve}) (x^{\nu+1}) \, \right]
\end{array} \]
By definition, $x^{\nu+1}$ is the unique optimal solution of the convex
program 
\begin{equation} \label{eq:final subproblem}
\displaystyle{
\operatornamewithlimits{\mbox{\bf minimize}}_{x \in X}
} \ \displaystyle{
\max_{j \in {\cal M}_{\Theta}^{\, \varepsilon_{\nu}}(x^{\nu})}
} \, \wh{\theta}_j(x;x^{\nu}) + \displaystyle{
\frac{\rho}{2}
} \, \| \, x - x^{\nu} \, \|_2^2, 
\end{equation}
where, with $y^{j;\nu} \triangleq P^j(x^{\nu})$,
\begin{equation} \label{eq:final subproblem theta} 
\begin{array}{l}
\wh{\theta}_j(x;x^{\nu}) \, = \, \theta_j(x^{\nu}) \, + \\ [0.1in]
\epc \displaystyle{
\sum_{k \in {\cal K}_{\phi_j}^{\, +}(y^{j;\nu})}
} \, \displaystyle{
\frac{\partial \phi_j(y^{j;\nu})}{\partial y_k}
} \, \left[ \,
p_{jk}^{\rm cvx}(x) - p_{jk}^{\rm cvx}(x^{\nu}) + 
\left[ \, \nabla p_{jk}^{\rm cve}(x^{\nu}) + 
\nabla p_{jk}^{\rm diff}(x^{\nu}) \, \right]^{\top}(x - x^{\nu}) \, \right]
\, + \\ [0.3in]
\epc \displaystyle{
\sum_{k \in {\cal K}_{\phi_j}^{\, -}(y^{j;\nu})}
} \, \displaystyle{
\frac{\partial \phi_j(y^{j;\nu})}{\partial y_k}
} \, \left[ \, p_{jk}^{\rm cve}(x) - p_{jk}^{\rm cve}(\bar{x} ) + 
\left[ \, \nabla p_{jk}^{\rm cvx}(x^{\nu} +
\nabla p_{jk}^{\rm diff}(x^{\nu}) \, \right]^{\top}( x - x^{\nu}) \, \right].
\end{array} \end{equation}
Thus there exist a subgradient 
$c^{\, \nu} \in \partial \left( \displaystyle{
\max_{j \in {\cal M}_{\Theta}^{\varepsilon_{\nu}}(x^{\nu})}
} \, \wh{\theta}_j(\bullet;x^{\nu}) \right)(x^{\nu+1})$ and
a normal vector $w^{\nu}$ in ${\cal N}(x^{\nu+1};X)$ such that 
\begin{equation} \label{eq:stationarity of subproblem}
0 \, = \, c^{\, \nu} + \rho \, ( x^{\nu+1} - x^{\nu} ) + w^{\nu}.
\end{equation}
As in the proof of Theorem~\ref{th:subsequential convergence I}, there exists a subset
${\cal M}_{\nu+1/2} \subseteq {\cal M}_{\Theta}^{\varepsilon_{\nu}}(x^{\nu})$ such that
$c^{\nu}$ belongs to the convex hull of the union $\displaystyle{
\bigcup_{j \in {\cal M}_{\nu+1/2}}
} \, \left[ \, \partial \wh{\theta}_j(\bullet;x^{\nu}) \, \right](x^{\nu + 1})$.
We have
\[ \begin{array}{lll}
\left[ \, \partial \wh{\theta}_j(\bullet;x^{\nu}) \, \right](x^{\nu + 1}) & = & 
\displaystyle{
\sum_{k \in {\cal K}_{\phi_j}^+(y^{j;\nu})}
} \, \displaystyle{
\frac{\partial \phi_j(y^{j;\nu})}{\partial y_k}
} \, \left[ \, \partial p_{jk}^{\rm cvx}( x^{\nu+1} ) + \nabla p_{jk}^{\rm cve}(x^{\nu}) +  
\nabla p_{jk}^{\rm diff}(x^{\nu}) \, \right] \, + \\ [0.25in]
& & \displaystyle{
\sum_{k \in {\cal K}_{\phi_j}^-(y^{j;\nu})}
} \, \displaystyle{
\frac{\partial \phi_j(y^{j;\nu})}{\partial y_k}
} \, \left[ \, \nabla p_{jk}^{\rm cvx}(x^{\nu}) +  \nabla p_{jk}^{\rm diff}(x^{\nu}) - 
\partial (-p_{jk}^{\rm cve}) (x^{\nu+1} ) \, \right] 
\end{array} \]
We compare the members of 
$\left[ \, \partial \wh{\theta}_j(\bullet;x^{\nu}) \, \right](x^{\nu + 1})$
with those in
$\wh{\partial} \theta_j(x^{\nu+1}) = 
\left[ \, \partial \Gamma_j(\bullet;x^{\nu+1}) \, \right]( x^{\nu+1} )$;
cf.\ (\ref{eq:partial of C}).  For this purpose, we introduce the following

\gap

$\bullet $ {\bf (locally uni-signed partial derivatives)} at every 
$x^{\infty} \in \Omega$: with $y^{j;\infty} \triangleq P^j(x^{\infty})$ for all 
$j \in [ J ]$, the following implication holds for all pairs 
$(j,k) \in {\cal M}_{\Theta}^{\, \varepsilon_{\infty}}(x^{\infty}) \times [ K ]$,
\[ \displaystyle{
\frac{\partial \phi_j(y^{j;\infty})}{\partial y_k}
} \, = \, 0 \ \Rightarrow \ \left\{ \begin{array}{l}
\exists \ \mbox{a neighborhood ${\cal N}$ of $x^{\infty}$ such 
that} \\ [0.1in]
\displaystyle{
\frac{\partial \phi_j(P^j(x))}{\partial y_k}
} \mbox{ is of one sign (possibly zero) for all $x \in {\cal N}$}.
\end{array} \right.
\] 
This assumption implies that for every $x^{\infty} \in \Omega$, there exists a 
neighborhood ${\cal N}$ of $x^{\infty}$, such that for every $j$, there exist 
two index sets
${\cal K}_j^{\oplus}(x^{\infty})$ and ${\cal K}_j^{\ominus}(x^{\infty})$
such that for any $x \in {\cal N}$ with 
$y^j \triangleq P^j(x)$, we have
\[
{\cal K}_{\phi_j}^+(y) \, \cup \, \left\{ \, k \, \mid \, \displaystyle{
\frac{\partial \phi_j(y)}{\partial y_k}
} \, = \, 0 \, \right\} \, = \, {\cal K}_j^{\oplus}(x^{\infty}) \epc \mbox{and} \epc
{\cal K}_{\phi_j}^-(y) \, \cup \, \left\{ \, k \, \mid \, \displaystyle{
\frac{\partial \phi_j(y)}{\partial y_k}
} \, = \, 0 \, \right\} \, = \, {\cal K}_j^{\ominus}(x^{\infty}) 
\]  
For the sequence $\{ x^{\nu} \}$, it was shown in \cite[Lemma!~2.3]{BotDaoLi22}
that $\displaystyle{
\lim_{\nu \to \infty}
} \, \mbox{dist}(x^{\nu},\Omega) = 0$.  Moreover, since $\displaystyle{
\lim_{\nu \to \infty}
} \, \| \, x^{\nu+1} - x^{\nu} \, \| = 0$, it follows that for all $\nu$
sufficiently large, $x^{\nu}$ and $x^{\nu+1}$ belong to the same neighborhood
of some accumulation point.  Hence, we have 
\[
{\cal K}_{\phi_j}^{\pm}(y^{j;\nu+1}) \, \cup \, \left\{ \, k \, \mid \, \displaystyle{
\frac{\partial \phi_j(y^{j;\nu+1})}{\partial y_k}
} \, = \, 0 \, \right\} \, = \, 
{\cal K}_{\phi_j}^{\pm}(y^{j;\nu}) \, \cup \, \left\{ \, k \, \mid \, \displaystyle{
\frac{\partial \phi_j(y^{j;\nu})}{\partial y_k}
} \, = \, 0 \, \right\} 
\]
respectively.  Therefore, with an abuse of notation, we may write
\[ \begin{array}{l}
\displaystyle{
\sum_{k \in {\cal K}_{\phi_j}^+(y^{j;\nu+1})}
} \, \displaystyle{
\frac{\partial \phi_j(y^{j;\nu+1})}{\partial y_k}
} \, \left[ \, \partial p_{jk}^{\rm cvx}( x^{\nu+1} ) + \nabla p_{jk}^{\rm cve}(x^{\nu}) +  
\nabla p_{jk}^{\rm diff}(x^{\nu}) \, \right] \\ [0.2in]
\epc + \, \displaystyle{
\sum_{k \in {\cal K}_{\phi_j}^-(y^{\nu+1})}
} \, \displaystyle{
\frac{\partial \phi_j(y^{j;\nu+1})}{\partial y_k}
} \, \left[ \, \nabla p_{jk}^{\rm cvx}(x^{\nu}) +  
\nabla p_{jk}^{\rm diff}(x^{\nu}) - \partial (-p_{jk}^{\rm cve})( x^{\nu+1} ) \, \right] 
\\ [0.2in]
= \, \displaystyle{
\sum_{k \in {\cal K}_{\phi_j}^+(y^{j;\nu})}
} \, \displaystyle{
\frac{\partial \phi_j(y^{j;\nu})}{\partial y_k}
} \, \left[ \, \partial p_{jk}^{\rm cvx}( x^{\nu+1} ) + \nabla p_{jk}^{\rm cve}(x^{\nu}) +  
\nabla p_{jk}^{\rm diff}(x^{\nu}) \, \right]  \\ [0.2in]
\epc + \displaystyle{
\sum_{k \in {\cal K}_{\phi_j}^-(y^{\nu})}
} \, \displaystyle{
\frac{\partial \phi_j(y^{j;\nu})}{\partial y_k}
} \, \left[ \, \nabla p_{jk}^{\rm cvx}(x^{\nu}) +  
\nabla p_{jk}^{\rm diff}(x^{\nu}) - \partial (-p_{jk}^{\rm cve})( x^{\nu+1} )
\, \right] \\ [0.2in] 
\epc + \displaystyle{
\sum_{k \in {\cal K}_{\phi_j}^+(y^{j;\nu+1})}
} \, \left( \, \displaystyle{
\frac{\partial \phi_j(y^{j;\nu+1})}{\partial y_k}
} - \displaystyle{
\frac{\partial \phi_j(y^{j;\nu})}{\partial y_k}
} \, \right)
\, \left[ \, \partial p_{jk}^{\rm cvx}( x^{\nu+1} ) + \nabla p_{jk}^{\rm cve}(x^{\nu}) +  
\nabla p_{jk}^{\rm diff}(x^{\nu}) \, \right] \\ [0.2in]
\epc + \, \displaystyle{
\sum_{k \in {\cal K}_{\phi_j}^-(y^{j;\nu+1})}
} \, \left( \, \displaystyle{
\frac{\partial \phi_j(y^{j;\nu+1})}{\partial y_k}
} - \displaystyle{
\frac{\partial \phi_j(y^{j;\nu})}{\partial y_k}
} \, \right)
\, \left[ \, \nabla p_{jk}^{\rm cvx}(x^{\nu}) + \nabla p_{jk}^{\rm diff}(x^{\nu}) 
- \partial (-p_{jk}^{\rm cve})( x^{\nu+1} ) \, \right] 
\end{array} \]
By the Lipschitz continuity of $\phi_j$ and $P^j$, we deduce
\[
\left| \,  \displaystyle{
\frac{\partial \phi_j(y^{j;\nu+1})}{\partial y_k}
} - \displaystyle{
\frac{\partial \phi_j(y^{j;\nu})}{\partial y_k}
} \, \right| \, \leq \, \mbox{Lip}_{\nabla \phi} \, \mbox{Lip}_P \, 
\| \, x^{\nu+1} - x^{\, \nu} \, \|_2.
\]
Hence it follows that
\[
\partial \left[ \, \wh{\theta}_j(\bullet;x^{\nu}) \, \right](x^{\nu+1})
\, \subseteq \, \wh{\partial} \theta_j(x^{\nu+1}) +
\wh{\mbox{Lip}} \, \| \, x^{\nu+1} - x^{\nu} \, \|_2 \, {\cal B}(0,1)
\]
for some aggregate constant $\wh{\mbox{Lip}} > 0$.  With the vector $c^{\, \nu}$ satisfying
(\ref{eq:stationarity of subproblem}), it follows that there exists a vector 
$\wh{c}^{\, \nu}$ belonging to the convex hull of $\displaystyle{
\bigcup_{j \in {\cal M}_{\nu+1/2}}
} \, \wh{\partial} \theta_j(x^{\nu+1})$ such that 
\[
\| \, c^{\, \nu} - \wh{c}^{\, \nu} \, \|_2 \, \leq \, 
\wh{\mbox{Lip}} \, \| \, x^{\nu+1} - x^{\nu} \, \|_2.
\] 
From (\ref{eq:stationarity of subproblem}), we deduce, 
with $\wh{\mbox{Lip}}^{\, \prime} \, \triangleq \, \wh{\mbox{Lip}} + \rho$,
\[
0 \, = \, \wh{c}^{\, \nu} + w^{\, \nu} + p^{\, \nu}, \epc \mbox{where } \
\| \, p^{\, \nu} \, \|_2 \, \leq \, \wh{\mbox{Lip}}^{\, \prime} \, 
\| \, x^{\nu+1} - x^{\nu} \, \|_2,
\]
which implies, 
\begin{equation} \label{eq:bounding distances}
\mbox{dist}\left( 0, \, \mbox{conv} \left[ \, \displaystyle{
\bigcup_{j \in {\cal M}_{\nu+1/2}}
} \, \wh{\partial} \theta_j(x^{\nu+1}) \, \right] + {\cal N}(x^{\nu+1};X) \right) 
\, \leq \, \wh{\mbox{Lip}}^{\, \prime} \, \| \, x^{\nu+1} - x^{\nu} \, \|_2.
\end{equation}
Note that there is a difference between the left-hand distance 
in (\ref{eq:bounding distances})
and the right-hand distance in (\ref{eq:distance bound}); namely, the former involves
the union over $j \in {\cal M}_{\nu+1/2} \subseteq 
{\cal M}_{\Theta}^{\, \varepsilon_{\nu}}(x^{\nu})$ 
while the latter involves the union over $j \in {\cal M}_{\Theta}(x^{\nu+1})$.
One situation in which this difference can be reconciled is when 
${\cal M}_{\Theta}(x^{\infty})$
is a singleton for all $x^{\infty} \in \Omega$.  This assumption implies that for 
all $\nu \in \kappa$ sufficiently
large, it holds that ${\cal M}_{\Theta}(x^{\nu+1}) = M_{\nu+1/2} = 
{\cal M}_{\Theta}(x^{\infty})$.  

\subsection{A final sequential convergence result with rates}

With all the preparations, we are now ready to state the following sequential
convergence result.  No additional proof is needed.
We should point out that the result requires several additional
assumptions beyond those for the subsequence convergence 
Theorem~\ref{th:subsequential convergence I}, it may be possible to relax
some of these, particularly those that involve all the accumulation points.
At this time, we have not attempted to establish such a refined analysis.

\begin{theorem} \label{th:final sequential differentiable} \rm
Assume the setting of Theorem~\ref{th:unit step}.  Suppose that the
(C$^{\, 2}$DC$^{\, 2}$) condition holds at all $\bar{x} \in X$. 
Let $\{ x^{\nu} \}$ be a sequence such that each $x^{\nu+1}$ is the unique 
optimal solution of (\ref{eq:final subproblem}) with $\wh{\theta}_j(\bullet;x^{\nu})$
given by (\ref{eq:final subproblem theta}).  Let 
$C > 0$ be a constant such that with $\rho > 2C$ this sequence $\{ x^{\nu} \}$
satisfies:

\gap

$\bullet $ $\Theta_{\max}(x^{\nu+1}) - \Theta_{\max}(x^{\nu}) \leq \
-\left( \, \displaystyle{
\frac{\rho}{2}
} - C \, \right) \, \| \, x^{\nu+1} - x^{\nu} \, \|_2^2$ for all $\nu$.

\gap

Then the following two statements hold:

\gap

{\bf (A) (Subsequential convergence})
the sequence $\{ x^{\nu} \}$ is bounded and every one of its accumulation
points is a directional stationary solution of (\ref{eq:unified framework});
moreover, either the sequence of iterates $\{ x^{\nu} \}$ converges finitely, or the 
sequence of objective values $\{ \Theta_{\max}(x^{\nu}) \}$ converges monotonically
to some value $\Theta_{\infty}$ with $\Theta_{\max}(x^{\nu}) > \Theta_{\infty}$
for all $\nu$.

\gap

{\bf (B) (Sequential convergence}) Under the following additional assumptions:
%
%
%
\gap

$\bullet $ the local uni-signed partial derivatives property holds for the outer
functions $\phi_j$ at all $x^{\infty}$ in the set $\Omega$
of accumulation points of the sequence $\{ x^{\nu} \}$;

\gap

$\bullet $ the uniform KL property holds at all $x^{\infty} \in \Omega$;

\gap

$\bullet $ the index set ${\cal M}_{\Theta}(x^{\infty})$ is a singleton for all
$x^{\infty} \in \Omega$;

\gap 

$\bullet $ the extended-valued function $\Theta_{\max} + \delta_X$  has a power
desingularization function at every $x^{\infty} \in \Omega$,

\gap

it holds that
there exist a positive scalar $\eta$ and a nonnegative scalar $\zeta < 1$ such that
\begin{equation} \label{eq:KL with power for Theta}
\| \, x^{\, \nu+1} - x^{\, \nu} \, \|_2 \, \geq \, \displaystyle{
\frac{1}{\eta}
} \, \left[ \, \Theta_{\max}(x^{\, \nu+1}) 
- \Theta_{\infty} \, \right]^{\zeta} \epc \forall \, \nu \mbox{ sufficiently large}.
\end{equation}
Consequently, the conclusions of Theorem~\ref{th:sequence converges under KL} hold
for the sequence $\{ x^{\nu} \}$; moreover, the limit of this sequence is a 
directional stationary point of (\ref{eq:unified framework}).  \hfill $\Box$
\end{theorem}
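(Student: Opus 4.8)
The plan is to assemble three ingredients already in hand — the uniform sufficient descent of Theorem~\ref{th:unit step}, the chain-rule formula for $\wh{\partial}\theta_j$ from Subsection~\ref{subsec:C2DC2 and KL}, and the uniform KL inequality — and to feed them into the abstract sequential-convergence Theorem~\ref{th:sequence converges under KL} applied to $f=\Theta_{\max}$ on $X$ (i.e.\ to $\Theta_{\max}+\delta_X$). For Part~(A) I would first observe that the hypotheses here are exactly those of Theorem~\ref{th:unit step}: the pairs $(\phi_j,P^j)$ are of type~I with LC$^{\,1}$ outer functions and LC$^{\,1}$ differentiable summands, the dc summands $p_{jk}^{\rm cvx},p_{jk}^{\rm cve}$ are Lipschitz on $\mbox{Lev}(x^0)$, the level set is bounded, and a constant $C$ with $\rho>2C$ is given; hence statements (A)--(C) of that theorem hold verbatim — the displayed descent bullet is statement~(A), the monotone convergence $\Theta_{\max}(x^\nu)\downarrow\Theta_\infty$ is statement~(B), and every accumulation point is a weak directional stationary solution of (\ref{eq:unified framework}) by statement~(C). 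Since the (C$^{\,2}$DC$^{\,2}$) condition holds at every $\bar x\in X$, the family $\wh{\boldsymbol{\Xi}}_\Theta(\bar x)$ is a singleton everywhere, so ``weak'' directional stationarity coincides with directional stationarity (cf.\ Definition~\ref{df:weak stationarity} and Corollary~\ref{co:subsequent type I}), giving the stationarity claim; the first part of Theorem~\ref{th:sequence converges under KL} with $\gamma=\rho/2-C>0$ then furnishes the dichotomy ``finite convergence, or $\Theta_{\max}(x^\nu)>\Theta_\infty$ for all $\nu$'', completing (A).

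Part~(B) reduces entirely to verifying the power-KL estimate (\ref{eq:KL with power for Theta}); once that is in place, Theorem~\ref{th:sequence converges under KL} delivers convergence of the whole sequence $\{x^\nu\}$ with the stated finite/linear/sublinear rates, and its limit — an accumulation point — is directionally stationary by (A). To establish (\ref{eq:KL with power for Theta}), fix $\nu$ large enough that, using $\mbox{dist}_2(x^\nu,\Omega)\to 0$ and $\|x^{\nu+1}-x^\nu\|_2\to 0$ (as in \cite[Lemma~2.3]{BotDaoLi22}), both $x^\nu$ and $x^{\nu+1}$ lie in one neighborhood ${\cal N}$ of some $x^\infty\in\Omega$ on which the uniform KL property and the locally uni-signed partial-derivative property apply, and large enough that ${\cal M}_\Theta^{\,\varepsilon_\nu}(x^\nu)\subseteq{\cal M}_\Theta^{\,\varepsilon_\infty}(x^\infty)$ and — by the singleton assumption on ${\cal M}_\Theta(x^\infty)$ — that ${\cal M}_\Theta(x^{\nu+1})={\cal M}_{\nu+1/2}={\cal M}_\Theta(x^\infty)$. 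Applying the uniform KL inequality at $x=x^{\nu+1}$ with desingularizer $\psi(s)=\eta\,s^{1-\alpha}$ yields $[\Theta_{\max}(x^{\nu+1})-\Theta_\infty]^{\alpha}\le\eta^{\,\prime}\,\mbox{dist}_2\!\left(0,\partial_{\rm L}\Theta_{\max}(x^{\nu+1})+{\cal N}(x^{\nu+1};X)\right)$ with $\eta^{\,\prime}=(1-\alpha)\eta$; by (\ref{eq:subdifferential Theta}) the convex-hull union $\mbox{conv}\big[\bigcup_{j\in{\cal M}_\Theta(x^{\nu+1})}\wh{\partial}\theta_j(x^{\nu+1})\big]$ is contained in $\partial_{\rm L}\Theta_{\max}(x^{\nu+1})$ (this is the form of (\ref{eq:distance bound}) carrying the normal cone), so the right-hand distance is at most $\mbox{dist}_2\!\left(0,\mbox{conv}\big[\bigcup_{j\in{\cal M}_\Theta(x^{\nu+1})}\wh{\partial}\theta_j(x^{\nu+1})\big]+{\cal N}(x^{\nu+1};X)\right)$, which by ${\cal M}_\Theta(x^{\nu+1})={\cal M}_{\nu+1/2}$ is exactly the left side of (\ref{eq:bounding distances}), hence $\le\wh{\mbox{Lip}}^{\,\prime}\,\|x^{\nu+1}-x^\nu\|_2$. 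Chaining gives $[\Theta_{\max}(x^{\nu+1})-\Theta_\infty]^{\alpha}\le\eta^{\,\prime}\wh{\mbox{Lip}}^{\,\prime}\,\|x^{\nu+1}-x^\nu\|_2$, which is (\ref{eq:KL with power for Theta}) with $\eta\mapsto\eta^{\,\prime}\wh{\mbox{Lip}}^{\,\prime}$ and $\zeta=\alpha\in[0,1)$; invoking Theorem~\ref{th:sequence converges under KL} finishes the proof.

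The main obstacle — and the reason all four extra hypotheses of (B) are needed — is the passage (carried out in Subsection~\ref{subsec:C2DC2 and KL}) from the subproblem optimality condition (\ref{eq:stationarity of subproblem}), whose subgradient $c^{\,\nu}$ lies in $\partial[\wh{\theta}_j(\bullet;x^\nu)](x^{\nu+1})$ and is built from the active sets ${\cal K}_{\phi_j}^{\pm}(y^{j;\nu})$ and from dc/differentiable data linearized at the \emph{old} iterate $x^\nu$, to a genuine element $\wh{c}^{\,\nu}$ of $\wh{\partial}\theta_j(x^{\nu+1})$, built from ${\cal K}_{\phi_j}^{\pm}(y^{j;\nu+1})$ and from data at $x^{\nu+1}$. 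The locally uni-signed property forces these two sign-pattern index sets to agree up to the zero set; the (C$^{\,2}$DC$^{\,2}$) condition makes the chain-rule identities (\ref{eq:partial of C})--(\ref{eq:equality of two subdiff}) exact rather than mere inclusions; the LC$^{\,1}$/Lipschitz hypotheses supply the $\mbox{Lip}_{\nabla\phi}\mbox{Lip}_P\|x^{\nu+1}-x^\nu\|_2$ perturbation bound on $|\partial\phi_j(y^{j;\nu+1})/\partial y_k-\partial\phi_j(y^{j;\nu})/\partial y_k|$ and control the linearization errors of the differentiable summands; and the singleton assumption on ${\cal M}_\Theta(x^\infty)$ reconciles the union over ${\cal M}_{\nu+1/2}\subseteq{\cal M}_\Theta^{\,\varepsilon_\nu}(x^\nu)$ with the union over ${\cal M}_\Theta(x^{\nu+1})$ demanded by the KL estimate. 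Bookkeeping all of these perturbations so that a single aggregate constant $\wh{\mbox{Lip}}^{\,\prime}$ works uniformly for all large $\nu$ is the bulk of the effort; by contrast, the invocations of Theorems~\ref{th:unit step} and~\ref{th:sequence converges under KL} are routine, which is why the paper can reasonably assert that ``no additional proof is needed.''
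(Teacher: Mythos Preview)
Your proposal is correct and follows essentially the same approach as the paper, which explicitly declares that ``no additional proof is needed'' precisely because the pieces you identify --- Theorem~\ref{th:unit step} for Part~(A), the chain-rule subdifferential formulas (\ref{eq:partial of C})--(\ref{eq:equality of two subdiff}), the distance bounds (\ref{eq:distance bound}) and (\ref{eq:bounding distances}), and the index-set reconciliation under the singleton assumption on ${\cal M}_\Theta(x^\infty)$ --- have all been assembled in Subsections~\ref{subsec:step sizes}--\ref{subsec:C2DC2 and KL} and need only be fed into Theorem~\ref{th:sequence converges under KL}. Your explanation of why each of the four extra hypotheses in (B) is needed, and your observation that (C$^{\,2}$DC$^{\,2}$) collapses the parameterizing family to a singleton so that weak directional stationarity upgrades to directional stationarity, match the paper's reasoning exactly.
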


\section{Concluding Remarks} 

After a review of some old results and the derivation of some new ones for the class of
quasi-dc functions, we have presented a unified iterative descent convex-programming
based algorithm and established 
its subsequential convergence to a weak directional stationary point for a broad class of 
composite quasi-dc functions.  An enhancement of the algorithm that is computationally
more demanding is also presented that can obtain a directional stationary solution.
The construction of the descent subproblems in the algorithms depends on the matching 
properties of the outer and inner functions defining the compositions.  Sequential
convergence and rates of convergence of a simplified version of the main algorithm 
are also established under additional assumptions, some of which may be difficult 
to relax in general and yet may be
simplified for problems with favorable properties.  Due to the length of the paper,
we have omitted further analysis of the algorithm for all the special cases, of 
which there are plenty.  Finally,
numerical results are presently being planned, important applications will be
reported separately, and extensions to composite Heaviside
optimization are also part of the future research.

\end{document}